\documentclass{amsart}

%\topmargin0in
%\textheight8.5in
%\oddsidemargin0.3in
%\evensidemargin0.3in
%\textwidth6in
%\advance\hoffset by -0.5 truecm

\usepackage{amsfonts,amsthm,amssymb,euscript}
\usepackage[pdftex]{graphicx}
\usepackage{graphics}
\usepackage[matrix,arrow]{xy}
\usepackage[active]{srcltx}

\usepackage[colorlinks=true]{hyperref}

\setcounter{tocdepth}{1}

\newcommand{\C}{\mathbb{C}}
\newcommand{\R}{\mathbb{R}}
\newcommand{\Z}{\mathbb{Z}}
\newcommand{\Q}{\mathbb{Q}}
\newcommand{\N}{\mathbb{N}}
\newcommand{\D}{\mathbb{D}}
\newcommand{\M}{\mathcal{M}}
\newcommand{\J}{\mathcal{J}}
\newcommand{\B}{\mathcal{B}}

\newcommand{\T}{\mathcal{T}}
\newcommand{\U}{\mathcal U}
\newcommand{\V}{\mathcal V}
\newcommand{\W}{\mathcal W}
\renewcommand{\P}{\mathcal{P}}
\renewcommand{\sl}{{\rm sl}}

\newcommand{\wind}{{\rm wind}}

\newcommand{\util}{{\tilde u}}
\newcommand{\vtil}{{\tilde v}}

\newcommand{\jtil}{{\tilde J}}

\newcommand{\jbar}{{\bar J}}

\theoremstyle{plain}
\newtheorem{theorem}{Theorem}[section]
\newtheorem{proposition}[theorem]{Proposition}
\newtheorem{lemma}[theorem]{Lemma}
\newtheorem{corollary}[theorem]{Corollary}

\theoremstyle{definition}
\newtheorem{definition}[theorem]{Definition}

\theoremstyle{remark}
\newtheorem{remark}[theorem]{Remark}

\title[Elliptic bindings]{Elliptic bindings for dynamically convex Reeb flows on the real projective three-space}
\author{Umberto L. Hryniewicz}
\address[Umberto L. Hryniewicz]{Universidade Federal do Rio de Janeiro -- Departamento de Matem\'atica Aplicada, Av.\ Athos da Silveira Ramos 149, Rio de Janeiro RJ, Brazil 21941-909.}
\email{umberto@labma.ufrj.br}
\author{Pedro A. S. Salom\~ao}
\address[Pedro A. S. Salom\~ao]{Universidade de S\~ao Paulo,  Instituto de Matem\'atica e Estat\'istica -- Departamento de Matem\'atica, Rua do Mat\~ao, 1010 - Cidade Universit\'aria - S\~ao Paulo SP, Brazil 05508-090.}
\email{psalomao@ime.usp.br}

\subjclass[2010]{53D (primary),  53D10, 37J55 (secondary)}

\begin{document}

\begin{abstract}
The first result of this paper is that every contact form on $\R P^3$ sufficiently $C^\infty$-close to a dynamically convex contact form admits an elliptic-parabolic closed Reeb orbit which is $2$-unknotted, has self-linking number $-1/2$ and transverse rotation number in $(1/2,1]$. %This is proved with no genericity assumptions.
Our second result implies %, as a particular case,
that any $p$-unknotted periodic orbit with self-linking number $-1/p$ of a dynamically convex Reeb flow on a lens space of order $p$ is the binding of a rational open book decomposition, whose pages are global surfaces of section. %for the Reeb flow. %This is again proved with no genericity assumptions on the contact form.

As an application we show that in the planar circular restricted three-body problem for energies below the first Lagrange value and large mass ratio, there is a special link consisting of two periodic trajectories for the massless satellite near the smaller primary -- lunar problem -- with the same contact-topological and dynamical properties of the orbits found by Conley in~\cite{conley} for large negative energies. Both periodic trajectories bind rational open book decompositions with disk-like pages which are global surfaces of section. In particular, one of the components is an elliptic-parabolic periodic orbit.
%We provide applications to the planar circular restricted three-body problem for energies below the first Lagrange value and large mass ratio. We also generalize a theorem due to H. B. Rademacher on the existence of elliptic-parabolic simple closed geodesics for pinched Finsler metrics on the two-sphere.
\end{abstract}

\maketitle

%\tableofcontents

\section{Introduction}\label{sec_intr}

\subsection{Celestial Mechanics and Symplectic Dynamics}

It is the purpose of this paper to study Hamiltonian flows restricted to dynamically convex energy levels diffeomorphic to $\R P^3$. We prove the existence of a rational open book decomposition with disk-like pages and an elliptic-parabolic binding orbit, such that each page is a global surface of section for the Hamiltonian flow. The binding orbit and an orbit associated to any fixed point of the first return map form a special link. This special pair of orbits has the same dynamical and contact-topological properties of
\begin{itemize}
\item the retrograde/direct orbits of Poincar\'e near the heavy primary in the planar circular restricted three-body problem (PCR3BP) with large mass ratio,
\item the orbits obtained by Hill~\cite{hill} in the lunar problem for large negative energies, and
\item the orbits bounding the annulus-like global surface of section obtained by Conley~\cite{conley} for the PCR3BP with any mass ratio for large negative energies, see also Kummer~\cite{kummer}.
\end{itemize}
In all these classical situations one of the orbits is elliptic-parabolic.

The reason for the results of Hill~\cite{hill} on the lunar problem, and of Conley~\cite{conley}, to be known only for large negative energies is that they rely on perturbation theory of periodic orbits, where the unperturbed system is integrable. The power of pseudo-holomorphic curves is that they allow for the analysis of systems {\it far from integrable}. This is exploited in the lunar problem by Albers, Fish, Frauenfelder, Hofer and van Koert in~\cite{AFFHvK}, where it is shown, roughly speaking, that for energies below the first critical value and large mass ratio, the dynamics of the satellite near the light primary can be lifted, via Levi-Civita regularization, to the Hamiltonian flow on a convex energy level $S\subset \R^4$. Results of Hofer, Wysocki and Zehnder~\cite{convex} can be applied to obtain a disk-like global surface of section $D$ for the flow lifted to $S$.

However, $S$ possesses antipodal symmetry, and it is the quotient $\Sigma := S/\{\pm {\rm id}\} \simeq \R P^3$ that is in 1-1 correspondence with the energy level of the regularized system.  This left unanswered the question of whether the periodic orbit $P$ obtained as the projection of $\partial D$ to $\Sigma$, and the projection of a periodic orbit associated to a fixed point of the return to $D\setminus\partial D$, retain the same properties as the orbits of Hill and Conley. Namely,
\begin{itemize}
\item[a)] Does $D$ descend to a (rational) disk-like global surface of section on $\Sigma$, in particular, is $P$ a $2$-unknot with self-linking number $-1/2$?
\item[b)] Is there another $2$-unknotted periodic orbit with self-linking number $-1/2$ which is simply linked with $P$?
\item[c)] Is $P$ elliptic-parabolic?
\end{itemize}
If the answers to these questions would be affirmative then we would obtain a pair of orbits with the same properties of the orbits found by Hill and Conley for large negative energies. Theorem~\ref{main1} below provides, after a minimal amount of work, a pair of orbits with all these properties. The affirmative answer to question c) is, of course, crucial if one aims at studying stability  using KAM theory; this will be pursued in future work.

Our methods come from Symplectic Dynamics~\cite{BH}, where one is concerned with the study of global properties of Hamiltonian systems using symplectic methods. Among such methods the theory of pseudo-holomorphic curves, first introduced in Symplectic Geometry by Gromov~\cite{gromov}, plays a central role in recent applications. These techniques were explored by Floer~\cite{Fl1,Fl2,Fl3} to obtain a major breakthrough on the Arnol'd conjecture on the number of fixed points of Hamiltonian diffeomorphisms. In the set-up of the Hamiltonian Arnol'd conjecture one looks for periodic orbits with prescribed period, but a different question is to ask for periodic orbits with prescribed energy. According to Ekeland~\cite{ekeland} the latter is a harder problem, and the sophisticated tools now come from the seminal work of Hofer~\cite{93} on the Weinstein conjecture, where punctured curves in non-compact symplectic cobordisms were first studied. Part of their power come from the fact that their perturbation and compactness properties are (mostly) well understood, so that their survival through deformations of the system can be studied.

Thanks to pioneering work of Hofer, Wysocki and Zehnder~\cite{93,props1,props2,props3}, pseudo-holomorphic curve theory at its current state is well-developed for a special class of energy levels, namely, those possessing a stable Hamiltonian structure. This class includes energy levels of contact-type, i.e., those admitting transverse infinitesimal symplectic dilations of the ambient phase space. Many energy levels of important Hamiltonian systems with two degrees of freedom coming from Physics and Differential Geometry are of contact-type, for example, this is classical and well-known for geodesic flows on surfaces, and for magnetic flows on surfaces with high enough energy. Recent important work of Albers, Frauenfelder, van Koert and Paternain~\cite{AFvKP} demonstrates that this is also the case for the PCR3BP when the energy is below or slightly above the first critical value.

The answer to question a) above relates to the problem of finding antipodal symmetric closed orbits bounding antipodal symmetric systems of global surfaces of section, for a Reeb flow on standard $S^3$ with antipodal symmetry. The existence of symmetric closed Reeb orbits is extremely relevant, and was recently explored by Frauenfelder and Kang~\cite{FK} for another kind of symmetry. Results of Kang~\cite{Kang} can be used to study existence of symmetric orbits.

Finally, there is an application of our results to geodesic flows of Finsler metrics on the $2$-sphere with a suitable pinching condition on the flag curvatures. More precisely, our results imply that if $r$ is the reversibility of the Finsler metric and its flag curvatures $K$ satisfy
\begin{equation}\label{pinching_cond_rad}
\left(\frac{r}{r+1}\right)^2 < K \leq 1
\end{equation}
then the geodesic flow admits a simple closed geodesic which is elliptic-parabolic, so that its lift to the unit tangent bundle is the binding of a rational open book decomposition with disk-like pages. Each page is a global surface of section and a fixed point of the first return map corresponds to a second closed geodesic. Under the pinching condition~\eqref{pinching_cond_rad}, the existence of two geometrically distinct closed geodesics -- one of them being simple -- was already known by Rademacher~\cite{rad}. Our method not only produces two closed orbits as in~\cite{rad} for a broader class of Hamiltonian systems, but also shows that both these orbits globally organize the Hamiltonian flow as the bindings of two rational open book decompositions by disk-like pages which are global surfaces of section.

\subsection{Main results}

In order to state our results we recall basic notions in contact geometry and establish some notation. A contact form $\alpha$ on a $3$-manifold $M$ is a $1$-form such that $d\alpha$ is non-degenerate when restricted to the associated contact structure $\xi=\ker\alpha$. The Reeb vector field induced by $\alpha$ is defined by equations
\begin{equation}\label{defn_Reeb_vector}
\begin{array}{ccc}  i_{X_\alpha}\alpha = 1, & & i_{X_\alpha}d\alpha = 0. \end{array}
\end{equation}
We denote by $\P(\alpha)$ the set of closed $\alpha$-Reeb orbits, defined as the set of equivalence classes of pairs $P=(x,T)$ where $x:\R\to M$ is a periodic trajectory of $X_\alpha$ and $T>0$ is a period of $x$. The set $x(\R)$ is called the geometric image of $P$, and two such pairs are declared equivalent when their geometric images and periods coincide. We call $P=(x,T)\in\P(\alpha)$ contractible if the loop $t\in\R/T\Z\mapsto x(t)$ is contractible, and it is called simply covered, or prime, if $T$ is the minimal positive period of $x$. If $n\geq 1$ then the $n$-th iterate of $P=(x,T)$ is the orbit $P^n=(x,nT)$. When the first Chern class $c_1(\xi)$ vanishes on $\pi_2(M)$, any contractible $P\in\P(\alpha)$ has well-defined invariants of the transverse linearized dynamics: the Conley-Zehnder index $\mu_{CZ}(P)\in\Z$ and the transverse rotation number $\rho(P)\in\R$.
%However, if $(\xi,d\alpha)$ is a trivial symplectic vector bundle then $\mu_{CZ}(P)$, $\rho(P)$ are defined for all closed $\alpha$-Reeb orbits $P\in\P(\alpha)$, contractible or not.
See Section~\ref{sec_prelim} for a more detailed discussion.

\begin{definition}[Hofer, Wysocki and Zehnder~\cite{convex}]\label{defn_dyn_convex}
A contact form $\alpha$ on a three-manifold $M$ is dynamically convex if $c_1(\ker\alpha)$ vanishes on $\pi_2(M)$ and every contractible $P\in\P(\alpha)$ satisfies $\mu_{CZ}(P)\geq 3$.
\end{definition}

The contact structure $\xi$ is said to be overtwisted if there exists an embedded disk $D\hookrightarrow M$ such that $T\partial D\subset \xi$ and $T_pD\neq \xi_p$ for all $p\in\partial D$. Such a disk $D$ is called an overtwisted disk. When such disks do not exist $\xi$ is called tight. The dichotomy {\it tight versus overtwisted} introduced by Eliashberg plays a central role in three-dimensional contact topology. The following statement exhibits strong contact-topological restrictions imposed by dynamical convexity.

\begin{theorem}[Hofer, Wysocki and Zehnder~\cite{char2}]
If $\alpha$ is a dynamically convex contact form on the closed $3$-manifold $M$ then $\pi_2(M)$ vanishes and $\xi=\ker\alpha$ is a tight contact structure.
\end{theorem}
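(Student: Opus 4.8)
The plan is to argue by contradiction, working with pseudo-holomorphic curves in the symplectization $\R\times M$ with symplectic form $d(e^t\alpha)$ and a compatible almost complex structure $\tilde J$. The engine of the proof is the following rigidity statement, the analytic heart of the Hofer--Wysocki--Zehnder study of dynamically convex Reeb flows, resting on the compactness theorems and asymptotic analysis for punctured pseudo-holomorphic curves of \cite{93,props1,props2,props3}: if a dynamically convex contact form on a closed $3$-manifold admits, in its symplectization, a non-constant finite-energy plane, then the manifold is diffeomorphic to $S^3$ and its contact structure is the standard tight one. Granting this, it suffices to produce, for a hypothetical dynamically convex $\alpha$, such a plane under each of the two alternative hypotheses ``$\xi=\ker\alpha$ is overtwisted'' and ``$\pi_2(M)\neq 0$'', since the resulting identification $M\cong S^3$ is then absurd: the tight $S^3$ is not overtwisted and has trivial $\pi_2$.

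To produce the plane when $\xi$ is overtwisted, I would fix an overtwisted disk $D$, isotoped to the standard model in which its characteristic foliation has a single interior (elliptic) singularity $e$ with $\partial D$ a closed leaf, and, emanating from $e$, construct the maximal connected one-parameter family of embedded $\tilde J$-holomorphic disks with boundary on $\{0\}\times D$ whose boundary circles sweep $D$ out, following Bishop and Hofer \cite{93}; Stokes' theorem along $\{0\}\times D$ bounds the $d\alpha$-energy of the entire family. This family cannot remain compact for all parameter values --- its Gromov limit would be a non-constant $\tilde J$-holomorphic disk with boundary the Legendrian curve $\partial D$, forcing zero energy and hence constancy --- so it must degenerate. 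A careful accounting (again \cite{93}) rules out sphere bubbles, $d(e^t\alpha)$ being exact, and boundary-disk bubbles, leaving a breaking into levels whose top one contains a non-constant finite-energy plane $\tilde v$. Tracking homotopy classes through the degeneration --- the Bishop disks bound disks in $M$, and the bottom level of the limiting building exhibits a free homotopy from such a boundary to the asymptotic orbit --- shows that the asymptotic orbit $P$ of $\tilde v$ is contractible, so that $\mu_{CZ}(P)$ is defined, as $c_1(\xi)|_{\pi_2(M)}=0$ by hypothesis. When $\pi_2(M)\neq 0$ I would instead represent a non-trivial class of $\pi_2(M)$ by a $2$-sphere in $M\subset\R\times M$ and apply Hofer's proof of the Weinstein conjecture in this setting \cite{93}, a minimax over such sphere classes producing a non-constant finite-energy plane asymptotic to a contractible periodic Reeb orbit. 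In both cases we obtain the plane demanded by the engine.

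I expect the genuine difficulty to lie entirely in the rigidity statement, not in producing the planes. The route I would take is the Hofer--Wysocki--Zehnder one. First, from the plane at hand extract a \emph{fast} finite-energy plane, that is, one for which $\wind_\pi$ vanishes: either the plane is already fast, or it is homotoped to a fast one, dynamical convexity --- which forbids contractible orbits of Conley--Zehnder index $\leq 2$ --- removing the obstructions to doing so. A fast plane is embedded, transverse to the Reeb flow away from its puncture, and has asymptotic orbit $P$ with $\mu_{CZ}(P)\in\{2,3\}$, hence $\mu_{CZ}(P)=3$ by dynamical convexity. Second, propagate this fast plane across $M\setminus P$, arguing as in \cite{convex} that the set of such planes is open (implicit function theorem) and closed (compactness together with dynamical convexity, which excludes degenerate limits), to obtain a foliation of $M\setminus P$ by fast planes whose images descend to the pages of an open book decomposition of $M$ with disk-like pages, each a global surface of section for the Reeb flow; a closed $3$-manifold carrying such an open book is $S^3$, and its contact structure is tight. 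The transversality, the intersection theory keeping the planes embedded and pairwise disjoint, and the compactness of the propagation are the technical core. One routine loose end remains: $\alpha$ need not be non-degenerate, but one reduces to that case by a $C^\infty$-small conformal rescaling $f\alpha$, which leaves $\xi$ --- hence the hypotheses ``overtwisted'' and ``$\pi_2\neq 0$'' --- untouched, and notes that only Reeb orbits of a priori bounded action enter the argument (bounded by the energy of the Bishop family, respectively by the minimax level), on which the generalized Conley--Zehnder index is lower semicontinuous, so the bound $\mu_{CZ}\geq 3$ survives where it is needed.
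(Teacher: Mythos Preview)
The paper does not prove this theorem; it is quoted from \cite{char2} as background and used without argument. There is therefore no ``paper's own proof'' to compare against.

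That said, your sketch is a faithful outline of the Hofer--Wysocki--Zehnder strategy. The two mechanisms you invoke for producing a finite-energy plane---Bishop filling of an overtwisted disk, and Hofer's $\pi_2$ argument for the Weinstein conjecture---are exactly those of \cite{93}, and the ``engine'' you isolate (a finite-energy plane plus dynamical convexity forces a disk-like open book, hence $M\cong S^3$) is the content of \cite{char1,char2,convex}. One small correction: you do not need to \emph{argue} that the asymptotic orbit of a finite-energy plane is contractible---the plane itself is a capping disk, so this is automatic. A more substantive caution: your ``engine'' as stated is slightly stronger than what \cite{char2} actually proves; the paper you are reading quotes the relevant characterization as Theorem~\ref{thm_char2}, which requires in addition that the binding orbit be unknotted with self-linking number $-1$ and $\mu_{CZ}=3$, and that the index bound hold for contractible orbits of action below that of the binding. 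Your fast-plane step does secure the first two of these (embeddedness and $\wind_\infty=1$ give the self-linking, and dynamical convexity pins $\mu_{CZ}$ to $3$), but you should make explicit that dynamical convexity supplies the action-bounded index hypothesis as well, rather than appealing to a black-box ``plane $\Rightarrow$ $S^3$'' statement that is not quite in the literature in that form.
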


A knot $K\subset M$ is called $k$-unknotted, $k\geq1$, if there exists an immersion $u:\D\to M$ such that $u|_{{\rm int}(\D)}$ is an embedding and $u|_{\partial \D}:\partial\D \to K$ is a $k$-covering map. The map $u$ is called a $k$-disk for~$K$. If $K$ is oriented then we call $u$ oriented if $u|_{\partial \D}$ is orientation preserving when we endow $\partial\D$ with its counter-clockwise orientation. Simple topological arguments show that if $k\neq k'$ then a knot $K$ cannot be simultaneously $k$-unknotted and $k'$-unknotted. In fact, if $K$ is $k$-unknotted then its order in the fundamental group is precisely $k$. If $K$ is transverse to $\xi$ then we always orient it by $\alpha$. The rational self-linking number $\sl(K,u) \in \Q$ is well-defined with respect to a $k$-disk $u$ for $K$ as in~\cite{BK}, and it is independent of $u$ when the first Chern class $c_1(\xi)$ vanishes on $\pi_2(M)$, in which case we simply write $\sl(K)$. The rational self-linking number is invariant under transverse isotopies of transverse knots. The reader is referred to Section~\ref{sec_prelim} for more details.

One way of finding manifolds with contact forms is to search for a Liouville vector field, which is a vector field $Y$ defined on a symplectic manifold $(W,\omega)$ satisfying $\mathcal L_Y\omega=\omega$. If $M$ is a hypersurface sitting inside $(W,\omega)$ then $M$ has contact-type if it is transverse to some Liouville vector field $Y$ defined near $M$. In this case $\theta := i_Y\omega$ is a primitive of $\omega$ and $\alpha := \iota^*\theta$ is a contact form, where $\iota:M\hookrightarrow W$ is the inclusion map. Every manifold equipped with a contact form arises in this way. As a simple example, consider the standard Liouville form on $\C^2$
\begin{equation}\label{std_liouville_form_S3}
\lambda_{\rm std} := \frac{1}{4i} (\bar z_0 dz_0 - z_0 d\bar z_0 + \bar z_1 dz_1 - z_1 d\bar z_1)
\end{equation}
where $(z_0,z_1)$ are complex coordinates. Then $d\lambda_{\rm std}$ is the standard symplectic structure on $\C^2$. The radial vector field $Y_0(z_0,z_1) = (z_0,z_1)/2$ is Liouville because it satisfies $i_{Y_0}d\lambda_{\rm std}=\lambda_{\rm std}$ and, consequently, $\lambda_0 := \iota^*\lambda_{\rm std}$ is a contact form on $S^3 = \{(z_0,z_1) \in \C^2 : |z_0|^2 + |z_1|^2=1\}$, where $\iota:S^3\hookrightarrow\C^2$ is the inclusion map.

Given integers $p\geq q\geq 1$ satisfying $\gcd (p,q)=1$, there is a free action of $\Z_p=\Z/p\Z$ on $S^3$ generated by the diffeomorphism $g_{p,q}(z_0,z_1) := (e^{ 2\pi i/p} z_0, e^ {2 \pi i q/p} z_1)$. The lens space $L(p,q)$ is the space of trajectories $$ L(p,q):= S^3 / \Z_{p}. $$ With this convention $L(1,1) = S^3$, and $L(2,1) \simeq SO(3) \simeq \R P^3$ is diffeomorphic to the unit tangent bundle of any Finsler metric on $S^2$. We denote by $$ \pi_{p,q}:S^3 \to L(p,q) $$ the quotient projection. It is well-known that $$ \begin{array}{ccc} \pi_1 (L(p,q)) \simeq \Z_p & \text{and} & \pi_{2}(L(p,q)) \simeq 0, \end{array} $$ where the choice of base point is omitted. Since $g_{p,q}^*\lambda_0 = \lambda_0$, we get that $\lambda_0$ descends to a contact form on $L(p,q)$, still denoted by $\lambda_0$. The contact structure $$ \xi_0 = \ker \lambda_0 \subset  TL(p,q) $$ is universally tight: its lift to the universal covering is tight. This property determines $\xi_0$ up to a diffeomorphism, i.e., if $\xi=\ker \lambda$ is a contact structure on $L(p,q)$ and $\pi_{p,q}^*\xi$ is tight then there exists a diffeomorphism $h:L(p,q) \to L(p,q)$ satisfying $h_*\xi = \xi_0$. In this case, the contact form $\lambda$ is also called universally tight.

A closed Reeb orbit is said to be elliptic-parabolic if its transverse Floquet multipliers lie in the unit circle, in three-dimensions this means that $P$ is not hyperbolic.

The main result of this paper reads as follows.

\begin{theorem}\label{main1}
Let $\lambda$ be any dynamically convex contact form on $\R P^3=L(2,1)$. There exists $T>0$ and a $C^\infty$-neighborhood $\U$ of $\lambda$ with the following property: every contact form in $\U$ admits a $2$-unknotted elliptic-parabolic periodic Reeb trajectory with self-linking number $-1/2$ and prime period smaller than $T$. Its transverse rotation number associated to the prime period belongs to $\left(1/2,1\right]$.
\end{theorem}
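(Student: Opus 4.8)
My plan is to pass to the double cover $\pi_{2,1}\colon S^3\to\R P^3$, produce the desired orbit there via an equivariant version of the Hofer--Wysocki--Zehnder construction of a disk-like global surface of section, descend it, and then spread the conclusion to a $C^\infty$-neighbourhood by stability of the underlying pseudoholomorphic curves. \emph{Step 1 (reduction to the tight three-sphere)}. Let $A\colon S^3\to S^3$ generate the deck group of $\pi_{2,1}$; it commutes with the Reeb flow of $\tilde\lambda:=\pi_{2,1}^*\lambda$. First I check that $\tilde\lambda$ is dynamically convex. Let $\tilde P$ be a prime closed $\tilde\lambda$-orbit. If its geometric image is not $A$-invariant, then $\pi_{2,1}$ maps it injectively onto an embedded circle whose preimage is a pair of circles; that circle is a \emph{contractible} closed $\lambda$-orbit with the same Conley--Zehnder index as $\tilde P$, so $\mu_{CZ}(\tilde P)\geq3$ by dynamical convexity of $\lambda$. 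If the geometric image of $\tilde P$ is $A$-invariant, then $A$ restricts to a fixed-point-free involution commuting with the flow, hence to a half-period translation, so $P:=\pi_{2,1}(\tilde P)$ is a prime non-contractible closed $\lambda$-orbit and $\tilde P$ is the lift of $P^2$, whence $\mu_{CZ}(\tilde P)=\mu_{CZ}(P^2)\geq3$. Thus $\tilde\lambda$ is dynamically convex on $S^3$, and by~\cite{char2} $\ker\tilde\lambda$ is tight, so $(S^3,\ker\tilde\lambda)$ is the standard tight three-sphere.

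\emph{Step 2 (an $A$-invariant disk-like global surface of section on $S^3$)}. The core of the argument is to run, equivariantly, the construction of~\cite{convex}: a non-degenerate approximation $\tilde\lambda_n=f_n\tilde\lambda\to\tilde\lambda$, a Bishop family of pseudoholomorphic disks in $\R\times S^3$, the resulting fast finite-energy plane asymptotic to a closed orbit of controlled action, and a compactness argument as $n\to\infty$. Choosing all the data ($f_n$, the compatible almost complex structures, the totally real boundary conditions) $A$-invariant, one expects $A$ to permute the Bishop families and hence to fix the limit: a $\tilde\lambda$-Reeb orbit $\tilde P$ that is $A$-invariant, unknotted, with $\sl(\tilde P)=-1$ and transverse rotation number $\rho(\tilde P)\in(1,2]$ (equivalently $\mu_{CZ}(\tilde P)=3$ when $\tilde P$ is non-degenerate), and is the binding of an $A$-invariant holomorphic open book of $S^3$ whose disk pages are global surfaces of section; the energy of the defining planes bounds the period of $\tilde P$ by some $2T_0=2T_0(\lambda)$. \emph{This equivariance step is, I expect, the main obstacle}: one must either build the Bishop family $A$-equivariantly from the outset, or establish enough canonicity of the binding (say, of the orbit of least action among the relevant fast planes) to force it to be $A$-invariant rather than swapped with a second orbit.

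\emph{Step 3 (descent to $\R P^3$)}. Since $A$ has no fixed point it cannot preserve a page (a fixed-point-free involution of a disk does not exist), so it exchanges the pages in pairs with disjoint interiors; hence the $A$-invariant open book descends to a rational open book decomposition of $\R P^3$ with disk-like pages that are global surfaces of section, with binding $P:=\pi_{2,1}(\tilde P)$, a prime non-contractible closed $\lambda$-orbit whose double iterate lifts to $\tilde P$ and whose prime period is at most $T_0$. In particular $P$ is $2$-unknotted, and the covering formula for rational self-linking numbers converts $\sl(\tilde P)=-1$ into $\sl(P)=-1/2$. Finally $\rho(P)=\tfrac12\rho(P^2)=\tfrac12\rho(\tilde P)\in(\tfrac12,1]$, and as $\rho(P^2)\in(1,2]$ the linearized return map of $P^2$ is elliptic or has $1$ as an eigenvalue, so $P$ is not hyperbolic, i.e.\ elliptic-parabolic. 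This proves the theorem for $\lambda$ itself.

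\emph{Step 4 ($C^\infty$-persistence)}. Given $\alpha$ in a small $C^\infty$-neighbourhood $\U$ of $\lambda$, lift to $\tilde\alpha=\pi_{2,1}^*\alpha$, still $A$-invariant and $C^\infty$-close to $\tilde\lambda$. Shrinking $\U$ keeps the relevant $d\tilde\alpha$-energies below the first threshold at which breaking can occur, and fast planes asymptotic to simply covered orbits are automatically transverse in the four-dimensional symplectization; so the implicit function theorem, applied $A$-equivariantly, deforms the plane and its $A$-invariant open book of Step 2 to one for $\tilde\alpha$, asymptotic to an orbit $\tilde P_\alpha$ near $\tilde P$ --- this is the stability of fast finite-energy foliations under $C^\infty$-perturbations of the contact form. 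Along the deformation the Fredholm index and the asymptotic winding numbers are integers and thus constant, so $\tilde P_\alpha$ stays unknotted with $\sl(\tilde P_\alpha)=-1$ and $\rho(\tilde P_\alpha)\in(1,2]$. Descending as in Step 3 gives, for every $\alpha\in\U$, a $2$-unknotted elliptic-parabolic closed $\alpha$-Reeb orbit with self-linking number $-1/2$, transverse rotation number in $(\tfrac12,1]$, and prime period less than $T:=T_0+1$.
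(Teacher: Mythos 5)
Your Step~2 is exactly the gap, and you correctly flag it yourself: there is no known way to make the Bishop family, the bubbling-off analysis, and the resulting fast plane $A$-equivariant, and consequently no way to guarantee that the binding orbit produced on $S^3$ is $A$-invariant rather than part of a pair swapped by $A$. The paper in fact emphasizes this in the introduction (``there is no evidence for the binding orbit of this open book to be symmetric under the antipodal map'') --- this missing equivariance is precisely the problem Theorem~\ref{main1} is designed to circumvent, so an argument that assumes it is circular. Your Step~4 has a related soft spot: the claim that ``asymptotic winding numbers are integers and thus constant'' controls $\rho(\tilde P_\alpha)$ under perturbation does not follow, since the rotation number varies continuously in $\R$ and the endpoint $\rho=2$ (parabolic) is reachable; this needs the lower semicontinuity of the generalized Conley--Zehnder index plus iteration formulas, not an integer-rigidity argument. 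Step~1 and Step~3 are correct.

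The paper avoids the equivariance problem entirely by working directly on $L(2,1)$. It fixes a reference contact form $\lambda_E$ coming from an irrational ellipsoid with $f<f_E$ pointwise, builds a non-cylindrical symplectic cobordism interpolating $\lambda$ and $\lambda_E$ on $\R\times L(2,1)$, and studies generalized finite-energy planes in this cobordism asymptotic to the doubly-covered short Hopf orbit $P_1^2$ of $\lambda_E$ with $\mu_{CZ}(P_1^2)=3$. Existence of such planes comes from \cite{HLS}; Siefring's intersection theory keeps them embedded and mutually disjoint (this is needed because, unlike the $S^3$ case, some curves appearing in the limit are not Fredholm-regular); stretching the neck by pushing $\min a\to-\infty$ and applying SFT compactness together with the index arithmetic of Lemmas~\ref{lem_fundamental_CZ} and~\ref{lem_CZ_iteration} forces the bubbling-off tree to have exactly two vertices, the lower one a fast plane asymptotic to an orbit $P$ with $\mu_{CZ}(P)=3$. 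The characterization of the tight $3$-sphere (Theorem~\ref{thm_char2}) then rules out $P$ being simply covered, so $P=P_0^2$ with $P_0$ a $2$-unknot of self-linking $-1/2$ and $\rho(P_0)\in(1/2,1)$. The degenerate/neighborhood statement is handled by approximating $h\lambda$ by non-degenerate forms, Arzel\`a--Ascoli, and lower semicontinuity of $\mu_{CZ}$. So the two approaches really are different: you try to transplant the $S^3$ result equivariantly (which runs into the unresolved symmetry problem), while the paper replaces the Bishop family input by a cobordism to an explicit ellipsoid and replaces equivariance by intersection-theoretic control of non-regular curves plus the tight-$S^3$ recognition theorem.
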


The proof is based on arguments of~\cite{convex}. However, since our manifold is not the $3$-sphere, this strategy presents some delicate difficulties involving certain holomorphic curves which are non-regular in the sense of Fredholm theory and that cannot be avoided.
%We could not overcome these difficulties in the case of a lens space $L(p,q)$ with $p\geq3$.
The somewhat special case of $L(2,1)$ can be handled by taming the behavior of possibly non-regular curves in a certain non-cylindrical symplectic cobordism. The proof heavily relies on the non-trivial intersection theory developed by Siefring~\cite{sie1,sie2} and on dynamical characterizations of the tight $3$-sphere due to Hofer, Wysocki and Zehnder~\cite{char1,char2} further generalized in~\cite{HLS}.

The second main result completes the study of the existence of rational disk-like global surfaces of section on universally tight lens spaces for Reeb flows initiated in~\cite{convex,char1,char2} followed by~\cite{HS,hryn,openbook,HLS}. In~\cite{HS} this question was completely answered for non-degenerate tight Reeb flows on the three-sphere.

\begin{theorem}[\cite{HS}]\label{teo_HS}
Let $\lambda$ be a non-degenerate contact form defining the standard contact structure on the $3$-sphere, and let $P\in\P(\lambda)$ be a prime closed Reeb orbit. Then $P$ is the boundary of a disk-like global surface of section for the $\lambda$-Reeb flow if, and only if, the following two conditions hold:
\begin{itemize}
\item[i)] $P$ is unknotted, $\sl(P)=-1$ and $\mu_{CZ}(P)\geq 3$.
\item[ii)] Every $P'\in\P(\lambda)$ satisfying $\mu_{CZ}(P')=2$ is linked with $P$.
\end{itemize}
\end{theorem}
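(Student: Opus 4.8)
The plan is to treat the two implications separately; the forward implication is soft, and the converse carries the real content.

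For the ``only if'' direction, suppose $P=\partial D$ where $D$ is a disk-like global surface of section. Then $D$ is an embedded disk with boundary $P$, so $P$ is unknotted. Since $X_\lambda$ is transverse to $\mathrm{int}(D)$, the characteristic foliation $\xi\cap TD$ is nonsingular near $\partial D$, and the standard computation of the winding of $\xi$ along $P$ relative to the Seifert framing induced by $D$ (as in~\cite{convex}) gives $\sl(P)=-1$. For the index, orbits entering a small solid-torus neighbourhood of $P$ lie on $D$ and spiral around $P$, so the linking number of $P$ with any such nearby Reeb orbit is at least $1$; translating this through the asymptotic operator at $P$ forces $\rho(P)>1$, hence $\mu_{CZ}(P)\geq 3$. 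This gives~(i). For~(ii), every $P'\in\P(\lambda)$ with $P'\neq P$ must meet $\mathrm{int}(D)$ in forward time, by the defining property of a global surface of section, and each such intersection is transverse and of one fixed sign, so $P'$ is linked with $P$; in particular this holds for every $P'$ with $\mu_{CZ}(P')=2$, and even more is true.

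For the ``if'' direction I would follow the pseudoholomorphic curves strategy of~\cite{convex,char1,char2} in the form used in~\cite{HS}. Fix a $d\lambda$-compatible complex structure on $\xi$ and the associated $\R$-invariant almost complex structure $\jbar$ on the symplectization $\R\times S^3$. Since $P$ is unknotted with $\sl(P)=-1$, one can start a Bishop-type family of small $\jbar$-holomorphic disks near $\{0\}\times P$ whose boundaries foliate a neighbourhood of $P$; the value $\sl(P)=-1$ is precisely what makes the local Fredholm index correct for such a family to exist and be continued. The goal is to extract, as a limit of this family, a \emph{fast} finite-energy plane $\util=(a,u):\C\to\R\times S^3$ positively asymptotic to $P$, i.e. one whose asymptotics are governed by the largest negative eigenvalue of the asymptotic operator at $P$; fastness is exactly the condition that $\pi\circ du$ be nowhere vanishing, where $\pi:TS^3\to\xi$ is the projection along $X_\lambda$. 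When $\mu_{CZ}(P)\geq 3$, the moduli space $\M$ of such planes (modulo reparametrization and $\R$-translation) is a $2$-dimensional manifold, its elements are somewhere injective, and automatic transversality guarantees Fredholm regularity.

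The main obstacle is the existence of $\util$: one must run an SFT-type compactness argument on the Bishop family and exclude every degeneration using conditions~(i) and~(ii). Sphere bubbling does not occur because $\jbar$ is exact and $\R$-invariant; the dangerous scenario is the family breaking into a holomorphic building whose lowest level contains a plane asymptotic to some $P'\in\P(\lambda)$, where dimension and index bookkeeping force $\mu_{CZ}(P')=2$, while positivity of intersections with $\R\times P$ and with the other levels forces such a $P'$ to be unlinked with $P$ — contradicting~(ii). Hence a fast plane survives. Once $\util$ is in hand, $\pi\circ du\neq 0$ makes $u$ an immersion transverse to $X_\lambda$ with $u^*d\lambda>0$; Siefring's asymptotic analysis shows $u$ wraps once around $P$ near infinity and extends to a continuous map $\overline u:\overline{\D}\to S^3$ with $\overline u(\partial\D)=P$; and Siefring's intersection theory (positivity of self-intersections together with the asymptotic data of the somewhere injective $\util$) shows $\overline u$ is an embedding whose interior is disjoint from $P$. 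Thus $D:=\overline u(\overline{\D})$ is an embedded disk, transverse to $X_\lambda$ away from its boundary $P$. Finally, since $\area_{d\lambda}(D)=\int_\C u^*d\lambda$ equals the period of $P$ and is in particular finite, a now-standard recurrence argument shows the first return map to $D\setminus\partial\D$ is globally defined, so $D\setminus\partial\D$ is a disk-like global surface of section bounded by $P$. The crux, as indicated, is the compactness/degeneration analysis producing the fast plane, where hypotheses~(i) and~(ii) are used exactly to rule out breaking onto an index-$2$ orbit unlinked with $P$ and to secure regularity of the surviving curve.
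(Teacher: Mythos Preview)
This theorem is not proved in the paper; it is stated as a citation from~\cite{HS} and used as background to motivate Theorem~\ref{thm_22}. There is therefore no proof in the paper to compare your proposal against.

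That said, your sketch is a reasonable outline of the strategy actually carried out in~\cite{HS}, with one caveat. The existence of the fast plane asymptotic to $P$ is not obtained in~\cite{HS} via a Bishop family of disks with boundary near $\{0\}\times P$; Bishop disks are used in~\cite{char1,char2} to produce planes from an overtwisted or specially prepared disk, but in the tight $3$-sphere setting of~\cite{HS} the fast planes are produced differently (roughly, by a continuation argument from a model situation and a compactness analysis of the kind you describe). The SFT-degeneration analysis you sketch is the correct heart of the argument: breakings are excluded because any lowest-level plane would be asymptotic to an orbit with $\mu_{CZ}=2$ unlinked from $P$, contradicting~(ii). Your forward direction is essentially correct, though the inequality $\rho(P)>1$ is usually argued by showing that the return map to the disk extends to the boundary and that the rotation there exceeds one full turn, rather than via ``nearby orbits''.
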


In~\cite{PS}, one shows that condition (ii) in Theorem~\ref{teo_HS} is sharp in the following sense: fixing any integer $N\geq 3$, there exist Reeb flows on the tight $3$-sphere admitting precisely one closed Reeb orbit $P'$ with $\mu_{CZ}(P')=2$ and if $P$ is an unknotted closed Reeb orbit with  $\mu_{CZ}(P)\in\{3,\ldots,N\}$, then  $P'$ is not linked to $P$. In particular, $P$ is not the boundary of a global surface of section. In such cases one obtains different types of surfaces of sections, the so called systems of transversal sections introduced in \cite{fols}, where  $P'$ is one the binding orbits. See also the recent paper by Fish and Siefring~\cite{FS}.

Theorem~\ref{thm_22} below will show, as a special case, that  conditions (i) and (ii) in Theorem \ref{teo_HS} are sufficient even when the contact form is degenerate. It should be noted, however, that in very degenerate situations it is not clear whether the above conditions are necessary. The statement of Theorem~\ref{thm_22} requires a preliminary technical definition.

\begin{definition}[\cite{HLS}, Definition~6.3]\label{def_special_robust}
Let $\alpha$ be a contact form on the $3$-manifold $M$, and let $K\subset M$ be a $p$-unknot tangent to $X_\alpha$ and oriented by $\alpha$. An oriented $p$-disk $u_0:\D\to M$ for $K$ is special robust for $(\alpha,K)$ if the following holds:
\begin{itemize}
\item[a)] The singular characteristic distribution of $u_0(\D\setminus\partial\D)$ induced by $\ker\alpha$ has precisely one singular point $e$, which is positive. Moreover, it is possible to find coordinates $(x,y,z)$ on a neighborhood $V$ of $e$ such that $e \simeq (0,0,0)$, $\alpha \simeq dz+xdy$ and $$ u_0(\D) \cap V \simeq \{z = -\frac{1}{2}xy\}. $$
\item[b)] $\exists \epsilon>0$ such that for every sequence of smooth functions $h_k:M \to (0,+\infty)$ satisfying $h_k \to 1 \ \text{in} \ C^\infty$, $h_k|_{K}\equiv1$ and $dh_k|_{K} \equiv 0$ $\forall k$, one finds $k_0\geq 1$ such that
\begin{equation}\label{special_robust_cond_b}
1-\epsilon < |z| < 1, \ k\geq k_0 \Rightarrow X_{h_k\alpha}|_{u_0(z)} \not\in d{u_0}|_z(T_z\D).
\end{equation}
\end{itemize}
\end{definition}

\begin{proposition}[\cite{HLS}, Proposition~6.4]\label{prop_nice_disk}
Let $M,\alpha,K$ be as in Definition~\ref{def_special_robust}, and assume in addition that $\xi=\ker\alpha$ is tight. Let $u$ be an oriented $p$-disk for $K$ and denote by $\rho(u)$ the transverse rotation number of the closed Reeb orbit obtained by iterating $K$ $p$-times, computed with respect to a trivialization of $u^*\xi$. If $\sl(K,u)=-1/p$ and $\rho(u)\neq 1$ then there exists an oriented $p$-disk $u_0$ which is special robust for $(\alpha,K)$. Moreover, $u_0$ can be constructed arbitrarily $C^0$-close to $u$.
\end{proposition}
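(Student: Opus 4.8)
The plan is to obtain $u_0$ from $u$ by two localized $C^0$-small deformations, one in a collar of the boundary circle $\partial\D$ and one in the interior, leaving $u_0=u$ elsewhere and tracking $C^0$-smallness throughout. \emph{Near $K$:} fix a normal form for a tubular neighbourhood of the transverse Reeb orbit $K$ --- coordinates $(\theta,x,y)$ on $\R/\Z\times B^2$ with $\{x=y=0\}=K$, $X_\alpha$ tangent to $K$, and the \emph{linearized} transverse Reeb dynamics rotating the $(x,y)$-plane by $\rho(u)$ per period when read in the framing $u^*\xi$ (this is the meaning of $\rho(u)$). Inside this chart, replace $u$ near $\partial\D$ by an explicit $p$-fold ``spiralling page'' $u_0$: in the model, its singular characteristic distribution $\mathcal{D}_{u_0}$ has $\partial\D$ as a \emph{hyperbolic} limit cycle into which all nearby leaves spiral, no singular points occur in a collar $\{1-\delta_0\le|z|\le1\}$, and $X_\alpha$ is transverse to $u_0(\D)$ on the \emph{punctured} collar $\{1-\delta_0<|z|<1\}$, with transversality angle comparable to the distance to $K$. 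Both the multiplier of the limit cycle $\partial\D$ and the transversality of $X_\alpha$ off $\partial\D$ are governed by $\rho(u)-1$; this is precisely where $\rho(u)\neq1$ enters, and for $\rho(u)=1$ this page cannot be built without a circle of tangencies or a non-isolated limit set accumulating on $K$. This normal form already yields condition~(a) away from the interior.

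\emph{Robustness (condition~(b)).} With $u_0$ in this normal form near $\partial\D$, write $X_{h\alpha}=\frac1h X_\alpha+Z_h$ for $h>0$ close to $1$, where $Z_h\in\xi$ is determined by $dh$. If $h=h_k\to1$ in $C^\infty$ with $h_k|_K\equiv1$ and $dh_k|_K\equiv0$, then $Z_{h_k}\to0$ in $C^\infty$ and vanishes to first order along $K$, so near $K$ one has $|Z_{h_k}|\lesssim\|Z_{h_k}\|_{C^1}\cdot\mathrm{dist}(\cdot,K)$. Since $X_\alpha$ is transverse to $u_0(\D)$ on $\{1-\delta_0<|z|<1\}$ with angle $\gtrsim\mathrm{dist}(\cdot,K)$, the perturbation $Z_{h_k}$ is, for large $k$, uniformly dominated by this angle on the whole punctured collar; hence there is $\epsilon\in(0,\delta_0]$ (universal) and, for each such sequence, a $k_0$ with $X_{h_k\alpha}$ transverse to $u_0(\D)$ on $\{1-\epsilon<|z|<1\}$ for all $k\ge k_0$, which is exactly~\eqref{special_robust_cond_b}.

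\emph{Interior reduction and the singular point.} Holding $u_0$ fixed near $\partial\D$, extend it into the interior in general position so that $\mathcal{D}_{u_0}$ on $u_0(\D\setminus\partial\D)$ is Morse--Smale: finitely many nondegenerate singular points (elliptic or hyperbolic, signed), no saddle connections, and no interior limit cycles (the one near $\partial\D$ is $\partial\D$ itself). The Bennequin-type index formula for oriented $p$-disks (\cite{BK}; see also~\cite{HLS}) expresses $\sl(K,u_0)=\sl(K,u)=-1/p$ as a signed count of these singular points; inserting $-1/p$ pins the net count to one positive elliptic point in excess of balanced pairs. Then cancel elliptic--hyperbolic pairs of equal sign by the elimination lemma, and use tightness of $\xi$ (via the Eliashberg--Giroux criterion) to exclude the two obstructions to reducing to a single point --- a negative singularity, and a limit cycle enclosing the wrong region --- each of which would produce an overtwisted disk. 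What remains is a $p$-disk $u_0$, $C^0$-close to $u$ and unchanged near $\partial\D$, whose characteristic distribution has a unique singular point $e$, positive elliptic. Finally normalize at $e$: there $T_eu_0(\D)=\xi_e$, so choose Darboux coordinates with $e\simeq(0,0,0)$, $\alpha\simeq dz+x\,dy$; since $e$ is elliptic, a Moser-type isotopy of $u_0$ (not of $\alpha$) supported near $e$ brings the surface to $\{z=-\frac12 xy\}$, whose induced characteristic foliation $\frac12(x\,dy-y\,dx)=0$ is the standard radial elliptic node, tangent to $\xi$ only at the origin. This is condition~(a), and all deformations were arbitrarily $C^0$-small.

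\emph{Main obstacle.} The crux is the boundary analysis: constructing the spiralling page with the exact rate forced by $\rho(u)$ so that simultaneously (i) $\mathcal{D}_{u_0}$ has no singular points or stray limit cycles in a collar of $\partial\D$ and (ii) $X_\alpha$ --- hence every sufficiently small \emph{constrained} perturbation $X_{h_k\alpha}$ --- stays transverse to $u_0$ on a uniform punctured collar. This is exactly the role of $\rho(u)\neq1$, and making the quantitative scales line up (the spiralling gap of $u_0$ versus the first-order vanishing of $Z_{h_k}$ along $K$) is the delicate part. The interior reduction is then a fairly standard application of convex-surface theory once tightness is in hand.
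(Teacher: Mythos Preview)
This proposition is not proved in the present paper; it is quoted verbatim from \cite[Proposition~6.4]{HLS} and used as a black box. There is therefore no proof here to compare your attempt against.

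That said, your sketch follows the expected lines and is almost certainly the strategy carried out in \cite{HLS}: a boundary normal form exploiting $\rho(u)\neq 1$ to control both the characteristic foliation and the transversality of the Reeb vector field in a collar of $K$, followed by a Giroux--Eliashberg interior reduction (elimination lemma plus tightness) to a single positive elliptic singularity, and finally a local normalization at that point. Your identification of the quantitative mechanism behind condition~(b) --- that the transversality angle of $X_\alpha$ to the page near $K$ scales like $\mathrm{dist}(\cdot,K)$ (with constant governed by $\rho(u)-1$), while the perturbation $Z_{h_k}$ scales like $o(1)\cdot\mathrm{dist}(\cdot,K)$ under the constraints $h_k\to 1$ in $C^\infty$ and $dh_k|_K\equiv 0$ --- is exactly the point.

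One item would need more care in a full write-up: the normalization at $e$. You say a Moser-type isotopy of the surface alone brings it to $\{z=-\tfrac12 xy\}$ in a fixed Darboux chart, but a generic positive elliptic singularity of a characteristic foliation can be a focus rather than a node, and the model $\{z=-\tfrac12 xy\}$ has a node. Definition~\ref{def_special_robust} only asks for \emph{some} coordinates in which both $\alpha\simeq dz+x\,dy$ and $u_0(\D)\cap V\simeq\{z=-\tfrac12 xy\}$ hold simultaneously, so you should adjust the Darboux chart as well as the surface; this extra freedom is what makes the normalization go through.
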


Let $K\subset L(p,q)$ be a Hopf fiber. A   \textit{rational open book decomposition with binding $K$ and disk-like pages} is a smooth fibration $\pi : L(p,q)\setminus K \to S^1$ so that the closure of each fiber $\pi^{-1}(t)$ is the image of an oriented $p$-disk for $K$, with an additional normal form near the binding \cite[Definition 4.4.4]{gei}.

\begin{theorem}\label{thm_22}
Consider a contact form $\lambda$ on $L(p,q)$ defining its standard contact structure $\xi_{\rm std}$, and assume that $X_\lambda$ admits a periodic trajectory $x:\R\to L(p,q)$, with minimal period $T_{\rm min}>0$, which is an order $p$ rational unknot with self-linking number $-1/p$. Set $T=pT_{\rm min}$, $P = (x,T)$ and assume that $\rho(P)>1$. Consider $\P^*(\lambda) \subset \P(\lambda)$ the set of closed $\lambda$-Reeb orbits in $L(p,q)\setminus x(\R)$ which are contractible in $L(p,q)$ and have transverse rotation number equal to $1$. Fix a special robust $p$-disk $u_0$ for $x(\R)$, oriented by $d\lambda$, and suppose that every $P'\in\P^*(\lambda)$ satisfies
\begin{equation}\label{linked_conv}
\begin{array}{ccc}
P' \ \text{is not contractible in} \ L(p,q)\setminus x(\R) & \text{or} & \int_{P'}\lambda> 1+\int_\D |u_0^*d\lambda|.
\end{array}
\end{equation}
Then $x(\R)$ bounds a rational disk-like global surface of section for the Reeb flow of~$\lambda$ which is a page of a rational open book decomposition of $L(p,q)$ with binding $x(\R)$. Moreover, all pages of this open book are disk-like global surfaces of section.
\end{theorem}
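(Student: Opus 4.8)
The plan is to adapt the pseudoholomorphic curve theory of Hofer--Wysocki--Zehnder~\cite{convex,char1,char2} together with the refinements for degenerate Reeb flows and rational open books of~\cite{HS,hryn,openbook,HLS}, after lifting the problem to the universal cover $\pi_{p,q}:S^3\to L(p,q)$. Writing $\hat\lambda:=\pi_{p,q}^*\lambda$, one obtains a tight contact form on $S^3$ defining $\xi_0$; since $x(\R)$ is an order $p$ rational unknot the preimage $\hat K:=\pi_{p,q}^{-1}(x(\R))$ is a single embedded circle, which is an ordinary unknot in $S^3$ and carries a prime closed $\hat\lambda$-Reeb orbit $\hat P$ equal to the lift of $P=(x,T)$, so that $\int_{\hat P}\hat\lambda=\int_P\lambda$ and $\rho(\hat P)=\rho(P)>1$, whence $\mu_{CZ}(\hat P)\geq 3$. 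The boundary of the special robust $p$-disk $u_0$ is null-homotopic in $L(p,q)$, so $u_0$ lifts to an honest $1$-disk $\hat u_0$ for $\hat K$ with $\sl(\hat K,\hat u_0)=p\,\sl(x(\R),u_0)=-1$ and $\int_\D\hat u_0^*d\hat\lambda=\int_\D u_0^*d\lambda$; it is still special robust for $(\hat\lambda,\hat K)$ because both conditions of Definition~\ref{def_special_robust} are local and $\Z_p$-invariant. Finally, using that $S^3\setminus\hat K\to L(p,q)\setminus x(\R)$ is a covering and that a closed orbit $P'\in\P^*(\lambda)$ lifts to closed $\hat\lambda$-orbits of the same action and transverse rotation number, one shows that~\eqref{linked_conv} yields the following statement upstairs: every closed $\hat\lambda$-orbit $\hat P'$ in $S^3\setminus\hat K$ with transverse rotation number $1$ is either non-contractible in $S^3\setminus\hat K$ or has action exceeding $\int_{\hat P}\hat\lambda+\int_\D|\hat u_0^*d\hat\lambda|$ (under the normalization implicit in~\eqref{linked_conv}, the period $\int_{\hat P}\hat\lambda$ is the constant ``$1$'' appearing there).

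Granting this reduction, the construction is carried out in the symplectization $(\R\times S^3,d(e^a\hat\lambda))$ (or, as in the proof of Theorem~\ref{main1}, in a mildly non-cylindrical cobordism built by capping off with $\hat u_0$). One fixes a $d\hat\lambda$-compatible almost complex structure on $\xi_0$ which near $\hat K$ is adapted to the normal form in Definition~\ref{def_special_robust}(a); the special robust property --- not dynamical convexity, which is not available --- is what forces the relevant curves to behave and produces a \emph{fast}, somewhere injective finite-energy plane asymptotic to $\hat P$ whose projection to $S^3$ is an embedded disk bounded by $\hat K$, positively transverse to $X_{\hat\lambda}$ in its interior and $C^0$-close to $\hat u_0(\D)$. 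By the compactness-and-gluing scheme of~\cite{convex,char1,HS}, this plane sits in a maximal smooth one-parameter family $\{\tilde u_t\}_{t\in S^1}$ of fast planes asymptotic to $\hat P$, and Siefring's intersection theory~\cite{sie1,sie2} guarantees that the projections $u_t(\C)$ are mutually disjoint embedded surfaces, each disjoint from $\hat K$ and positively transverse to $X_{\hat\lambda}$, matching the prescribed normal form along $\hat K$. The pages of the open book will be the closed disks $\overline{u_t(\C)}$.

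The crux is to prove that the family is \emph{complete}, i.e.\ that $\{u_t(\C)\}_{t\in S^1}$ foliates $S^3\setminus\hat K$; once this is known, transversality of $X_{\hat\lambda}$ to the page interiors forces every Reeb orbit of $S^3\setminus\hat K$ to cross every page in forward and backward time, so each page is a global surface of section, and the normal form makes the family an honest open book of $S^3$. If the family failed to be complete, then a sequence $\tilde u_{t_n}$ would break, by the SFT compactness theorem, into a holomorphic building containing a finite-energy plane $\tilde v$ asymptotic to a closed orbit $\hat P'\neq\hat P$; uniform energy bounds for the family (the bookkeeping of the construction bounds it by $\int_{\hat P}\hat\lambda+\int_\D|\hat u_0^*d\hat\lambda|$) give $\int_{\hat P'}\hat\lambda\leq\int_{\hat P}\hat\lambda+\int_\D|\hat u_0^*d\hat\lambda|$, and comparing Conley--Zehnder indices --- using $\mu_{CZ}(\hat P)\geq3$, maximality of the family, and positivity of intersections of $\tilde v$ with the $\tilde u_t$ --- pins down the transverse rotation number of $\hat P'$ to be $1$. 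As $\tilde v$ is disjoint from $\R\times\hat K$ by positivity of intersections, its projection is a disk in $S^3\setminus\hat K$ bounded by $\hat P'$, so $\hat P'$ is contractible in $S^3\setminus\hat K$; together with the action bound this contradicts the upstairs form of~\eqref{linked_conv}, so the family is complete.

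To finish, one arranges all auxiliary data --- the almost complex structure, the normal form, the seed plane --- to be $\Z_p$-invariant; by uniqueness of the maximal family it is then $\Z_p$-invariant as a set, the action permuting the $S^1$-parameter, so the open book of $S^3$ descends to a rational open book of $L(p,q)$ with binding $x(\R)$ whose pages are the $p$-disks $\pi_{p,q}(\overline{u_t(\C)})$, and these are rational disk-like global surfaces of section for the $\lambda$-Reeb flow. I expect the third step to be the main obstacle: because $\lambda$ is only assumed degenerate and the ambient manifold is a lens space rather than $S^3$, the planes in play can fail to be Fredholm regular, so that both the existence of the family near the binding (tamed by the special robust disk $u_0$ and the normal form around its singular point $e$) and the control of its degenerations along $S^1$ (tamed by Siefring's intersection theory and the precise energy bound in~\eqref{linked_conv}) must be handled by the machinery assembled in~\cite{HLS}.
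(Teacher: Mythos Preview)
Your approach differs from the paper's in a basic way: the paper never lifts to $S^3$; all the analysis is carried out directly on $L(p,q)$. More importantly, your sketch has a genuine gap in its treatment of degeneracy, which is precisely the point of the theorem.

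You argue that if the $S^1$-family of fast planes failed to fill $S^3\setminus\hat K$, a sequence would ``break, by the SFT compactness theorem, into a holomorphic building containing a finite-energy plane $\tilde v$ asymptotic to a closed orbit $\hat P'$'' with $\rho(\hat P')=1$. But SFT compactness in that form requires non-degeneracy of the contact form: without it, the levels of a limiting building need not have well-defined asymptotic orbits, and the index/rotation-number bookkeeping you invoke is unavailable. Since $\lambda$ is allowed to be degenerate, this step does not go through. (It would prove the non-degenerate case, which for $p=1$ is Theorem~\ref{teo_HS}, but the degenerate case even on $S^3$ is a special case of the present theorem and not previously available to cite.)

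The paper's route around this is different. One first uses the special robust $p$-disk $u_0$ only through Proposition~\ref{prop_global_sections} (which is \cite[Proposition~7.1]{HLS}): for \emph{non-degenerate} perturbations $\lambda_n=f_n\lambda$ with $f_n|_K\equiv1$, $df_n|_K\equiv0$, the linking hypothesis~\eqref{linked_conv} guarantees rational open books for $\lambda_n$ adapted to $K$. Then, for each $y\notin K$ and each $J$, one takes the embedded fast $\tilde J_n$-planes $\tilde u_n$ through $y$ furnished by these open books (energy exactly $T$, not $T+\int_\D|u_0^*d\lambda|$) and passes to the limit $n\to\infty$. The heart of the argument is to show the limit is again an embedded fast plane asymptotic to $P$: one rules out bubbling (Lemmas~\ref{lemma_asymp_limit_u}--\ref{lemma_no_bubb_points}) using Lemma~\ref{lemma_global_linked}, which converts~\eqref{linked_conv} into the statement that \emph{every} closed $\lambda$-orbit in $L(p,q)\setminus K$ is non-contractible there; and one proves $\infty$ is a non-degenerate puncture of the limit via the uniform asymptotic analysis of Lemmas~\ref{lemma_uniform_1}--\ref{lemma_geom_data_unif_compactness} together with Proposition~\ref{prop_unif_asymptotic_analysis}. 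This yields Proposition~\ref{prop_existence_fast_planes}, and then Theorems~\ref{thm_two_fast_planes}--\ref{thm_fredholm_theory_fast_planes} assemble the planes into the open book. The role of $u_0$ and of the action bound in~\eqref{linked_conv} is entirely confined to producing the non-degenerate open books in Proposition~\ref{prop_global_sections}; after that, only the linking alternative is used.

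Two smaller remarks on your reduction. Condition (b) in Definition~\ref{def_special_robust} is not local---it quantifies over global functions $h_k$ on $M$---so it does not automatically pass to the lift $\hat u_0$, since a test function on $S^3$ need not be $\Z_p$-invariant. And your energy bound for the family on $S^3$ is off: fast planes in the symplectization have Hofer energy equal to the action $T$ of $P$, so any orbit appearing in a hypothetical breaking has action $\le T$; the term $\int_\D|u_0^*d\lambda|$ enters only in the non-cylindrical step inside \cite{HLS} that manufactures the seed plane for the non-degenerate $\lambda_n$.
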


\begin{corollary}\label{coro_main_2}
If $\lambda$ is any dynamically convex contact form on $L(p,q)$ then every $p$-unknot $K$ which is tangent to the Reeb vector field of $\lambda$ and has rational self-linking number $-1/p$ must bound a $p$-disk which is a global surface of section for the Reeb flow. Moreover, this $p$-disk is a page of a rational open book decomposition of $L(p,q)$ with binding $K$ such that all pages are disk-like global surfaces of section.
\end{corollary}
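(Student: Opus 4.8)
The plan is to obtain Corollary~\ref{coro_main_2} as a direct consequence of Theorem~\ref{thm_22}: the knot $K$ is already the candidate binding orbit, so the whole task is to read off the hypotheses of Theorem~\ref{thm_22} from dynamical convexity. The first point I would check is that a dynamically convex $\lambda$ defines the standard contact structure on $L(p,q)$. By the theorem of Hofer, Wysocki and Zehnder quoted above, $\ker\lambda$ is tight; to upgrade this to \emph{universal} tightness I would pull back to the universal cover and show that $\pi_{p,q}^{*}\lambda$ is again dynamically convex. Indeed $\pi_{p,q}$ conjugates the two Reeb flows and, being a covering map, identifies the contact planes of $\pi_{p,q}^{*}\lambda$ along any closed orbit --- together with the linearized Reeb flow and the symplectic form $d(\pi_{p,q}^{*}\lambda)$ --- with the corresponding data for the projected orbit; moreover every closed $\pi_{p,q}^{*}\lambda$-orbit bounds a disk in $S^{3}$, whose image under $\pi_{p,q}$ caps the projected orbit and exhibits it as contractible in $L(p,q)$. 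Hence the path of symplectic matrices computing the Conley--Zehnder index is literally the same upstairs and downstairs, so the inequality $\mu_{CZ}\geq 3$ descends to every contractible orbit on $S^{3}$; thus $\pi_{p,q}^{*}\lambda$ is dynamically convex, $\pi_{p,q}^{*}\ker\lambda$ is tight, and by the rigidity of $\xi_{0}$ recalled in the introduction there is a diffeomorphism of $L(p,q)$ carrying $\ker\lambda$ onto $\xi_{0}=\xi_{\rm std}$. Pushing $\lambda$ and $K$ through this diffeomorphism, I may assume $\ker\lambda=\xi_{\rm std}$.

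Next I would assemble the remaining data. Parametrize $K$ by a periodic trajectory $x:\R\to L(p,q)$ of $X_{\lambda}$ with minimal period $T_{\rm min}$, and set $T=pT_{\rm min}$ and $P=(x,T)$. Since $K$ is $p$-unknotted it bounds an oriented $p$-disk $u$, which is simultaneously a capping disk for $P$; hence $P$ is contractible, dynamical convexity gives $\mu_{CZ}(P)\geq 3$, and the correspondence between the Conley--Zehnder index and the transverse rotation number recalled in Section~\ref{sec_prelim} upgrades this to $\rho(P)>1$. Because $\pi_{2}(L(p,q))=0$ the rational self-linking number is disk-independent, so $\sl(K,u)=\sl(K)=-1/p$. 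As $\ker\lambda$ is tight and $\rho(P)\neq 1$, Proposition~\ref{prop_nice_disk} then produces a special robust $p$-disk $u_{0}$ for $(\lambda,K)$, oriented by $d\lambda$ --- precisely the disk that Theorem~\ref{thm_22} requires us to fix.

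It remains to deal with the set $\P^{*}(\lambda)$ of Theorem~\ref{thm_22}, that is, the closed $\lambda$-Reeb orbits in $L(p,q)\setminus x(\R)$ which are contractible in $L(p,q)$ and have transverse rotation number $1$. By dynamical convexity every orbit contractible in $L(p,q)$ has Conley--Zehnder index at least $3$, hence --- again by the correspondence of Section~\ref{sec_prelim} --- transverse rotation number strictly bigger than $1$; so $\P^{*}(\lambda)=\emptyset$ and the dichotomy~\eqref{linked_conv} holds vacuously. All hypotheses of Theorem~\ref{thm_22} now hold for $\lambda$, $x$ and $u_{0}$, and that theorem yields exactly the assertion of Corollary~\ref{coro_main_2}: $x(\R)=K$ bounds a rational disk-like global surface of section which is a page of a rational open book decomposition of $L(p,q)$ with binding $K$, all of whose pages are disk-like global surfaces of section; the closure of this page is the $p$-disk claimed.

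Essentially the only non-formal inputs are the passage ``dynamically convex $\Rightarrow$ standard contact structure'' via the universal cover and the characterization of dynamical convexity in terms of transverse rotation numbers from the preliminaries. If anything deserves extra care it is the \emph{strict} inequality $\rho>1$ needed in the last two paragraphs: the crude bound $|\mu_{CZ}(P)-2\rho(P)|\leq 1$ only gives $\rho(P)\geq 1$, so one must genuinely exclude contractible Reeb orbits --- notably the iterate $P=K^{p}$ --- with transverse rotation number equal to $1$, which is exactly where the fine form of the Conley--Zehnder/rotation-number dictionary for possibly degenerate orbits enters. The covering argument of the first paragraph --- verifying that contractibility and the Conley--Zehnder index pass correctly through $\pi_{p,q}$ --- should likewise be carried out carefully, though it presents no real difficulty.
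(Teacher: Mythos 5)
Your proposal is correct and follows exactly the reduction the paper intends (the paper gives no explicit proof of this corollary, leaving it as an application of Theorem~\ref{thm_22}). The three ingredients you identify are the right ones: (1) dynamical convexity implies universal tightness via pulling back to $S^3$ — which is precisely the argument the paper runs at the start of Section~3 for $L(2,1)$, and it carries over verbatim to $L(p,q)$; (2) the equivalence $\mu_{CZ}(P)\geq 3 \Leftrightarrow \rho(P)>1$ for contractible orbits, which the paper uses repeatedly (e.g.\ Appendix~A: ``since $\rho(P)>1\Leftrightarrow\mu_{CZ}(P)\geq 3$'') and which is the correct statement even in the degenerate case, not merely the crude two-sided bound; and (3) the observation that $\P^*(\lambda)=\emptyset$ under dynamical convexity so the linking condition~\eqref{linked_conv} is vacuous. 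Your caution about the strict inequality $\rho>1$ at the end is well placed but ultimately harmless, since the paper's formulation of the Conley--Zehnder index via the rotation interval $J_\epsilon$ is designed exactly so that $\mu_{CZ}\geq3$ forces $\rho>1$ strictly; you may cite the paper's own appeal to this equivalence rather than rederiving it.
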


Consider the link $l_0 = \pi_{p,q}(S^1\times 0 \cup 0\times S^1) \subset L(p,q)$. This link is transverse to $\xi_{\rm std}$, and we shall call a Hopf link any transverse link in $(L(p,q),\xi_{\rm std})$ which is transversely isotopic to $l_0$. Combining Theorem~\ref{main1} with Corollary~\ref{coro_main_2} and the results from~\cite{HLS} we obtain the following statement.

\begin{corollary}\label{coro_total}
Let $\lambda$ be any dynamically convex contact form on $\R P^3=L(2,1)$. Then $\ker\lambda$ is contactomorphic to $\xi_{\rm std}$ and there exists a pair $P,P'$ of $\lambda$-Reeb orbits forming a Hopf link and binding rational open book decompositions with disk-like pages. Each page of both open books is a global surface of section for the $\lambda$-Reeb flow. These orbits are $2$-unknotted, have self-linking number $-1/2$, and one of them is elliptic-parabolic with transverse rotation number in $(-1/2,1]$.
\end{corollary}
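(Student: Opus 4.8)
The plan is to assemble Corollary~\ref{coro_total} from Theorem~\ref{main1}, Corollary~\ref{coro_main_2} and the results of~\cite{HLS}, together with the classification of tight contact structures on $\R P^3$. Since $\lambda$ lies in the $C^\infty$-neighborhood $\U$ of itself, Theorem~\ref{main1} applied to $\lambda$ produces a closed $\lambda$-Reeb orbit $P$ which is $2$-unknotted, has $\sl(P)=-1/2$, is elliptic-parabolic, and whose transverse rotation number relative to the prime period lies in $(1/2,1]\subset(-1/2,1]$. For the contactomorphism statement I would invoke the theorem of Hofer--Wysocki--Zehnder quoted above (\cite{char2}), by which $\xi=\ker\lambda$ is tight; since $\R P^3=L(2,1)$ admits a unique tight contact structure up to contactomorphism, namely $\xi_{\rm std}$, it follows that $\ker\lambda$ is contactomorphic to $\xi_{\rm std}$.

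Next, the geometric image $K$ of $P$ is a $2$-unknot tangent to $X_\lambda$ with rational self-linking number $-1/2$, so Corollary~\ref{coro_main_2} (via Theorem~\ref{thm_22}) applies: $K$ bounds a $2$-disk $\Sigma_0$ which is a global surface of section for the $\lambda$-Reeb flow, and $K$ is the binding of a rational open book decomposition of $L(2,1)$ all of whose pages are disk-like global surfaces of section. It then remains to produce the second component $P'$ of the Hopf link. The Reeb flow induces a first return map on the interior of a page $\Sigma_0$ preserving the finite area form $d\lambda|_{\mathrm{int}(\Sigma_0)}$; an area-preserving diffeomorphism of an open disk need not possess a fixed point (an irrational rotation has none), so one must use the finer structure carried by the open book. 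This is precisely the content of the relevant results of~\cite{HLS}: from the rational open book above one extracts a closed $\lambda$-Reeb orbit $P'$ in $L(2,1)\setminus K$ that meets each page exactly once, is itself a $2$-unknot with $\sl(P')=-1/2$, and is such that $P\cup P'$ is transversely isotopic to the model link $l_0$, i.e.\ a Hopf link.

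Finally, applying Corollary~\ref{coro_main_2} once more to the $2$-unknot underlying $P'$ shows that it too bounds a disk-like global surface of section and binds a rational open book of $L(2,1)$ with disk-like pages, all of which are global surfaces of section. Collecting these facts --- $P$ and $P'$ form a Hopf link, each is the binding of a rational open book with disk-like pages that are global surfaces of section, both are $2$-unknotted with self-linking number $-1/2$, and $P$ is elliptic-parabolic with transverse rotation number in $(-1/2,1]$ --- together with the contactomorphism statement of the first paragraph, yields the corollary.

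I expect the main obstacle to be the step producing $P'$ with the correct knot type and self-linking number and verifying that $\{P,P'\}$ is a Hopf link: the existence of a periodic orbit transverse to the open book is not automatic from area preservation alone, and controlling its contact-topological type is exactly where the machinery of~\cite{HLS} --- systems of transversal sections and the fine intersection theory of~\cite{sie1,sie2} behind them --- is needed. A secondary point requiring care is checking, for both $P$ and $P'$, the hypotheses of Corollary~\ref{coro_main_2} (equivalently of Theorem~\ref{thm_22}), in particular the linking condition~\eqref{linked_conv} and the positivity of the relevant transverse rotation numbers, which in turn reduces to dynamical convexity.
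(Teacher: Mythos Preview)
Your outline matches the paper's: the corollary is stated as a direct combination of Theorem~\ref{main1}, Corollary~\ref{coro_main_2}, and~\cite{HLS}, and you have assembled these ingredients in the right order. The contactomorphism argument is fine (the paper argues via universal tightness of the lift to $S^3$, but uniqueness of the tight structure on $\R P^3$ works equally well).

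There is, however, a factual slip in your discussion of the second orbit $P'$. You write that an area-preserving diffeomorphism of an open disk need not have a fixed point and cite the irrational rotation as a counterexample; but every rotation of the disk fixes the center. In fact any orientation-preserving homeomorphism of the open disk preserving a finite-area form has a fixed point: by Brouwer's plane translation theorem a fixed-point-free orientation-preserving homeomorphism of $\R^2$ has every orbit wandering to infinity, which contradicts Poincar\'e recurrence for a finite invariant measure. This is exactly the argument the paper uses in the proof of Theorem~\ref{theo_finsler}: ``Since $\psi$ preserves a finite area, it has a fixed point.'' So the \emph{existence} of $P'$ is elementary and does not require~\cite{HLS}. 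What~\cite{HLS} (or a direct topological argument using the open book) supplies is the contact-topological identification of $P'$: that a closed Reeb orbit meeting each disk-like page of the rational open book exactly once is a $2$-unknot with self-linking $-1/2$ and that $P\cup P'$ is transversely isotopic to the model Hopf link $l_0$. Your final paragraph already anticipates this, so after correcting the Brouwer step the argument is complete.
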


It is a hard problem to decide whether any universally tight contact form on $L(p,q)$ admits a Hopf link consisting of a pair of closed Reeb orbits. In the dynamically convex case this is answered for $p=1$ by~\cite{convex} and for $p=2$ by the above corollary. The existence of Hopf links in lens spaces is of dynamical relevance since one can use the results of~\cite{HMS} to obtain infinitely many closed Reeb orbits from a very simple non-resonance relation. This non-resonance relation plays the role of the boundary twist-condition in the classical Poincar\'e-Birkhoff theorem.

It is an open conjecture that any compact regular energy level in $\R^{2n}$ which is strictly convex carries an elliptic-parabolic periodic orbit. This conjecture has been established in special cases by Ekeland~\cite{ekeland_elliptic}. Assuming antipodal symmetry this has been established by Dell'Antonio, D'Onofrio and Ekeland~\cite{AOE}. There are results in this direction by Hu, Long and Wang~\cite{HLW}, Long and Zhu~\cite{LZ} and others. See also the recent work by Abreu and Macarini~\cite{AM} using contact homology to study the existence of elliptic orbits for dynamically convex Reeb flows. Work by Benedetti~\cite{ben} explores consequences of dynamical convexity for certain energy levels of magnetic flows on the two-sphere. Dynamical convexity has also been explored in the context of Arnol'd diffusion by Gidea~\cite{gidea}, where  the existence of diffusing orbits with prescribed qualitative behavior described in terms of invariant tori and Aubry-Mather sets is established. These dynamical elements live within a disk-like global section in a normally hyperbolic invariant dynamically convex $3$-sphere inside the energy level.
%The result of~\cite{AOE} in the special case $n=2$ can be recovered from Theorem~\ref{main1} below since the Hamiltonian flow on a regular, compact and strictly convex energy level in $\R^4$ is equivalent to a dynamically convex Reeb flow on $S^3$, see~\cite{convex}.
%However, our main goal is to study the existence of an elliptic-parabolic unknotted orbit in an antipodal symmetric (dynamically) convex energy level in $\R^4$ with a very special feature, namely, one that descends to $\R P^3$ as the binding of a rational open book decomposition with disk-like pages, all of which are global surfaces of section for the Reeb flow.
%The existence of elliptic-parabolic closed Reeb orbits for dynamically convex Reeb flows in $\R P^3$ has been studied by Abreu and Macarini~\cite{AM} using contact homology. In fact, in~\cite{AM} the notion of dynamical convexity is generalized using contact homology as a guiding principle.

\subsection{Application to Celestial Mechanics}

Here we apply our results to the PCR3BP for energies below the first Lagrange critical value, as explained in the beginning of this introduction.

Consider two particles, called primaries, moving on $\R^2$ with masses $m_1,m_2$ according to Newton's gravitational law, and a third massless particle, called satellite, moving in the same plane under the influence of the primaries. Assume that the primaries move in circular trajectories about their center of mass ${\bf c}$ in the counter-clockwise direction. Then one can always find a rotating system based on ${\bf c}$ so that the primaries stay at rest. After some suitable normalizations the motion of the satellite is described by the time-independent Hamiltonian
\begin{equation}\label{eq_hamilton1}
H_\mu(q,p)= \frac{|p|^2}{2} +q_1p_2-p_1q_2 - \mu p_2 - \frac{1-\mu}{|q|} -\frac{\mu}{|q-1|}.
\end{equation}
Here $q=q_1 + i q_2 \in \C \setminus \{0,1\}$ and $p=p_1+ip_2\in \C$ play the role of the position and the momentum of $m_3$; the mass ratio defined as $0<\mu:=\frac{m_1}{m_1+m_2}<1$ is assumed to be close to $1$ ($m_1 \gg m_2$). The points $0\in \C$ and $1\in \C$  are the normalized rest positions of the lighter primary $m_2$ and the heavier primary $m_1$, respectively.

One can directly verify that the Hamiltonian \eqref{eq_hamilton1} has five critical points $L_1,\ldots, L_5$ (all of them depending on $\mu$) which are monotonically ordered by their critical values. Moreover, $H_\mu(L_1) \to -\frac{3}{2}$ as $\mu \to 1^-$ and, for all $-c <H_\mu(L_1)$, the energy level $H_\mu^{-1}(-c)$ contains three components $\mathcal{C}_{\mu,c}^0,\mathcal{C}_{\mu,c}^1$ and $\mathcal{C}_{\mu,c}^2$. The projections of $\mathcal{C}_{\mu,c}^0$ and $\mathcal{C}_{\mu,c}^1$ to the $q$-plane are non-overlapping bounded regions whose closures contain $0$ and $1\in \C$, respectively. The projection of the third component $\mathcal{C}_{\mu,c}^2$ into the $q$-plane is unbounded. In the following we focus on the component $\mathcal{C}_{\mu,c}^0$ around the lighter primary at $0\in \C$.

The component $\mathcal{C}_{\mu,c}^0\subset H_\mu^{-1}(-c)$ is non-compact and contains those orbits colliding with  $0\in \C$. It is always possible to regularize $\mathcal{C}_{\mu,c}^0$ so that these collision trajectories become true trajectories. To do that, one may consider Levi-Civita coordinates $q=2v^2$ and $p=\frac{u}{\bar v}$, where $v=v_1+iv_2$ and $u=u_1 + iu_2$. The coordinates $(v,u)$ are symplectic with respect to $(q,p)$ up to a constant factor. Since $(v,u)$ and $-(v,u)$ correspond to the same point $(q,p)$, these new coordinates  admit a natural $\Z_2$-symmetry given by the antipodal map.

Following~\cite{AFFHvK}, the new Hamiltonian is written as $$ \begin{aligned} K_{\mu,c}(v,u)& := |v|^2(H_\mu(q,p)+c) = \frac{|u|^2}{2} + 2|v|^2(v_1u_2-v_2u_1) \\ & -\mu(u_1v_2+u_2v_1) -\frac{1-\mu}{2}-\mu\frac{|v|^2}{|2v^2-1|}+c|v|^2, \end{aligned} $$ where both $\mu$ and $c$ are now seen as parameters.

In $(v,u)$ coordinates, the component $\mathcal{C}_{\mu,c}^0$ corresponds to a smooth compact component $\widetilde \Sigma_{\mu,c}\subset K_{\mu,c}^{-1}(0)$, which is diffeomorphic to $S^3$ and is invariant under the $\Z_2$-symmetry. The quotient $\Sigma_{\mu,c}:= \widetilde \Sigma_{\mu,c} / \Z_2$ is diffeomorphic to  $\R P^3 = L(2,1)$.

\begin{definition}[\cite{AFFHvK}]
The convexity range is the set of pairs $(\mu,c)$ for which $\widetilde \Sigma_{\mu,c}\subset \R^4$ is a strictly convex hypersurface.
\end{definition}

\begin{theorem}[Albers, Fish, Frauenfelder, Hofer and van Koert~\cite{AFFHvK}]\label{teo_AFFHK}
For all $c>\frac{3}{2}$, there exists $\mu_0(c) \in (0,1)$ such that $(\mu,c)$ lies on the convexity range for all $\mu \in (\mu_0(c),1)$.
\end{theorem}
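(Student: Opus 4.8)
The plan is to realize $\widetilde\Sigma_{\mu,c}$, after a convexity-preserving linear change of coordinates on $\R^4$, as a $C^\infty$-small perturbation of a genuine ellipsoid as $\mu\to1^-$, and then to invoke the openness of convexity. Fix $c>3/2$ and set $\epsilon:=1-\mu>0$. Near $q=0$ the effective potential of $H_\mu$ equals $-\tfrac32+O(\epsilon)$ minus the Kepler well $(1-\mu)/|q|$ of the light primary, so the condition of lying below the energy $-c$ with $c>3/2$ \emph{fixed} confines the Hill region component $\mathcal{C}_{\mu,c}^0$ to $|q|\lesssim(1-\mu)/(c-\tfrac32)$, uniformly in $\mu$ close to $1$. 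Hence in Levi--Civita coordinates $\widetilde\Sigma_{\mu,c}\subset\{|v|=O(\sqrt\epsilon)\}$, and the equation $K_{\mu,c}=0$ then forces $|u|=O(\sqrt\epsilon)$ there as well. This dictates the blow-up $\Phi_\epsilon(\hat v,\hat u):=(\sqrt\epsilon\,\hat v,\sqrt\epsilon\,\hat u)$, a linear isomorphism of $\R^4$ (hence preserving convexity of subsets), under which $\Phi_\epsilon^{-1}(\widetilde\Sigma_{\mu,c})$ stays in a fixed ball, uniformly in small $\epsilon$.

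Next I would compute $\epsilon^{-1}\,K_{\mu,c}\circ\Phi_\epsilon$ term by term. The Levi--Civita transform turns the attraction $-(1-\mu)/|q|$ of the light primary into the \emph{constant} $-\epsilon/2$; the attraction $-\mu/|q-1|$ of the heavy primary becomes $-\mu|v|^2/|2v^2-1|$, i.e.\ $-\mu|\hat v|^2/|2\epsilon\hat v^2-1|=-|\hat v|^2+O(\epsilon)$ in rescaled coordinates, uniformly with all derivatives on compact sets (the collision locus $2\epsilon\hat v^2=1$ has receded to $|\hat v|\sim(2\epsilon)^{-1/2}\to\infty$); the Coriolis term $2|v|^2\,\mathrm{Im}(\bar v u)$ becomes $O(\epsilon^2)$; and $-\mu(u_1v_2+u_2v_1)$ becomes $-\mu\,\mathrm{Im}(\hat u\hat v)$. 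Therefore, on every fixed ball in $\R^4$, one has the $C^\infty_{\mathrm{loc}}$-convergence, as $\mu\to1^-$,
\begin{equation*}
\epsilon^{-1}\,K_{\mu,c}\circ\Phi_\epsilon\ \longrightarrow\ K_\infty(\hat v,\hat u):=\tfrac12|\hat u|^2-\mathrm{Im}(\hat u\hat v)+(c-1)|\hat v|^2-\tfrac12 .
\end{equation*}
Completing the square in $\hat u$ --- a second convexity-preserving linear change of variables --- identifies $K_\infty$ with the Levi--Civita regularization of the Kepler problem at the fixed negative energy $\tfrac32-c$, and rewrites $\{K_\infty=0\}$ as the ellipsoid
\begin{equation*}
\Big\{\ \tfrac12|\hat u|^2+\big(c-\tfrac32\big)|\hat v|^2=\tfrac12\ \Big\}.
\end{equation*}
This is a bounded ellipsoid precisely because $c>3/2$ --- the limit as $\mu\to1^-$ of the condition of lying below the first Lagrange value $H_\mu(L_1)\to-\tfrac32$. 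It is strictly convex and has the origin in its interior.

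Finally I would invoke the openness of convexity: if $\Sigma_0=f_0^{-1}(0)$ is a compact connected regular hypersurface bounding a strictly convex body, then every $g$ which is $C^2$-close to $f_0$ on a neighborhood of $\Sigma_0$ has $0$ as a regular value, and $g^{-1}(0)$ is again a compact connected hypersurface bounding a strictly convex body (its second fundamental form is $C^0$-close and stays positive definite, and Hadamard's theorem upgrades this pointwise convexity to global convexity). Inside a fixed large ball the zero set of $\epsilon^{-1}K_{\mu,c}\circ\Phi_\epsilon$ consists exactly of $\Phi_\epsilon^{-1}(\widetilde\Sigma_{\mu,c})$ --- the heavy-primary component $\mathcal{C}_{\mu,c}^1$ and the unbounded component $\mathcal{C}_{\mu,c}^2$ having receded to infinity in the blown-up picture --- and these defining functions converge in $C^\infty$ to one cutting out the ellipsoid above. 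Hence for $\mu$ sufficiently close to $1$ the hypersurface $\Phi_\epsilon^{-1}(\widetilde\Sigma_{\mu,c})$ is $C^\infty$-close to that ellipsoid, so it is strictly convex; applying the linear map $\Phi_\epsilon$ yields strict convexity of $\widetilde\Sigma_{\mu,c}$ itself, and any such threshold serves as $\mu_0(c)$.

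The step I expect to be the main obstacle is the uniform confinement: proving, uniformly as $\mu\to1^-$, that $\Phi_\epsilon^{-1}(\widetilde\Sigma_{\mu,c})$ stays inside a fixed ball and bounded away from the heavy-primary collision locus --- equivalently, the uniform $O(1-\mu)$-bound on the diameter of $\mathcal{C}_{\mu,c}^0$ --- together with the resulting $C^\infty_{\mathrm{loc}}$ convergence of the rescaled Hamiltonians that feeds the perturbation argument. One could instead attempt a direct computation of the Hessian of $K_{\mu,c}$ along $\widetilde\Sigma_{\mu,c}$, perhaps after multiplying $K_{\mu,c}$ by a suitable positive conformal factor to make it positive definite on tangent spaces; but the blow-up above has the advantage of reducing the problem to an ellipsoid and of exhibiting $c=3/2$ as the exact threshold.
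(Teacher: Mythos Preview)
The paper does not prove this theorem: it is quoted from \cite{AFFHvK} and used as a black box in the application to the PCR3BP, so there is no ``paper's own proof'' to compare against. Your sketch is essentially the argument of \cite{AFFHvK}: the rescaling $(v,u)=(\sqrt{1-\mu}\,\hat v,\sqrt{1-\mu}\,\hat u)$ turns $K_{\mu,c}=0$ into a $C^\infty$-perturbation of the quadric $\tfrac12|\hat u'|^2+(c-\tfrac32)|\hat v|^2=\tfrac12$ obtained after completing the square, and one concludes by openness of strict convexity. Your computation of the limit $K_\infty$ and the identification of $c=3/2$ as the threshold are correct. The confinement step you flag as the main obstacle is indeed the only nontrivial analytic point, and it is handled in \cite{AFFHvK} exactly along the lines you indicate (uniform $O(1-\mu)$ bound on the diameter of the Hill region near the light primary).
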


Observe that if $(\mu,c)$ is in the convexity range, then the Liouville form $\lambda_0$ on $\R^4$ restricts to a contact form $\tilde \lambda_{\mu,c}$ on $\widetilde \Sigma_{\mu,c}$. Due to the invariance of $\tilde \lambda_{\mu,c}$ under the antipodal map, it descends to a contact form $\lambda_{\mu,c}$ on $\Sigma_{\mu,c}$.  Theorem 3.4 in \cite{convex} implies that   $\tilde \lambda_{\mu,c}$ and $\lambda_{\mu,c}$, whose Reeb flows reparametrize the Hamiltonian flows on $\widetilde \Sigma_{\mu,c}$ and $\Sigma_{\mu,c}$, respectively, are dynamically convex. Hofer, Wysocki and Zehnder  \cite{convex} show the existence of an open book decomposition with disk-like pages adapted to $\tilde \lambda_{\mu,c}$ on $\widetilde \Sigma_{\mu,c}$ so that each page is a global surface of section. However, it is not clear whether such an open book descends to a rational open book decomposition adapted to $\lambda_{\mu,c}$ on $\Sigma_{\mu,c}$, or what precise dynamical and contact-topological properties the projection of the binding orbit has. In fact, there is no evidence for the binding orbit of this open book to be symmetric under the antipodal map. The following result is a direct consequence of a combination of Theorem~\ref{teo_AFFHK}, Theorem~\ref{main1}, Theorem~\ref{thm_22} and~\cite[Theorem~3.4]{convex}.

\begin{theorem}
For all $(\mu,c)$ in the convexity range the following assertions hold:
\begin{itemize}
\item[(i)] $\Sigma_{\mu,c}$ admits an elliptic-parabolic $2$-unknotted periodic orbit $P$ which is the binding of a rational open book decomposition with disk-like pages. Each page of this open book is a rational global surface of section for the Hamiltonian flow on $\Sigma_{\mu,c}$. Moreover, the rotation number of $P$ lies in $(\frac{1}{2},1]$ and its self-linking number is $\frac{-1}{2}$.
\item[(ii)] The first return map associated to any page of the open book in (i) admits at least one fixed point, and any such fixed point corresponds to a $2$-unknotted periodic orbit $P'\subset \Sigma_{\mu,c}$ with self-linking number $\frac{-1}{2}$.
%The link $P\cup P'$ is a Hopf link on the contact manifold $(\Sigma_{\mu,c},\ker \lambda_{\mu,c}) \equiv (L(2,1),\xi_0)$.
%Moreover, $P'$ is also the binding  of a rational open book decomposition with disk-like pages so that each page  is a rational global surface of section.
\item[(iii)] Given any periodic orbit $P'' \subset \Sigma_{\mu,c}$ which is  $2$-unknotted with self-linking number $\frac{-1}{2}$, there exists a rational open book decomposition with disk-like pages with binding $P''$ such that each page is a rational global surface of section for the Hamiltonian flow on $\Sigma_{\mu,c}$.
\end{itemize}
\end{theorem}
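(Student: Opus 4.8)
The plan is to assemble the quoted results; the statement is a combination of Theorems~\ref{teo_AFFHK}, \ref{main1} and~\ref{thm_22} (the last one in the form of Corollary~\ref{coro_main_2}, which is what Theorem~\ref{thm_22} gives in the dynamically convex case) together with~\cite[Theorem~3.4]{convex}. Fix $(\mu,c)$ in the convexity range, a non-empty set by Theorem~\ref{teo_AFFHK}. As explained just before that theorem, together with~\cite[Theorem~3.4]{convex}, the form $\lambda_{\mu,c}$ is a dynamically convex contact form on $\Sigma_{\mu,c}\cong L(2,1)=\R P^3$ whose Reeb flow is a smooth time-reparametrization of the Hamiltonian flow on $\Sigma_{\mu,c}$. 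A reparametrization preserves periodic trajectories as subsets, global surfaces of section, first-return maps and their fixed points, (self-)linking and transverse rotation numbers, and the elliptic-parabolic condition, so it is enough to prove (i)--(iii) for the $\lambda_{\mu,c}$-Reeb flow. Finally, being dynamically convex on $\R P^3$, $\ker\lambda_{\mu,c}$ equals the standard tight structure $\xi_{\rm std}$ (Corollary~\ref{coro_total}), so both Theorem~\ref{main1} and Corollary~\ref{coro_main_2} apply to $\lambda_{\mu,c}$.

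For (i), I would apply Theorem~\ref{main1} to $\lambda_{\mu,c}$, which lies in the neighbourhood $\U$ of itself that the theorem provides: this produces a prime closed $\lambda_{\mu,c}$-Reeb orbit $P$ whose geometric image $K$ is a $2$-unknot with $\sl(K)=-1/2$ and transverse rotation number (for the prime period) in $(1/2,1]$, and which is elliptic-parabolic. Since $K$ is tangent to $X_{\lambda_{\mu,c}}$, oriented by $\lambda_{\mu,c}$, and has rational self-linking number $-1/2$, Corollary~\ref{coro_main_2} applies to $K$ and produces a rational open book decomposition of $\Sigma_{\mu,c}$ with binding $K$, all of whose pages are disk-like global surfaces of section for the $\lambda_{\mu,c}$-Reeb flow, hence for the Hamiltonian flow; this, together with the above properties of $P$, is assertion (i). Incidentally, the strict lower bound on the prime rotation number is precisely what makes the rotation number of the double cover exceed $1$, which is the inequality Theorem~\ref{thm_22} needs for the $2$-fold iterate.

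For (ii), fix any page $\Sigma$ of the open book of (i). Its interior $\dot\Sigma$ is an open disk, the first-return map $\psi\colon\dot\Sigma\to\dot\Sigma$ is an orientation-preserving diffeomorphism of $\dot\Sigma\cong\R^2$, and, because $\mathcal L_{X_{\lambda_{\mu,c}}}d\lambda_{\mu,c}=0$, it preserves the positive area form $d\lambda_{\mu,c}|_{\dot\Sigma}$, whose total mass is $\int_{\partial\Sigma}\lambda_{\mu,c}=2T_{\rm min}<\infty$, where $T_{\rm min}$ is the prime period of $P$. A fixed-point-free orientation-preserving homeomorphism of the plane has no non-wandering points by Brouwer's plane translation theorem, contradicting Poincar\'e recurrence for the finite invariant measure $d\lambda_{\mu,c}|_{\dot\Sigma}$; hence $\psi$ has a fixed point, and the Reeb trajectory through it is a closed orbit $P'$ disjoint from $K$ that meets $\Sigma$ --- and, all pages being global surfaces of section, every page --- exactly once per period. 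By the structural results of~\cite{HLS} underlying Corollary~\ref{coro_total}, any closed orbit obtained this way is $2$-unknotted, has self-linking number $-1/2$, and forms a Hopf link with $K$; since the return maps of the Reeb and Hamiltonian flows coincide, this gives (ii).

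For (iii), a $2$-unknotted periodic orbit $P''$ of the Hamiltonian flow on $\Sigma_{\mu,c}$ with $\sl(P'')=-1/2$ is, viewed as a $\lambda_{\mu,c}$-Reeb orbit, transverse to $\xi_{\rm std}$ and tangent to $X_{\lambda_{\mu,c}}$, and its geometric image is a $2$-unknot with rational self-linking number $-1/2$; applying Corollary~\ref{coro_main_2} (which needs only dynamical convexity) to this image and reparametrizing gives the rational open book of (iii), all of whose pages are disk-like global surfaces of section. I expect the only step that is not essentially bookkeeping to be (ii): producing the fixed point of $\psi$ (area preservation plus a Brouwer-type argument) and identifying the resulting orbit as a $2$-unknot with self-linking $-1/2$, the latter resting on~\cite{HLS}. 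Everything else amounts to checking that $\lambda_{\mu,c}$ satisfies the hypotheses of Theorem~\ref{main1} and Corollary~\ref{coro_main_2} --- dynamical convexity is~\cite[Theorem~3.4]{convex}, the rotation-number requirement of Theorem~\ref{thm_22} is automatic in the dynamically convex case (this is the content of Corollary~\ref{coro_main_2}), and $\ker\lambda_{\mu,c}=\xi_{\rm std}$ --- and to translating between the Reeb and Hamiltonian flows.
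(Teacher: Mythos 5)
Your proposal is correct and follows essentially the same route the paper intends: the paper states this theorem as a ``direct consequence of a combination of Theorem~\ref{teo_AFFHK}, Theorem~\ref{main1}, Theorem~\ref{thm_22} and~\cite[Theorem~3.4]{convex}'', and your assembly of those ingredients -- Theorem~\ref{main1} for the elliptic-parabolic $2$-unknot, Corollary~\ref{coro_main_2} (i.e.\ Theorem~\ref{thm_22} in the dynamically convex case) for the open book in (i) and (iii), and the Brouwer/Poincar\'e-recurrence fixed-point argument plus the structural results of~\cite{HLS} for (ii) -- is exactly the argument the authors have in mind and is the one they spell out in the parallel Finsler case (proof of Theorem~\ref{theo_finsler}).
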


The relevance of the above result is that the orbits $P,P'$ obtained in (i)-(ii) form a Hopf link and have the same contact-topological properties of the orbits obtained by Hill~\cite{hill} in the lunar problem and by Conley~\cite{conley}, both for large negative energies.

\subsubsection{Birkhoff's retrograde orbit}
In \cite{B}, Birkhoff proves that for all mass ratios $0<\mu <1$ and all energies $c$ below the first Lagrange value, there exists a periodic orbit $P_{\mu,c}$ inside the bounded component near each primary. Such a periodic orbit, called retrograde by Birkhoff, projects onto the $q$-plane as a simple closed curve surrounding the primary in the clockwise direction.  Hence $P_{\mu,c}$, seen as a periodic orbit on $(\R P^3,\xi_0)$, is $2$-unknotted and has self-linking number $\frac{-1}{2}$. Birkhoff raises the question whether this retrograde orbit is the boundary of a disk-like global surface of section. This global surface of section may be used to find a direct orbit near the light primary. The direct orbit near the heavy primary was previously found by Poincar\'e for $\mu$ close to $1$ and a dense set of  energies. It is worth mentioning that the motion of the moon in our Sun-Earth-Moon system is direct.

Theorem 1.12 answers Birkhoff's question affirmatively on the bounded component near the light primary and $(\mu,c)$ in the convexity range. Hence Birkhoff's retrograde orbit $P_{\mu,c}$ is the binding of a rational open book decomposition with disk-like pages and each page is a rational  global surface of section. In particular, a periodic orbit $P'_{\mu,c}$, corresponding to a fixed point of the first return map, is again the binding of a rational open book decomposition with similar dynamical properties.

Finally we remark that in his thesis  \cite{McG} McGehee finds disk-like global surfaces of section for the lifted flow of the bounded component near the heavy primary, $\mu \sim 1$ and energies below the first Lagrange value.

\subsection{Application to Finsler geodesic flows}

Let $F:TS^2 \to [0, +\infty)$ be a Finsler metric on the $2$-sphere $S^2$ and denote by $T^1S^2 = F^{-1}(1)$ its unit tangent bundle.  The reversibility $r\in [1,+\infty)$ of $F$, defined by $$ r :=  \sup \left\{\frac{F(v)}{F(-v)}:v\in TS^2\setminus \{0\} \right\}, $$  is a measure of how the fibers of $T^1S^2$ deviate from being symmetric relative to the zero vector. It attains its minimum value $r=1$ if and only if  all of them are symmetric. In this case, the metric is called reversible, otherwise it is called irreversible.

Existence and multiplicity of closed geodesics for Finsler metrics on $S^2$ is a very delicate question. While in the Riemannian case one always has infinitely many closed geodesics, in the Finsler case one may have finitely many as in Katok's examples \cite{kat} with only two of them. In these examples the reversibility plays a crucial role; all Katok's examples are irreversible. A sharp lower bound on the number of closed geodesics in the Finsler case has only been proved recently.

\begin{theorem}[Bangert, Long \cite{BL}]\label{theo_bl} A Finsler metric on $S^2$ admits at least two prime closed geodesics. \end{theorem}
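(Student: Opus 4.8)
The plan is to argue by contradiction using $S^1$-equivariant Morse theory on the free loop space. I would work on $\Lambda = W^{1,2}(\R/\Z,S^2)$ with its $S^1$-action by rotation of the domain and the energy functional $E(\gamma)=\int_0^1 F(\gamma,\dot\gamma)^2\,dt$, whose critical orbits are the constant loops $\Lambda^0\cong S^2$ together with the $S^1$-orbits of closed geodesics. First I would recall that at least one prime closed geodesic $c$ exists: since $H_*(\Lambda,\Lambda^0;\Q)$ is nontrivial in positive degrees (every loop on $S^2$ is contractible, but $\Lambda S^2$ is not), a minimax over a nontrivial homology class --- equivalently Birkhoff's curve-shortening applied to a sweepout of $S^2$ by loops --- yields a positive critical value, hence a nonconstant closed geodesic. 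Assume, for contradiction, that $c$ is the only prime closed geodesic, up to the obvious equivalences (reparametrization by $S^1$ and, when $F$ is irreversible, orientation reversal, which I would carry along in parallel). Then every critical orbit of $E$ above the constants is $S^1\cdot c^m$, $m\geq 1$.

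Next I would extract strong constraints on $c$ from index iteration. Using Bott's iteration formula and Long's precise index iteration theory for the symplectic path generated by the linearized Poincar\'e return map $P_c\in \mathrm{Sp}(2,\R)$, one relates $\iota(c^m)$, $\nu(c^m)$ and the mean index $\hat\iota(c)=\lim_m \iota(c^m)/m>0$ to the spectrum of $P_c$. If $c$ were hyperbolic, parabolic, or elliptic with rotation number in $\Q$, then either the local equivariant homology $\bar H^{S^1}_*(E,S^1\cdot c^m)$ fails to supply the known equivariant homology of $\Lambda S^2$ in a suitable degree, or a Birkhoff--Lewis / bifurcation argument produces a nearby second closed geodesic; both are contradictions. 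Hence $c$ must be irrationally elliptic: $P_c$ has eigenvalues $e^{\pm 2\pi i\theta}$ with $\theta$ irrational, so $\nu(c^m)=0$ for all $m$, each $S^1\cdot c^m$ is a nondegenerate critical orbit, and its local contribution to $H^{S^1}_*(\Lambda,\Lambda^0)$ is a single short block concentrated near degree $\iota(c^m)$ with uniformly bounded rank.

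Then I would close the argument by a counting comparison. One route uses Rademacher's resonance identity: for a Finsler $2$-sphere with finitely many prime closed geodesics $c_1,\dots,c_k$ one has $\sum_i \hat\chi(c_i)/\hat\iota(c_i)=\kappa(S^2)$, a fixed positive rational determined by $H^{S^1}_*(\Lambda S^2,\Lambda^0;\Q)$; with $c$ the only geodesic this pins down $\hat\chi(c)/\hat\iota(c)$, and one checks that this value cannot be realized by an irrationally elliptic geodesic, for which $\hat\iota(c)$ and $\hat\chi(c)$ are explicit functions of $\theta$. An equivalent route applies the Common Index Jump Theorem of Long--Zhu to produce iterates $c^{m_1},c^{m_2},\dots$ whose Morse indices cluster around prescribed large values while remaining controlled from one side; since the only Morse-theoretic generators come from $c$ and its iterates, while the Betti numbers $b^{S^1}_q(\Lambda S^2,\Lambda^0;\Q)$ are bounded yet do not all vanish in high degrees, one obtains a degree carrying more homology than $c$'s iterates can generate (or, dually, a forced extra critical orbit). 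Either way we reach a contradiction, so a second prime closed geodesic exists.

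The hard part is the irrationally elliptic case, where the unique geodesic is maximally nondegenerate and the crude Morse inequalities do not bite. There one needs the exact local equivariant homology of iterated elliptic orbits --- including how the homological degree shift jumps with $m$, governed by the Bott functions and the parity of $\iota(c^m)$ --- combined with the precise structure of $H^{S^1}_*(\Lambda S^2,\Lambda^0)$, and then either the resonance identity or the Common Index Jump Theorem to distill an honest numerical impossibility. Controlling the interaction between the $S^1$-action (special behaviour of $c^m$ when $m$ has common factors with orbit multiplicities) and index parities --- and, when $F$ is irreversible, tracking $c$ against its reverse --- is the technical core of the argument.
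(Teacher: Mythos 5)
This theorem is not proved in the paper at all: it appears only as a citation to Bangert and Long~\cite{BL}, quoted as background for the Finsler application, so there is no ``paper's own proof'' to measure your attempt against. The paper's independent route to a two-geodesic statement is Theorem~\ref{theo_finsler}, which uses dynamical convexity, pseudo-holomorphic curves, and rational open book decompositions rather than loop-space Morse theory, but that result only holds under the pinching hypothesis $(r/(r+1))^2 < K \leq 1$ and so is strictly weaker in scope than Theorem~\ref{theo_bl}.

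As for your proposal on its own terms: the architecture you describe --- Lyusternik--Fet for existence, assume a unique prime closed geodesic $c$, constrain $c$ via $S^1$-equivariant Morse theory together with Long's precise index iteration, then derive a contradiction from the resonance identity or the Common Index Jump Theorem --- is indeed the broad shape of the Bangert--Long argument, and you correctly identify irrationally elliptic $c$ as the bottleneck. However, as written, the outline has genuine gaps beyond the one you flag. The dismissal of the hyperbolic/parabolic/rationally elliptic cases by ``a Birkhoff--Lewis / bifurcation argument produces a nearby second closed geodesic'' does not work here: Birkhoff--Lewis produces periodic orbits in a one-parameter family or near an elliptic orbit under twist-type nondegeneracy, not for a fixed metric with a fixed geodesic, and for a single hyperbolic $c$ the argument has to be Morse-theoretic (the iterates $c^m$ fail to fill the known equivariant Betti numbers in certain degrees). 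In the irrationally elliptic case, the resonance identity alone does not yield an ``honest numerical impossibility'' without considerable work --- one must track the exact local equivariant homology of $S^1\cdot c^m$, its degree shift as a function of $m$ (governed by the Bott function and the fractional parts $\{m\theta\}$), and the parity structure, and then compare against $\bar H^{S^1}_*(\Lambda S^2,\Lambda^0 S^2;\Q)$ degree by degree; this is where Bangert--Long spend most of their effort and where a straightforward count fails. Finally, your parenthetical treatment of irreversibility (tracking $c$ against its reverse) is more than bookkeeping: for an irreversible $F$ the reversed curve is typically not a geodesic, so ``up to orientation reversal'' is not an available identification and this changes which $S^1$-orbits can occur as critical sets. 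In short, the approach is the right one, but the proposal is a road map with the hardest steps left unresolved.
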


One might ask if at least one of the closed geodesics in Theorem \ref{theo_bl} is simple. This can be  affirmatively answered if the flag curvature of $F$ is positive and pinched by a suitable constant depending on the reversibility.

\begin{theorem}[Rademacher \cite{rad}]\label{theo_rad1}
Let $F$ be a Finsler metric with reversibility $r$ and flag curvature $K$. Assume that
\begin{equation*}
\left(\frac{r}{r+1}\right)^2 < \delta \leq K \leq 1,
\end{equation*}
for some $\delta$. Then its geodesic flow admits two distinct closed geodesics $\gamma_1$ and $\gamma_2$ with lengths $l(\gamma_1) \leq l(\gamma_2)$, such that $\gamma_1$ is simple and $l(\gamma_1) \leq \frac{2\pi}{\sqrt{\delta}}$.
\end{theorem}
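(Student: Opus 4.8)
I would split the statement into two parts: (a) the existence of a \emph{simple} closed geodesic $\gamma_1$ with $l(\gamma_1)\le 2\pi/\sqrt\delta$, which is essentially comparison geometry; and (b) the existence of a second, geometrically distinct, closed geodesic $\gamma_2$, which is where the full strength of the pinching is used. The hypothesis $(r/(r+1))^2<\delta\le K\le 1$ has two roles: the upper bound $K\le 1$ controls the injectivity radius and hence the lengths of geodesic loops, the lower bound $K\ge\delta$ produces short closed geodesics and forces dynamical convexity, and the strict inequality $(r/(r+1))^2<\delta$ is precisely what reconciles the two length scales, being equivalent to $2\pi/\sqrt\delta<2\pi(1+1/r)$.

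For part (a): by the Finsler Bonnet--Myers theorem the bound $K\ge\delta$ makes $(S^2,F)$ compact of diameter $\le\pi/\sqrt\delta$, and a Birkhoff--Lusternik--Schnirelmann min--max over sweep-outs of $S^2$ by loops yields a closed geodesic of length $\le 2\pi/\sqrt\delta$. Let $\gamma_1$ be a closed geodesic of minimal length, so $l(\gamma_1)\le 2\pi/\sqrt\delta$. The bound $K\le 1$ gives, through a Klingenberg-type estimate in its irreversible form --- this is the point at which the reversibility $r$ enters --- that every nontrivial geodesic loop of $F$ has length $\ge\pi(1+1/r)$, hence every closed geodesic with a self-intersection, being a concatenation of two geodesic loops, has length $\ge 2\pi(1+1/r)$. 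Since $\sqrt\delta>r/(r+1)$, i.e. $2\pi/\sqrt\delta<2\pi(1+1/r)$, the minimizing geodesic $\gamma_1$ cannot self-intersect; it is therefore simple, and $\pi(1+1/r)\le l(\gamma_1)\le 2\pi/\sqrt\delta$.

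For part (b): the pinching $(r/(r+1))^2<K\le 1$ implies, via a Sturm/Morse comparison of the index form of the Jacobi equation along every closed geodesic \emph{and all of its iterates} with the corresponding constant--flag--curvature model, that the contact form on the unit tangent bundle $T^1S^2\simeq\R P^3=L(2,1)$ whose Reeb flow is the geodesic flow is \emph{dynamically convex}; along the way one reads off that the transverse rotation number of the lift of a simple closed geodesic lies in $(1/2,1]$ (lower end from $K>(r/(r+1))^2$, upper end from $K\le 1$). Since the lift $\widehat\gamma_1\subset T^1S^2$ of the simple geodesic $\gamma_1$ is a $2$-unknot tangent to the Reeb vector field with rational self-linking number $-1/2$ (a standard computation with $\xi_{\rm std}$), Corollary~\ref{coro_main_2} applies with $p=2$: $\widehat\gamma_1$ bounds a rational disk-like global surface of section which is a page of a rational open book decomposition of $L(2,1)$. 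A standard fixed-point argument (the first return map being an area-preserving map of a disk-like surface isotopic to the identity, whose boundary rotation is non-integral) produces a fixed point in the interior of a page; the corresponding closed Reeb orbit does not meet the binding, so its projection $\gamma_2$ is a closed geodesic geometrically distinct from $\gamma_1$, and since $\gamma_1$ has minimal length, $l(\gamma_1)\le l(\gamma_2)$. (Invoking Theorem~\ref{main1} in place of Corollary~\ref{coro_main_2} additionally shows $\gamma_1$ may be taken elliptic-parabolic; alternatively, part (b) can be run in Rademacher's original way, deriving a contradiction from the fact that the iterates of a single closed geodesic --- whose indices grow with slope bounded above in terms of $K\le 1$, by Bott's iteration formula --- cannot reproduce the $S^1$-equivariant rational homology of $\Lambda S^2$, which is non-zero in infinitely many degrees.)

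The step I expect to be hardest is establishing dynamical convexity in part (b): one must prove the uniform lower bound $\mu_{CZ}\ge 3$ for \emph{every} iterate of \emph{every} closed geodesic from the pointwise pinching $K>(r/(r+1))^2$, and the constant is sharp --- Katok's metrics lie on its boundary --- so the Sturm comparison has to be carried out for \emph{oriented} geodesics, which is exactly how the reversibility $r$ is built into the constant $(r/(r+1))^2$; one also has to verify that the lift of a simple closed geodesic to $T^1S^2\cong\R P^3$ is a $2$-unknot with self-linking number $-1/2$. By comparison the comparison-geometry input of part (a) --- Bonnet--Myers, the min--max length bound, and the Klingenberg-type loop estimate --- is routine.
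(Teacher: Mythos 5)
Note first that Theorem~\ref{theo_rad1} is cited from Rademacher~\cite{rad} and the paper does not reprove it; what the paper actually proves is Theorem~\ref{theo_finsler}, described as ``a new proof and an improvement.'' Your proposal is correct but routes part~(a) quite differently from the paper. You obtain $\gamma_1$ classically: Bonnet--Myers plus a Birkhoff min--max gives a closed geodesic of length $\leq 2\pi/\sqrt\delta$, and Rademacher's loop estimate from~\cite[Proposition~1]{rad2} (which in the normalization $K\leq 1$ reads $\ell \geq \pi(1+1/r)$, and which the paper cites in the rescaled form $\ell>\pi$ under $1\leq K<((r+1)/r)^2$) forces any self-intersecting closed geodesic to have length $\geq 2\pi(1+1/r)>2\pi/\sqrt\delta$, so a length-minimizer is simple. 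The paper instead takes $\gamma_1$ from Theorem~\ref{main1}, so that $\mu_{CZ}(\dot\gamma_1^2)=3$, and proves simplicity by combining the same loop estimate with the transverse rotation bound $\dot\theta\geq 1$: non-simplicity would give $l(\gamma_1)>2\pi$, hence $\theta(2l(\gamma_1))-\theta(0)>4\pi$ and $\mu_{CZ}(\dot\gamma_1^2)\geq 5$, a contradiction. The trade-off is clear: your classical route recovers the literal conclusions of Theorem~\ref{theo_rad1} --- the bound $l(\gamma_1)\leq 2\pi/\sqrt\delta$ and the ordering $l(\gamma_1)\leq l(\gamma_2)$, automatic since your $\gamma_1$ minimizes length --- which the Theorem~\ref{theo_finsler} argument does not establish; the paper's route yields the additional dynamical structure (open book, Hopf link, elliptic-parabolic binding) that is the advertised improvement. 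For part~(b) your main mechanism (Corollary~\ref{coro_main_2} plus an area-preserving fixed point on a page) is exactly the paper's, and the Bott-iteration alternative you sketch is the way to argue without the pseudo-holomorphic machinery. I see no genuine gap; the only point worth making explicit is that the fixed point lies in the interior of the page while the lift of $\gamma_1$ is the binding, so the corresponding closed Reeb orbit is disjoint from it and $\gamma_2$ is geometrically distinct from $\gamma_1$.
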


The geodesic flow restricted to $T^1S^2 \simeq L(2,1)$ is the Reeb flow of the universally tight contact form $\lambda_F$ obtained by pulling-back the tautological $1$-form on $T^*S^2$ to $TS^2$ under the Legendre transform induced by $F$, and restricting it to $T^1S^2$. In this case, the Reeb vector field $X_F$ of $\lambda_F$ coincides with the geodesic vector field restricted to $T^1S^2$.

A regular simple closed curve $S^1 \ni t \mapsto \gamma(t) \in S^2$ represents the generator $t \mapsto \dot \gamma(t)/F(\dot \gamma(t))$ of $\pi_1(T^1S^2) \simeq \Z_2$, which is a $2$-unknot. We may assume that $F(\dot \gamma)=1$. This means that there exists an immersion $u:\D \to T^1S^2$ so that $u|_{{\rm int}(\D)}$ is an embedding and $u|_{\partial \D}$ is a $2$-covering map over the knot $\dot \gamma(\R)\subset T^1S^2$. Moreover,  $\dot \gamma$ is transverse to the contact structure $\xi_F=\ker \lambda_F$ and its rational self-linking number  is $\frac{-1}{2}$. The disk $u$ is called a $2$-disk for the $2$-unknot $\dot \gamma$. Any knot on $T^1S^2$ which is transverse to $\xi_F$ and transversally isotopic to $\dot \gamma$ is called a Hopf fiber. A pair of geometrically distinct simple closed geodesics $\gamma_1$ and $\gamma_2$ which intersect at precisely two distinct points in $S^2$, lift to a pair of Hopf fibers $\dot \gamma_1$ and $\dot \gamma_2$ in $T^1S^2$ forming a link $l:=\dot \gamma_1 \cup \dot \gamma_2\subset T^1S^2$. Any link on $T^1S^2$ which is transverse to $\xi_F$  and transversally isotopic to $l$ is called a Hopf link.

%\begin{definition}Let $\gamma$ be a closed geodesic so that $\dot \gamma(\R)\subset T^1S^2$ is a Hopf fiber. A rational disk-like global surface of section with boundary in $\dot \gamma(\R)$ is a $2$-disk $u:\D \to T^1S^2$ for  $\dot \gamma(\R)$ so that $u|_{\D \setminus \partial \D}$ is transverse to the Reeb vector field $X_F$ and every Reeb trajectory on $T^1S^2 \setminus \dot \gamma(\R)$ hits $u(\D)$ for arbitrarely large positive and negative values of time. \end{definition}

%\begin{definition}Let $\gamma$ be a closed geodesic so that $\dot \gamma(\R)\subset T^1S^2$ is a Hopf fiber. A rational open book decomposition on $T^1S^2$ adapted to $\lambda_F$ with binding $\dot \gamma(\R)\subset T^1S^2$ is a smooth fibration $\pi:T^1S^2 \setminus \dot \gamma(\R) \to S^1$ so that  the closure of each page $\pi^{-1}(c),c\in S^1,$ is a rational disk-like global surface of section with boundary in $\dot \gamma(\R)$. \end{definition}

Our main result follows from Theorem~\ref{main1} and Corollary~\ref{coro_main_2}. It is a new proof and an improvement of Theorem~\ref{theo_rad1}.

\begin{theorem}\label{theo_finsler}Let $F$ be a Finsler metric with reversibility $r$ and flag curvatures $K$ satisfying
\begin{equation}\label{eq_flag1}
\left(\frac{r}{r+1}\right)^2 <\delta \leq K \leq 1,
\end{equation}
for some $\delta \in (0,1]$. Then its geodesic flow admits two distinct closed geodesics $\gamma_1$ and $\gamma_2$ so that  the following properties hold:
\begin{itemize}
\item[(i)] The lifts $\dot \gamma_1$ and $\dot \gamma_2$ of $\gamma_1$ and $\gamma_2$ to $T^1S^2$, respectively, form a Hopf link.
\item[(ii)] Each $\dot \gamma_i(\R)\subset T^1S^2$, $i=1,2$, is the binding of a rational open book decomposition such that each page is a disk-like global surface of section for the geodesic flow.
\item[(iii)] $\gamma_1$ is simple, elliptic-parabolic and its transverse rotation number belongs to $(\frac{1}{2},1]$.
\item[(iv)] If the lift $\dot \gamma$ of some unit speed prime closed geodesic $\gamma$ is a Hopf fiber, then $\dot \gamma$ is the binding of a rational open book decomposition with disk-like pages which are global surfaces of section for the geodesic flow. In particular, this applies to the lift of the shortest closed geodesic provided by Theorem \ref{theo_rad1}.
\end{itemize}
\end{theorem}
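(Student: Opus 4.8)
The plan is to use the dictionary identifying the geodesic flow of $F$ on $T^1S^2\simeq L(2,1)$ with the Reeb flow of the universally tight contact form $\lambda_F$, and then to feed this Reeb flow into the three structural results already available for dynamically convex Reeb flows on $L(2,1)$: Theorem~\ref{main1}, Corollary~\ref{coro_main_2} and Corollary~\ref{coro_total}. Every closed Reeb orbit of $\lambda_F$ is the unit-tangent lift of a closed geodesic, and the attributes of being elliptic-parabolic, of carrying a given transverse rotation number, of being $2$-unknotted, and of having rational self-linking $-1/2$ all match up verbatim across this dictionary. Hence, once we know that $\lambda_F$ is dynamically convex, items (i)--(iv) should follow by unwinding definitions; the one genuinely nontrivial ingredient is the dynamical convexity itself.

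First I would establish that the pinching hypothesis~\eqref{eq_flag1} forces $\lambda_F$ to be dynamically convex. Since $\pi_2(T^1S^2)=0$, the first Chern class of $\xi_F$ vanishes on $\pi_2$ automatically, and only the Conley--Zehnder bound $\mu_{CZ}\ge 3$ for contractible orbits needs checking. A contractible $P\in\P(\lambda_F)$ is the (possibly iterated) unit-tangent lift of a closed geodesic, along which the transverse linearized flow is governed by the scalar Jacobi equation $\ddot y+K(t)y=0$ with $\delta\le K(t)\le 1$; Sturm comparison with the constant curvatures $\delta$ and $1$ bounds the transverse rotation number $\rho(P)$ from below by a quantity proportional to $\sqrt\delta$ times the traversed length. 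One checks that $\mu_{CZ}(P)\ge 3$ as soon as $\rho(P)>1$, which reduces to a sharp lower bound on lengths of contractible closed geodesics, and it is exactly at this point that the reversibility-dependent constant $(r/(r+1))^2$ enters, through Klingenberg-type injectivity-radius estimates and Bol--Fiala-type isoperimetric inequalities for (possibly irreversible) Finsler metrics on $S^2$. I would invoke this index estimate as a black box from the results of Rademacher~\cite{rad}.

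With $\lambda_F$ dynamically convex, I would apply Corollary~\ref{coro_total}: it produces a pair $P,P'\in\P(\lambda_F)$ whose images form a transverse Hopf link, both $2$-unknotted with self-linking $-1/2$, each binding a rational open book decomposition of $L(2,1)$ all of whose pages are disk-like global surfaces of section, with $P$ elliptic-parabolic. Let $\gamma_1,\gamma_2$ be the closed geodesics simply covered by $P,P'$; they are geometrically distinct because their unit-tangent lifts have disjoint images. Transporting these statements through $T^1S^2\simeq L(2,1)$ and the contactomorphism $\xi_F\cong\xi_{\rm std}$ gives (i) and (ii), and the elliptic-parabolicity of $P$ gives that of $\gamma_1$. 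For the rest of (iii): since $\dot\gamma_1$ is a $2$-unknot with $\sl(\dot\gamma_1)=-1/2$, a turning-number and self-intersection count for unit-tangent lifts of closed curves on $S^2$ forces $\gamma_1$ to be embedded, hence simple; and the transverse rotation number of $\dot\gamma_1$ lies in $(1/2,1]$ directly by Theorem~\ref{main1} applied to $\lambda_F$, which furnishes precisely the elliptic-parabolic $2$-unknot with self-linking $-1/2$ entering the Hopf link of Corollary~\ref{coro_total}. For (iv): if $\gamma$ is a unit-speed prime closed geodesic whose lift $\dot\gamma\subset T^1S^2$ is a Hopf fiber, then $\dot\gamma$ is a $2$-unknot tangent to $X_{\lambda_F}$ with rational self-linking $-1/2$, so Corollary~\ref{coro_main_2} with $p=2$ produces the claimed rational open book with binding $\dot\gamma(\R)$; since the unit-tangent lift of every simple closed curve on $S^2$ is such a Hopf fiber, this applies in particular to the simple closed geodesic furnished by Theorem~\ref{theo_rad1} under~\eqref{eq_flag1}.

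The main obstacle is the dynamical-convexity step: converting the reversibility-weighted pinching $(r/(r+1))^2<\delta\le K\le 1$ into the bound $\mu_{CZ}\ge 3$ for all contractible closed geodesics. The Sturm-comparison half of the argument is routine, but the sharp lower bound on lengths of contractible closed geodesics on which it rests --- the place where the reversibility must be tracked carefully --- is a delicate piece of global Finsler geometry that I would import from~\cite{rad} rather than reprove. Everything downstream of that is a translation of Theorem~\ref{main1}, Corollary~\ref{coro_main_2} and Corollary~\ref{coro_total} into the vocabulary of geodesic flows.
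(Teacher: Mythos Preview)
Your overall strategy is correct and matches the paper's: establish dynamical convexity of $\lambda_F$, then invoke Theorem~\ref{main1} and Corollary~\ref{coro_main_2} (or their packaged form, Corollary~\ref{coro_total}). Items (i), (ii), (iv), and the elliptic-parabolic and rotation-number assertions in (iii) follow exactly as you outline. One minor correction: dynamical convexity of $\lambda_F$ under~\eqref{eq_flag1} is the theorem of Harris--Paternain~\cite{HP}, not of Rademacher; your sketch of the mechanism is right, but the black box you want is~\cite{HP}.

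There is, however, a genuine gap in your argument that $\gamma_1$ is \emph{simple}. You assert that $\dot\gamma_1$ being a $2$-unknot with $\sl(\dot\gamma_1)=-\tfrac12$, together with ``a turning-number and self-intersection count'', forces $\gamma_1$ to be embedded. This does not work: the transverse isotopy class of the unit-tangent lift $\dot\gamma$ is only a regular-homotopy invariant of the immersed curve $\gamma$. A regular homotopy that passes through an \emph{opposite-direction} tangential self-touching lifts to an isotopy of embedded transverse knots in $T^1S^2$, since at such a tangency the two branches lift to the distinct points $(p,v)$ and $(p,-v)$. In particular, a Reidemeister-II-type move pushing one arc of a simple closed curve across an oppositely-oriented arc creates a pair of double points while keeping $\dot\gamma$ transversely isotopic to a Hopf fiber. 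So the $2$-unknot property and $\sl=-\tfrac12$ are perfectly compatible with $\gamma$ having self-intersections; no purely topological count of the sort you invoke can rule them out.

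The paper's proof of simplicity is dynamical rather than topological, and uses an input you did not exploit: the index constraint $\mu_{CZ}(\dot\gamma_1^2)=3$ (equivalently $\rho(\dot\gamma_1)\le 1$) supplied by Theorem~\ref{main1}. After rescaling so that $1\le K<\bigl(\tfrac{r+1}{r}\bigr)^2$, Rademacher's estimate~\cite{rad2} gives that every geodesic loop has length $>\pi$, while the Jacobi equation with $K\ge 1$ yields $\dot\theta\ge 1$ for the argument of any transverse linearized solution. If $\gamma_1$ were not simple it would decompose into at least two geodesic loops, hence $\ell(\gamma_1)>2\pi$; along the double cover this gives $\theta(2\ell(\gamma_1))-\theta(0)>4\pi$ and thus $\mu_{CZ}(\dot\gamma_1^2)\ge 5$, contradicting $\mu_{CZ}(\dot\gamma_1^2)=3$.
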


\begin{remark}
Since $F$ may be irreversible, a simple closed geodesic $\gamma$ traversed in the opposite direction may not be a geodesic. Hence Birkhoff annuli with boundary  $\dot \gamma \cup -\dot \gamma$ do not necessarily exist for irreversible Finsler metrics. Thus the natural global surfaces of section for the geodesic flow on $T^1S^2$ which arise in such cases, are rational disk-like global surfaces of section with a single boundary component, as those given in Theorem \ref{theo_finsler}. On the other hand, the Hopf link $\dot \gamma_1 \cup \dot \gamma_2$, obtained in Theorem \ref{theo_finsler}, is a candidate for an annulus-like global surface of section. This question is left for a future work.
\end{remark}

\begin{remark}
Non-reversible Finsler metrics on $S^2$ satisfying the pinching condition~\eqref{eq_flag1} were studied in~\cite{global}, where it is shown that closed geodesics satisfying certain contact-topological properties cannot exist. The proof is based on dynamical characterizations of the tight three-sphere from~\cite{hryn,HLS} and on a theorem due to Harris and Paternain~\cite{HP} stating that~\eqref{eq_flag1} implies dynamical convexity. Moreover, in~\cite{global} it is shown that the pinching condition~\eqref{eq_flag1} is sharp for dynamical convexity.
\end{remark}

\begin{proof}[Proof of Theorem \ref{theo_finsler}]Under the hypothesis \eqref{eq_flag1} on the flag curvature, we know from \cite{HP} that $\lambda_F$ is dynamically convex. From Theorem \ref{main1} there exists a $2$-unknotted non-hyperbolic periodic orbit $\dot \gamma_1$ of $\lambda_F$ corresponding to a closed geodesic $\gamma_1$ so that $\mu_{CZ}(\dot \gamma_1^2) =3$ and $\sl(\dot \gamma_1) = \frac{-1}{2}.$ Moreover, its rotation number  satisfies $\rho(\dot \gamma_1) \in \left(\frac{1}{2},1 \right]$.

Corollary~\ref{coro_main_2} implies that $\dot \gamma_1$ is the binding orbit of a rational open book decomposition. Each page $\Sigma$ of this open book is a rational global surface of section. The dynamics of the geodesic flow on $T^1S^2$ is represented by an area-preserving first return disk map $\psi:\Sigma \to \Sigma$. Since $\psi$ preserves a finite area, it has a fixed point which corresponds to a $2$-unknotted closed geodesic $\dot \gamma_2$ with $\sl(\dot \gamma_2)=\frac{-1}{2}$. The link $\dot \gamma_1 \cup \dot \gamma_2$ is a Hopf link. Corollary~\ref{coro_main_2} also implies that $\dot \gamma_2$ is the binding of an open book decomposition.

Multiplying the Finsler metric by a suitable constant we may assume that \begin{equation}\label{eq_flag2} 1 \leq K < \left(\frac{r+1}{r}\right)^2. \end{equation} To see that $\gamma_1$ is simple we use  the following estimate due to Rademacher \cite[Proposition 1]{rad2} on the length $\ell$ of a geodesic loop \begin{equation}\label{eq_rad2}\ell > \pi . \end{equation} Now the linearized flow transverse to the Reeb vector field along a geodesic $\gamma$ is represented by solutions of the following equation \begin{equation}\label{eq_lineari}\left(\begin{array}{c} \dot a(t)\\\dot b(t) \end{array} \right) = \left(\begin{array}{cc}0 & - K(t)\\ 1 & 0 \end{array} \right)\left(\begin{array}{c} a(t)\\ b(t) \end{array} \right), \end{equation} where $K(t)$ is the flag curvature of $T_{\gamma(t)} S^2$ in the direction of $\dot \gamma(t)$. Writing a linearized solution as $a(t)+ ib(t) = r(t)e^{i\theta(t)}$ for continuous functions $r(t)>0$ and $\theta(t)\in \R,$ we obtain from \eqref{eq_flag2} and \eqref{eq_lineari} that  \begin{equation}\label{eq_lineari2} \dot \theta(t) = K(t) \cos^2 \theta(t) + \sin^2 \theta(t)\geq 1, \forall t.\end{equation} If $\gamma_1$ is not simple, then it has at least two distinct loops and, from \eqref{eq_rad2} and \eqref{eq_lineari2} we obtain the following estimate for $\dot \gamma_1^2$ $$\theta(2l(\gamma_1))-\theta(0) > 4 \pi.$$ However, according to the definition of the Conley-Zehnder index discussed in Section~\ref{sec_prelim}, this implies that $\mu_{CZ}(\dot \gamma_1^2) \geq 5$, contradicting $\mu_{CZ}(\dot \gamma_1^2)=3$. Thus $\gamma_1$ must be simple. \end{proof}

\noindent {\bf Acknowledgements.} We became aware of Birkhoff's question on the existence of disk-like global surfaces of section bound by retrograde orbits after a talk given by Urs Frauenfelder in the Workshop on Conservative Dynamics and Symplectic Geometry held at IMPA -- Rio de Janeiro, August 2015. We would like to thank Urs Frauenfelder for his inspiring lecture. UH was partially supported by CNPq grant 309983/2012-6. PS was partially supported by FAPESP grants 2011/16265-8 and 2013/20065-0, and by CNPq grant 301715/2013-0.

\section{Preliminaries}\label{sec_prelim}

Here we review a couple of basic facts about contact geometry, Conley-Zehnder indices, pseudo-holomorphic curves in symplectizations etc. Throughout this section we let $(M,\xi)$ be a closed co-oriented contact three-manifold and $\alpha$ be a defining contact form for $\xi$ inducing the given co-orientation. The Reeb flow associated to $\alpha$ will be denoted by $\phi_t$. Later we will need to consider the projection
\begin{equation}\label{proj_along_reeb_direction}
\pi_\alpha : TM \to \xi
\end{equation}
along the Reeb direction.

\subsection{Invariants of linearized dynamics and asymptotic operators}

Consider a closed $\alpha$-Reeb orbit $P=(x,T) \in \P(\alpha)$ and fix a homotopy class $\beta$ of trivializations of the rank-$2$ symplectic vector bundle $(x(T\cdot)^*\xi,d\alpha)$. Any trivialization in class $\beta$ can be used to represent the linearized Reeb flow $d\phi_t:\xi|_{x(0)} \to \xi|_{x(Tt)}$ as a path of symplectic $2\times 2$ matrices $\varphi(t)$. For any $v\in\R^2\setminus0$ there is a smooth function $\theta_v(t)$ determined by $\varphi(t)v\in \R^+e^{i\theta_v(t)}$ up to a number in $2\pi\Z$. Set $\Delta(v)=(\theta_v(1)-\theta_v(0))/2\pi$ and consider the rotation interval $J=\{\Delta(v):v\in\R^2\setminus0\}$. The interval $J$ always has length strictly less than $1/2$. For $\epsilon>0$ small set $J_\epsilon = J-\epsilon$ and
\begin{equation*}
\mu_{CZ}(P,\beta) = \left\{ \begin{aligned} & \text{$2k$ if $k\in J_\epsilon$}, \\ & \text{$2k-1$ if $J_\epsilon\subset (k-1,k)$} \end{aligned} \right.
\end{equation*}
with $k\in\Z$.
This is the so-called Conley-Zehnder index of $P$ with respect to $\beta$, see~\cite{fols}. This definition does not depend on the choice of $\epsilon>0$ small enough and of the $d\alpha$-symplectic trivialization in the class $\beta$. The transverse rotation number of $P$ with respect to $\beta$ is defined as
\begin{equation*}
\rho(P,\beta) = \lim_{t\to+\infty} \frac{\theta_v(t)}{2\pi t}.
\end{equation*}
This is independent of $v\in\R^2\setminus 0$ and of the choice of $d\alpha$-symplectic trivialization in the class $\beta$.

If $P$ is contractible and $c_1(\xi)|_{\pi_2(M)}=0$ then we denote by $\mu_{CZ}(P)$ and $\rho(P)$ the Conley-Zehnder index and the transverse rotation number of $P$ computed with respect to a $d\alpha$-symplectic trivialization of $x(T\cdot)^*\xi \to \R/\Z$ which extends to a capping disk for $P$. Then, again assuming $P$ is contractible, we have
\begin{equation}\label{iteration_formula_rho}
\rho(P^n) = n\rho(P) \ \ \ \forall n\geq1
\end{equation}
and if, in addition, $P^n$ is non-degenerate then
\begin{equation}\label{iteration_formula_mu}
\begin{aligned}
& \text{$P^n$ hyperbolic} \Rightarrow \mu_{CZ}(P^n)=n\mu_{CZ}(P) \\
& \text{$P^n$ elliptic} \Rightarrow \mu_{CZ}(P^n) = 2\lfloor n\rho(P)\rfloor + 1.
\end{aligned}
\end{equation}

\begin{remark}\label{rmk_global_triv_SO(3)}
The standard (universally tight) contact structure $\xi_{\rm std}$ on $L(2,1)$ is a trivial symplectic vector bundle, and any two choices of global symplectic trivializations compatible with its standard co-orientation are homotopic. Thus we have well-defined invariants $\mu_{CZ}(P)$ and $\rho(P)$ for every closed Reeb orbit $P$, contractible or not, of any contact form defining $\xi_{\rm std}$ with its standard co-orientation. The corresponding iteration formulae~\eqref{iteration_formula_rho} and~\eqref{iteration_formula_mu} remain equally valid for non-contractible closed Reeb orbits in this special case.
\end{remark}

We denote by $\J_+(\xi)$ the set of complex structures on $\xi$ which are compatible with $d\alpha$. This set depends only on $\xi$ and its given co-orientation, and is non-empty and contractible when equipped with the $C^\infty$-topology. We may write $\J$ for simplicity when the context is unambiguous. Associated to $J \in \J$ and $P=(x,T) \in \P(\alpha)$ there is a so-called asymptotic operator $A:W^{1,2}(x(T\cdot)^*\xi) \to L^2(x(T\cdot)^*\xi)$ defined by
\begin{equation}
A:\eta\mapsto -J(\nabla_t\eta-T\nabla_\eta X_\alpha)
\end{equation}
where $\nabla$ is any symmetric connection on $TM$. Note that $A$ is, in fact, independent of a choice of such connection. In a $(d\alpha,J)$-unitary frame this operator gets represented as $v(t) \mapsto -i\dot v(t)-S(t)v(t)$ where $S(t)$ is a path of symmetric matrices. In fact, if the linearized Reeb flow is represented in this frame as the path of symplectic $2\times 2$ matrices $\varphi(t)$ then $S(t) = -i\dot\varphi(t)\varphi^{-1}(t)$. It follows that $A$ inherits a number of properties studied in~\cite{props2}, namely, its spectrum $\sigma(A)$ is discrete, real, accumulates at $\pm\infty$ and consists of eigenvalues of multiplicity at most two. If $\beta$ is an arbitrary homotopy class of $d\alpha$-symplectic trivializations of $x(T\cdot)^*\xi$ then we can use a given trivialization in class $\beta$ to represent an eigensection associated to some $\nu\in\sigma(A)$ as a path $t\in\R/\Z \mapsto v(t) \in \R^2$. Moreover, $v(t) \in \R^+ e^{i\theta(t)}$ is nowhere vanishing, the winding number $(\theta(1)-\theta(0))/2\pi\in\Z$ is independent of the choice of eigensection in the eigenspace of $\nu$ and, consequently, will be simply denoted by $\wind(\nu,\beta)$. Note that $A$ depends on $J$ but its eigenvalues and their winding numbers do not. They satisfy $\nu\leq \nu' \Rightarrow \wind(\nu,\beta) \leq \wind(\nu',\beta)$ and for any $k\in\Z$ there are two eigenvalues (multiplicites accounted) with winding equal to $k$. We single out two special eigenvalues
\begin{equation*}
\begin{array}{ccc} \nu^{<0} = \sup \{\nu\in\sigma(A) \mid \nu<0\}, & & \nu^{\geq0} = \inf \{\nu\in\sigma(A) \mid \nu\geq0\}, \end{array}
\end{equation*}
denote the associated winding numbers by
\begin{equation}
\begin{array}{ccc} \wind^{<0}(A,\beta) = \wind(\nu^{<0},\beta), & & \wind^{\geq0}(A,\beta) = \wind(\nu^{\geq0},\beta) \end{array}
\end{equation}
and recall here the formula
\begin{equation}
\begin{aligned}
\mu_{CZ}(P,\beta) &= 2\wind^{<0}(A,\beta) + \wind^{\geq0}(A,\beta)-\wind^{<0}(A,\beta) \\
&= \wind^{<0}(A,\beta) + \wind^{\geq0}(A,\beta)
\end{aligned}
\end{equation}
which is studied in~\cite{props2}.

\subsection{Rational self-linking number of rational unknots}

Let $K\subset M$ be a knot transverse to $\xi$, which we orient by the co-orientation of $\xi$. Assume that $K$ is $k$-unknotted, for some $k\geq1$, and consider an oriented $k$-disk $u:\D\to M$ for $K$. Choose a non-vanishing section $Z$ of $u^*\xi$, any exponential map $\exp$ on $M$ and, for $\epsilon>0$ small, consider the immersion $\gamma_\epsilon : t \in \R/\Z \mapsto \exp_{u(e^{i2\pi t})}(\epsilon Z(e^{i2\pi t})) \in M \setminus K$. The rational self-linking number $\sl(K,u) \in \Q$ is defined as
\begin{equation}
\sl(K,u) = \frac{1}{k^2} \ (\text{algebraic intersection number of $\gamma_\epsilon$ with $u$}).
\end{equation}
Here the domains $\R/\Z$ of $\gamma_\epsilon$ and $\D$ of $u$ are given their standard orientations, and $M$ is oriented by $\xi$. It can be shown that if $c_1(\xi)$ vanishes on $\pi_2(M)$ then $\sl(K,u)$ does not depend on the choice of $u$, and in this case we might denote this invariant simply by $\sl(K)$.

\subsection{Pseudo-holomorphic curves}\label{par_hol_curves}

Any $J\in\J$ uniquely determines an almost complex structure $\jtil$ on $\R\times M$ by
\begin{equation}\label{defn_adapted_J}
\begin{array}{ccc} \jtil(\partial_a)=X_\alpha, & & \jtil|_\xi=J. \end{array}
\end{equation}
Here and in the following we might see $\xi$, $\alpha$, $X_\alpha$ as $\R$-invariant objects on the symplectization $$ (\R\times M,d(e^a\alpha)). $$ We may briefly say that $\jtil$ is determined, or induced, by $(\alpha,J)$.

Recall from~\cite{93} that if $(S,j)$ is a closed Riemann surface and $\Gamma\subset S$ is a finite set, then a map $\util:S\setminus \Gamma \to \R\times M$ is a finite-energy curve if it is pseudo-holomorphic $$ \bar\partial(\util) = \frac{1}{2} \left( d\util + \jtil(\util)\circ d\util \circ j \right) = 0 $$ and has finite Hofer energy $$ E(\util) = \sup_{\phi\in\Lambda} \int_{S\setminus\Gamma} \util^*d(\phi\alpha). $$ Here $\Lambda$ is the set of smooth functions $\phi:\R\to[0,1]$ satisfying $\phi'\geq 0$. Points in $\Gamma$ are called punctures. A puncture $z_0$ is positive, or negative, if $a(z)\to+\infty$, or $a(z)\to-\infty$, as $z\to z_0$ respectively. If $z_0$ is neither positive nor negative then it is called removable. The analysis from~\cite{93} shows that finite-energy curves can be smoothly continued across a removable puncture.

If $z_0$ is a puncture and $\psi:(V,z_0)\to (B,0)$ is a holomorphic chart, where $B\subset \C$ is the open unit ball centered at the origin, then we consider positive cylindrical coordinates $(s,t)\in (0,+\infty)\times\R/\Z$ on $V\setminus \{z_0\}$ defined by $z\simeq \psi^{-1}(e^{-2\pi(s+it)})$. Similarly one defines negative cylindrical coordinates $(s,t) \in (-\infty,0)\times \R/\Z$ by $z\simeq \psi^{-1}(e^{2\pi(s+it)})$. In both cases we call these cylindrical coordinates centered at $z_0$. If $z_0$ is non-removable then we set its sign to be $+1$ or $-1$ depending whether it is positive or negative, respectively.

\begin{theorem}[Hofer~\cite{93}]
Let $z_0$ be a non-removable puncture, choose $(s,t)$ positive cylindrical coordinates centered at $z_0$, and let $\epsilon$ be the sign of $z_0$. Write $\util(s,t)=(a(s,t),u(s,t)) \in \R\times M$ with respect to these coordinates. Then for any sequence $s_n\to+\infty$ there exists a subsequence $s_{n_k}$, a closed Reeb orbit $P=(x,T)$ and $d\in\R$ such that the sequence of loops $t\mapsto u(s_{n_k},t)$ $C^\infty$-converges to $t\mapsto x(\epsilon Tt+d)$ as $k\to\infty$.
\end{theorem}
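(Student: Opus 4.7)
The plan is a compactness-plus-identification argument along the lines of Hofer's original approach: derive uniform $C^\infty_{\rm loc}$ bounds on suitable $\R$-translates of $\util$ near the puncture, extract a convergent subsequence via Arzel\`a-Ascoli, and identify the $C^\infty_{\rm loc}$-limit as a cylinder over a closed Reeb orbit. Restricting the limit to any slice $\{s=\text{const.}\}$ then reads off the announced loop convergence.

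The technical heart and main obstacle is the uniform gradient bound
\[
\sup_{s \geq s_0,\, t \in \R/\Z} \|d\util(s,t)\| < +\infty,
\]
taken with respect to any translation-invariant Riemannian metric on $\R \times M$ compatible with $\jtil$. The plan here is the classical bubbling-off argument by contradiction. Suppose the gradient blows up along $(s_n, t_n)$ with $s_n \to +\infty$; rescaling the domain by the reciprocal of the gradient magnitude and composing with an $\R$-translation in the target so as to normalize the value of $a$ at the basepoint, elliptic regularity and Arzel\`a-Ascoli produce in the limit a non-constant finite-energy $\jtil$-holomorphic plane $\C \to \R \times M$. On the other hand, since $E(\util) < +\infty$ and the puncture can be exhausted by shrinking cylindrical neighborhoods, the tail $d\alpha$-area
\[
\int_{[s, +\infty) \times \R/\Z} \util^* d\alpha
\]
tends to zero as $s \to +\infty$; yet any nonconstant finite-energy plane carries a strictly positive $d\alpha$-energy, quantized by the action spectrum of $\alpha$. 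This contradiction yields the gradient bound.

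Once $\|d\util\|$ is bounded, elliptic bootstrapping for the nonlinear Cauchy-Riemann equation $\bar\partial_{\jtil}\util = 0$ upgrades it to uniform $C^k$ bounds on every compact subcylinder. Given $s_n \to +\infty$, I would then choose $c_n \in \R$ so that the translates $\vtil_n(s,t) := \util(s+s_n, t) - (c_n, 0) =: (a_n, u_n)$ satisfy $a_n(0,0) = 0$, and pass to a subsequence converging in $C^\infty_{\rm loc}(\R \times \R/\Z, \R \times M)$ to a finite-energy $\jtil$-holomorphic cylinder $\vtil_\infty = (a_\infty, u_\infty)$. The tail-energy vanishing forces $\vtil_\infty^* d\alpha \equiv 0$, so $\pi_\alpha \circ du_\infty \equiv 0$ and both $\partial_s u_\infty$ and $\partial_t u_\infty$ are parallel to $X_\alpha(u_\infty)$. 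Writing $A := \alpha(\partial_s u_\infty)$ and $T := \alpha(\partial_t u_\infty)$, the splittings of the Cauchy-Riemann equation into $\partial_a$-, $X_\alpha$- and $\xi$-components give $\partial_s a_\infty = T$, $\partial_t a_\infty = -A$, $\partial_s u_\infty = A\, X_\alpha(u_\infty)$ and $\partial_t u_\infty = T\, X_\alpha(u_\infty)$. Single-valuedness of $a_\infty$ in $t$ forces $A = 0$; hence $a_\infty(s,t) = Ts$ (after the normalization $a_\infty(0,0)=0$) and $u_\infty(s,t) = x(Tt + d)$ for some closed Reeb orbit $(x,|T|)$ and some $d \in \R$. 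Finally, the sign of $T$ must equal $\epsilon$: the identity $\partial_s a_\infty \equiv T$ combined with $a \to +\infty$ (resp.\ $a \to -\infty$) at a positive (resp.\ negative) puncture forces $T > 0$ (resp.\ $T < 0$). Restricting to the slice $\{s=0\}$ delivers $u(s_{n_k}, \cdot) \to x(\epsilon |T| \cdot + d)$ in $C^\infty(\R/\Z, M)$, which is the claim after renaming $T := |T|$.
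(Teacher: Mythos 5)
The paper does not prove this statement; it cites Hofer~\cite{93}, so I am comparing your reconstruction against Hofer's original argument, which it follows in outline: uniform gradient bounds via bubbling-off, $\R$-translation and $C^\infty_{\rm loc}$-extraction, and identification of a zero-$d\alpha$-area limit cylinder. That scaffolding is correct. However, the final identification step has two real gaps.

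First, after writing $A=\alpha(\partial_s u_\infty)$ and $T=\alpha(\partial_t u_\infty)$ these are a priori \emph{functions} of $(s,t)$, not constants, and the clause ``single-valuedness of $a_\infty$ in $t$ forces $A=0$'' only yields $\int_0^1 A(s,t)\,dt=0$ unless you already know $A$ is constant. The missing observation is that the Cauchy--Riemann equations (together with equality of mixed partials for $u_\infty$) make $T+iA$ a holomorphic function on the cylinder $\R\times\R/\Z\cong\C^{\ast}$, and the gradient bound makes it bounded, so Liouville gives constancy; only then does $a_\infty(s,t)=Ts-At+c$ and the periodicity argument apply. Second, and more substantively, nothing in your write-up excludes $T=0$, i.e.\ the possibility that the limit cylinder is constant. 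The conclusion requires $P=(x,T)$ to be a genuine closed Reeb orbit with $T>0$, so you must show that the asymptotic $\alpha$-action $\lim_{s\to\infty}\int_{\{s\}\times\R/\Z}u^{\ast}\alpha$ is nonzero. The hypothesis available to you is only that $a\to\pm\infty$; converting that into $T\neq0$ is exactly Hofer's removable-singularity analysis and is not automatic. The sign argument you give (``$\partial_s a_\infty\equiv T$ combined with $a\to+\infty$ forces $T>0$'') presupposes $T\neq0$: if $T=0$ the translates $a(s+s_n,t)-c_n$ converge to $0$ with $c_n\to+\infty$ and no contradiction arises. You should either invoke the removability criterion as a separate lemma or supply the monotonicity argument for $s\mapsto\int_{\{s\}\times\R/\Z}u^{\ast}\alpha$ together with the fact that a vanishing limit forces boundedness of $a$.

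A smaller point: the bubbling-off step requires Hofer's lemma to ensure the rescaled sequence produces a \emph{plane} (domains exhausting $\C$) rather than a more degenerate object, and to guarantee a nonzero gradient at the base point of the limit; worth a sentence.
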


In order to handle degenerate contact forms we need the following definition, taken from~\cite{hryn}.

\begin{definition}[Non-degenerate punctures]\label{defn_non_deg_punct}
Let $z$ be a non-removable puncture of the finite-energy curve $\util=(a,u)$, $\epsilon\in\{1,-1\}$ be the sign of the puncture $z$ and $(s,t)$ be positive cylindrical coordinates centered at $z$. We call $z$ a non-degenerate puncture of $\util$ if the following holds:
\begin{itemize}
\item There exists a periodic orbit $P=(x,T)$ and $c,d\in\R$ such that $u(s,t) \to x(\epsilon Tt+d)$ and $|a(s,t)-\epsilon Ts-c| \to 0$ as $s\to+\infty$, uniformly in $t$.
\item If $\zeta(s,t)\in \xi|_{x(\epsilon Tt+d)}$ is defined by $u(s,t)=\exp_{x(\epsilon Tt+d)}(\zeta(s,t))$ for $s\gg1$, then $\exists b>0$ such that $e^{bs}|\zeta(s,t)| \to 0$ as $s\to+\infty$, uniformly in $t$.
\item If $\pi_\alpha \circ du$ does not vanish identically then it does not vanish when $s$ is large enough.
\end{itemize}
The orbit $P$ is called the asymptotic limit $\util$ at $z$.
\end{definition}

A partial description of the results from~\cite{props1} reads as follows.

\begin{theorem}[Hofer, Wysocki and Zehnder~\cite{props1}]\label{thm_partial_asymp}
If $\alpha$ is non-degenerate then every non-removable puncture of a finite-energy curve is non-degenerate in the sense of Definition~\ref{defn_non_deg_punct}.
\end{theorem}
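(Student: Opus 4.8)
This is the foundational asymptotic result of~\cite{props1}, and the plan is to organize its proof around a normal-form reduction near the puncture followed by an exponential-decay estimate. The statement is local near a single non-removable puncture $z$, so I would work on a half-cylinder: fix positive cylindrical coordinates $(s,t)\in[s_0,+\infty)\times\R/\Z$ centered at $z$, let $\epsilon\in\{1,-1\}$ be its sign, and write $\util=(a,u)$. The first step is to upgrade Hofer's subsequential convergence theorem, quoted above, to honest convergence. A bubbling-off and gradient-bound argument shows that $|\partial_s\util|+|\partial_t\util|$ stays bounded for $s$ large and that the loops $t\mapsto u(s,t)$ remain in a fixed small tubular neighbourhood of a single geometric Reeb orbit $x(\R)$. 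Here the non-degeneracy of $\alpha$ is essential: it excludes the a priori possibility that these loops drift along a degenerate family of Reeb orbits, so that in fact $u(s,\cdot)\to x(\epsilon T\cdot+d)$ in $C^\infty$ and $|a(s,t)-\epsilon Ts-c|\to 0$ uniformly in $t$, for suitable $c,d\in\R$. With this in hand one writes, for $s\gg 1$,
\[
\util(s,t)=\bigl(\epsilon Ts+c+\tilde a(s,t),\ \exp_{x(\epsilon Tt+d)}\zeta(s,t)\bigr),
\]
where $\zeta$ is a section of $x(T\cdot)^*\xi$ along the cylinder, $\tilde a$ and $\zeta$ tend to $0$ uniformly in $t$, and $\tilde a$ is controlled by $\zeta$ through the equation; the whole theorem then reduces to $C^\infty$ exponential decay of $\zeta$.

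The second step is the PDE analysis. In these coordinates $\bar\partial(\util)=0$ becomes $\partial_s\zeta+J(t)\bigl(\partial_t\zeta-A(t)\zeta\bigr)=w(s,t)$, where $A(t)$ represents, in a $(d\alpha,J)$-unitary frame, the asymptotic operator of $P=(x,T)$ associated to the relevant $J\in\J$, and the remainder $w$ is quadratically small in $(\zeta,\partial_s\zeta,\partial_t\zeta)$ with $s$-decaying coefficients. Since $\alpha$ is non-degenerate, $0\notin\sigma(A)$, so there is a spectral gap $\delta:=\mathrm{dist}(0,\sigma(A))>0$. The core estimate I would prove is a differential inequality for $g(s):=\tfrac12\int_{\R/\Z}|\zeta(s,t)|^2\,dt$: using the equation, the symmetry of $A$, the spectral gap and absorbing $w$ for $s$ large, one obtains $g''(s)\ge\delta'^2\,g(s)$ with $\delta'$ close to $\delta$, together with $g(s)\to 0$. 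An elementary ODE comparison (equivalently, a Carleman-type estimate) then forces $g$ to decay exponentially, the alternative of exponential growth being incompatible with $g\to 0$, and elliptic bootstrapping on the equation upgrades this $L^2$ decay to $C^\infty$ exponential decay of $\zeta$ and $\tilde a$. This already yields the first two bullets of Definition~\ref{defn_non_deg_punct} for any $0<b<\delta$.

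For the third bullet one needs the sharper asymptotic formula. Decomposing $\zeta(s,\cdot)$ along the eigenspaces of $A$ and isolating the slowest-decaying mode, a refinement of the same energy/ODE analysis gives $\zeta(s,t)=e^{\int_{s_0}^{s}\lambda(\tau)\,d\tau}\bigl(e(t)+\wtil(s,t)\bigr)$ with $\lambda(s)\to\lambda\in\sigma(A)\cap(-\infty,0)$, $\wtil\to 0$, and $e$ a nowhere-vanishing eigensection of $A$ for the eigenvalue $\lambda$ (eigensections of $A$ never vanish, cf.\ \cite{props2}). Hence, when $\pi_\alpha\circ du$ does not vanish identically, it agrees to leading order with $e^{\int\lambda}\,e(t)$ up to strictly faster-decaying terms, and therefore cannot vanish for $s$ large; in particular its winding number in this frame coincides with that of $e$. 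I expect the main obstacle to be exactly the core estimate of the second step --- uniformly controlling the nonlinear remainder and all of its derivatives, establishing the differential inequality with a sharp gap constant, and running the bootstrap --- which is the hard analysis carried out in~\cite{props1}, to which we refer for the complete argument.
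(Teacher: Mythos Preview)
The paper does not supply its own proof of this theorem: it is quoted as a result of Hofer, Wysocki and Zehnder from~\cite{props1} and used as a black box, so there is no in-paper argument to compare against. Your proposal is a faithful and accurate outline of the original proof in~\cite{props1}---the reduction to a half-cylinder, the use of non-degeneracy to get honest (not just subsequential) convergence of the loops, the differential inequality $g''\geq \delta'^2 g$ for the $L^2$-energy of the transverse component coming from the spectral gap at $0$, the bootstrap to $C^\infty$ exponential decay, and the refined eigensection expansion for the third bullet---and you correctly flag that the technical core is the analysis carried out in~\cite{props1}.
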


\begin{definition}\label{defn_martinet}
A Martinet tube for an orbit $P=(x,T)\in \P(\alpha)$ is a pair $(U, \Psi)$ where $U$ is a neighborhood of $x(\R)$ in $M$ and $\Psi:U \to \R/\Z \times B$ is a diffeomorphism ($B \subset \R^2$ is an open ball centered at the origin and $\R/\Z\times B$ is equipped with coordinates $(\theta,x_1,x_2)$) satisfying
\begin{itemize}
  \item[a)] $\Psi^*(f(d\theta+x_1dx_2)) = \alpha$ where the smooth function $f:\R/\Z \times B \to \R^+$ satisfies $f|_{\R/\Z \times 0} \equiv T_{\rm min}$ and $df|_{\R/\Z \times 0} \equiv 0$.
  \item[b)] $\Psi(x(T_{\rm min}t)) = (t,0,0)$.
\end{itemize}
Here $T_{\rm min}$ is the minimal positive period of $x$.
\end{definition}

\begin{remark}
The existence of such Martinet tubes follows from a Moser-type argument, see~\cite{props1}. In the notation above, we may assume that $\partial_{x_1}$ along $t\mapsto x(T_{}t)$ is any non-vanishing section of $(x(T_{\rm min}\cdot)^*\xi$.
\end{remark}

Let $\util=(a,u):(S\setminus\Gamma,j)\to(\R\times M,\jtil)$ be a non-constant finite-energy pseudo-holomorphic map as above, where $\Gamma\subset S$ is the set of non-removable punctures and $\jtil$ is induced by $(\alpha,J)$, with $J\in\J_+(\xi)$. Let $z_0\in\Gamma$ be a puncture, let $\epsilon$ be its sign, and choose $(s,t)$ holomorphic cylindrical coordinates which are positive if $z_0$ is positive, or negative if $z_0$ is negative. Assume that $\alpha$ is non-degenerate and let $P=(x,T)$ be the asymptotic limit of $\util$ at $z_0$, which exists in view of Theorem~\ref{thm_partial_asymp}. Choose a Martinet tube $(U,\Psi)$ for $P$. If $\epsilon s\gg1$ there are well-defined functions $\theta(s,t)\in\R/\Z$, $x_1(s,t),x_2(s,t) \in \R$, $z(s,t)\in\R^2$ by $$ \begin{array}{ccc} \Psi \circ u(s,t) = (\theta(s,t),x_1(s,t),x_2(s,t)), & & z(s,t)=(x_1(s,t),x_2(s,t)). \end{array} $$ We continue to denote by $\theta(s,t)$ a lift to the universal covering $\R$ of $\R/\Z$. Then $\theta,z$ can be seen as functions of $(s,t)\in\R^2$ such that $z$ is $1$-periodic in $t$ and $\theta(s,t+1)=\theta(s,t)+k$, where $k\geq 1$ is the covering multiplicity of $P$. Up to a rotation we may assume that $\theta(s,0)\to 0$.

\begin{theorem}[Hofer, Wysocki and Zehnder~\cite{props1}, Siefring~\cite{sie1}]\label{thm_asymptotics}
There exists an eigenvalue $\nu$ of the asymptotic operator $A$ at $P$ associated to $(\alpha,J)$ satisfying $\epsilon\nu<0$, a non-zero $v(t) \in \ker (A-\nu I)$, a function $R(s,t)\in\R^2$ defined for $\epsilon s\gg1$ and $t\in\R/\Z$, and constants $c\in\R$, $r>0$, such that the following hold. The functions $a(s,t),\theta(s,t)$ satisfy
\begin{align}
& \lim_{\epsilon s\to+\infty} e^{r|s|} \|\partial_s^{\beta_1}\partial_t^{\beta_2}[a(s,t)-Ts-c](s,\cdot)\|_{L^\infty(\R/\Z)} = 0, \label{rep_a_asymptotics} \\
& \lim_{\epsilon s\to+\infty} e^{r|s|} \|\partial_s^{\beta_1}\partial_t^{\beta_2}[\theta(s,t)-kt](s,\cdot)\|_{L^\infty(\R/\Z)} = 0 \label{rep_theta_asymptotics}
\end{align}
for all $\beta_1,\beta_2\geq 0$. Moreover, if $t\in\R/\Z\mapsto e(t) \in \R^2$ is the representation of $v(t)$ in the frame $\{\partial_{x_1},\partial_{x_2}\}$ along $x(Tt)$ induced by the Martinet tube $(U,\Psi)$, then
\begin{align}
& z(s,t) = e^{\nu s}(e(t)+R(s,t)) & & \lim_{\epsilon s\to+\infty} \|\partial_s^{\beta_1}\partial_t^{\beta_2}R(s,\cdot)\|_{L^\infty(\R/\Z)} = 0. \label{rep_z_asymptotics}
\end{align}
\end{theorem}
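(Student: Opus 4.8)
The plan is to follow the arguments of Hofer, Wysocki and Zehnder, with Siefring's sharpening of the exponential estimates. Replacing $a$ by $-a$ and reflecting the $s$-variable we may assume $z_0$ is a positive puncture, so $\epsilon=+1$, and we work on a positive cylindrical end $[s_0,\infty)\times\R/\Z$. \textbf{Step 1 (trapping and exponential convergence).} By Hofer's theorem, the non-degeneracy of $\alpha$ and Theorem~\ref{thm_partial_asymp}, the asymptotic limit $P=(x,T)$ exists and $u(s,t)\to x(Tt+d)$ in $C^0$; after rotating the $t$-coordinate the end of $\util$ is eventually contained in the Martinet tube $U$, so $\theta(s,t)$ and $z(s,t)=(x_1,x_2)(s,t)$ are well-defined for $s\gg 1$, with $z(s,\cdot)\to 0$ and $\theta(s,\cdot)-kt\to$ const. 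The first real point is that this convergence is exponentially fast; this is precisely where non-degeneracy of $P$ is used, and in our setting it is already recorded in Definition~\ref{defn_non_deg_punct}.

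\textbf{Step 2 (the transverse Cauchy--Riemann equation).} Writing $\bar\partial(\util)=0$ in the Martinet coordinates of $(U,\Psi)$, and using $\Psi^*(f(d\theta+x_1dx_2))=\alpha$ with $f|_{\R/\Z\times 0}\equiv T_{\rm min}$, $df|_{\R/\Z\times 0}\equiv 0$, the transverse component satisfies a system of the form $\partial_s z+\hat J(s,t)\partial_t z+\hat S(s,t)z=g(s,t)$, where $\hat J(s,t)\to J_\infty(t)$ and $\hat S(s,t)\to S_\infty(t)$ exponentially (because $z$ already decays exponentially), $J_\infty$ is the representation of $J$ in the frame $\{\partial_{x_1},\partial_{x_2}\}$ along $x(T\cdot)$, and $|g(s,t)|\le C|z(s,t)|^2+(\text{exp.\ small})$. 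The limiting operator $A\eta=-J_\infty(\partial_t\eta+S_\infty\eta)$ is exactly the asymptotic operator of $P$ with respect to $(\alpha,J)$ in this frame; it is self-adjoint on $L^2(\R/\Z,\R^2)$ with discrete real spectrum accumulating only at $\pm\infty$, and $\ker A=0$ since $P$ is non-degenerate.

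\textbf{Step 3 (the abstract asymptotic lemma --- the crux).} One then studies $\zeta(s,\cdot)=z(s,\cdot)$, a $W^{1,2}$-path solving $\partial_s\zeta=-A(s)\zeta+g(s)$ with $A(s)\to A$, $\int_{s_0}^\infty e^{\delta s}\|g(s)\|\,ds<\infty$ for some $\delta>0$, and $\|\zeta(s)\|\to 0$. The claim is that either $\zeta$ vanishes identically for $s$ large --- an alternative ruled out here unless $\util$ is a branched cover of the $\R$-invariant cylinder over $P$, by the similarity principle applied to $\pi_\alpha\circ du$ --- or there exist $\nu\in\sigma(A)$ with $\nu<0$ and $0\neq e\in\ker(A-\nu)$ with $\zeta(s,t)=e^{\nu s}(e(t)+R(s,t))$ and $\|R(s,\cdot)\|\to 0$. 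The proof proceeds by: (a) a differential inequality $\phi''\ge c^2\phi-(\text{small})$ for $\phi(s)=\tfrac12\|\zeta(s)\|_{L^2}^2$, using the spectral gap of $A$, which forces $\|\zeta(s)\|\le Ce^{-\delta's}$; (b) analysing the Rayleigh-type quotient $\langle A(s)\zeta(s),\zeta(s)\rangle/\|\zeta(s)\|^2$ and showing it converges to some eigenvalue $\nu<0$ of $A$, where the decay from (a) plus a monotonicity argument exclude the nonnegative part of the spectrum; (c) substituting $\zeta(s,t)=e^{\nu s}w(s,t)$, showing $w$ is bounded and solves $\partial_s w=-(A-\nu)w+(\text{small})$, and that $w(s)$ converges in $W^{1,2}$ to an element of $\ker(A-\nu)$; should that limit vanish, one repeats (b)--(c) at the next eigenvalue and uses $\|\zeta(s)\|\to 0$ together with the finiteness of $\sigma(A)\cap(-\infty,0]$ below any bound to conclude $\zeta\equiv 0$ eventually. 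The delicate parts are the quantitative Wronskian/integration-by-parts identities for $A$ in (b)--(c) and controlling the $s$-dependence of $A(s)$ and the quadratic remainder $g$.

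\textbf{Step 4 (bootstrapping and the $a,\theta$ components).} Once $\|z(s,\cdot)\|_{L^2}$ is bounded by $e^{\nu's}$ for every $\nu'>\nu$, interior elliptic estimates for the Cauchy--Riemann operator on cylinders of fixed width upgrade this to exponential decay of $z-e^{\nu s}e(t)$ in $C^\infty_{\rm loc}$, giving~\eqref{rep_z_asymptotics} with all derivatives. Feeding the exponential decay of $z$ (and of $\hat J-J_\infty$, $\hat S-S_\infty$) back into the equations for the $a$- and $\theta$-components --- which, integrated, yield $a(s,t)=Ts+c+O(e^{-rs})$ and $\theta(s,t)=kt+O(e^{-rs})$ --- and bootstrapping with the same elliptic estimates produces~\eqref{rep_a_asymptotics} and~\eqref{rep_theta_asymptotics} in $C^\infty$. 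The main obstacle is Step~3, and within it the passage from mere $L^2$-decay to the identification of a \emph{single} leading eigenvalue with \emph{nonzero} eigenfunction coefficient; everything else is standard elliptic theory or bookkeeping in the Martinet chart. Full details are in~\cite{props1,sie1}.
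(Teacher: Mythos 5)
The paper does not prove Theorem~\ref{thm_asymptotics}; it is quoted as a black box from~\cite{props1} (and, for the stronger derivative/uniformity statements, from~\cite{sie1}), so there is no in-paper argument to compare against. Your sketch correctly reproduces the overall architecture of those references: reduce to a transverse Cauchy--Riemann system $\partial_s z + \hat J\partial_t z + \hat S z = g$ in Martinet coordinates; prove exponential $L^2$-decay via a convexity/differential-inequality estimate for $\|z(s,\cdot)\|_{L^2}^2$ using the spectral gap of the limiting asymptotic operator $A$; identify the leading eigenvalue by following the Rayleigh quotient; peel off the exponential weight $e^{\nu s}$ and show the rescaled section converges in $W^{1,2}$ to a nonzero eigenfunction of $A$; then bootstrap to $C^\infty$ and feed the result back into the $a$- and $\theta$-equations. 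That is essentially the HWZ/Siefring proof.

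Two small imprecisions to be aware of. First, the asymptotic operator in a $(d\alpha,J)$-unitary frame is of the form $A\eta = -J_0\dot\eta - S_\infty\eta$ (as recalled in~\S\ref{sec_prelim} of the paper), not $-J_\infty(\partial_t\eta+S_\infty\eta)$; in the non-unitary Martinet frame $\{\partial_{x_1},\partial_{x_2}\}$ the precise shape of the zeroth-order term is different, and being sloppy here is exactly where it is easy to lose self-adjointness in the estimates of your Step~3(a)--(b). Second, your closing remark in Step~3(c) about ``the finiteness of $\sigma(A)\cap(-\infty,0]$ below any bound'' is confusingly worded: the negative spectrum of $A$ is infinite, and what the iteration actually shows is that if each rescaled limit vanished one would obtain decay of $z$ faster than any exponential rate; one then invokes unique continuation (the similarity principle for $\pi_\alpha\circ du$) to conclude $\pi_\alpha\circ du\equiv 0$ on a half-cylinder, which is the excluded trivial-cylinder case you flag earlier. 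Stating it this way makes the dichotomy in your Step~3 cleaner and shows why the resulting asymptotic eigensection is genuinely nonzero.
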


The eigenvalue $\nu$ and eigensection $v(t)$ as in the above theorem will be referred to as the asymptotic eigenvalue and asymptotic eigensection of $\util$ at the puncture~$z_0$. The following lemma is proved in~\cite{hryn}.

\begin{lemma}\label{lem_asymp_formula_nondeg_punct}
The conclusions of Theorem~\ref{thm_asymptotics} continue to hold if we replace the assumption that $\alpha$ is non-degenerate by the assumption that $z_0$ is a non-degenerate puncture of $\util$ in the sense of Definition~\ref{defn_non_deg_punct}.
\end{lemma}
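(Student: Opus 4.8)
The guiding principle is that once the non-removable puncture $z_0$ of $\util=(a,u)$ is known to converge exponentially fast to its asymptotic orbit $P=(x,T)$, the derivation of the fine asymptotics in Theorem~\ref{thm_asymptotics} is ``soft'' and never uses non-degeneracy of $\alpha$ \emph{per se}: it uses only exponential decay of the curve towards $P$ and, in order to rule out a trivial limiting eigensection, non-vanishing of $\pi_\alpha\circ du$ near $z_0$. The plan is therefore to rerun the analysis of Hofer--Wysocki--Zehnder~\cite{props1} and Siefring~\cite{sie1}, with the two bullets of Definition~\ref{defn_non_deg_punct} feeding in exactly where non-degeneracy of $\alpha$ is used in those works. (This is the content of~\cite{hryn}.)

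First I would fix a Martinet tube $(U,\Psi)$ for $P$ and cylindrical coordinates $(s,t)$ centered at $z_0$ adapted to its sign $\epsilon$, and write $\Psi\circ u=(\theta,x_1,x_2)$, $z=(x_1,x_2)$, as in the discussion preceding Theorem~\ref{thm_asymptotics}; let $\zeta(s,t)\in\xi|_{x(\epsilon Tt+d)}$ be the section with $u(s,t)=\exp_{x(\epsilon Tt+d)}(\zeta(s,t))$. By Definition~\ref{defn_non_deg_punct} we already have $u(s,t)\to x(\epsilon Tt+d)$, $|a(s,t)-\epsilon Ts-c|\to0$, and $e^{b|s|}\sup_t|\zeta(s,t)|\to0$ for some $b>0$, as $\epsilon s\to+\infty$. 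Standard interior elliptic estimates for the pseudo-holomorphic curve equation, applied on unit squares $[s-1,s+1]\times[0,1]$ as $\epsilon s\to+\infty$, then upgrade this $C^0$ decay of $\zeta$ to exponential decay (with a possibly smaller rate) of every derivative $\partial_s^{\beta_1}\partial_t^{\beta_2}\zeta$ on cross-sections, exactly as in~\cite{props1}.

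Next I would write the equation satisfied by $\zeta$. Inside the tube, subtracting the trivial cylinder over $P$ turns $\bar\partial(\util)=0$ into an equation of the form $\partial_s\zeta+J(t)\partial_t\zeta+S(t)\zeta=g(s,t)$, where $-J(\partial_t+S)$ represents the asymptotic operator $A$ at $P$ associated to $(\alpha,J)$ and the remainder $g$ is quadratically small, bounded by the $C^1$-size of the triple $(\zeta,\,a-\epsilon Ts-c,\,\theta-kt)$ near $(s,t)$ times itself; by the previous step, $g$ and all its derivatives decay exponentially. By unique continuation together with the third bullet of Definition~\ref{defn_non_deg_punct}, either $\util$ is a branched cover of a trivial cylinder (a case with no content here) or $\pi_\alpha\circ du\neq0$ for $\epsilon s$ large, so that $\zeta\not\equiv0$ near $z_0$; assume the latter.

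Finally I would run the spectral argument. Using the spectral decomposition of the path of operators $-J(\partial_t+S)$ and the exponential smallness of $g$, the standard differential inequality for $s\mapsto\|\zeta(s,\cdot)\|_{L^2}$ shows it decays like $e^{\nu s}$ for some eigenvalue $\nu\in\sigma(A)$; here the exponential-decay hypothesis of Definition~\ref{defn_non_deg_punct} is exactly what forces $\epsilon\nu<0$, since for a degenerate orbit $P$ one could a priori have $\nu=0$ and this is what the hypothesis excludes. Projecting $\zeta$ onto the $\nu$-eigenspace by a Lyapunov--Schmidt argument then yields $z(s,t)=e^{\nu s}(e(t)+R(s,t))$, with $e$ the frame-representation of an eigensection $v\in\ker(A-\nu I)$, $v\neq0$ because $\zeta\not\equiv0$, and $R$ together with all its derivatives decaying exponentially; this is \eqref{rep_z_asymptotics}. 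The formulas \eqref{rep_a_asymptotics} and \eqref{rep_theta_asymptotics} then follow because, by the Cauchy--Riemann equations, $\partial_s a$ and $\partial_s\theta$ are controlled by $z$ and its first derivatives, hence converge exponentially to their limiting values, and integrating in $s$ gives the claimed convergence. The main difficulty is the bookkeeping: carrying exponentially weighted elliptic estimates through the nonlinear equation, and checking that every appeal to non-degeneracy of $\alpha$ in the arguments of~\cite{props1,sie1} is in fact only an appeal to exponential decay of the curve towards $P$ plus non-vanishing of $\pi_\alpha\circ du$ --- which is precisely what Definition~\ref{defn_non_deg_punct} provides.
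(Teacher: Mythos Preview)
The paper does not give its own proof of this lemma: it simply records that the statement is proved in~\cite{hryn}. Your proposal is a correct outline of how that proof goes, and it matches the approach taken there --- namely, observing that the asymptotic analysis of~\cite{props1,sie1} only uses exponential approach to the orbit together with non-vanishing of $\pi_\alpha\circ du$ near the puncture, both of which are built into Definition~\ref{defn_non_deg_punct}.
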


We pause our discussion of the theory to prove a lemma which will be crucial for our later analysis.

\begin{definition}[Relatively prime punctures]
Let $z_0$ be a non-degenerate puncture of the finite-energy curve $\util$, let $P=(x,T)$ be the asymptotic limit of $\util$ at $z_0$, with minimal period $T_{\rm min}>0$ and covering multiplicity $k=T/T_{\rm min}$, and let $v$ be the asymptotic eigensection of $\util$ at~$z_0$. Choose a homotopy class $\beta$ of $d\alpha$-symplectic trivializations of $x(T_{\rm min}\cdot)^*\xi$. We call $z_0$ a relatively prime puncture if $\wind(v,\beta^k)$ and $k$ are relatively prime integers.
\end{definition}

\begin{remark}
Note that $\wind(v,\beta^k) \mod k$ is independent of the choice of $\beta$. Thus the above definition is independent of the choice of $\beta$.
\end{remark}

\begin{lemma}\label{lem_crucial_behavior_at_punct}
Let $z_0\in\Gamma$ be a non-degenerate relatively prime puncture of the finite-energy curve $\util=(a,u):S\setminus\Gamma\to \R\times M$. Then there exists a neighborhood $U$ of $z_0$ in $S$ such that $u|_{U\setminus\{z_0\}}$ is an embedding.
\end{lemma}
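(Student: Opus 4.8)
The strategy is to exploit the asymptotic formula of Theorem~\ref{thm_asymptotics} (in the form of Lemma~\ref{lem_asymp_formula_nondeg_punct}, since $z_0$ is a non-degenerate puncture) near $z_0$, and to show that the relative primality hypothesis forces the asymptotic approximation to be injective on concentric loops $\{s = \text{const}\}$ for $s$ large. First I would fix a Martinet tube $(U,\Psi)$ for the asymptotic limit $P = (x,T)$, write $T = kT_{\min}$, and use positive cylindrical coordinates $(s,t)$ centered at $z_0$, so that (after the usual normalization) the curve is described by $\theta(s,t)$ and $z(s,t)$ with $z(s,t) = e^{\nu s}(e(t) + R(s,t))$, $\theta(s,t+1) = \theta(s,t) + k$, and the remainder $R$ together with its derivatives decaying to zero. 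Here $\nu$ is the asymptotic eigenvalue, $v$ the asymptotic eigensection, and $e(t)$ its representation in the tube frame $\{\partial_{x_1},\partial_{x_2}\}$; the winding number of $e$ over $\R/k\Z$ is $\wind(v,\beta^k)$, which by hypothesis is prime to $k$.

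\textbf{Key steps.} The main point is an injectivity statement for the loop $t \in \R/\Z \mapsto (\theta(s_0,t), z(s_0,t)) \in \R/\Z \times B$ in the tube, for $s_0$ large, followed by transferring this to embeddedness of $u$ on a punctured neighborhood. Suppose $(\theta(s_0,t_1), z(s_0,t_1)) = (\theta(s_0,t_2), z(s_0,t_2))$ with $t_1,t_2 \in \R/\Z$. Equality of the $\theta$-components modulo $1$ combined with \eqref{rep_theta_asymptotics} (which says $\theta(s_0,t) = kt + o(1)$ with derivatives) forces, for $s_0$ large, that $k t_1$ and $k t_2$ differ by an integer, i.e.\ $t_2 = t_1 + j/k$ for some $j \in \{0,1,\dots,k-1\}$; more precisely one shows $|t_1 - t_2 - j/k|$ is small, and then a monotonicity/degree argument in $t$ for the smooth function $t \mapsto \theta(s_0,t) - kt$ pins down $j$. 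Next, equality of the $z$-components gives $e^{\nu s_0}(e(t_1) + R(s_0,t_1)) = e^{\nu s_0}(e(t_1 + j/k) + R(s_0,t_1))$ approximately, so $e(t_1) = e(t_1 + j/k) + (\text{small})$; but $e$ is nowhere vanishing and winds $\wind(v,\beta^k)$ times over $\R/k\Z$, and $\gcd(\wind(v,\beta^k),k)=1$ means the only way the $k$ translates $\{t \mapsto e(t + j/k)\}_{j=0}^{k-1}$ of this curve can meet "to first order" at a common $t$ is $j=0$. Making this quantitative: the function $t \mapsto e(t) - e(t+j/k)$ is nowhere zero on $\R/\Z$ for each $j \ne 0$ (otherwise $e$ would be $1/k$-periodic up to that shift, contradicting relative primality of its winding with $k$), hence bounded away from zero by compactness, so the $o(1)$ correction cannot compensate once $s_0$ is large. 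This yields $j = 0$ and then $t_1 = t_2$, proving that $t \mapsto (\theta(s_0,t), z(s_0,t))$ is injective for all large $s_0$.

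\textbf{From injective loops to an embedded punctured disk.} Choosing $U$ to correspond to $\{s > s_*\}$ for $s_*$ large, I would show $\Psi \circ u|_{U \setminus \{z_0\}}$ is injective: two points $(s_a, t_a)$, $(s_b, t_b)$ with the same image must have $a(s_a,t_a) = a(s_b,t_b)$, and since by \eqref{rep_a_asymptotics} $a(s,t) = Ts + c + o(1)$ with decaying derivatives, $a$ is strictly increasing in $s$ for $s$ large, forcing $s_a = s_b$; then the loop injectivity just established gives $t_a = t_b$. To upgrade injectivity to \emph{embedding} one also needs $d u$ to be injective on $T(U \setminus \{z_0\})$, equivalently $\partial_s u, \partial_t u$ linearly independent; this follows from the same asymptotic expansions, since in the tube $\partial_s(\Psi \circ u) = (k t \text{-derivative terms} + o(1), \nu e^{\nu s}(e(t) + o(1)))$ and $\partial_t(\Psi \circ u) = (k + o(1), e^{\nu s}(e'(t) + o(1)))$, and as $e$ is nowhere zero these are independent for large $s$ (shrinking $U$ if necessary). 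Finally, properness of $\Psi \circ u$ near $z_0$ plus injectivity and immersivity yield that $u|_{U \setminus \{z_0\}}$ is an embedding onto its image, completing the proof.

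\textbf{Main obstacle.} The delicate point is the quantitative argument ruling out $j \ne 0$: one must convert the purely topological fact "$\gcd(\wind(v,\beta^k),k)=1$ implies the eigensection $e$ is not invariant (up to nonzero scalar) under the $\Z_k$-rotation $t \mapsto t + 1/k$" into a uniform lower bound $\inf_{t}|e(t) - e(t+j/k)| > 0$ that beats the exponentially small remainder $e^{\nu s} R(s,t)$, while simultaneously handling the $\theta$-component matching so that the shift between $t_1$ and $t_2$ is genuinely forced to lie in $\frac{1}{k}\Z$. Care is needed because a priori $e(t_1)$ and $e(t_2)$ need only agree after dividing by the (possibly different, but actually equal by the $a$-matching) exponential prefactors; tracking that the prefactors coincide and isolating the leading-order contradiction is where the bookkeeping concentrates.
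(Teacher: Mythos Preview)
There is a genuine gap in the step ``From injective loops to an embedded punctured disk.'' You write that two points $(s_a,t_a)$, $(s_b,t_b)$ with the same image under $u$ must satisfy $a(s_a,t_a)=a(s_b,t_b)$, and then use monotonicity of $a$ in $s$ to force $s_a=s_b$. But the lemma concerns $u$, the $M$-component, not $\tilde u=(a,u)$: equality $u(s_a,t_a)=u(s_b,t_b)$ gives only $(\theta,z)(s_a,t_a)=(\theta,z)(s_b,t_b)$ in the Martinet tube, and says nothing about $a$. So you cannot reduce to a single level $\{s=s_0\}$, and your loop-injectivity argument (which compares $e(t_1)$ with $e(t_2)$ with \emph{equal} exponential prefactor $e^{\nu s_0}$) does not directly apply. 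You yourself flag this in the ``Main obstacle'' paragraph, but the proposed resolution via ``$a$-matching'' is exactly the step that fails.

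The paper's proof confronts this head-on. Given self-intersections $(s_n,t_n)\neq(s_n',t_n')$ of $u$ with $\min\{s_n,s_n'\}\to\infty$, the $z$-equation reads $e^{\nu(s_n-s_n')}(e(t_n)+R_n)=e(t_n')+R_n'$. First one shows $|s_n-s_n'|$ stays bounded (else the left side would force $e(t_*')=0$); passing to a subsequence yields $c\,e(t_*)=e(t_*')$ with $c=\lim e^{\nu(s_n-s_n')}>0$ possibly $\neq 1$, and $t_*-t_*'=m/k$ from the $\theta$-equation. The case $m=0$ (so $t_*=t_*'$, $c=1$) needs a separate argument: a rescaled family $F_n$ converges to the explicit map $F_\infty(s,t)=(kt,e^{\nu s}e(t))$, and the self-intersections would force $DF_\infty$ to drop rank, which it does not. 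For $m\neq 0$, since $e$ solves a linear $1/k$-periodic ODE, the single identity $c\,e(t_*'+m/k)=e(t_*')$ propagates to $c\,e(t+m/k)=e(t)$ for all $t$; the winding computation then gives $m\cdot\wind(e)=kj$ for some integer $j$, and $\gcd(\wind(e),k)=1$ forces $k\mid m$, a contradiction. Note that the winding argument is insensitive to the scalar $c$, which is what makes the proof go through without any control on $s_a-s_b$.
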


\begin{proof}
We only treat the case $z_0$ is a positive puncture, negative punctures are handled analogously. Let $(s,t)$ be positive holomorphic cylindrical coordinates centered at $z_0$. By means of a Martinet tube we can represent $u(s,t)$ in coordinates with the aid of functions $\theta(s,t)\in\R$, $z(s,t),e(t),R(s,t)\in\R^2$ of $(s,t)\in\R^2$, and $\nu<0$, as in Theorem~\ref{thm_asymptotics}, so that $$ u(s,t+\Z) \simeq (\theta(s,t)+\Z,z(s,t)) \in \R/\Z \times \R^2, \ \ \ \ \forall (s,t) \in \R^2. $$ We assume, by contradiction, the existence of $(s_n,t_n),(s'_n,t'_n)$ satisfying
\begin{equation}
\begin{array}{ccc} (s_n,t_n+\Z)\neq(s'_n,t'_n+\Z) \text{ in } \R\times\R/\Z, & & \min\{s_n,s'_n\}\to+\infty \end{array}
\end{equation}
and
\begin{equation}\label{eq_identities_un_s}
\begin{array}{ccc} \theta(s_n,t_n) \in \theta(s_n',t_n') + \Z & & z(s_n,t_n)=z(s_n',t_n'). \end{array}
\end{equation}
Without loss of generality we may assume that $t_n\to t_*$, $t_n'\to t_*'$ with $t_*,t_*'\in [0,1)$. We denote $R_n=R(s_n,t_n)$, $R_n'=R(s_n',t_n')$.

If $|s_n-s_n'|$ is not bounded then, up to relabeling and selection of a subsequence, we may assume that $s_n-s'_n\to+\infty$. Thus $\nu(s_n-s_n')\to-\infty$ and from~\eqref{rep_z_asymptotics} we have
\begin{equation}\label{eq_e_tn}
e^{\nu(s_n-s_n')}(e(t_n)+R_n)=e(t_n')+R_n'.
\end{equation}
Passing to the limit we get $e(t_*')=0$, absurd. This proves that $|s_n-s_n'|$ is bounded. Up to a subsequence we may assume that $s_n-s_n'$ is convergent. Again taking the limit in~\eqref{eq_e_tn} we find that
\begin{equation}\label{eq_e_t*}
ce(t_*)=e(t_*') \ \ \ \text{for $c=\lim_{n\to\infty} e^{\nu(s_n-s_n')} >0$.}
\end{equation}
Let $m_n$ be integers defined by
\begin{equation}\label{eq_identities_thetas}
\theta(s_n,t_n) = \theta(s_n',t_n') + m_n.
\end{equation}
Using~\eqref{rep_theta_asymptotics} we find real numbers $\epsilon_n\to0$ such that $$ kt_n + \epsilon_n=kt_n'+m_n. $$ Note that $|t_*-t_*'|<1 \Rightarrow |m_n|\leq k-1$, for all large $n$. Up to selection of a subsequence, we can pass to the limit to get
\begin{equation}
t_* = t_*' + \frac{m}{k} \ \ \ \text{for some $m\in\Z\cap[-k+1,k-1]$, $m_n\to m$.}
\end{equation}

Set $F(s,t) = (\theta(s,t),z(s,t)) \in \R\times \R^2$, and suppose that $m=0$. Hence $c=1$, $s_n-s_n' \to 0$ as $n\to\infty$ in view of~\eqref{eq_e_t*} and, in this case, $|s_n-s_n'|+|t_n-t_n'| \to 0$. Moreover, $m_n=0$ for large $n$, \eqref{eq_identities_un_s} and~\eqref{eq_identities_thetas} give $F(s_n,t_n)=F(s_n',t_n')$ for large $n$. Consider the sequence of functions $$ F_n(s,t) := M_nF(s+s_n,t) \ \ \text{where $M_n$ is the matrix} \ \ \begin{pmatrix} 1 & 0 \\ 0 & e^{-\nu s_n}I \end{pmatrix} $$ When $n$ is large $F_n(s,t)$ is defined on $[0,+\infty)\times\R$ and is explicitly given by $$ F_n(s,t) = (\theta(s+s_n,t),e^{\nu s}(e(t)+R(s+s_n,t))). $$ Then $$ F_n \to F_\infty(s,t) = (kt,e^{\nu s}e(t)) \ \ \text{as $n\to\infty$ in $C^\infty_{\rm loc}([0,+\infty)\times\R)$} $$ in view of Theorem~\ref{thm_asymptotics}. Note that the identities $F(s_n,t_n)=F(s_n',t_n')$ are equivalent to $F_n(0,t_n)=F_n(s'_n-s_n,t_n')$, for each $n$. Since $(0,t_n)$ and $(s'_n-s_n,t_n')$ both converge to $(0,t_*)=(0,t_*')$, we get that the rank of $DF_\infty(0,t_*)$ is at most $1$. But $$ DF_\infty(0,t_*) = \begin{pmatrix} 0 & k \\ \nu e(t_*) & \dot e(t_*) \end{pmatrix} \ \ \text{clearly has rank two because $e(t_*)\neq0$.} $$ This contradiction shows that $m\neq 0$, $t_*\neq t_*'$.
%and, consequently, there exist $(s_n'',t_n'')$ satisfying $s_n''\to+\infty$, $t_n'' \to t_*=t'_*$ and vectors $h_n \in \R^2$, $|h_n|=1$, satisfying $DF(s_n'',t_n'') [h_n]=0$. Up to a subsequence we may assume that $h_n \to h_*$, and with an application of Theorem~\ref{thm_asymptotics} we get
%\begin{equation}
%\begin{aligned}
%0
%&= \begin{pmatrix} 1 & 0 \\ 0 & e^{-\nu s_n''}I \end{pmatrix} DF(s_n'',t_n'')[h_n] \\
%&= \begin{pmatrix} 1 & 0 \\ 0 & e^{-\nu s_n''}I \end{pmatrix} \left. \begin{pmatrix} \theta_s & k + \partial_t(\theta-kt) \\ \nu z + e^{\nu s}R_s & e^{\nu s}(\dot e +R_t) \end{pmatrix} \right|_{(s_n'',t_n'')}[h_n] \\
%&\to \begin{pmatrix} 0 & k \\ \nu e(t_*) & \dot e(t_*) \end{pmatrix} [h_*] \ \ \ \text{ as $n\to\infty$.}
%\end{aligned}
%\end{equation}
%This is impossible because $\nu e(t_*)\neq 0$ and $k\neq 0$, thus the matrix in last line has trivial kernel. This contradiction proves that $t_*\neq t_*'$.
Up to relabeling,~\eqref{eq_e_t*} now reads $$ ce\left(t_*'+\frac{m}{k}\right) = e(t_*') \ \ \ \text{for some $1\leq m\leq k-1$}. $$ Since $e(t)$ solves a linear $1/k$-periodic ODE, we find that
\begin{equation}\label{final_conclusion_e(t)}
\text{$ce\left(t+\frac{m}{k}\right) = e(t)$ holds for all $t$.}
\end{equation}
Let $\varphi(t)$ be a choice of argument of $e(t)$, that is, $e(t) \in \R^+e^{i\varphi(t)}$, and set $$ j=\varphi(m/k)-\varphi(0). $$ Note that $j$ is an integer in view of~\eqref{final_conclusion_e(t)}. Denoting $\wind(e)=\varphi(1)-\varphi(0)$ we compute
\begin{align*}
m \ \wind(e) &= m(\varphi(1)-\varphi(0)) \\
&= \varphi(m)-\varphi(0) \\
&= \sum_{l=1}^k \varphi(lm/k)-\varphi((l-1)m/k) \\
&= kj.
\end{align*}
This shows that $k$ divides $m\wind(e)$. But we have assumed that $k$ and $\wind(e)$ are relatively prime, so $k$ must divide $m$ which is impossible because $m<k$. This contradiction concludes the argument.
\end{proof}

Finally we introduce the basic algebraic invariants of curves in symplectizations, following~\cite{props2}. Let $\util = (a,u) : (S\setminus\Gamma,j) \to (\R\times M,\jtil)$ be a finite-energy curve, where $\Gamma$ are non-removable punctures on the closed Riemann surface $(S,j)$. We assume that every puncture in $\Gamma$ is non-degenerate in the sense of Definition~\ref{defn_non_deg_punct} so that, according to Lemma~\ref{lem_asymp_formula_nondeg_punct}, the conclusions of Theorem~\ref{thm_asymptotics} hold at every puncture. If $u^*d\lambda$ does not vanish identically Hofer, Wysocki and Zehnder define
\begin{equation*}
\wind_\pi(\util) = \text{algebraic count of zeros of $\pi_\alpha \circ du$}
\end{equation*}
where $\pi_\alpha$ is the projection~\eqref{proj_along_reeb_direction}. Since $\pi_\alpha \circ du$ solves a Cauchy-Riemann type equation, the similarity principle will tell us that its zeros are isolated and contribute positively to the above count. Moreover, since $\pi_\alpha \circ du$ does not vanish near the punctures (see Definition~\ref{defn_non_deg_punct}) we can conclude that the above count is finite. Now let us symplectically trivialize $u^*\xi$ using a trivialization $\sigma$. Then for every $z\in\Gamma$ such $\sigma$ induces a homotopy class $\beta_{z,\sigma}$ of $d\alpha$-symplectic trivializations of $x_z(T_z\cdot)^*\xi$, where $P_z=(x_z,T_z)$ is the asymptotic limit of $\util$ at $z$. We now set $\wind(\util,z,\sigma):=\wind(v_z,\beta_{z,\sigma})$ where $v_z$ is the asymptotic eigenvalue of $\util$ at $z$, split $\Gamma = \Gamma^+ \sqcup \Gamma^-$ into positive and negative punctures, and define
\begin{equation}
\wind_\infty(\util) = \sum_{z\in\Gamma^+} \wind(\util,z,\sigma) - \sum_{z\in\Gamma^-} \wind(\util,z,\sigma).
\end{equation}
The above sum does not depend on the choice of $\sigma$. It relates to $\wind_\pi(\util)$ according to the formula
\begin{equation}
\wind_\pi(\util) = \wind_\infty(\util) - \chi(S) + \#\Gamma
\end{equation}
proved in~\cite{props2}.

\begin{definition}\label{defn_fast_planes}
A finite-energy plane $\util:\C\to \R\times M$ will be called fast if $\infty$ is a non-degenerate puncture and $\wind_\pi(\util)=0$.
\end{definition}

\section{Proof of Theorem~\ref{main1}}

If $\lambda$ is a dynamically convex contact form as in Theorem \ref{main1}, then the lifted contact form $\pi_{2,1}^* \lambda$ on $S^3$  is also dynamically convex and hence tight, see~\cite[Theorem~1.5]{char2}. Thus $\xi = \ker \lambda$ is a universally tight contact structure on $L(2,1)$ and, as such, coincides with $\xi_0$ up to a diffeomorphism. Therefore, we may assume that $\lambda = f\lambda_0$ for some smooth function $f:L(2,1) \to (0,+\infty)$, without any loss of generality.

Given $0<r_1<r_2$, let $\lambda_E = f_E \lambda_0$ be the contact form  on $S^3$ associated to an ellipsoid, where $f_E$ is given by
\begin{equation}\label{lambdaE}
f_E = \left\{\frac{x_1^2+y_1^2}{r_1^2} + \frac{x_2^2+y_2^2}{r_2^2} \right\}^{-1}.
\end{equation}
Since  $g_{2,1}^*\lambda_E = \lambda_E$, $\lambda_E$ descends to a contact form on $L(2,1)$, also denoted by $\lambda_E$. If
\begin{equation}\label{hip1}
\left(\frac{r_2}{r_1}\right)^2 \in \R \setminus \Q,
\end{equation}
then the Reeb flow of $\lambda_E$ on $L(2,1)$ contains precisely two nondegenerate elliptic simply covered periodic orbits $P_1 = \pi_{2,1}(S^1 \times \{0\})$ and $P_2=\pi_{2,1}(\{0\} \times S^1)$ forming a so-called Hopf link on $L(2,1)$. Both $P_1$ and $P_2$ are $2$-unknotted and have self-linking number $\frac{-1}{2}$. Their (prime) periods satisfy $$ T_1 = \frac{\pi r_1^2}{2} \ \ \mbox{ and } \ \ T_2 = \frac{\pi r_2^2}{2}, $$ respectively.  The Conley-Zehnder indices of the contractible periodic orbits $P_1^2$ and $P_2^2$ are given by $$ \begin{aligned} \mu_{CZ}(P_1^2)  =3 \ \ \mbox{ and } \ \ \mu_{CZ}(P_2^2) = 2k+1, \end{aligned} $$  where $k\in \Z$, $k>1$, is such that $r_2^2/r_1^2 \in (k-1,k)$. Given a contact form $\lambda = f\lambda_0$ on $L(2,1)$ we choose $0<r_1<r_2$ large enough so that
\begin{equation}\label{hip2}
f < f_E \ \ \ \text{pointwise on $L(2,1)$.}
\end{equation}

\subsection{The non-degenerate case}\label{subsec_nondeg_case}

The following proposition implies Theorem \ref{main1} under a non-degeneracy assumption on $\lambda$.

\begin{proposition}\label{prop1}
Let $\lambda=f\lambda_0$ where $f:L(2,1) \to (0,+\infty)$ is smooth. Choose $0<r_1<r_2$ with $r_2^2 / r_1^2 \in \R \setminus \Q$ in such a way that $f_E$ defined in equation \eqref{lambdaE} satisfies~\eqref{hip2}.  Assume that every $P\in\P(\lambda)$ with period less than or equal to $\pi r_1^2$ satisfies both conditions
\begin{itemize}
\item[a)] $P$ is non-degenerate.
\item[b)] If $P$ is contractible then $\mu_{CZ}(P)\geq 3$.
\end{itemize}
Let $J\in \J$ be a $d\lambda$-compatible complex structure on $\xi=\ker \lambda$ and let $\jtil$ be the cylindrical almost complex structure on $\R \times L(2,1)$ induced by $(\lambda,J)$. Then there exists a finite-energy $\jtil$-holomorphic plane $\tilde u=(a,u):\C \to \R \times L(2,1)$ satisfying:
\begin{itemize}
\item[i)] $\tilde u$ is an embedding.
\item[ii)] The asymptotic limit of $\tilde u$ is a non-degenerate periodic orbit $P_0^2=(x_0,2T_0)$ satisfying $\mu_{CZ}(P_0^2)=3$, where $P_0=(x_0,T_0)$ is a simply covered orbit with $1/2<\rho(P_0)<1$. In particular, $P_0$ and $P_0^2$ are elliptic and $\mu_{CZ}(P_0)=1$.
\item[iii)] $u$ is an embedding transverse to $X_\lambda$ and $u(\C) \cap x_0(\R) = \emptyset$. In particular, $x_0(\R)$ is a $2$-unknot with self-linking number $-1/2$.
\item[iv)] $E(\tilde u) = 2T_0 \leq \pi r_1^2$.
\end{itemize}
\end{proposition}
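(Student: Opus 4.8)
The plan is to adapt to the non-simply-connected setting the strategy of Hofer, Wysocki and Zehnder from~\cite{convex}, the genuinely new analytic ingredient being Siefring's intersection theory~\cite{sie1,sie2}, needed to control holomorphic curves on $L(2,1)$ that fail to be Fredholm regular. First I would construct a non-cylindrical almost complex cobordism. Fixing $J_E\in\J_+(\ker\lambda_E)$, the inequality $f<f_E$ says that the graph of $\lambda$ lies below the graph of $\lambda_E$ inside a symplectization, so one obtains a symplectic manifold $(\widehat W,\omega)\cong(\R\times L(2,1),d(e^a\widehat\lambda))$ that coincides with the symplectization of $(\lambda,J)$ on a negative cylindrical end and with that of $(\lambda_E,J_E)$ on a positive cylindrical end, carrying a compatible almost complex structure $\jbar$ equal to $\jtil$ for $a\ll0$ and to the cylindrical structure induced by $(\lambda_E,J_E)$ for $a\gg0$. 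A finite-energy $\jbar$-holomorphic plane in $\widehat W$ has a single, positive puncture asymptotic to a contractible $\lambda_E$-Reeb orbit, and its Hofer energy is bounded by the $\lambda_E$-action of that orbit.

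Second, I would exploit the explicit dynamics of $\lambda_E$. Since $r_2^2/r_1^2\in\R\setminus\Q$, the ellipsoid carries a finite-energy foliation whose leaves are embedded fast planes asymptotic to $P_1^2$, a double cover of the $2$-unknot $P_1$; these leaves have energy exactly $\pi r_1^2=\int_{P_1^2}\lambda_E$, and since they sit in the region $\{a\gg0\}$ they are $\jbar$-holomorphic planes in $\widehat W$. Moreover $P_1^2$ is the unique contractible $\lambda_E$-orbit with Conley--Zehnder index $3$, as one checks from $\mu_{CZ}(P_1^2)=3$, $\mu_{CZ}(P_2^2)=2k+1\ge5$ and the iteration formulas. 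Let $\M$ be the moduli space of simple fast finite-energy $\jbar$-holomorphic planes in $\widehat W$; automatic transversality, which applies to fast planes asymptotic to an index-$3$ orbit, makes $\M$ a smooth manifold that contains the ellipsoid leaves, and $\R$-translating those leaves upward exhibits planes of $\M$ sitting arbitrarily high in the positive end. Tracing the component of $\M$ through the ellipsoid foliation as the planes are pushed down into the non-cylindrical part and the negative end, a compactness/continuation argument as in~\cite{convex} produces, in the negative end, a holomorphic building.

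Third --- the analytic core --- I would run the SFT compactness analysis along this breaking or sinking sequence. The energy bound inherited from $P_1^2$ rules out $P_2$ and its iterates, and all high iterates of $P_1$, as breaking orbits or as asymptotic limits; using dynamical convexity of $\lambda$ (hypothesis (b)) and of $\lambda_E$, together with $\wind_\pi\ge0$, the formula relating $\wind_\pi$, $\wind_\infty$ and the Conley--Zehnder indices, and the asymptotic expansions of Theorem~\ref{thm_asymptotics} and Lemma~\ref{lem_asymp_formula_nondeg_punct}, I would show that the limiting building degenerates as little as possible: its bottom level is a single simple fast finite-energy $\jtil$-holomorphic plane $\tilde u_0=(a_0,u_0):\C\to\R\times L(2,1)$ asymptotic to a contractible orbit $P_0^2$ double-covering a prime orbit $P_0$, with no nonconstant curve strictly below it; the dynamical characterizations of the tight three-sphere from~\cite{char1,char2,HLS} are used to constrain the configurations arising over the negative end. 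I expect this step to be the main obstacle: intermediate curves of the building on $L(2,1)$ need not be Fredholm regular, so one cannot argue by naive dimension counting, and it is Siefring's intersection theory~\cite{sie1,sie2} --- via positivity of intersections and the homotopy invariance of the generalized intersection and self-intersection numbers --- that keeps the bottom level embedded, disjoint from the trivial cylinder over its asymptotic limit, and simple rather than multiply covered or ``thick''.

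Finally I would read off the stated properties of $\tilde u_0$. Since $P_0^2$ is contractible with $\lambda$-action $2T_0=E(\tilde u_0)\le\pi r_1^2$, hypotheses (a)--(b) make it non-degenerate with $\mu_{CZ}(P_0^2)\ge3$, while $\wind_\pi(\tilde u_0)=0$ forces $\mu_{CZ}(P_0^2)\le3$; hence $\mu_{CZ}(P_0^2)=3$, non-degeneracy excludes $\rho(P_0)=1/2$, so $1/2<\rho(P_0)<1$, both $P_0$ and $P_0^2$ are elliptic, and $\mu_{CZ}(P_0)=1$, which is (ii); and $E(\tilde u_0)=\int_{P_0^2}\lambda=2T_0$ for planes in a symplectization, which is (iv). The asymptotic eigensection of $\tilde u_0$ at $\infty$ has winding equal to $\wind^{<0}$ of the asymptotic operator at $P_0^2$, which is odd and hence coprime to the covering multiplicity $2$, so $\infty$ is a relatively prime puncture and Lemma~\ref{lem_crucial_behavior_at_punct} shows that $u_0$ is an embedding near $\infty$; combining this with $\wind_\pi(\tilde u_0)=0$ (so $u_0$ is everywhere transverse to $X_\lambda$), with simplicity, and with the vanishing --- by fastness and Siefring's theory --- of the self-intersection number of $\tilde u_0$ and of its intersection number with $\R\times x_0(\R)$, one obtains that $\tilde u_0$ is an embedding, that $u_0$ is an embedding transverse to $X_\lambda$, and that $u_0(\C)\cap x_0(\R)=\emptyset$, which are (i) and (iii). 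In particular $u_0$ is an oriented $2$-disk for the transverse knot $x_0(\R)$ with embedded interior disjoint from the knot, and the definition of the rational self-linking number together with $\wind_\pi(\tilde u_0)=0$ yields $\sl(x_0(\R))=-1/2$.
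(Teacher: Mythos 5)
Your overall strategy matches the paper's: build the non-cylindrical cobordism interpolating $\lambda$ and the ellipsoid form $\lambda_E$, start from the finite-energy foliation of the ellipsoid with leaves asymptotic to $P_1^2$, and run a deformation/compactness argument to produce an embedded fast $\jtil$-holomorphic plane in the negative end. Siefring's intersection theory, the Conley--Zehnder iteration lemmas, $\wind_\pi/\wind_\infty$ bookkeeping, Lemma~\ref{lem_crucial_behavior_at_punct}, and the characterization of the tight $3$-sphere (Theorem~\ref{thm_char2}) all appear in the right places. However, there are two concrete gaps in your implementation.

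First, you do not explain how to obtain the \emph{upper} bound $\mu_{CZ}(P_0^2)\le 3$ for the breaking orbit, and the mechanism you propose is wrong. You write that ``$\wind_\pi(\tilde u_0)=0$ forces $\mu_{CZ}(P_0^2)\le 3$,'' but $\wind_\pi(\tilde u_0)=0$ only gives $\wind_\infty(\tilde u_0)=1$, hence $\wind^{<0}(A_{P_0^2})\ge 1$ and $\mu_{CZ}(P_0^2)\ge 2$; it does not bound $\mu_{CZ}$ from above, since the asymptotic eigenvalue of $\tilde u_0$ need not be $\nu^{<0}$. Moreover the $\wind_\pi$-formalism is a symplectization tool and does not apply to the $\bar J$-holomorphic root $\tilde u_r$ living in the non-cylindrical part of the cobordism, which is precisely where the breaking orbit is selected. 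What the paper uses here is the Fredholm inequality of Theorem~\ref{theo_transversal}, valid for somewhere-injective generalized finite-energy spheres after choosing $\bar J$ in the residual set $\J_{\rm reg}$; applied to the root (or, when the root is multiply covered, to its underlying simple curve together with the iteration lemmas~\ref{lem_fundamental_CZ} and~\ref{lem_CZ_iteration}), it yields $\mu_{CZ}(P)\le 3$, which combined with dynamical convexity gives equality and forces exactly one negative puncture. Your proposal never invokes this genericity and this inequality, and without them the argument has no way to pin down $\mu_{CZ}(P_0^2)=3$ or rule out extra breaking.

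Second, your argument that $u_0(\C)\cap x_0(\R)=\emptyset$ is circular. You appeal to ``the vanishing --- by fastness and Siefring's theory --- of $\ldots$ its intersection number with $\R\times x_0(\R)$,'' but the computation showing that a fast plane asymptotic to $P$ does not meet the trivial cylinder over $K$ (as in Lemma~\ref{lemma_intersection}) uses the hypotheses that $K$ is a $p$-unknot and $\sl(K)=-1/p$ as \emph{inputs}; in Proposition~\ref{prop1} these are the \emph{outputs} you are trying to establish. Likewise $[\tilde u_0]*[\tilde u_0]=0$ is not available for free: $\tilde u_0$ is asymptotic to $P_0^2$, not to $P_1^2$, so it does not lie in the homotopy class used in Proposition~\ref{prop_embed}, and by Siefring's Corollary~5.17 the vanishing of the self-intersection number is in fact \emph{equivalent} to $u_0(\C)\cap x_0(\R)=\emptyset$, so it cannot serve as a proof of it. The paper instead derives disjointness from the Fredholm-theoretic implicit function theorem of~\cite[Theorem~A.1]{HLS}: the plane $\tilde u_0$ belongs to a $2$-parameter family that contains its small $\R$-translations $\tilde u_\epsilon$, giving $\tilde u_\epsilon(\C)\cap\tilde u_0(\C)=\emptyset$ for small $\epsilon>0$, and then an elementary bounded-intersection argument plus positivity and stability of intersections shows that $u_0(\C)\cap x_0(\R)\neq\emptyset$ would force such an intersection, a contradiction. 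You should replace the Siefring-based claim with this $\R$-translation argument.
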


The remainder of subsection~\ref{subsec_nondeg_case} is devoted to the proof of Proposition~\ref{prop1}. From now on we assume that $r_1,r_2$ satisfy~\eqref{hip1}, that $f_E$ defined as in~\eqref{lambdaE} satisfies~\eqref{hip2}, and that every closed $\lambda$-Reeb orbit with period less than or equal to $\pi r_1^2$ satisfies conditions a) and b) from Proposition~\ref{prop1}.

\subsubsection{Conley-Zehnder indices}\label{sec_CZ_iteration}

The universally tight contact structure on $L(2,1)$ is a trivial vector bundle. Hence we can find a global $d\lambda$-symplectic frame of $\xi$, with respect to which we compute Conley-Zehnder indices and transverse rotation numbers of all closed $\lambda$-Reeb orbits, contractible or not. Under the standing assumptions of Proposition~\ref{prop1} the Conley-Zehnder indices of closed $\lambda$-Reeb orbits have certain iteration properties summarized in the following statements.

\begin{lemma}\label{lem_fundamental_CZ}
Let $P=(x,T)$ be a closed $\lambda$-Reeb orbit satisfying $\mu_{CZ}(P)=1$ and $T\leq\pi r_1^2/2$. Then $P$ is prime, non-contractible, elliptic and $1/2 < \rho(P) < 1$.
\end{lemma}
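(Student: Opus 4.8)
The plan is to read off everything from the iteration formulae~\eqref{iteration_formula_rho} and~\eqref{iteration_formula_mu} for the transverse rotation number and the Conley--Zehnder index, which by Remark~\ref{rmk_global_triv_SO(3)} are available for \emph{all} closed $\lambda$-Reeb orbits (contractible or not), since on $L(2,1)$ we compute these invariants with respect to a global symplectic trivialization of $\xi_{\rm std}$. The only other inputs are the standing hypotheses (a) and (b) of Proposition~\ref{prop1} and the fact that $\pi_1(\R P^3)\cong\Z_2$. The running caveat is that each time hypothesis (b) is applied one must certify that the orbit in question is contractible \emph{and} has period in the window $(0,\pi r_1^2]$; the bound $T\leq\pi r_1^2/2$ is tuned precisely to leave room for the second iterate $P^2$ and for all iterates of a hypothetical prime factor of $P$.

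First I would note that $P$ cannot be contractible, since otherwise hypothesis (b) would force $\mu_{CZ}(P)\geq 3$, contradicting $\mu_{CZ}(P)=1$. To see that $P$ is prime, write $P=Q^n$ with $Q$ prime; the loop underlying $P$ represents $n[Q]$ in $\pi_1(\R P^3)\cong\Z_2$, so $P$ non-contractible forces $Q$ non-contractible and $n$ odd. Suppose, for contradiction, $n\geq 3$. Each of $Q,Q^2,\dots,Q^n$ has period $\leq T\leq\pi r_1^2$, hence is non-degenerate by (a), and $Q^2$ is contractible of period $\leq\pi r_1^2$, so $\mu_{CZ}(Q^2)\geq 3$ by (b). If $Q$ is hyperbolic then $1=\mu_{CZ}(P)=n\,\mu_{CZ}(Q)$, which is impossible for $n\geq 3$. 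If $Q$ is elliptic then so are $Q^2$ and $Q^n$, these iterates being non-degenerate, and $1=\mu_{CZ}(Q^n)=2\lfloor n\rho(Q)\rfloor+1$ forces $\rho(Q)\in[0,1/n)$; then $2\rho(Q)<2/3<1$, so $\mu_{CZ}(Q^2)=2\lfloor 2\rho(Q)\rfloor+1=1<3$, contradicting the previous line. Hence $n=1$ and $P$ is prime.

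Next, since $P$ generates $\pi_1(\R P^3)$, its iterate $P^2$ is contractible, of period $2T\leq\pi r_1^2$, hence non-degenerate with $\mu_{CZ}(P^2)\geq 3$ by (b). Were $P$ hyperbolic we would get $\mu_{CZ}(P^2)=2\,\mu_{CZ}(P)=2<3$, a contradiction; therefore $P$ is elliptic. Being elliptic, non-degenerate, and of index $1$, the $n=1$ instance of~\eqref{iteration_formula_mu} gives $1=2\lfloor\rho(P)\rfloor+1$, so $\rho(P)\in[0,1)$; and $\rho(P)\neq 0$ by non-degeneracy, so $\rho(P)\in(0,1)$.

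Finally, $P^2$ is elliptic and non-degenerate with $\mu_{CZ}(P^2)=2\lfloor 2\rho(P)\rfloor+1\geq 3$, so $\lfloor 2\rho(P)\rfloor\geq 1$, i.e. $\rho(P)\geq 1/2$; since $2\rho(P)=\rho(P^2)\notin\Z$ (again by non-degeneracy of $P^2$) we conclude $\rho(P)>1/2$. Together with $\rho(P)<1$ this gives $1/2<\rho(P)<1$. There is no genuine analytic difficulty here; what I expect to require care is the bookkeeping---at every use of hypotheses (a) and (b), verifying contractibility and the period bound of the orbit under scrutiny---together with the elementary but load-bearing topological observation that, in $\R P^3$, a prime orbit must be iterated an \emph{odd} number of times to remain non-contractible, which is exactly what drives the primality step.
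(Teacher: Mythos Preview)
Your proof is correct and follows essentially the same approach as the paper's: both exploit the iteration formulae for $\mu_{CZ}$ and $\rho$, hypothesis (b) applied to the contractible double cover, and the fact that $\pi_1(\R P^3)\cong\Z_2$. The paper's argument is slightly more economical in the primality step: rather than first observing that the multiplicity must be odd and then splitting into hyperbolic/elliptic cases, it simply notes that $\mu_{CZ}(P)=1$ forces $0<\rho(P)<1$ (this holds for any non-degenerate orbit with index $1$, elliptic or negative hyperbolic), hence $\rho(P')=\rho(P)/m<1/m\leq 1/2$ for any $m\geq 2$, which immediately contradicts $\mu_{CZ}(P'^2)\geq 3\Rightarrow \rho(P')>1/2$; no parity consideration or case split is needed.
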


\begin{proof}
$P$ is non-contractible since otherwise $\mu_{CZ}(P)\geq 3$ by assumption. Assume that $P$ is not prime, so $P=P'^m$ for some $m\geq2$ and some prime closed $\lambda$-Reeb orbit $P'=(x,T/m)$. Since $\mu_{CZ}(P)=1$ we have $0<\rho(P')<1/m\leq1/2$ because $0<\rho(P)<1$ and $m\geq 2$. But $P'^2$ is contractible and, as such $\mu_{CZ}(P'^2)\geq 3 \Rightarrow 2\rho(P')=\rho(P'^2)>1$, contradiction. We have proved so far that $P$ is prime and non-contractible. Since $P^2$ is contractible we get $\mu_{CZ}(P^2)\geq 3 \Rightarrow 2\rho(P)=\rho(P^2)>1 \Rightarrow \rho(P)>1/2$. Since $\mu_{CZ}(P)=1 \Rightarrow \rho(P)<1$ one gets that $P$ is elliptic and $1/2<\rho(P)<1$.
\end{proof}

\begin{lemma}\label{lem_CZ_iteration}
Let $P=(x,T)$ be a contractible closed $\lambda$-Reeb orbit satisfying both $\mu_{CZ}(P)\geq3$ and $T\leq\pi r_1^2$. The following assertions hold.
\begin{itemize}
\item[i)] If we write $P=P'^m$ for some prime closed $\lambda$-Reeb orbit $P'$ and some $m\geq 1$, then $\mu_{CZ}(P'^k)\geq 1$ for all $1\leq k\leq m$, and $\rho(P')>1/2$.
\item[ii)] If $\mu_{CZ}(P)=3$ then either $P$ is prime, or $P=P'^2$ for some elliptic non-contractible prime closed $\lambda$-Reeb orbit $P'$ satisfying $1/2 < \rho(P') < 1$.
\end{itemize}
\end{lemma}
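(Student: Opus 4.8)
The plan is to derive both parts from two equivalences valid for any \emph{nondegenerate} closed $\lambda$-Reeb orbit $Q$, with $\mu_{CZ}$ and $\rho$ always computed in the fixed global $d\lambda$-symplectic trivialization of $\xi$ (legitimate for non-contractible orbits by Remark~\ref{rmk_global_triv_SO(3)}):
\begin{equation}\label{plan_equiv}
\mu_{CZ}(Q)\geq 1 \iff \rho(Q)>0, \qquad \mu_{CZ}(Q)\geq 3 \iff \rho(Q)>1 .
\end{equation}
These are read off from the description of $\mu_{CZ}$ via the rotation interval in Section~\ref{sec_prelim}, splitting into the cases $Q$ positive hyperbolic (then $\rho(Q)\in\Z$ and $\mu_{CZ}(Q)=2\rho(Q)$) and $Q$ not positive hyperbolic (then $\rho(Q)\notin\Z$, by nondegeneracy, and $\mu_{CZ}(Q)=2\lfloor\rho(Q)\rfloor+1$); the same split shows that $\mu_{CZ}(Q)=3$ forces $\rho(Q)\in(1,2)$ (an odd index excludes the positive hyperbolic alternative, and then $2\lfloor\rho(Q)\rfloor+1=3$ with $\rho(Q)\notin\Z$ gives $\rho(Q)\in(1,2)$), a fact I would record separately.

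For part (i), write $P=P'^m$ with $P'$ prime. Each $P'^k$, $1\leq k\leq m$, has period at most $T\leq\pi r_1^2$, hence is nondegenerate by hypothesis (a). The key point is $\rho(P')>1/2$. When $m=1$ this is \eqref{plan_equiv} applied to $P=P'$, since $\mu_{CZ}(P)\geq3$. When $m\geq2$ I would use $\pi_1(L(2,1))\cong\Z_2$: since $P=P'^m$ is contractible, $P'^2$ is contractible (automatically if $m$ is even, and if $m$ is odd then $P'$, hence $P'^2$, is contractible), and $P'^2$ has period $2T/m\leq T\leq\pi r_1^2$; so hypothesis (b) gives $\mu_{CZ}(P'^2)\geq3$, whence $2\rho(P')=\rho(P'^2)>1$ by \eqref{plan_equiv}. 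In all cases $\rho(P')>1/2$, so $\rho(P'^k)=k\rho(P')>k/2>0$ for $1\leq k\leq m$, and the first equivalence in \eqref{plan_equiv} gives $\mu_{CZ}(P'^k)\geq1$.

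For part (ii), assume $\mu_{CZ}(P)=3$ and $P$ is not prime, so $P=P'^m$ with $m\geq2$; by part (i) we have $\rho(P')>1/2$. Nondegeneracy of $P$ together with $\mu_{CZ}(P)=3$ gives $\rho(P)\in(1,2)$, hence $m/2<\rho(P)=m\rho(P')<2$ and therefore $m<4$, i.e.\ $m\in\{2,3\}$. The value $m=3$ is excluded: then $P=P'^3$ contractible forces $P'$ contractible (in $\Z_2$ the cube of an element equals that element), so hypothesis (b) gives $\mu_{CZ}(P')\geq3$ and hence $\rho(P')>1$, contradicting $\rho(P')=\rho(P)/3<2/3$. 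Thus $m=2$ and $\rho(P')=\rho(P)/2\in(1/2,1)$. Since $(1/2,1)$ contains neither an integer nor a half-integer, $P'$ is neither positive nor negative hyperbolic, hence elliptic; and $P'$ cannot be contractible, for otherwise hypothesis (b) would give $\mu_{CZ}(P')\geq3$ and hence $\rho(P')>1$. This is exactly the claimed alternative.

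The points that need care are the tracking of which iterates $P'^k$ are contractible — this is precisely what licenses invoking hypothesis (b), dynamical convexity, on $P'^2$ in part (i) and on $P'$ in the case $m=3$ of part (ii) — together with the verification of \eqref{plan_equiv} in the negative hyperbolic case; everything else is elementary manipulation of $\rho$ via the iteration formula $\rho(P^n)=n\rho(P)$ of~\eqref{iteration_formula_rho}.
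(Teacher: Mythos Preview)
Your proof is correct and follows essentially the same line as the paper's: both hinge on the equivalences $\mu_{CZ}\geq 3 \Leftrightarrow \rho>1$ and $\mu_{CZ}\geq 1 \Leftrightarrow \rho>0$ for nondegenerate orbits, the iteration formula for $\rho$, and the $\Z_2$ structure of $\pi_1(L(2,1))$ to apply hypothesis (b) to $P'^2$. The only cosmetic difference is that in part (ii) the paper first observes $\mu_{CZ}(P')=1$ and then quotes Lemma~\ref{lem_fundamental_CZ} for the properties of $P'$, whereas you argue ellipticity and non-contractibility of $P'$ directly from $\rho(P')\in(1/2,1)$.
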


\begin{proof}
Item i) follows from $\mu_{CZ}(P)\geq 3 \Rightarrow m\rho(P')=\rho(P)>1$ and $\rho(P'^j)=j\rho(P') > j/m > 0$ for all $j$. Inequality $\rho(P')>1/2$ holds because either $m=1$ and $\rho(P')=\rho(P)>1$ or $m\geq 2$ so we can estimate $\mu_{CZ}(P'^2)\geq 3 \Rightarrow 2\rho(P')>1$.

To prove ii), assume that $P$ is not prime, so $P=P'^m$ for some $m\geq2$ and some prime closed $\lambda$-Reeb orbit $P'=(x,T/m)$. Since $\mu_{CZ}(P)=3$ we have $1/m<\rho(P')<2/m$ because $1<\rho(P)<2$. In particular $\mu_{CZ}(P')=1$. All the desired conclusions about $P'$ now follow from the previous lemma. It only remains to be shown that $m=2$, but do note that $2/m>1/2 \Rightarrow m\in\{2,3\}$ and that if $m=3$ then $P$ would not be contractible because $P'$ is not contractible. Hence $P=P'^2$.
\end{proof}

\subsubsection{Symplectic cobordisms}\label{sec_non_cyl_cob}

Following \cite{convex}, choose $h:\R \times L(2,1) \to \R^+$ smooth and satisfying
\begin{itemize}
\item $h(a,p) = f(p)$ if $a\leq -2$.
\item $h(a,p) = f_E(p)$ if $a\geq 2$.
\item $\frac{\partial h}{\partial a} \geq 0$ and $\frac{\partial h}{\partial a} > \sigma >0$ on $[-1,1] \times L(2,1)$, for some constant $\sigma>0$.
\end{itemize}
The family of contact forms $\lambda_a:= h(a, \cdot) \lambda_0$, $a\in \R$, interpolates $\lambda$ and $\lambda_E$ in a monotonic way and $\xi=\ker \lambda_a$ does not depend on $a$. Choose $J_E \in \J(\lambda_E)$ and let $J_a \in \J(\lambda_a)$, $a\in \R$, be a smooth family of $d\lambda_a$-compatible complex structures on $\xi$ so that $J_a= J$ if $a \leq -2$ and $J_a = J_E$ if $a \geq 2$. We consider smooth almost complex structures $\bar J$ on the symplectization $\R \times L(2,1)$ with the following properties. On $(\R \setminus [-1,1]) \times L(2,1)$ we consider the usual recipe given by
$$ \begin{array}{cccccc} \bar J|_\xi=J_a & \text{and} & \bar J \cdot \partial_a = X_a & \text{on} & T(\R\times L(2,1))|_{\{a\}\times L(2,1)}, & \text{for} \ |a| > 1. \end{array} $$
where $X_a$ is the Reeb vector field of $\lambda_a$. On $[-1,1] \times L(2,1)$ we only require $\bar J$ to be compatible with the symplectic form $d(h\lambda_0)$. The space of such almost complex structures on $\R \times L(2,1)$ is non-empty and contractible in the $C^\infty$-topology and will be denoted by $\J(\lambda_E,J_E,\lambda,J)$.

\subsubsection{Generalized finite energy curves}\label{sec_generalized}

Let $\Gamma \subset \C$ be a finite set. We shall consider non-constant maps $$ \tilde u:\C \setminus \Gamma \to \R \times L(2,1) $$ satisfying
\begin{equation}\label{eq_pseudo}
d\tilde u \circ i = \bar J(\tilde u) \circ  d \tilde u
\end{equation}
for some $\bar J \in \J(\lambda_E,J_E,\lambda,J)$, with finite Hofer energy $$ E(\tilde u) < \infty. $$ The energy $E(\tilde u)$ is defined as follows: let $\Lambda$ be the collection of smooth functions $\phi:\R \to [0,1]$ satisfying $\phi' \geq 0$ and $\phi= 1/2 $ on $[-1,1]$. Then $$ E(\tilde u) := \sup_{\phi \in \Lambda} \int_{\C \setminus \Gamma} \tilde u^* d(\phi(a)\lambda_a), $$ where $\lambda_a$ is seen as a $1$-form on $\R \times L(2,1)$ depending on $a\in \R$. These maps are called generalized finite energy spheres and if $\Gamma =\emptyset$  they are called generalized finite energy planes.

As in the case of cylindrical almost complex structures, the set of punctures of a generalized finite energy sphere $\tilde u=(a,u)$ is non-empty and if $\tilde u$ has a non-removable puncture $z_0\in \Gamma$, then either $a(z)\to +\infty$ or $a(z) \to -\infty$ as $z \to z_0\in \Gamma$. Thus we can talk about positive and negative punctures. Moreover, any such $\tilde u$ has at least one positive puncture, in view of the exact nature of the symplectic forms taming $\bar J$. The following proposition follows from fundamental results of Hofer~\cite{93}.

\begin{proposition}\label{step1}
If $\tilde u=(a,u):\C \to \R \times L(2,1)$ is a generalized finite energy plane, then $a(z) \to + \infty$ as $|z| \to + \infty$, and $u(Re^{2\pi it}) \to x(Tt)$ in $C^\infty$ as $R \to +\infty$, where $x(t)$ is a contractible periodic orbit of $\lambda_E$ with period equal to $E(\tilde u)$.
\end{proposition}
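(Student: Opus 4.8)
The plan is to reduce the statement to Hofer's foundational analysis of finite-energy curves, combined with the non-degeneracy of $\lambda_E$, by carrying out the standard description of a positive puncture adapted to the non-cylindrical cobordism. The three steps are: (1) show that the single puncture $\infty$ of $\tilde u$ is positive, so that $a\to+\infty$; (2) use that near $\infty$ the curve lands in the cylindrical region where $\bar J=\jtil_E$, and invoke non-degeneracy of $\lambda_E$ to upgrade Hofer's convergence to $C^\infty$ convergence to a contractible $\lambda_E$-Reeb orbit; (3) identify $E(\tilde u)$ with the period of that orbit via Stokes.

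For step (1), I would regard $\infty$ as a puncture of $\tilde u$ on $S^2=\C\cup\{\infty\}$. If $\infty$ were removable then, by~\cite{93}, $\tilde u$ would extend to a smooth $\bar J$-holomorphic sphere $\hat u:S^2\to\R\times L(2,1)$; since each $\phi(a)\lambda_a$ is a global primitive of the corresponding taming form $d(\phi(a)\lambda_a)$, Stokes gives $E(\hat u)=0$, and the standard fact that a finite-energy curve of zero energy is constant then contradicts non-constancy of $\tilde u$. Hence $\infty$ is non-removable; since a generalized finite-energy sphere always carries at least one positive puncture (as already observed, thanks to exactness of the taming forms) and $\infty$ is the only puncture, $\infty$ is positive, i.e. $a(z)\to+\infty$ as $|z|\to+\infty$.

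For step (2), choose $R_0>0$ with $a(z)>2$ whenever $|z|\ge R_0$. On that region $\bar J$ equals the cylindrical almost complex structure $\jtil_E$ induced by $(\lambda_E,J_E)$, so the restriction of $\tilde u$ there is an ordinary finite-energy $\jtil_E$-holomorphic half-cylinder in the symplectization of $\lambda_E$. Because $\lambda_E$ is non-degenerate -- for an irrational ellipsoid the only closed Reeb orbits are $P_1$, $P_2$ and their iterates, all non-degenerate -- Theorem~\ref{thm_partial_asymp} applies near $\infty$ and shows that $\infty$ is a non-degenerate puncture of $\tilde u$ in the sense of Definition~\ref{defn_non_deg_punct}; hence, by Theorem~\ref{thm_asymptotics}, there are a closed $\lambda_E$-Reeb orbit $P=(x,T)$ and constants $c,d$ with $u(s,t)\to x(Tt+d)$ and $a(s,t)-Ts-c\to0$ in $C^\infty$ as $s\to+\infty$ in positive cylindrical coordinates centered at $\infty$. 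Rephrasing in terms of the circles $|z|=R$ and absorbing $d$ by a rotation gives $u(Re^{2\pi it})\to x(Tt)$ in $C^\infty$ as $R\to+\infty$. Finally, the loop $t\mapsto x(Tt)$ is contractible in $L(2,1)$, being a $C^0$-limit of the loops $t\mapsto u(Re^{2\pi it})$, each bounding the disk $u(\{|z|\le R\})$.

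For step (3), I would compute, for each $\phi\in\Lambda$, using Stokes and pointwise non-negativity of the integrand, that $\int_\C\tilde u^* d(\phi(a)\lambda_a)=\lim_{R\to+\infty}\int_0^1 \phi\big(a(Re^{2\pi it})\big)\,\lambda_{a(Re^{2\pi it})}\big(\partial_t[u(Re^{2\pi it})]\big)\,dt$, and then -- using $a\to+\infty$, $h(a,\cdot)\to f_E$, $u(Re^{2\pi it})\to x(Tt)$, and $\lambda_E(\dot x)\equiv1$ -- that this equals $\big(\lim_{a\to+\infty}\phi(a)\big)T$. Taking the supremum over $\phi\in\Lambda$, whose limit at $+\infty$ can be taken equal to $1$, gives $E(\tilde u)=T$, the period of $x$. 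The main obstacle is step (2): the passage from Hofer's merely subsequential convergence to a Reeb orbit to the genuine $C^\infty$ asymptotics of Definition~\ref{defn_non_deg_punct}. This is exactly where non-degeneracy of $\lambda_E$ enters, via the asymptotic analysis of~\cite{props1,sie1} quoted in Theorems~\ref{thm_partial_asymp} and~\ref{thm_asymptotics}; steps (1) and (3) are routine bookkeeping.
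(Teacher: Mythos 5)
Your proof is correct and is a careful expansion of what the paper treats as a one-line citation to Hofer's foundational results. The paper gives no argument for Proposition~\ref{step1} beyond noting that it ``follows from fundamental results of Hofer~\cite{93}''; the three steps you identify (positivity of the puncture by exactness plus Stokes, reduction to the cylindrical region where $\bar J=\jtil_E$ and non-degeneracy of the irrational ellipsoid form $\lambda_E$ to upgrade to $C^\infty$ asymptotics and identify a well-defined contractible asymptotic orbit, and the Stokes computation identifying $E(\tilde u)$ with the period $T$) are exactly the bookkeeping the paper leaves implicit, and your handling of each -- in particular absorbing the phase constant $d$ by a rotation, noting the loops $u(Re^{2\pi i t})$ bound the disks $u(\{|z|\le R\})$, and taking the supremum over $\phi\in\Lambda$ with $\phi(+\infty)\to 1$ -- is sound.
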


\subsubsection{Fredholm theory}

The following theorems are important pieces of our analysis.

\begin{theorem}[Hofer, Wysocki and Zehnder~\cite{props3}]\label{theo_transversal}
There exists a dense subset $\J_{\rm reg} \subset \J(\lambda_E,J_E,\lambda,J)$ with the following property: if $\tilde u:\C \setminus \Gamma \to \R \times L(2,1)$ is a somewhere injective generalized immersed finite-energy sphere with one positive puncture, which is pseudo-holomorphic with respect to $\bar J \in \J_{\rm reg}$, then
\begin{equation}\label{eq_fredholm}
{\rm Fred}(\tilde u):= \mu_{CZ}(P_\infty) - \sum_{z \in \Gamma} \mu_{CZ}(P_z)+\#\Gamma -1 \geq 0.
\end{equation}
Here $\Gamma\subset \C$ is a non-empty finite set of negative punctures of $\tilde u$,  $P_\infty$ is the asymptotic limit of $\tilde u$ at the positive puncture $\infty$, and $P_z$ is the asymptotic limit of $\tilde u$ at  $z\in\Gamma$. Conley-Zehnder indices are computed using a global symplectic (with respect to $d\lambda$ or, equivalently, to $d\lambda_E$) trivialization of $\xi = \ker \lambda$.
\end{theorem}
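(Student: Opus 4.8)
The plan is to deduce this from the standard transversality package for somewhere injective punctured holomorphic curves, together with the Sard--Smale theorem. First I would record the asymptotic structure of $\tilde u$: a negative puncture $z\in\Gamma$ has as asymptotic limit a closed $\lambda$-Reeb orbit $P_z$, while the positive puncture $\infty$ has as asymptotic limit a closed contractible $\lambda_E$-Reeb orbit $P_\infty$. Under the standing hypotheses of Proposition~\ref{prop1} all of these orbits are non-degenerate---the $\lambda$-orbits by hypothesis~a), the $\lambda_E$-orbit by the irrationality condition~\eqref{hip1}---so every puncture of $\tilde u$ is non-degenerate in the sense of Definition~\ref{defn_non_deg_punct} (Theorem~\ref{thm_partial_asymp}, Lemma~\ref{lem_asymp_formula_nondeg_punct}) and the exponential asymptotics of Theorem~\ref{thm_asymptotics} are available at each of them. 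This is exactly what one needs to set up the linearized Cauchy--Riemann operator $D_{\tilde u}$ on exponentially weighted Sobolev spaces of sections of $\tilde u^*T(\R\times L(2,1))$ and, adjoining the finite-dimensional space of infinitesimal deformations of the conformal structure and of the punctures, to obtain an honest Fredholm problem. A Riemann--Roch computation---carried out with respect to the global $d\lambda$-symplectic trivialization of $\xi=\ker\lambda$ on $L(2,1)$, which makes the relevant relative first Chern number vanish (compare the winding and Chern bookkeeping of~\cite{props2})---identifies the index of this (unparametrized) problem with ${\rm Fred}(\tilde u)$.

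Next I would fix a tuple of non-degenerate asymptotic data $\big((P_z)_{z\in\Gamma},P_\infty\big)$ and form the universal moduli space $\M^*$ of pairs $(\tilde u,\bar J)$, where $\bar J$ ranges over a suitable Banach completion of $\J(\lambda_E,J_E,\lambda,J)$ (say Floer's space of $C^\epsilon$-perturbations) and $\tilde u$ is a somewhere injective immersed generalized finite-energy sphere that is $\bar J$-holomorphic, with one positive puncture asymptotic to $P_\infty$ and negative punctures $\Gamma$ asymptotic to the $P_z$. The central step is to show that $\M^*$ is a separable Banach manifold, for which it suffices to check that the linearization of the defining equation---$D_{\tilde u}$ together with the derivative in the $\bar J$-direction---is surjective at each of its points. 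I would argue by contradiction: let $0\neq\eta$ lie in the $L^2$-cokernel of $D_{\tilde u}$; then $\eta$ satisfies a Cauchy--Riemann-type equation, so by the similarity principle its zeros are isolated. On the other hand the injective points of $\tilde u$ form an open dense subset of the domain, and since $a\to+\infty$ at $\infty$ while $a\to-\infty$ at every $z\in\Gamma$, the continuous $\R$-component $a$ of $\tilde u$ assumes all real values, so $a^{-1}\big((-1,1)\big)$ is a non-empty open subset of the domain. Hence there is an injective point $z_0$ with $a(z_0)\in(-1,1)$ and $\eta(z_0)\neq0$; note that $\tilde u(z_0)$ lies in $[-1,1]\times L(2,1)$, where $\bar J$ is unconstrained (only compatibility with $d(h\lambda_0)$ is imposed there). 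One may therefore build a variation $Y$ of $\bar J$ supported in a tiny ball about $\tilde u(z_0)$, disjoint by somewhere injectivity from the rest of the image of $\tilde u$, which pairs non-trivially with $\eta$; this contradicts the $L^2$-orthogonality of $\eta$ to the image of the linearization. Hence the cokernel vanishes and $\M^*$ is a Banach manifold.

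The last step is Sard--Smale: the projection $\M^*\to\J(\lambda_E,J_E,\lambda,J)$ is a Fredholm map of index ${\rm Fred}$, so its regular values form a comeager---in particular dense---set, and one passes back to the $C^\infty$-topology in the usual way. For a regular value $\bar J$, every somewhere injective immersed $\bar J$-holomorphic generalized finite-energy sphere with the prescribed asymptotics lies in a moduli space which, $\bar J$ being regular, is a manifold of dimension ${\rm Fred}(\tilde u)$; since this moduli space contains $\tilde u$ it is non-empty, so ${\rm Fred}(\tilde u)\geq0$. Intersecting the resulting comeager sets over the countably many tuples of non-degenerate closed Reeb orbits (finitely many lie below any action bound) produces the desired dense $\J_{\rm reg}\subset\J(\lambda_E,J_E,\lambda,J)$. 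I expect the genuine difficulty to be the surjectivity of the universal linearized operator---more precisely, the production of an injective point of $\tilde u$ lying in $[-1,1]\times L(2,1)$, where $\bar J$ may be deformed. This is exactly what forces the hypothesis $\Gamma\neq\emptyset$: a curve with a negative puncture must cross $[-1,1]\times L(2,1)$, whereas a finite-energy plane need not, so for curves with no negative puncture a separate treatment---automatic transversality for fast planes, cf.\ Definition~\ref{defn_fast_planes}---is required and is not part of this statement.
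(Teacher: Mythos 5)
The paper gives no proof of this theorem: it is stated as a citation to Hofer--Wysocki--Zehnder~\cite{props3}, and then used as a black box in the compactness argument. Your reconstruction is correct and is the argument one would expect to find there, adapted to the non-cylindrical cobordism: exponentially-weighted Fredholm setup with index $\mathrm{Fred}(\tilde u)$, universal moduli space, surjectivity via a perturbation of $\bar J$ at an injective point, and Sard--Smale over the countable set of asymptotic data. In particular you correctly isolated the crux that is specific to this cobordism setting and explains the hypothesis $\Gamma\neq\emptyset$ in the statement: since $\bar J$ is rigidly prescribed outside $[-1,1]\times L(2,1)$, one needs the curve to pass through that region, which follows by the intermediate value theorem from having both a positive and at least one negative puncture --- whereas a plane ($\Gamma=\emptyset$) might avoid it entirely and indeed requires the separate automatic-transversality treatment used elsewhere in the paper for fast planes.
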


From now on we fix $\bar J\in \J_{\rm reg}$ as in Theorem \ref{theo_transversal} so that \eqref{eq_fredholm} always holds. This will play an important role in the compactness argument in \S~\ref{sec_compactness} below.

We shall focus on the space of generalized finite energy $\bar J$-holomorphic planes $\tilde u:\C \to \R \times L(2,1)$  which are asymptotic at $\infty$  to the periodic orbit $P_1^2$ of $\lambda_E$. Recall that $P_1\subset L(2,1)$ is a non-contractible elliptic periodic $\lambda_E$-Reeb orbit with smallest period $\pi r_1^2/2$, and that $\mu_{CZ}(P_1^2)=3$. In particular $E(\tilde u)=\pi r_1^2$.

\begin{theorem}[Hofer, Wysocki and Zehnder~\cite{props3}]\label{thm_fred_theory_gen_planes}
Let $\util_0:\C\to \R\times L(2,1)$ be an embedded finite-energy $\bar J$-holomorphic plane, asymptotic to $P_1^2$ at the positive puncture $\infty$. Then there exists a smooth embedding $$ \Phi:\C\times B \to \R\times L(2,1) $$ where $B\subset \R^2$ is an open ball centered at the origin, with the following properties:
\begin{itemize}
\item[(i)] $\Phi(z,0) = \util_0(z)$, $\forall z\in\C$.
\item[(ii)] For every $\tau\in B$ the map $z\mapsto \Phi(z,\tau)$ is a generalized finite-energy $\bar J$-holomorphic plane asymptotic to $P_1^2$.
\item[(iii)] If $\vtil_n:\C\to \R\times L(2,1)$ are generalized finite-energy $\bar J$-holomorphic planes asymptotic to $P_1^2$ such that $\vtil_n \to \util_0$ in $C^\infty_{\rm loc}$, then we can find $A_n,B_n\in\C$, $\tau_n\in\R^2$, such that $A_n\to 1$, $B_n\to 0$, $\tau_n\to(0,0)$ and $$ \vtil_n(z) = \Phi(A_nz+B_n,\tau_n) \ \forall z\in\C $$ for all $n$ large enough.
\end{itemize}
\end{theorem}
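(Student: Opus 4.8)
The plan is to obtain this statement from elliptic Fredholm theory in exponentially weighted Sobolev spaces, combined with automatic transversality for fast planes, following the scheme of~\cite{props3}. The starting observation is that $\util_0$ is automatically fast in the sense of Definition~\ref{defn_fast_planes}. Indeed, $\lambda_E$ is non-degenerate (its only periodic orbits, and their iterates, are the nondegenerate orbits $P_1,P_2$), so $\infty$ is a non-degenerate puncture and Theorem~\ref{thm_asymptotics} applies. Since $P_1^2$ is elliptic with $\mu_{CZ}(P_1^2)=3$, the formula $\mu_{CZ}=\wind^{<0}(A)+\wind^{\geq0}(A)$ together with $\wind^{\geq0}(A)-\wind^{<0}(A)\in\{0,1\}$ forces $\wind^{<0}(A)=1$ in the global trivialization of $\xi_{\rm std}$; the asymptotic eigenvalue of $\util_0$ at its positive puncture is negative, hence $\wind_\infty(\util_0)\le\wind^{<0}(A)=1$. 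On the other hand $\pi_{\lambda_E}\circ du_0\not\equiv0$ (a plane cannot be a cover of a trivial cylinder over a Reeb orbit), so its zeros count positively and $\wind_\pi(\util_0)\ge0$; together with $\wind_\pi(\util_0)=\wind_\infty(\util_0)-\chi(S^2)+\#\Gamma=\wind_\infty(\util_0)-1$ this gives $\wind_\infty(\util_0)=1$, $\wind_\pi(\util_0)=0$. In particular $\util_0$ is an immersion, and being embedded it is somewhere injective.

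Next I would linearize~\eqref{eq_pseudo} at $\util_0$ to obtain a Cauchy--Riemann type operator $D_{\util_0}$ on sections of $\util_0^*T(\R\times L(2,1))$ in a weighted Sobolev space adapted to the non-degenerate orbit $P_1^2$. Since $\util_0$ is immersed, $D_{\util_0}$ splits modulo compact terms into a tangential summand governing reparametrizations and the normal Cauchy--Riemann operator $D^N_{\util_0}$ on the complex normal line bundle; the condition $\wind_\pi(\util_0)=0$ is exactly what makes $D^N_{\util_0}$ surjective, since any element of its cokernel solves a Cauchy--Riemann equation whose winding at the puncture is bounded via Theorem~\ref{thm_asymptotics} and positivity of zeros, which is incompatible with the index unless it vanishes. (This automatic transversality is essential: $\util_0$ has no negative punctures and so is not covered by Theorem~\ref{theo_transversal}.) Hence the space of generalized finite-energy $\bar J$-holomorphic planes asymptotic to $P_1^2$ near $\util_0$, modulo the free reparametrization action of $G=\{z\mapsto Az+B:A\in\C^*,\,B\in\C\}$, is a smooth manifold of dimension $\mu_{CZ}(P_1^2)-1=2$, all of whose members are embedded by the same winding argument. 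Taking a smooth local slice $\tau\in B\mapsto\util_\tau$ of the $G$-action through $\util_0$ (with $\util_0$ at $\tau=0$) and setting $\Phi(z,\tau):=\util_\tau(z)$ yields (i) and (ii); distinct values of $\tau$ give distinct unparametrized planes, and two distinct members of the family are disjoint because an embedded fast plane has vanishing Siefring self-intersection number, so positivity of intersections~\cite{sie1,sie2} prevents nearby leaves from meeting. After shrinking $B$, $\Phi$ is therefore an embedding of $\C\times B$.

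For (iii), let $\vtil_n\to\util_0$ in $C^\infty_{\rm loc}$ with each $\vtil_n$ a generalized plane asymptotic to $P_1^2$. All carry energy $\pi r_1^2=E(\util_0)$, so no energy is lost to bubbling or near the puncture; the uniform asymptotic estimates of Theorem~\ref{thm_asymptotics} (through Lemma~\ref{lem_asymp_formula_nondeg_punct}) relative to a fixed Martinet tube around $P_1$ then upgrade the convergence to $\vtil_n(A_nz+B_n)\to\util_0(z)$ in $C^\infty(\C)$ for suitable $A_n\to1$, $B_n\to0$, with matching exponential decay at $\infty$. Thus for $n$ large $\vtil_n(A_n\cdot+B_n)$ lies in the local Fredholm chart of the parametrized moduli space at $\util_0$; since the $G$-orbit of the slice $\{\util_\tau\}_{\tau\in B}$ fills a neighborhood of $\util_0$ there, one obtains $\tau_n\to0$ with $\vtil_n(z)=\Phi(A_nz+B_n,\tau_n)$ for all $n$ large, which is (iii).

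I expect the main obstacle to be the transversality step: verifying carefully that $\wind_\pi(\util_0)=0$ forces surjectivity of $D^N_{\util_0}$ without any genericity of $\bar J$, and arranging the exponential weights at the puncture so that the linearized problem has the correct index and the quotient moduli space is exactly $2$-dimensional. A secondary technical point, needed for (iii), is promoting $C^\infty_{\rm loc}$-convergence to convergence with uniformly controlled asymptotics, so that the limiting curves genuinely enter the image of $\Phi$.
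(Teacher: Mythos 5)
The paper gives no proof of this statement---it is cited directly to Hofer, Wysocki and Zehnder---so the comparison is with what one would need to reconstruct. Your outline correctly identifies the essential ingredients: linearize the CR equation at the embedded curve, split off the normal Cauchy--Riemann operator $D^N_{\util_0}$ on the complex normal line bundle, choose exponential weights via the spectrum of the asymptotic operator at $P_1^2$ so the Fredholm index is $\mu_{CZ}^\delta(P_1^2)-1=2$, establish automatic transversality, and upgrade $C^\infty_{\rm loc}$-convergence using the uniform asymptotics of Theorem~\ref{thm_asymptotics}. This matches the structure of the paper's own version of the argument in the cylindrical setting (cf.\ the appendix on Proposition~\ref{prop_compactness_1}: Theorem~\ref{thm_index_HWZ} and Lemma~\ref{lemma_automatic_transv}).

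There is, however, a genuine gap in your opening step. You assert that $\util_0$ is ``automatically fast'' and derive $\wind_\pi(\util_0)=0$ from $\wind_\pi=\wind_\infty-\chi+\#\Gamma$. Both ``fast'' (Definition~\ref{defn_fast_planes}) and $\wind_\pi$ (\S~\ref{par_hol_curves}) are defined only for curves that are pseudoholomorphic with respect to a \emph{cylindrical} almost complex structure: the section $\pi_\alpha\circ du$ requires a fixed contact form $\alpha$ so that projection along $X_\alpha$ makes sense, and the displayed relation is proven for $\jtil$-holomorphic curves. But $\util_0$ is $\bar J$-holomorphic, and on the shell $[-1,1]\times L(2,1)$ the cobordism structure $\bar J$ of \S~\ref{sec_non_cyl_cob} is merely $d(h\lambda_0)$-compatible; there is no distinguished Reeb direction, hence no $\pi_\alpha$ and no $\wind_\pi(\util_0)$, once the image of $\util_0$ meets that shell (as it does, for instance, for planes produced in Proposition~\ref{prop_inf}). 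What the argument actually needs is that the normal bundle $N\to\C$ of the embedded plane has relative first Chern number determined by the winding of the asymptotic normal frame at the cylindrical end $\{a>2\}$ where $\bar J=\jtil_E$; this is the content of the formula $\wind(e_1,n_1)=-1$ of~\eqref{winding_disk_to_normal}, drawn from~\cite[Theorem~1.8]{props3}, and it is this normal-bundle computation---not $\wind_\pi(\util_0)=0$---that combines with the similarity principle to force nontrivial kernel elements of $D^N$ to be nowhere vanishing, giving both automatic transversality and the embedding property of $\Phi$ for arbitrary $\bar J$. Your appeal to Siefring's intersection theory for disjointness of nearby leaves is more machinery than is needed here; the nowhere-vanishing kernel already supplies the local injectivity of $\Phi$ in the $\tau$-direction.
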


The above theorem is valid for any $\bar J \in \J(\lambda_E,J_E,\lambda,J)$, even if $\bar J$ does not belong to $\J_{\rm reg}$.

\subsubsection{Existence of generalized finite energy planes}

Denote by $\Theta$ the space of generalized finite-energy $\bar J$-holomorphic planes asymptotic to $P_1^2$, modulo holomorphic reparametrizations. Observe that since $P_1$ is non-contractible, every plane $\tilde u\in \Theta$ must be somewhere injective.

\begin{theorem}\label{existence}
The set $\Theta$ is non-empty. More precisely, there exists a finite energy $\jtil_E$-holomorphic plane $\util=(a,u) : \C \to \R \times L(2,1)$ asymptotic to $P_1^2$ at $\infty$ such that $\tilde u$ is an embedding, where $\jtil_E$ is the cylindrical almost complex structure induced by $(\lambda_E,J_E)$. In particular, after translating in the $\R$-direction, we may assume that $\inf a(\C) > 2$, so that $\util$ represents an element of $\Theta$.
\end{theorem}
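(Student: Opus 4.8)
The plan is to produce the desired plane explicitly by analyzing the Reeb flow of $\lambda_E$ on $L(2,1)$ using the Hopf-type fibration adapted to the ellipsoid. Recall that on $S^3$ the Reeb flow of $\lambda_E$ is the linear flow $(z_0,z_1)\mapsto (e^{it/r_1^2}z_0,e^{it/r_2^2}z_1)$, whose orbits foliate $S^3$; the orbit $\widetilde P_1 = S^1\times\{0\}$ has prime period $\pi r_1^2$ on $S^3$ and the complement $S^3\setminus\widetilde P_1$ retracts onto $\{0\}\times S^1$. First I would recall that for the standard ellipsoid there is a classical rational/honest open book decomposition of $S^3$ with binding $\widetilde P_1$ whose pages are spanned by explicit fast holomorphic planes in $(\R\times S^3,\jtil_E)$ asymptotic to $\widetilde P_1$ (the Hofer--Wysocki--Zehnder analysis of the tight $S^3$, or the explicit Bishop-type family associated to the irrational ellipsoid). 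Such a plane $\widetilde v=(\widetilde a,\widetilde v):\C\to\R\times S^3$ can be written down in closed form: up to biholomorphism it is (a reparametrization of) the map associated to the first component $z_0$, projecting to the disk $\{z_1=0\}\cap S^3$ pushed along the flow; it is an embedding, asymptotic to $\widetilde P_1$ at $\infty$, and transverse to $X_{\lambda_E}$.

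Next I would use the antipodal symmetry. The map $g_{2,1}$ acts freely on $S^3$ preserving $\lambda_E$ and $J_E$, hence lifts to a free $\R$-invariant action on $(\R\times S^3,\jtil_E)$. The orbit $\widetilde P_1$ is invariant under $g_{2,1}$ (it maps $S^1\times\{0\}$ to itself by a rotation), and the explicit plane $\widetilde v$ can be chosen so that $g_{2,1}\circ\widetilde v = \widetilde v\circ\varrho$ for a suitable rotation $\varrho$ of $\C$ of order $2$ — this is automatic from the $z_0$-description, since $g_{2,1}$ acts on the $z_0$-coordinate by multiplication by $e^{\pi i}=-1$. Passing to the quotient $\R\times L(2,1) = (\R\times S^3)/\Z_2$, the map $\widetilde v$ descends, after precomposing with the branched double cover $\C\to\C$, $w\mapsto w^2$, to a map $\util=(a,u):\C\to\R\times L(2,1)$. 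This $\util$ is $\jtil_E$-holomorphic, has finite energy, and is asymptotic at $\infty$ to $P_1^2 = \pi_{2,1}(\widetilde P_1)$ traversed twice, since $\widetilde P_1$ projects $2$-to-$1$ onto $P_1$ and the double cover $w\mapsto w^2$ wraps the boundary loop around twice; hence $E(\util)=\pi r_1^2$, as required.

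Then I would check the embedding property. Away from the branch point $w=0$, $\util$ is locally $\widetilde v$ composed with a local biholomorphism, hence an embedding onto its image modulo the $\Z_2$-action; since $\widetilde v$ is an embedding and the $\Z_2$-action identifies $\widetilde v(\C)$ only with $g_{2,1}\widetilde v(\C) = \widetilde v(\varrho\,\cdot\,)$, and $\widetilde v(w)=\widetilde v(-w)$ never happens for $w\neq 0$ (as $\widetilde v$ is injective), $u$ is injective on $\C\setminus\{0\}$. At the branch point one argues directly from the local normal form: near $w=0$, $\util(w) = \widetilde v(w^2)$ followed by the quotient, and since $\widetilde v$ is an immersion at $0$ one gets that $\util$ is an embedding near $0$ as well (the branching upstairs is exactly cancelled by the order-$2$ symmetry, as in the standard picture of rational open books in lens spaces). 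Finally, translating in the $\R$-direction we arrange $\inf a(\C)>2$, so that $\util$ is $\bar J$-holomorphic for $\bar J$ agreeing with $\jtil_E$ on $\{a\geq 2\}\times L(2,1)$, hence represents an element of $\Theta$.

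The main obstacle I expect is the careful verification of the embedding property and the asymptotic behavior at the branch point $w=0$: one must be sure that composing with $w\mapsto w^2$ and quotienting by $\Z_2$ produces a genuinely embedded (not merely immersed or multiply-covered) plane, and that the asymptotic limit upstairs, $\widetilde P_1$ with its prime period, descends to precisely $P_1^2$ and not to $P_1$ or $P_1^4$. This bookkeeping — matching covering multiplicities of the asymptotic orbit against the branching order of the domain against the order of the deck group — is the delicate point; everything else (finite energy, pseudo-holomorphicity, the final $\R$-translation) is formal once the explicit model plane for the irrational ellipsoid on $S^3$ is in hand.
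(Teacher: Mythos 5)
Your proposal takes a genuinely different route: you try to build the plane explicitly by descending a Bishop-type plane from $\R\times S^3$ to $\R\times L(2,1)$, whereas the paper's proof invokes~\cite[Proposition~6.8]{HLS} to produce an embedded fast plane for some auxiliary $J_0\in\J(\lambda_E)$ and then uses the open-and-closed property of $\J_{\rm fast}$ (Proposition~\ref{prop_compactness_1}) to transfer it to the given $J_E$. Your route is attractive in principle, but as written it contains two substantive errors.

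The first is the claim $g_{2,1}\circ\widetilde v = \widetilde v\circ\varrho$. This is false. The Bishop plane $\widetilde v$ projects onto a single page $D_\theta = \{\arg z_1 = \theta\}$ of the open book of $S^3$ with binding $\widetilde P_1 = S^1\times 0$, and $g_{2,1}(z_0,z_1)=(-z_0,-z_1)$ sends $D_\theta$ to the different page $D_{\theta+\pi}$, so $g_{2,1}\cdot\widetilde v(\C)\neq\widetilde v(\C)$. (In fact no finite-energy plane $\C\to\R\times S^3$ can be invariant under the lifted free $\Z_2$-action, since the restriction would be a free $\Z_2$-action on $\C$, impossible by Brouwer.) Consequently $\pi_{2,1}\circ\widetilde v$ does not factor through $w\mapsto w^2$, and the ``descent'' step is vacuous.

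The second error is in the covering multiplicity. Since $\widetilde P_1$ covers $P_1$ two-to-one and $\widetilde v$ is asymptotic to $\widetilde P_1$ -- whose prime period $\pi r_1^2$ on $S^3$ equals twice the prime period $\pi r_1^2/2$ of $P_1$ on $L(2,1)$ -- the \emph{bare} projection $({\rm id}\times\pi_{2,1})\circ\widetilde v$ is already asymptotic to $P_1^2$. Precomposing with $w\mapsto w^2$ would yield a plane asymptotic to $P_1^4$ with energy $2\pi r_1^2$, not $\pi r_1^2$, so your stated energy $E(\util)=\pi r_1^2$ is inconsistent with your own accounting. The correct construction is simply $\util=({\rm id}\times\pi_{2,1})\circ\widetilde v$; injectivity follows from $\widetilde v(\C)\cap g_{2,1}\cdot\widetilde v(\C)=\emptyset$, which one can see either from the explicit model or from Siefring's intersection theory (two embedded fast planes asymptotic to the same orbit with $\mu_{CZ}=3$ and $\wind_\infty=1$ have vanishing generalized intersection number, hence disjoint or equal images, and equality is excluded by the Brouwer observation above). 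Finally, an explicit Bishop model is pseudo-holomorphic only for a particular compatible complex structure on $\xi$; to obtain the result for the $J_E$ fixed arbitrarily in \S3.2 you would still need the transfer step of Proposition~\ref{prop_compactness_1}, which is exactly the second half of the paper's proof.
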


\begin{proof}
Consider the cylindrical almost complex structure $\tilde J_E$ induced by $(\lambda_E,J_E)$ on $\R \times L(2,1)$. Recall that $\lambda_E$ is dynamically convex, nondegenerate and admits the $2$-unknotted periodic orbit $P_1$ with period $\pi r_1^2/2$.  Proposition 6.8 from \cite{HLS} implies that for some $J_0 \in \J(\lambda_E)$  there exists an embedded (fast) $\tilde J_0$-holomorphic plane $\tilde u_0=(a_0,u_0):\C \to \R \times L(2,1)$ where $\jtil_0$ is the cylindrical almost complex structure induced by $(\lambda_E,J_0)$. This plane is asymptotic to $P_1^2$ at $\infty$ and has energy $E(\tilde u_0)= \pi r_1^2$. It follows from Proposition~\ref{prop_compactness_1} that there exists some embedded fast $\tilde J_E$-holomorphic plane $\tilde u:\C \to \R \times L(2,1)$ asymptotic to $P_1^2$. Now we may suitably translate $\tilde u$ in the $\R$-direction so that $\min_{z\in \C} a(z) > 2$. In this way $\tilde u$ also solves equation \eqref{eq_pseudo} and hence is a generalized finite energy plane. By construction, it is embedded, asymptotic to $P_1^2$ at $\infty$ and  $E(\tilde u)=\pi r_1^2$.
\end{proof}

\subsubsection{Embedding controls}\label{sec_embedding_controls}

The set $\Theta$ is a non-empty smooth $2$-dimensional manifold. In fact, Theorem~\ref{existence} gives slightly more information: some connected component $\Theta' \subset \Theta$ contains embedded planes $\util=(a,u)$ satisfying $\inf a(\C) > 2$ which, consequently, can be seen as $\jtil_E$-holomorphic.

In this section  we use Siefring's intersection theory, see~\cite{sie1,sie2}, to prove some important embedding properties of the elements of the distinguished connected component $\Theta'$.

\begin{proposition}\label{prop_embed}
If $\tilde u,\tilde v\in \Theta'$ then the following hold true.
\begin{itemize}
\item[(i)] $\tilde u$ is an embedding.
\item[(ii)] Either $\tilde u(\C) \cap \tilde v(\C) = \emptyset$ or $\tilde u(\C) = \tilde v(\C)$.
\end{itemize}
\end{proposition}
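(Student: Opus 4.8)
The plan is to exploit Siefring's intersection theory, which assigns to any pair of finite-energy curves a homotopy-invariant intersection number $\util * \vtil$ and a self-intersection number, both of which bound the actual (signed, hence nonnegative after accounting for asymptotic contributions) geometric intersections from above. Since every plane in $\Theta$ is asymptotic at $\infty$ to the fixed orbit $P_1^2$, and since $\Theta'$ is connected, the intersection number $\util * \vtil$ is the same for all $\util, \vtil \in \Theta'$; likewise the self-intersection invariant is constant along $\Theta'$. So the strategy is: compute these invariants once, on a convenient reference element of $\Theta'$ — namely one of the embedded $\jtil_E$-holomorphic planes $\util_0$ with $\inf a(\C) > 2$ produced by Theorem~\ref{existence} — and then transport the conclusion to all of $\Theta'$.

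First I would set up the relevant asymptotic data at the puncture $\infty$: the orbit $P_1^2$ is elliptic with $\mu_{CZ}(P_1^2) = 3$, and since the relevant winding numbers are $\wind^{<0} = \wind^{\geq 0} = 1$, the asymptotic eigenvalue of any $\util \in \Theta'$ has winding $1$ relative to the global trivialization of $\xi$. Moreover $P_1$ is simply covered and $P_1^2$ is its second iterate, so the puncture is relatively prime in the sense of the definition preceding Lemma~\ref{lem_crucial_behavior_at_punct} (covering multiplicity $2$, asymptotic winding odd relative to the square trivialization). That lemma then gives that $u$ is embedded near $\infty$. Combined with $\wind_\pi(\util) = 0$ (the planes are fast, by construction in Theorem~\ref{existence} and preservation of $\wind_\pi$ under $C^\infty_{\rm loc}$ limits along $\Theta'$), the standard Hofer--Wysocki--Zehnder argument shows $\util$ is an immersion; the self-intersection invariant being zero on the reference curve then upgrades this to (i): $\util$ is an embedding. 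The key inputs here are that $\wind_\pi = 0$ is a nonnegative quantity that is locally constant on $\Theta'$, and that the reference plane $\util_0$ is already known to be embedded.

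For (ii), I would compute $\util_0 * \vtil_0$ for two elements of $\Theta'$ (or use $\util_0 * \util_0$ together with the behavior under the family $\Phi$ from Theorem~\ref{thm_fred_theory_gen_planes}): since the reference planes can be taken $\jtil_E$-holomorphic with $\inf a > 2$, one can translate one of them far in the positive $\R$-direction and use that $\jtil_E$ is $\R$-invariant to see the two translates are disjoint, forcing the homotopy-invariant count to be zero. Then for arbitrary $\util, \vtil \in \Theta'$ the count $\util * \vtil = 0$, and Siefring's positivity of intersections (each geometric intersection point, interior or asymptotic, contributes a positive amount to $\util * \vtil$ once the asymptotic winding comparison at $\infty$ is controlled — here both are asymptotic to the same $P_1^2$ with the same extremal winding, so there is no forced asymptotic contribution with the wrong sign) yields that either $\util(\C) \cap \vtil(\C) = \emptyset$ or the images coincide. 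The dichotomy "disjoint or equal" rather than merely "finitely many intersections" uses that two distinct somewhere-injective holomorphic curves with the same asymptotic orbit and intersection number zero cannot have isolated intersection points, since such a point would contribute strictly positively.

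The main obstacle I anticipate is the careful bookkeeping of the asymptotic contributions to Siefring's intersection numbers at the common puncture $\infty$: because both curves limit onto the same orbit $P_1^2$, the invariant $\util * \vtil$ is not simply the count of geometric intersections but includes a term measuring the relative asymptotic approach (governed by the asymptotic eigenvalues and their winding numbers), and one must verify this term vanishes or is correctly signed so that the numerical value $0$ genuinely forces geometric disjointness. This is exactly where the relatively-prime condition at the puncture (via Lemma~\ref{lem_crucial_behavior_at_punct}) and the dynamical-convexity/tightness input characterizing $P_1^2$ as having $\mu_{CZ} = 3$ with the minimal admissible windings become essential; the remaining points — connectedness of $\Theta'$, homotopy-invariance of the Siefring numbers, and the $\R$-translation trick — are comparatively routine.
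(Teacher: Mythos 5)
Your strategy is the same as the paper's at the top level: Siefring's generalized intersection/self-intersection numbers are homotopy-invariant, so constant on the connected component $\Theta'$, and the implications (his Theorems~2.2 and~2.3) reduce (i) and (ii) to showing these invariants vanish on a single convenient representative. The divergence is in how the vanishing is verified, and your acknowledged ``main obstacle'' is exactly where the two routes differ. You propose the $\R$-translation trick on the single $\jtil_E$-holomorphic reference plane: translate far up, observe geometric disjointness, conclude the count is zero. The paper instead pulls in the one-parameter family $\{\util_\tau\}_{\tau\in S^1}$ from \cite{HLS} whose projections form a rational open book with binding $P_1$, giving a supply of pairwise-disjoint fast planes, and then invokes Siefring's \emph{exact equivalences} (Corollaries~5.9 and~5.17 of \cite{sie2}), which state that the generalized (self-)intersection number vanishes \emph{if and only if} $u(\C)\cap P_1=\emptyset$ \emph{and} $d_0(\util)=\wind^{<0}(A_{P_1^2})-\wind_\infty(\util)=0$. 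Both conditions are verified directly ($d_0=1-1=0$ from $\mu_{CZ}(P_1^2)=3$ and fastness; disjointness from $P_1$ from the open-book/embedding argument). This is cleaner than the translation trick because the asymptotic-contribution bookkeeping you flag as the hard point is entirely absorbed into Siefring's equivalence — there is no need to decompose $[\util]*[\util_c]$ into geometric plus asymptotic terms and argue the latter vanishes.

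There is one genuine gap in your version as written. ``Translate far up so the images are disjoint'' presupposes that the projection $u_0:\C\to M$ is already injective — otherwise $\util_0(z_1)=\util_0(z_2)+c$ can occur with $z_1\ne z_2$ for $c>0$. That injectivity is of course true for the reference plane (fast planes project to embeddings in $M\setminus P_1$ by the proof of \cite[Theorem~2.3]{props2}, given $\wind_\pi=0$ and $u_0(\C)\cap P_1=\emptyset$), but you should make it explicit since it is an input, not a consequence, of the translation trick. Moreover, even after geometric disjointness you still need the asymptotic term to vanish, which again reduces to $d_0=0$; so your route does not avoid the computation the paper performs — it just places it in a less convenient spot. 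I would also note that your invocation of Lemma~\ref{lem_crucial_behavior_at_punct} (relatively prime punctures, embeddedness near $\infty$) is not used by the paper here; it plays a role later in the proof of Proposition~\ref{prop1} but is unnecessary for Proposition~\ref{prop_embed}, where Siefring's self-intersection number alone delivers global embeddedness in one stroke via implication~\eqref{intuu}.
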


\begin{proof}
In~\cite{sie2} Siefring introduces the generalized intersection number $[\tilde u]*[\tilde v]$ between two generalized finite energy planes $\tilde u,\tilde v\in \Theta$ with distinct images, and the generalized self-intersection number $[\tilde u]*[\tilde u]$ of a generalized finite energy plane $\tilde u\in \Theta$. Such numbers compute actual intersections, or self-intersections and critical points, plus `intersections at infinity', or `self-intersections at infinity', which are contributions  given by the asymptotic behavior of the planes near the puncture at $\infty$.  It turns out that the these additional contributions at infinity make these integers invariant under homotopies through so-called asymptotically cylindrical maps, see~\cite[Theorem 2.1]{sie2}. Hence, these numbers do not depend on $\tilde u,\tilde v\in \Theta'$. For us, the importance of these numbers  relies on the following implications
\begin{equation}\label{intuv}
[\tilde u]*[\tilde v]=0 \Rightarrow \tilde u(\C) \cap \tilde v(\C)=\emptyset,
\end{equation}
and
\begin{equation}\label{intuu}
[\tilde u]*[\tilde u] =0 \Rightarrow \tilde u \mbox{ is an embedding}.
\end{equation}
For proofs of \eqref{intuv} and \eqref{intuu} see Theorems 2.2 and 2.3 in \cite{sie2}, respectively.

If we verify the identities $[\tilde u]*[\tilde u]=0$ and $[\tilde u]*[\tilde v]=0$ for special choices of elements in $\Theta'$, then we conclude from the homotopy invariance of these numbers that these identities hold for any pair of elements in $\Theta'$. Using the implications \eqref{intuv} and \eqref{intuu} the proof of Proposition~\ref{prop_embed} will be complete.

By results of~\cite{HLS}, there exists a smooth one parameter family of embedded finite energy $\tilde J_E$-holomorphic planes $\tilde u_\tau=(a_\tau,u_\tau)$, $\tau\in S^1$, such that $\{u_\tau\}$ determines a rational open book decomposition of $L(2,1)$ with disk-like pages and binding~$P_1$. All of such planes are asymptotic to $P_1^2$ at $\infty$, and automatically fast since $\mu_{CZ}(P_1^2)=3$, see below. Translating each plane in the $\R$-direction, if necessary, we may assume that  $\min_{z\in \C} a_\tau(z) >2, \forall \tau$. This implies, in particular, that we can see $\tilde u_\tau \in \Theta, \forall \tau$, and $\Theta'$ is the connected component of $\Theta$ containing these planes.

For every $\tau$ we have $u_\tau(\C) \cap P_1 = \emptyset$. This follows, for instance, from arguing as in Lemma~\ref{lemma_intersection} below since $P_1$ is $2$-unknotted and has self-linking number $-1/2$. Here we just need to note that each $\util_\tau$ can be thought of as a plane which is pseudo-holomorphic with respect to the cylindrical almost complex structure $\jtil_E$ induced by $(\lambda_E,J_E)$, which allows us to use tools that are special to the cylindrical case, such as $\wind_\pi$, $\wind_\infty$ etc. Note that fast planes must be somewhere injective, in fact, fast planes must be immersions in view of $\wind_\pi=0$, and if they were not somewhere injective then we would be able to factor them through somewhere injective planes using a polynomial of degree at least two, thus forcing the existence of critical points, absurd. By~\cite[Theorem~2.3]{props2} we conclude that each map $u_\tau:\C\to L(2,1)\setminus P_1$ is an embedding.

As observed above, the integers $\wind_\pi(\tilde u_\tau)$ and $\wind_\infty(\tilde u_\tau)$ are well-defined tools at our disposal since $\util_\tau$ can be thought of as $\jtil_E$-holomorphic planes. From $\mu_{CZ}(P_1^2)=3$ we obtain
\begin{equation}\label{windinfi}
\wind_\infty(\tilde u_\tau)=\wind^{<0}(A_{P_1^2})=1.
\end{equation}
Using that each $\tilde u_\tau$ is $\tilde J_E$-holomorphic, we compute $[\tilde u]*[\tilde u]$ as in the case of cylindrical almost complex structures. Corollary 5.17 from~\cite{sie2} applied to our situation states that
\begin{equation}\label{genself1}
[\tilde u_\tau]*[\tilde u_\tau]=0 \Leftrightarrow u_\tau(\C)\cap P_1= \emptyset \mbox{ and } d_0(\tilde u_\tau)=0.
\end{equation}
Here $d_0(\tilde u_\tau) = \wind^{<0}(A_{P_1^2}) - \wind_\infty(\tilde u_\tau)\geq 0$ and~\eqref{windinfi} implies that $d_0(\tilde u_\tau)=1-1=0$. Since $u_\tau(\C)\cap P_1=\emptyset$, we conclude from~\eqref{genself1} that $[\tilde u_\tau]*[\tilde u_\tau]=0, \forall \tau$. Thus, by homotopy invariance, $[\tilde u]*[\tilde u]=0$ for all $\tilde u\in \Theta'$ and by~\eqref{intuu}, $\tilde u$ is an embedding for all $\tilde u\in \Theta'$.

Now consider $\tau_1\neq \tau\in S^1\Rightarrow \tilde u_{\tau_1}(\C)\cap \tilde u_\tau(\C)=\emptyset$. Corollary 5.9 from \cite{sie2} applied to our case asserts that \begin{equation}\label{genint2} [\tilde u_{\tau_1}]*[\tilde u_{\tau}]=0 \Leftrightarrow u_{\tau_1}(\C)\cap P_1= u_\tau(\C)\cap P_1=\emptyset \mbox{ and } d_0(\tilde u_\tau)=0.\end{equation} Again, since the right hand side of \eqref{genint2} is already verified, we get that $[\tilde u_{\tau_1}]*[\tilde u_{\tau}]=0$. By the homotopy invariance we have $[\tilde u]*[\tilde v]=0$ for any $\tilde u,\tilde v\in \Theta'$. We conclude from \eqref{intuv} that $\tilde u(\C) \cap \tilde v(\C)=\emptyset$ for any $\tilde u,\tilde v\ \in\Theta'$ satisfying $\tilde u(\C)\neq \tilde v(\C)$, completing the proof of Proposition~\ref{prop_embed}.
\end{proof}

\subsubsection{Compactness}\label{sec_compactness}

In this section we study compactness properties of the set $\Theta$. Let $\alpha < \beta$ be real numbers and let $\Theta_{\alpha,\beta}\subset \Theta$ be the subset of equivalence classes represented by maps $\tilde u=(a,u) \in \Theta$ satisfying
\begin{equation}\label{eq_minu}
\alpha \leq \min \tilde u:= \min_{z\in \C} a(z) \leq \beta.
\end{equation}

\begin{proposition}\label{lem_compact}
Given a sequence $[\tilde u_n]$ in $\Theta_{\alpha,\beta}$, there exists $[\tilde u] \in \Theta_{\alpha,\beta}$ and $n_j \to \infty$ such that, up to holomorphic reparametrizations and $\R$-translations, we have $\tilde u_{n_j} \to \tilde u$ in $C^\infty_{\rm loc}$ as $j\to\infty$.
\end{proposition}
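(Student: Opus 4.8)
The proof of Proposition~\ref{lem_compact} follows the standard SFT-type compactness scheme, adapted to the present non-cylindrical cobordism with its split ends, so the plan is to combine gradient bounds, bubbling-off analysis, and a ruling-out argument for breaking.

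\textbf{Step 1: Uniform energy and no loss of energy.} Every $\tilde u_n \in \Theta$ is asymptotic to $P_1^2$, hence $E(\tilde u_n) = \pi r_1^2$ is uniformly bounded. The first task is to obtain a uniform gradient bound. If no such bound exists, one rescales near points of maximal gradient and extracts, in the usual way, either a non-constant finite-energy plane in one of the two symplectizations $(\R \times L(2,1), \jtil)$ or $(\R \times L(2,1), \jtil_E)$ (a \emph{bubble}), or -- if the rescaling points escape to $|z| \to \infty$ -- a competing behavior near the puncture. Since the asymptotic orbit $P_1^2$ has $\mu_{CZ}(P_1^2) = 3$ and the curves are planes, an index/area count shows no bubbling can occur: a bubble would carry a positive amount of $d\lambda_a$-energy asymptotic to some closed Reeb orbit, and the resulting stable building would have to be compatible with a plane asymptotic to $P_1^2$, which by the Conley-Zehnder arithmetic from \S\ref{sec_CZ_iteration} and Theorem~\ref{theo_transversal} (the Fredholm inequality \eqref{eq_fredholm} for $\bar J \in \J_{\rm reg}$) leaves no room -- every negative puncture would force $\mu_{CZ} \geq 3$ of its limit, and $3 - (\text{sum of such}) + \#\Gamma - 1 \geq 0$ forces $\#\Gamma = 0$, i.e. no breaking and no bubbling off a non-constant sphere. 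Hence the gradients are uniformly bounded and, after reparametrization, $\tilde u_{n_j} \to \tilde u$ in $C^\infty_{\rm loc}$ on $\C$, with $\tilde u$ finite-energy, non-constant (the asymptotic constraint at $\infty$ prevents $\tilde u$ from becoming a trivial/constant object, which we verify via a standard ``no loss of energy at $\infty$'' argument using the bound \eqref{eq_minu} to keep the images in a fixed compact part of the cobordism).

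\textbf{Step 2: Identifying the limit and controlling the ends.} Once $\tilde u$ is obtained as a $C^\infty_{\rm loc}$-limit, one must show $\tilde u \in \Theta_{\alpha,\beta}$. The vertical bound \eqref{eq_minu}, $\alpha \leq \min \tilde u_{n_j} \leq \beta$, passes to the limit to give $\alpha \leq \min \tilde u \leq \beta$, in particular the limit does not drift to $a \equiv +\infty$ nor develop a negative puncture escaping to the lower symplectization. It remains to check that $\tilde u$ is again a plane asymptotic to $P_1^2$: by Theorem~\ref{thm_fred_theory_gen_planes} applied with $\util_0 = \tilde u$ -- once we know $\tilde u$ is an embedded plane asymptotic to $P_1^2$, the local structure theorem gives the Fredholm chart $\Phi$ and, via part (iii), realizes $\tilde u_{n_j}(z) = \Phi(A_{n_j} z + B_{n_j}, \tau_{n_j})$ for large $j$, which upgrades $C^\infty_{\rm loc}$-convergence to convergence as generalized planes and confirms $[\tilde u] \in \Theta$. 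To see $\tilde u$ is asymptotic to $P_1^2$ and has the right energy, I would use that the energy $E(\tilde u_{n_j}) = \pi r_1^2$ is the action of $P_1^2$ and the lowest possible positive action available at the positive puncture under the monotonicity built into $h$; Stokes' theorem and the standard ``no energy escapes to $\infty$'' estimate then force $E(\tilde u) = \pi r_1^2$ with asymptotic limit $P_1^2$ (the only Reeb orbit of $\lambda_E$ with this action that can appear contractibly).

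\textbf{Main obstacle.} The delicate point is the bubbling-off analysis in the \emph{interior} region $[-1,1] \times L(2,1)$ of the cobordism, where $\bar J$ is only assumed compatible with the symplectic form $d(h\lambda_0)$ and is not cylindrical, so the usual $\R$-translation symmetry used to normalize rescaled bubbles is unavailable there. One resolves this by noting that a bubble forming in this region is a non-constant $\bar J$-holomorphic sphere or plane of positive $d(h\lambda_0)$-area with image in a fixed compact set; exactness of the taming form on each end, together with the monotonicity $\partial_a h \geq 0$ (strictly positive on $[-1,1]\times L(2,1)$), rules out non-constant closed components and forces any non-trivial bubble to have at least one puncture, returning us to the index bookkeeping of Step 1 which excludes it. Keeping careful track of which of the two asymptotic regimes (the $\jtil$-end at $a \to -\infty$ versus the $\jtil_E$-end at $a \to +\infty$) a rescaled sequence lands in, and invoking the Fredholm regularity from Theorem~\ref{theo_transversal} together with the Conley-Zehnder bound $\mu_{CZ} \geq 3$ for contractible orbits of both $\lambda$ and $\lambda_E$, is the technical heart of the argument; the bound \eqref{eq_minu} is precisely what prevents the more serious ``stretching'' degeneration in which the whole sequence would split across the cobordism.
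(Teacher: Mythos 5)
Your proposal correctly identifies the crucial role of the bound $\alpha \leq \min a_n \leq \beta$, but the Fredholm-index computation you rely on in Step~1 is wrong, and this is a genuine gap. With $\mu_{CZ}(P_1^2)=3$ and $\mu_{CZ}(P_z)\geq 3$ at every negative puncture, the inequality \eqref{eq_fredholm} gives $3 - \sum_z \mu_{CZ}(P_z) + \#\Gamma - 1 \leq 2 - 2\#\Gamma \geq 0$, which forces $\#\Gamma \leq 1$, not $\#\Gamma = 0$. One negative puncture with a $\mu_{CZ}=3$ asymptotic limit is perfectly consistent with the Fredholm inequality -- and indeed this is exactly the breaking that actually occurs later (Theorem~\ref{theo_tree}) once one drops the lower bound $\alpha$ and lets $\min a_n \to -\infty$. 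So index arithmetic alone cannot rule out breaking here; if it could, the entire subsequent argument of \S\ref{sec_compactness} would be vacuous.

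What the paper does instead is apply the SFT compactness theorem to get a bubbling-off tree $\B=(\T,\U)$ and then classify each vertex $q$ by the behavior of the normalizing constants $c_n^q$: I) bounded (curve lives in the $\bar J$-level), II) $c_n^q\to-\infty$ (curve in the $\jtil_E$-level), III) $c_n^q\to+\infty$ (curve in the $\jtil$-level); this is Lemma~\ref{lem_aux_compactness}. The lower bound $\alpha \leq \min a_n$ immediately excludes case III) for every vertex -- this replaces your index argument entirely. If the root $r$ falls in case II), the asymptotic limits at any negative puncture would be contractible $\lambda_E$-Reeb orbits of action $<\pi r_1^2$, but no such orbit exists since the shortest contractible closed $\lambda_E$-orbit is $P_1^2$ with action exactly $\pi r_1^2$; so $\util_r$ has no negative punctures, and then Lemma~\ref{lem_uniform_ends_vertices} combined with $c_n^r\to-\infty$ forces $\min a_n(\C)>\beta$ for large $n$, contradicting $[\util_n]\in\Theta_{\alpha,\beta}$. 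Hence I) holds for the root, whence the $\R$-component of $\util_r$ is bounded below, again excluding negative punctures, and the tree has one vertex. Note the exclusion mechanism in the upper ($\lambda_E$) end is an \emph{action} argument specific to the chosen irrational ellipsoid, not an index argument, while in the lower ($\lambda$) end there is no exclusion of orbits at all -- it is the vertical bound that does all the work. You should reorganize so that the case-split on where each vertex lives is the spine of the proof, and drop the claim that \eqref{eq_fredholm} forbids one negative puncture.

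Your Step~2 also has a logical wrinkle: you invoke Theorem~\ref{thm_fred_theory_gen_planes} to ``upgrade'' the convergence and conclude $[\tilde u]\in\Theta$, but that theorem requires knowing in advance that $\tilde u$ is an embedded generalized finite-energy plane asymptotic to $P_1^2$, which is precisely what is being established. In the paper the limit is produced directly as the root of the (one-vertex) bubbling-off tree, so it is automatically a generalized plane asymptotic to $P_1^2$ in case I); no appeal to the gluing/local-structure theorem is needed at this stage.
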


In order to prove Proposition~\ref{lem_compact},  we shall make use of the SFT compactness theorem from~\cite{sftcomp}, see the recent book of C. Abbas~\cite{Abb} for a nice exposition. To state this theorem in our quite simple situation, we introduce the notion of a bubbling-off tree as follows. Consider a finite, rooted and oriented (away from the root) tree $\T$, and a finite set $\U$ of finite energy holomorphic spheres
so that the following properties hold.
\begin{itemize}
\item[(i)] There is a bijective correspondence between vertices $q\in \T$ and finite-energy punctured spheres $\tilde u_q\in \U.$ Each  $\tilde u_q:\C \setminus \Gamma_q \to \R \times L(2,1)$ is pseudo-holomorphic with respect to either $\tilde J$, $\tilde J_E$ or $\bar J$. Moreover, each ordered path $(q_1,\dots,q_N)$ from the root $q_1=r$ to a leaf $q_N$, where $q_{k+1}$ is a direct descendant of $q_k$, contains at most one vertex $q_i$ such that $\util_{q_i}$ is $\bar J$-holomorphic, in which case $\util_{q_j}$ is $\jtil_E$-holomorphic $\forall 1\leq j < i$, and $\util_{q_j}$ is $\jtil$-holomorphic $\forall i< j \leq N$.
\item[(ii)] Each sphere $\tilde u_q$ has exactly one positive puncture at $\infty$ and $0\leq \#\Gamma_q<+\infty$ negative punctures, where $\Gamma_q$ is the set of negative punctures of $\util_q$.
\item[(iii)] If the vertex $q$ is not the root then $q$ has an incoming edge $e$ from a vertex $q'$, and $\#\Gamma_q$ outgoing edges $f_1,\ldots, f_{\#\Gamma_q}$ to vertices $p_1,\ldots,p_{\#\Gamma_q}\in \T$, respectively. The edge $e$ is associated to the positive puncture of $\tilde u_q$ and the edges $f_1,\ldots,f_{\#\Gamma_q}$ are associated to the negative punctures of $\tilde u_q$. %Note that the vertices $p_1,\dots,p_n$ are the direct descendants of $q$.
The asymptotic limit of $\tilde u_q$ at its positive puncture coincides with the asymptotic limit of $\tilde u_{q'}$ at its negative puncture associated to $e$. In the same way, the asymptotic limit of $\tilde u_q$ at a negative puncture corresponding to $f_i$ coincides with the asymptotic limit of $\tilde u_{p_i}$ at its unique positive puncture. If  $\tilde u_q$ is $\tilde J_E$-holomorphic then $\tilde u_{p_i}$ is either $\tilde J_E$ or $\bar J$-holomorphic. If $\tilde u_q$ is either $\bar J$ or $\tilde J$-holomorphic then $\tilde u_{p_i}$ is necessarily $\tilde J$-holomorphic.
\item[(iv)] If the contact area of $\tilde u_q$ vanishes and $\util_q$ is $\tilde J_E$-holomorphic or $\tilde J$-holomorphic then $\#\Gamma_q \geq 2$.
\end{itemize}
We will denote by $\B=(\T,\U)$ the bubbling-off tree where $\T$ and $\U$ satisfy all properties above.

\begin{remark}
In (i) for a path $(q_1,\dots,q_N)$ from the root to a leaf it might be the case that no $\util_{q_i}$ is $\jbar$-holomorphic. In (iii) the vertices $p_1,\dots,p_{\#\Gamma_q}$ are the direct descendants of $q$.
\end{remark}

Now consider a sequence $\tilde u_n=(a_n,u_n)$ of generalized finite energy planes representing elements of $\Theta$. Clearly $E(\tilde u_n)$ is uniformly bounded by a constant $\pi r_1^2$. Assume moreover that all periodic orbits of $\lambda$ having period $\leq \pi r_1^2$ are nondegenerate. A corollary of the SFT compactness theorem is the following statement.

\begin{theorem}\label{thm_general_compactness}
Up to a subsequence of $\util_n$, still denoted by $\util_n$, there exists a bubbling-off tree $\B=(\T,\U)$ with the following properties.
\begin{itemize}
\item For every vertex $q$ of $\T$ there exist sequences $z^q_n,\delta^q_n\in\C$ and $c_n^q\in \R$ such that
\begin{equation}\label{renormaliza}
\tilde u_n(z^q_n + \delta^q_n\cdot)+c^q_n \to \tilde u_q(\cdot) \mbox{ in } C^\infty_{\rm loc}(\C\setminus\Gamma_q) \mbox{ as } n\to \infty.
\end{equation}
Here $\tilde u+c := (a+c,u)$ where $\tilde u=(a,u)$ and $c\in\R$.
\item The curve $\tilde u_r$ is asymptotic to $P_1^2$ at $\infty$, and every asymptotic limit of all planes $\util_q$ are contractible closed orbits with periods $\leq \pi r_1^2$ of the Reeb flow of either $\lambda_E$ or $\lambda$.
\end{itemize}
\end{theorem}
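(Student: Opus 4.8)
The plan is to obtain the assertion as a translation of the SFT compactness theorem of~\cite{sftcomp} (see also~\cite{Abb}) into the bubbling-off tree formalism, exploiting only the special features of the situation: the domains are planes, the positive asymptotic is the fixed nondegenerate orbit $P_1^2$, the energies are uniformly bounded by $\pi r_1^2$, and $\jbar$ coincides with $\jtil_E$ for $a\geq 2$ and with $\jtil$ for $a\leq -2$, the interpolation being confined to the compact slab $\{|a|\leq 2\}$. First I would regard the $\util_n$ as finite-energy curves in the exact symplectic cobordism whose positive end is the symplectization of $\lambda_E$ and whose negative end is the symplectization of $\lambda$. Each $\util_n$ has genus zero, Hofer energy at most $\pi r_1^2$, a single positive puncture asymptotic to $P_1^2$, and no negative punctures. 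Since the uniform energy bound forces every orbit that can arise as an asymptotic limit in a limiting building to have period $\leq\pi r_1^2$, and all such orbits are nondegenerate ($\lambda_E$ is nondegenerate and, by hypothesis, every closed $\lambda$-orbit of period $\leq\pi r_1^2$ is nondegenerate), SFT compactness applies and yields, after passing to a subsequence, a limiting holomorphic building together with sequences $z_n^q,\delta_n^q\in\C$ and $c_n^q\in\R$ realising the $C^\infty_{\rm loc}$-convergence~\eqref{renormaliza} on each component.

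Next I would read off the combinatorics. Because the domains are planes, the limiting building is connected of arithmetic genus zero; the bubbling-off analysis of~\cite{93} in a symplectization produces finite-energy spheres glued along Reeb orbits, so the dual graph is a finite rooted tree $\T$ whose root $r$ is the topmost component, and the components $\U$ are indexed by the vertices of $\T$. Each non-root component is attached to its parent along a single Reeb orbit, which is its positive puncture; hence each component carries exactly one positive puncture, and by energy quantization finitely many negative ones, which is property (ii). Since the interpolation region is the compact slab $\{|a|\leq 2\}$, the building has a single cobordism level, so moving away from the root one descends through the levels of the building and meets at most one $\jbar$-holomorphic component along any branch; those strictly above it in the tree are $\jtil_E$-holomorphic and those strictly below it are $\jtil$-holomorphic. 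This is property (i), and the same monotonicity of levels along edges, together with the matching of asymptotic limits that is built into the definition of a building, gives the restrictions on the complex structures of the direct descendants stated in property (iii). Property (iv) is the stability requirement: a genus-zero symplectization-level component with only one positive and one negative puncture and vanishing $d\lambda$- (resp.\ $d\lambda_E$-) area is a branched cover of a trivial cylinder, and such components are collapsed when one passes to the stable building, so a surviving symplectization component with vanishing contact area must carry at least two negative punctures.

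To finish the list of claims I would verify the second bullet. The topmost component $\util_r$ inherits the positive asymptotic of the $\util_n$, so it is asymptotic to $P_1^2$ at $\infty$. Non-negativity of the contact area of a component $\util_q$, together with Stokes' theorem and the monotonicity $\partial h/\partial a\geq 0$ of the cobordism, shows that the action at the positive puncture of $\util_q$ dominates the action at each of its negative punctures; inducting down from the root, where the top orbit $P_1^2$ has action $\pi r_1^2$, every asymptotic limit has period $\leq\pi r_1^2$. Finally, every leaf of $\T$ is a plane, so the subtree hanging below any gluing orbit $\gamma$ assembles, after projecting out the $\R$-factor, into a compact oriented surface mapped into $L(2,1)$ with a single boundary component $\gamma$; hence $[\gamma]=0$ in $H_1(L(2,1))\simeq\Z_2\simeq\pi_1(L(2,1))$, and the negative asymptotics of $\util_r$ are null-homotopic as well since $P_1^2$ is. As every asymptotic orbit of every $\util_q$ is either a gluing orbit of an edge or the top orbit $P_1^2$, all of them are contractible.

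The main obstacle I anticipate is organisational rather than conceptual: one must check carefully that SFT compactness in this single-cobordism setting really imposes the rigid shape encoded in (i)--(iv) -- above all that no root-to-leaf path meets more than one $\jbar$-holomorphic vertex, and that the stability collapse underlying (iv) is correctly accounted for -- and one must confirm that the abstract limiting building is genuinely recovered by the hands-on rescaling data $z_n^q,\delta_n^q,c_n^q$ in~\eqref{renormaliza}. None of this goes beyond~\cite{sftcomp}, but it demands a careful bookkeeping of levels and asymptotic orbits.
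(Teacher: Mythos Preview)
Your proposal is correct and follows the same approach as the paper: the paper states this theorem without proof, simply presenting it as ``a corollary of the SFT compactness theorem'' from~\cite{sftcomp}. Your sketch supplies exactly the details the paper omits---the cobordism setup, the nondegeneracy of the relevant orbit sets, the tree combinatorics from genus zero and a single positive puncture, the action bound by induction down the tree, and the contractibility of every gluing orbit via the disk assembled from the subtree below it---and these details are accurate.
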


%\begin{remark}
%If the sequence $\{\util_n=(a_n,u_n)\}\subset \Theta$ with uniformly bounded energies, as above, satisfies $\inf_{n,z} a_n(z) > -\infty$ then any bubbling-off tree which is the SFT-limit of some subsequence of $\util_n$ contains no $\jtil$-holomorphic maps.
%\end{remark}

In order to prove Proposition~\ref{lem_compact} we first establish two auxiliary lemmas, which exhibit more refined properties of the SFT-convergence of a sequence $\util_n=(a_n,u_n) \in \Theta$ to a bubbling-off tree $\B=(\T,\U)$ described above.

\begin{lemma}\label{lem_aux_compactness}
Let $z_n^q,\delta_n^q,c_n^q$ be sequences such that~\eqref{renormaliza} holds for all vertices $q$ of~$\T$. Then we can assume, up to selection of a subsequence still denoted by $\util_n$, that one of the three mutually excluding possibilities holds for every vertex $q$.
\begin{itemize}
\item[I)] $c_n^q$ is bounded, $a_n(z_n^q+\delta_n^q\cdot)$ is $C^0_{\rm loc}(\C\setminus\Gamma_q)$-bounded and $\util_q$ is a $\bar J$-holomorphic curve.
\item[II)] $c_n^q \to -\infty$, $a_n(z_n^q+\delta_n^q\cdot)\to+\infty$ in $C^0_{\rm loc}(\C\setminus\Gamma_q)$ as $n\to\infty$, and $\util_q$ is a $\jtil_E$-holomorphic curve.
\item[III)] $c_n^q \to +\infty$, $a_n(z_n^q+\delta_n^q\cdot)\to-\infty$ in $C^0_{\rm loc}(\C\setminus\Gamma_q)$ as $n\to\infty$, and $\util_q$ is a $\jtil$-holomorphic curve.
\end{itemize}
Moreover, if $q$ is a vertex for which III) holds then $\util_q$ is asymptotic at its positive puncture to a closed $\lambda$-Reeb orbit having period strictly less than $\pi r_1^2$. In particular, III) does not hold for the root $r$.
\end{lemma}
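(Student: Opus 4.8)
The plan is to read everything off the SFT‑convergence data supplied by Theorem~\ref{thm_general_compactness}, together with the structural axioms (i)--(iv) of the bubbling‑off tree $\B=(\T,\U)$. Since $\T$ has only finitely many vertices, I would first pass to a subsequence of $\util_n$ along which, for \emph{every} vertex $q$ at once, the real sequence $c_n^q$ exhibits exactly one of three behaviors: it stays bounded, or $c_n^q\to-\infty$, or $c_n^q\to+\infty$. These alternatives are visibly mutually exclusive and exhaustive. Moreover, the $C^\infty_{\rm loc}$‑convergence in~\eqref{renormaliza} forces $a_n(z_n^q+\delta_n^q z)+c_n^q\to a_q(z)$ uniformly on compact subsets of $\C\setminus\Gamma_q$, so the three alternatives translate respectively into: $a_n(z_n^q+\delta_n^q\cdot)$ is $C^0_{\rm loc}$‑bounded; $a_n(z_n^q+\delta_n^q\cdot)\to+\infty$ in $C^0_{\rm loc}$; or $a_n(z_n^q+\delta_n^q\cdot)\to-\infty$ in $C^0_{\rm loc}$. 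This already produces the trichotomy I)/II)/III) at the level of the height functions, and it then remains only to identify the almost complex structure of $\util_q$ in each case.

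To do that, note that the reparametrized and $\R$‑translated map $\util_n(z_n^q+\delta_n^q\cdot)+c_n^q$ is pseudo‑holomorphic for the almost complex structure $\jbar(\,\cdot-c_n^q,\,\cdot\,)$, where I recall that $\jbar(a,\cdot)$ equals the cylindrical structure $\jtil_E$ for $a\ge 2$ and the cylindrical structure $\jtil$ for $a\le-2$. If $c_n^q\to-\infty$, then on each fixed compact subset of $\C\setminus\Gamma_q$ the heights $a_n(z_n^q+\delta_n^q\cdot)$ tend to $+\infty$, hence eventually exceed $2$, so $\jbar(\,\cdot-c_n^q,\,\cdot\,)$ agrees with $\jtil_E$ on the relevant part of $\R\times L(2,1)$ and $\util_q$ is $\jtil_E$‑holomorphic; symmetrically, $c_n^q\to+\infty$ yields a $\jtil$‑holomorphic $\util_q$. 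When $c_n^q$ stays bounded one may, as is built into the construction behind Theorem~\ref{thm_general_compactness}, where cobordism‑level components arise as $C^\infty_{\rm loc}$‑limits with no $\R$‑shift, simply take $c_n^q\equiv 0$, so that $\util_q$ is honestly $\jbar$‑holomorphic. Matching this with the trichotomy of the previous paragraph gives I), II), III) in the three respective cases, compatibly with axioms (i) and (iii).

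It remains to prove the last assertion. First, the root $r$ cannot satisfy III): its positive asymptotic is $P_1^2$, a closed $\lambda_E$‑Reeb orbit, whereas the asymptotic limits of a $\jtil$‑holomorphic curve are closed $\lambda$‑Reeb orbits, so $\util_r$ is $\jbar$‑ or $\jtil_E$‑holomorphic. Now let $q$ be any vertex satisfying III), so $\util_q$ is $\jtil$‑holomorphic and its positive asymptotic $P$ is a closed $\lambda$‑Reeb orbit. Along the unique oriented path from $r$ to $q$, axiom (iii) prevents a $\jtil_E$‑vertex from having a $\jtil$‑child, and the root is $\jtil_E$‑ or $\jbar$‑holomorphic, so the path must contain at least one $\jbar$‑holomorphic vertex, hence exactly one by axiom (i); call it $q_*$ (necessarily $q_*\neq q$). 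I would then run the usual Stokes balance along this path: at each $\jtil_E$‑ or $\jtil$‑holomorphic component the $d\lambda_E$‑ (resp.\ $d\lambda$‑) area is nonnegative, so the period of the positive asymptotic is at least the sum of the periods of the negative ones; descending from the root, where this period equals $\int_{P_1^2}\lambda_E=\pi r_1^2$, through the $\jtil_E$‑levels, the positive asymptotic $Q^+$ of $q_*$ satisfies $\int_{Q^+}\lambda_E\le\pi r_1^2$. At the cobordism vertex the balance reads
\[
\int_{Q^+}\lambda_E=\sum_j\int_{Q^-_j}\lambda+\int\util_{q_*}^*\omega,\qquad \omega=d(h\lambda_0),
\]
where $Q^-_j$ are the (necessarily $\lambda$‑Reeb) negative asymptotics of $q_*$, and the defect $\int\util_{q_*}^*\omega$ is \emph{strictly} positive: being genuinely $\jbar$‑holomorphic and asymptotic to a $\lambda_E$‑orbit above and $\lambda$‑orbits below, $\util_{q_*}$ must meet the slab $[-1,1]\times L(2,1)$ on which $\partial_a h>\sigma>0$, and there a non‑constant pseudo‑holomorphic curve carries strictly positive area. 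Descending further from $Q^+$ through the $\jtil$‑levels down to $P$, periods are again non‑increasing, so $\int_P\lambda\le\sum_j\int_{Q^-_j}\lambda<\int_{Q^+}\lambda_E\le\pi r_1^2$, as claimed; in particular, consistently with the above, III) cannot hold at $r$, since $\pi r_1^2$ is not strictly less than itself.

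I expect the one genuinely delicate point to be the strict inequality $\int\util_{q_*}^*\omega>0$ at the cobordism vertex: one must make sure that a $\jbar$‑holomorphic component which is not already $\jtil_E$‑ or $\jtil$‑holomorphic really does penetrate the monotone slab $[-1,1]\times L(2,1)$, and exclude — using the stability built into the SFT compactness theorem of~\cite{sftcomp} together with axiom (iv) and the fact that $\jbar$ fails to be translation‑invariant there — the possibility that $\util_{q_*}$ degenerates to a trivial object with no area. Everything else (the diagonal subsequence extraction, the dictionary between $c_n^q$ and $a_n(z_n^q+\delta_n^q\cdot)$, and the monotonicity of asymptotic periods along $\T$) is routine.
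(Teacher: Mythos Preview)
Your argument for the trichotomy I)/II)/III) is essentially the same as the paper's: pass to a subsequence on the real translations, read off the behavior of $a_n(z_n^q+\delta_n^q\cdot)$ from the $C^\infty_{\rm loc}$-convergence, and identify the almost complex structure accordingly.

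For the ``Moreover'' clause you take a genuinely different route. The paper does \emph{not} use the tree structure at all: it works directly with the original planes $\util_n$. Given a type III vertex $q$, it picks $R_\epsilon$ so that the loop $t\mapsto u_q(R_\epsilon e^{2\pi it})$ has $\lambda$-action close to $T_q$ and lies at height $\le -2$, transfers this to $\util_{n_\epsilon}$, and considers the half-cylinder $\vtil_\epsilon(s,t)=\util_{n_\epsilon}(z^q_{n_\epsilon}+\delta^q_{n_\epsilon}R_\epsilon e^{2\pi(s+it)})$, which runs from height $\le -2$ to $+\infty$ and hence crosses the slab $[-1,1]\times L(2,1)$. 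The strict gap $\pi r_1^2-T_q$ then comes from a \emph{quantitative} lower bound on the $d(h\lambda_0)$-area in the slab, obtained via the monotonicity lemma after symplectically embedding the slab into $(\C P^2,\Omega)$. Your argument instead locates the unique $\jbar$-vertex $q_*$ on the path from $r$ to $q$ and uses a Stokes balance at $q_*$, extracting strict positivity from the fact that $\util_{q_*}$ (having a positive puncture at $+\infty$ and a negative one at $-\infty$) must cross the slab where $\jbar$ is $\omega$-compatible. This is cleaner in that it avoids the $\C P^2$ embedding and only needs qualitative strict positivity; the price is that you must first argue the root is not type III (via its asymptotic being the $\lambda_E$-orbit $P_1^2$) in order to guarantee that $q_*$ exists, whereas the paper's half-cylinder argument handles all vertices uniformly, including $q=r$. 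One small point you glide over (the paper does too): to conclude $\int\util_{q_*}^*d(h\lambda_0)>0$ from positivity on the slab you also need $\util_{q_*}^*d(h\lambda_0)\ge 0$ \emph{outside} the slab; this holds because on $|a|>1$ the cylindrical $\jbar$ together with $\partial_a h\ge 0$ gives $d(h\lambda_0)(v,\jbar v)\ge 0$, but it is worth saying explicitly.
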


\begin{proof}
Fix a vertex $q$ of $\T$, write $\util_q=(a_q,u_q)$ for the components of $\util_q$, and choose any point $z_0 \in \C\setminus \Gamma_q$. Up to choice of a subsequence we may assume that either $a_n(z_n^q+\delta_n^qz_0)$ is bounded, or $a_n(z_n^q+\delta_n^qz_0)\to+\infty$ or $a_n(z_n^q+\delta_n^qz_0)\to-\infty$. If $a_n(z_n^q+\delta_n^qz_0) \to \pm\infty$ then $a_n(z_n^q+\delta_n^q\cdot) \to \pm\infty$ on compact subsets of $\C\setminus\Gamma_q$ since the derivatives of $a_n(z_n^q+\delta_n^q\cdot)$ are uniformly bounded on compact subsets of $\C\setminus\Gamma_q$. If $a_n(z_n^q+\delta_n^qz_0)$ is bounded then $a_n(z_n^q+\delta_n^q\cdot)$ is bounded on compact subsets of $\C\setminus\Gamma_q$ for the same reason. From~\eqref{renormaliza} we get $a_n(z_n^q+\delta_n^qz_0) + c_n^q \to a_q(z_0)$ as $n\to\infty$, so we can conclude that if $a_n(z_n^q+\delta_n^qz_0) \to +\infty$ then $c_n^q\to-\infty$, if $a_n(z_n^q+\delta_n^qz_0)$ is bounded then so is $c_n^q$, and if $a_n(z_n^q+\delta_n^qz_0) \to -\infty$ then $c_n^q\to+\infty$. In the first case $\util_q$ is $\jtil_E$-holomorphic, in the second case $\util_q$ is $\jbar$-holomorphic, and in the third case $\util_q$ is $\jtil$-holomorphic. This proves that I) or II) or III) holds, up to a subsequence.

Fix a vertex $q$ for which III) holds, and denote by $P_q=(x_q,T_q)$ the closed $\lambda$-Reeb orbit which is the asymptotic limit of $\util_q$ at the positive puncture $\infty$. For every $\epsilon>0$ there exists $R_\epsilon\gg1$ such that the loop $t\mapsto u_q(R_\epsilon e^{i2\pi t})$ has $\lambda$-action $\geq T_q-\epsilon/2$. Hence we find $n_\epsilon\gg1$ such that $$ n\geq n_\epsilon \ \Rightarrow \ \text{the loops $t\mapsto u_n(z_n^q+\delta_n^qR_\epsilon e^{i2\pi t})$ have $\lambda$-action $\geq T_q-\epsilon$.} $$ We can also assume that
\begin{equation*}
n\geq n_\epsilon \ \Rightarrow \ \sup_{t\in\R/\Z} a_n(z_n^q+\delta_n^qR_\epsilon e^{i2\pi t}) \leq -2
\end{equation*}
since III) holds. Consider the of $\jbar$-holomorphic finite-energy half-cylinder
\[
\vtil_\epsilon:[0,+\infty)\times \R/\Z \to \R\times L(2,1)
\]
defined by
\[
\vtil_\epsilon(s,t) = (b_\epsilon(s,t),v_\epsilon(s,t)) = \util_{n_\epsilon}(z_{n_\epsilon}^q+\delta_{n_\epsilon}^qR_\epsilon e^{2\pi(s+it)}).
\]
It satisfies
\begin{equation}\label{boundary_values_b_k}
\begin{array}{ccc} b_\epsilon(0\times\R/\Z) \subset (-\infty,-2] & & \inf_t b_\epsilon(s,t) \to +\infty \text{ as } s\to+\infty \end{array}
\end{equation}
and
\begin{equation}\label{areas_going_to_zero}
\int_{[0,+\infty) \times \R/\Z} \vtil_\epsilon^*d(h\lambda_0) \leq \pi r_1^2 - T_q + \epsilon.
\end{equation}
where $h$ is the function fixed in \S~\ref{sec_non_cyl_cob}.

At this point we wish to apply the monotonicity lemma since $d(h\lambda_0)$ is symplectic on $[-1,1]\times L(2,1)$ and $\jbar$ is $d(h\lambda_0)$-compatible. We can either use a version of the monotonicity lemma for manifolds with boundary, or argue with the usual monotonicity lemma for closed manifolds as follows. Consider the diffeomorphism $\Phi:\C^2\setminus0 \to \R\times S^3$ given by $\Phi(z)=(a,p)$ where $|z|=e^a$ and $p=z/|z|$. Denoting by $\alpha_0$ the standard Liouville form~\eqref{std_liouville_form_S3} on $\C^2$, and by $\tilde\lambda_0$ its pull-back to $S^3$ via the inclusion $S^3\hookrightarrow \C^2$, then $\pi_{2,1}^*\lambda_0 = \tilde\lambda_0$ and $\Phi^*(e^a\tilde\lambda_0)=\alpha_0$. Hence $\Phi$ defines a symplectomorphism $(\C^2\setminus0,\omega_{\rm std}) \simeq (\R\times S^3,d(e^a\tilde\lambda_0))$, where $\omega_{\rm std} = d\alpha_0$ is the standard symplectic form on $\C^2$. Choose $M>0$ such that $e^{-M+1}<f<f_E<e^{M-1}$ holds pointwise on $L(2,1)$, and a smooth function $\tilde h:[-M,M]\times S^3 \to \R^+$ satisfying
\begin{itemize}
\item $\tilde h(a,p) = h(a,\pi_{2,1}(p))$ for all $(a,p) \in [-1,1]\times S^3$.
\item $\tilde h(a,p)=e^a$ on $[-M,-M+1]\times S^3 \sqcup [M-1,M]\times S^3$.
\item $\partial_a\tilde h>0$ on $[-M,M]\times S^3$.
\end{itemize}
Here $h$ is the function used in~\S~\ref{sec_non_cyl_cob}. Then $\Phi^*d(\tilde h \tilde\lambda_0)$ is a symplectic form on $K := \Phi^{-1}([-M,M]\times S^3)$ coinciding with $\omega_{\rm std}$ near $\partial K$. The open ball $B = \{z\in\C^2:|z|\leq e^{2M}\}$ equipped with $\omega_{\rm std}$ is symplectomorphic with an open set in $\C P^2$ equipped with a positive constant multiple of the Fubini-Study symplectic form. We have proved that $([-1,1]\times S^3,(id_\R\times\pi_{2,1})^*d(h\lambda_0)) = ([-1,1]\times S^3,d(\tilde h\tilde\lambda_0))$ can be symplectically embedded into $(\C P^2,\Omega)$ where $\Omega$ is a symplectic form coinciding with a multiple of the Fubini-Study form near some complex line. Since the loops $t\mapsto v_\epsilon(s,t)$ are contractible in $L(2,1)$, the cylinders $\vtil_\epsilon$ can be lifted to cylinders $\tilde V_\epsilon:[0,+\infty)\times\R/\Z \to \R\times S^3$. These lifted cylinders are pseudo-holomorphic with respect to $\jbar' := (id_\R\times \pi_{2,1})^*\jbar$. Note that $\jbar'$ is compatible with $(id_\R\times\pi_{2,1})^*d(h\lambda_0)$ on $[-1,1]\times S^3$ since $\jbar$ is compatible with $d(h\lambda_0)$ on $[-1,1]\times L(2,1)$. Take $-1+\epsilon>a_\epsilon^- > -1$ and $1-\epsilon < a_\epsilon^+ < 1$ regular values of $b_\epsilon(s,t)$. Hence by~\eqref{boundary_values_b_k} each $S_\epsilon = \tilde V_\epsilon^{-1}([a_\epsilon^-,a_\epsilon^+]\times S^3)$ is a smooth domain with boundary in $(0,+\infty)\times\R/\Z$. The $\tilde V_\epsilon$ maps $\partial S_\epsilon$ into $\partial([a_\epsilon^-,a_\epsilon^+]\times S^3)$ and $\tilde V_\epsilon(S_\epsilon) \cap \{0\}\times S^3 \neq \emptyset$. Using the monotonicity lemma on $(\C P^2,\Omega)$ we find $\epsilon_0>0$ independent of $\epsilon$ such that
\begin{equation}
\begin{aligned}
\pi r_1^2 - T_q + \epsilon \geq \int_{S_\epsilon} \vtil_\epsilon^*d(h\lambda_0) &= \int_{S_\epsilon} \tilde V_\epsilon^*(id_\R\times \pi_{2,1})^*d(h\lambda_0) \geq \epsilon_0.
\end{aligned}
\end{equation}
Making $\epsilon\to0^+$ we get $T_q<\pi r_1^2$, as desired.
\end{proof}

The following lemma, left without proof, is standard and proved using the maximum principle combined with estimates for cylinders with small area, see~\cite{convex}.

\begin{lemma}\label{lem_uniform_ends_vertices}
Let $z_n^r,\delta_n^r,c_n^r$ be sequences such that~\eqref{renormaliza} holds for the root $r$. Then, by Theorem~\ref{thm_general_compactness}, $P_1^2=(x_1,\pi r_1^2)$ is the asymptotic limit of $\util_r$ at the positive puncture $\infty$. For every $\R/\Z$-invariant neighborhood $\W$ of $t\mapsto x_1(\pi r_1^2t)$ in $C^\infty(\R/\Z,L(2,1))$ and for every number $M>0$, there exist $R_0>0$ and $n_0$ such that if $R>R_0$ and $n>n_0$ then the loop $t\mapsto u_n(z_n^r+\delta_n^rRe^{i2\pi t})$ belongs to $\W$, and $a_n^r(z_n^r+\delta_n^rRe^{i2\pi t})+c_n^r > M$.
\end{lemma}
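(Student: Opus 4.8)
The lemma upgrades the $C^\infty_{\rm loc}$-convergence $\tilde u_n(z_n^r+\delta_n^r\cdot)+c_n^r\to\tilde u_r$, together with the asymptotic formula for $\tilde u_r$ at $\infty$, to control of the loops $t\mapsto u_n(z_n^r+\delta_n^r Re^{i2\pi t})$ that is uniform over all large $R$ and all large $n$. The plan is the classical one from~\cite{convex}: fix a single large radius $R_1$ beyond which everything is already pinned down by the limit curve $\tilde u_r$ alone, show that beyond $R_1$ the renormalized curves carry an arbitrarily small amount of symplectic area, and then run a long-cylinder argument, using the monotonicity lemma to rule out bubbling and to bound the gradient, and the maximum principle for the $\R$-coordinate of a $\jtil_E$-holomorphic curve.

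Concretely, write $\tilde v_n(w):=\tilde u_n(z_n^r+\delta_n^r w)+c_n^r$, with $\R$-component $b_n$, so that $\tilde v_n\to\tilde u_r=(a_r,u_r)$ in $C^\infty_{\rm loc}(\C\setminus\Gamma_r)$ and $\tilde u_r$ is asymptotic to $P_1^2$ at $\infty$ (in particular $a_r\to+\infty$ there). Each $\tilde v_n$ is pseudo-holomorphic for the $\R$-translate of $\bar J$ by $-c_n^r$, which agrees with the $\R$-invariant $\jtil_E$ wherever $b_n$ is large; by Lemma~\ref{lem_aux_compactness} the root is of type I or II, so $c_n^r$ is bounded above and there is a constant $c_*$ such that $\tilde v_n$ is $\jtil_E$-holomorphic on $\{b_n>c_*\}$. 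Using only the asymptotics of $\tilde u_r$ and the $\R/\Z$-invariance of $\W$, I would first fix $R_1$ — larger than $|w|$ for every $w\in\Gamma_r$ — so that the loop $t\mapsto u_r(R_1e^{i2\pi t})$ lies in a small neighbourhood $\W_1$ of $t\mapsto x_1(\pi r_1^2t)$ with $\overline{\W_1}\subset\W$, with $a_r>M+|c_*|+2$ along it, and with $\lambda_E$-action exceeding $\pi r_1^2-\epsilon$, for a fixed $\epsilon<\frac14\pi r_1^2$; by $C^\infty_{\rm loc}$-convergence there is then $n_1$ so that for $n\ge n_1$ the loop $t\mapsto u_n(z_n^r+\delta_n^r R_1e^{i2\pi t})$ still lies in $\W_1$, satisfies $b_n>M+|c_*|+1$ along it, and has $\lambda_E$-action exceeding $\pi r_1^2-2\epsilon$. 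Now view $\{|w|\ge R_1\}$ as a half-cylinder $[0,+\infty)\times\R/\Z$. Since $b_n\to+\infty$ as $|w|\to+\infty$ for each fixed $n$, a bootstrap argument based on the maximum principle (detailed below) shows that $\tilde v_n$ stays in $\{b_n>c_*\}$, hence is $\jtil_E$-holomorphic, on all of this half-cylinder. There $b_n$ is subharmonic, the $\lambda_E$-action $\mathcal A_n(s)$ of the loop at height $s$ is non-decreasing with $\mathcal A_n(0)>\pi r_1^2-2\epsilon$ and $\mathcal A_n(s)\to\pi r_1^2$ as $s\to+\infty$, so $\mathcal A_n(s)\in(\pi r_1^2-2\epsilon,\pi r_1^2]$ throughout and Stokes gives that the $d\lambda_E$-area of $\tilde v_n$ over the half-cylinder is $<2\epsilon$. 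With $2\epsilon$ below the threshold of the monotonicity lemma (applied on $\C P^2$ as in the proof of Lemma~\ref{lem_aux_compactness} if a closed ambient manifold is wanted), this small area forbids gradient concentration and yields a uniformly small bound on $|d\tilde v_n|$; elliptic estimates then give $C^\infty$-smallness of the $t$-derivatives, so every loop $t\mapsto u_n(z_n^r+\delta_n^r Re^{i2\pi t})$ with $R\ge R_1$ stays in $\W$ and $\tilde v_n$ remains inside a fixed Martinet tube around $x_1$. Finally, from $\partial_sb_n=\lambda_E(\partial_tu_n)$ one has $\int_{\R/\Z}\partial_sb_n\,dt=\mathcal A_n(s)>\pi r_1^2-2\epsilon>0$, so the $t$-average of $b_n$ increases from a value $>M+|c_*|+1$, while the small gradient keeps $b_n$ close to its $t$-average; hence $b_n>M$ everywhere on $\{|w|\ge R_1\}$, which closes the bootstrap and yields both conclusions of the lemma for all $R\ge R_1$ and $n\ge n_1$.

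The step I expect to be the real obstacle is the bootstrap, i.e.\ ruling out that $\tilde v_n$ dips, at intermediate radii, into the region where $\bar J$ is only the interpolating almost complex structure, where neither the subharmonicity of $b_n$ nor the monotone-action identity is available a priori. This is handled as in~\cite{convex}: $\bar J$ is still tamed there by the symplectic form $d(h\lambda_0)$, so the monotonicity lemma still applies and such a dip would require a definite amount of area, contradicting the small-area bound once $R_1$ and $n_1$ are large; alternatively one invokes the energy-conservation part of the SFT compactness theorem of~\cite{sftcomp} to conclude directly that the area of $\tilde v_n$ outside $\{|w|\le R_1\}$ tends to $0$ as $R_1,n\to\infty$. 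Once the area outside $\{|w|\le R_1\}$ is under control, the remaining long-cylinder analysis is routine, exactly as in~\cite{convex}.
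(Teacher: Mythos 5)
The paper leaves this lemma unproved, simply citing~\cite{convex} and noting that the argument uses ``the maximum principle combined with estimates for cylinders with small area''; your proposal fills this in correctly with exactly those ingredients (fix a large radius $R_1$ pinned down by the SFT limit $\util_r$, show the area beyond $R_1$ is small, run a maximum-principle and long-cylinder argument for the $\R$-coordinate). The one genuine extra subtlety you flag for the non-cylindrical cobordism is real, and the circularity in the Stokes-then-monotonicity version of the bootstrap is removed, as you say, by invoking SFT energy conservation to bound the area of $\tilde v_n$ outside a fixed disk before assuming the curve stays in the cylindrical region; so your route agrees with the paper's.
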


\begin{proof}[Proof of Proposition~\ref{lem_compact}]
Let $\tilde u_n=(a_n,u_n)$ be a sequence in $\Theta_{\alpha,\beta}$. By the SFT compactness theorem, we find a bubbling-off tree $\B=(\T,\U)$ which is the SFT-limit of a subsequence of $\tilde u_n$, which we still denote by $\util_n$.

We claim that $\T$ contains only one vertex, namely, its root~$r$. To prove this claim we start by recalling that for every vertex $q$ we find $z_n^q,\delta_n^q\in\C$ and $c_n^q\in\R$ such that~\eqref{renormaliza} holds. We may assume, without any loss of generality and up to choice of a subsequence, that one of the three possibilities I), II) and III) listed in Lemma~\ref{lem_aux_compactness} holds for each vertex $q$. Note, however, that III) is excluded since the $\R$-components of the planes $\util_n$ are bounded from below by $\alpha>-\infty$. Hence I) or II) of Lemma~\ref{lem_aux_compactness} holds for all vertices.

In case II) holds for the root $r$, we claim that $\util_r$ can not have negative punctures. In fact, in this case either its contact area vanishes or not; if it did then the defining properties of a bubbling-off tree tell us that it can not be a trivial cylinder over $P_1^2$, so the action of its asymptotic limits at negative punctures would be strictly less than $\pi r_1^2$, contradicting the fact that these asymptotic limits are contractible closed $\lambda_E$-Reeb orbits. If the contact area does not vanish then again asymptotic limits at negative punctures of $\util_r$ would be contractible closed $\lambda_E$-orbits with action strictly less than $\pi r_1^2$, again impossible. We conclude that $\util_r$ does not have negative punctures in this case. In case I) holds for $r$ we conclude from $\util_n\in\Theta_{\alpha,\beta}$ and~\eqref{renormaliza} that the $\R$-component of $\util_r$ is bounded from below, again excluding the possibility of negative punctures.

Since $\util_r$ does not have negative punctures in all cases, the tree has only one vertex. A combination of lemmas~\ref{lem_aux_compactness} and~\ref{lem_uniform_ends_vertices} will show that II) does not hold for the root $r$. In fact, assume that II) holds for the root $r$. By Lemma~\ref{lem_uniform_ends_vertices} we find $R_0>0$ such that $|z|>R_0 \Rightarrow a_n(z_n^r+\delta_n^rz) > \beta - c_n^r > \beta$ for all $n$ large enough. But $\inf \{ a_n(z_n^r+\delta_n^rz) : |z|\leq R_0 \} \to +\infty$ as $n\to\infty$ in this case. Hence, if II) holds for $r$ then $\inf \{a_n(z):z\in\C\} > \beta$ when $n$ is large, contradicting $[\util_n]\in\Theta_{\alpha,\beta}$.

We have proved that $\T$ has only one vertex (the root $r$) and that I) holds for the root $r$. The desired conclusion now follows from~\eqref{renormaliza}.
\end{proof}

\begin{proposition}\label{prop_inf}
There exists a sequence $\{[\tilde u_k=(a_k,u_k)]\} \subset \Theta'$ satisfying $$ \min_{z\in \C} a_k(z) \to -\infty \mbox{ as } k \to + \infty. $$
\end{proposition}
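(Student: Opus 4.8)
The plan is to argue by contradiction. Set $c := \inf\{\min\tilde u : [\tilde u]\in\Theta'\}\in[-\infty,+\infty)$, where $\min\tilde u$ denotes the global minimum of the $\R$-component $a$ of a generalized plane $\tilde u=(a,u)$; this minimum is attained because, by Proposition~\ref{step1}, $a$ is proper and tends to $+\infty$ at the puncture. Assuming $c>-\infty$ I will derive a contradiction from two facts pulling in opposite directions: compactness forces $c$ to be realized by an actual element of $\Theta'$, whereas the Fredholm/gluing picture of Theorem~\ref{thm_fred_theory_gen_planes} shows that $\min$ can never attain its infimum over $\Theta'$. Hence $c=-\infty$, which is exactly the assertion.

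\textbf{The infimum is attained if finite.} The first step is to choose $[\tilde u_k]\in\Theta'$ with $\min\tilde u_k\to c$; after discarding finitely many terms I may assume $\min\tilde u_k\in[c,c+1]$, so $[\tilde u_k]\in\Theta_{c,c+1}$. Proposition~\ref{lem_compact} then yields, along a subsequence and after holomorphic reparametrizations, $C^\infty_{\rm loc}$-convergence to a limit $\tilde u_*$ representing an element of $\Theta_{c,c+1}\subset\Theta$. The uniform control of the ends provided by Lemma~\ref{lem_uniform_ends_vertices} confines the minimum points of the $\tilde u_k$ to a fixed compact subset of the domain, so $\min\tilde u_*=\lim_k\min\tilde u_k=c$. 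Finally, $[\tilde u_*]\in\Theta'$: by Theorem~\ref{thm_fred_theory_gen_planes}(iii) the convergence above is convergence in the manifold topology of $\Theta$, and $\Theta'$, being a connected component of the smooth $2$-manifold $\Theta$, is closed in $\Theta$. Thus $c$ is realized by some $[\tilde u_*]\in\Theta'$, attained at a global minimum $z_*\in\C$ of the $\R$-component $a_*$ of $\tilde u_*$.

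\textbf{The infimum is not attained.} The second step applies Theorem~\ref{thm_fred_theory_gen_planes} to $\tilde u_0:=\tilde u_*$, producing a smooth embedding $\Phi:\C\times B\to\R\times L(2,1)$ with $\Phi(\cdot,0)=\tilde u_*$ and with $z\mapsto\Phi(z,\tau)$ a generalized finite-energy $\bar J$-holomorphic plane asymptotic to $P_1^2$ for every $\tau\in B$; for $\tau$ near the origin these planes represent elements of $\Theta'$. Consider $g(\tau):=a(\Phi(z_*,\tau))$, so that $g(0)=c$. Since $z_*$ is a critical point of $a_*$, the differential $d\tilde u_*(z_*):T_{z_*}\C\to T_{\tilde u_*(z_*)}(\R\times L(2,1))$ has image contained in $\ker da$; as $\Phi$ is an embedding, $d\Phi(z_*,0):T_{z_*}\C\oplus\R^2\to T_{\tilde u_*(z_*)}(\R\times L(2,1))$ is an isomorphism, so the nowhere-vanishing $1$-form $da\circ d\Phi(z_*,0)$ cannot vanish on the $\R^2$ factor; since its restriction to that factor is $dg(0)$, we get $dg(0)\neq 0$. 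Therefore $g$ takes values strictly below $c$ arbitrarily near $\tau=0$, so some nearby $\tau_0$ gives $[\Phi(\cdot,\tau_0)]\in\Theta'$ with $\min\Phi(\cdot,\tau_0)\leq g(\tau_0)<c$, contradicting the definition of $c$. This contradiction forces $c=-\infty$, which produces the desired sequence.

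\textbf{Main obstacle.} The substantial point is the first step: extracting a genuine limit inside $\Theta'$, rather than merely inside some SFT compactification, requires ruling out breaking and bubbling and controlling the behaviour near the puncture. This is precisely the content of Proposition~\ref{lem_compact} (which rests on the SFT compactness theorem, the monotonicity estimate exploited in Lemma~\ref{lem_aux_compactness}, and the uniform-ends estimate of Lemma~\ref{lem_uniform_ends_vertices}), combined with the soft but essential fact that $\Theta'$ is open and closed in the manifold $\Theta$. The deformation argument of the second step is, by contrast, entirely soft once Theorem~\ref{thm_fred_theory_gen_planes} is available.
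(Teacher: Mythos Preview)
Your proof is correct and follows essentially the same route as the paper's: assume the infimum $c$ is finite, realize it via the compactness of Proposition~\ref{lem_compact} together with Lemma~\ref{lem_uniform_ends_vertices}, and then use the local Fredholm chart of Theorem~\ref{thm_fred_theory_gen_planes} to push strictly below $c$. Your rank argument showing $dg(0)\neq 0$ is in fact more explicit than the paper, which simply asserts that the embedding $\Phi$ produces nearby planes with smaller minimum.

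One small point: your appeal to part~(iii) of Theorem~\ref{thm_fred_theory_gen_planes} in the first step to place $[\tilde u_*]$ in $\Theta'$ is mildly circular, since that theorem is stated for \emph{embedded} $\tilde u_0$ and you have not yet verified that $\tilde u_*$ is embedded. The paper's order avoids this: it first observes $[\tilde u_\infty]\in\Theta'$ (connected components being closed in $\Theta$ under $C^\infty_{\rm loc}$ limits needs no Fredholm input), then invokes Proposition~\ref{prop_embed} to obtain that $\tilde u_\infty$ is an embedding, and only then applies Theorem~\ref{thm_fred_theory_gen_planes}. Inserting a one-line reference to Proposition~\ref{prop_embed} before your second step fixes this.
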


\begin{proof}
Adapting the argument of Proposition 4.17 in~\cite{convex} to our setup, we argue indirectly assuming that $$ \inf_{\tilde u = (a,u) \in \Theta'} \left[\inf a(\C)\right] = m > -\infty. $$ Recall that $\Theta'$ is the distinguished connected component of $\Theta$ described in~\S~\ref{sec_embedding_controls}, containing embedded finite-energy $\jtil_E$-holomorphic planes asymptotic to $P_1^2$.

For any $\alpha\in\R$ we denote $\Theta'_\alpha = \{ [\util=(a,u)]\in\Theta' : \inf a(\C) \geq\alpha \}$. Take a sequence $\tilde u_n = (a_n, u_n) \in \Theta'_m$ satisfying $\lim_{n \to \infty} \inf a_n(\C) = m$. From Theorem~\ref{existence} we necessarily have $m\leq 1$.

By Proposition~\ref{lem_compact}, we find a plane $[\tilde u_\infty = (a_\infty,u_\infty)] \in \Theta'_m$ so that up to a subsequence and possibly reparametrizations, $\tilde u_n \to \tilde u_\infty$ in $C^\infty_{\rm loc}(\C)$ as $n \to \infty$. For each $n$ choose $z_n\in\C$ such that $a_n(z_n) = \inf a_n(\C) \to m$. Using Lemma~\ref{lem_uniform_ends_vertices} we find that $|z_n|$ can be assumed to be bounded up to a subsequence, and up to a further subsequence we can also assume that $z_n\to z_\infty$ for some $z_\infty \in \C$. This gives $a_\infty(z_\infty)=m$ and, consequently, $\inf a_\infty(\C)=m$.

By Proposition~\ref{prop_embed}, $\tilde u_\infty$ is an embedding, so we can apply Theorem~\ref{thm_fred_theory_gen_planes} to find planes $[\util=(a,u)] \in \Theta'$ satisfying $\inf a(\C)<m$, a contradiction.
\end{proof}

Now we take a sequence $[\tilde u_n=(a_n,u_n)] \in \Theta' \subset \Theta$ satisfying $\min \tilde u_n \to -\infty$ as $n \to + \infty$. The existence of such a sequence is guaranteed by Proposition \ref{prop_inf}. By the SFT compactness theorem, there exists a bubbling-off tree $\B=(\T,\U)$ which is, up to a subsequence, the SFT-limit of the sequence $\tilde u_n$.

\begin{theorem}\label{theo_tree}
Under the assumptions of Proposition~\ref{prop1} the bubbling-off tree $\B=(\T,\U)$ obtained as an SFT-limit of the sequence $\{[u_n]\}_{n\in \N}$, contains only $2$ vertices, namely, its root $r$ and a second vertex $q$. The root $r$ corresponds to a $\bar J$-holomorphic finite energy cylinder $\tilde u_r: \C\setminus\{0\}\to \R \times L(2,1)$ which is asymptotic to $P_1^2$ at the positive puncture $\infty$ and to some $P \in \P(\lambda)$ at the negative puncture~$0$ satisfying $\mu_{CZ}(P)=3$. The vertex $q$ corresponds to an embedded finite energy $\tilde J$-holomorphic plane $\tilde u_q:\C\to \R \times L(2,1)$  asymptotic to $P$ at $\infty$.
\end{theorem}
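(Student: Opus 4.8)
The plan is to dissect the bubbling-off tree $\B=(\T,\U)$ furnished by the SFT compactness theorem (Theorem~\ref{thm_general_compactness}) as an SFT-limit of a subsequence of $\{[\util_n]\}$, using three ingredients: the Fredholm index estimate~\eqref{eq_fredholm}, valid since we have fixed $\jbar\in\J_{\rm reg}$; the dynamical convexity of $\lambda$ contained in hypothesis~b) of Proposition~\ref{prop1}; and the Hofer--Wysocki--Zehnder inequality $\wind_\pi\ge0$ together with Siefring's intersection theory~\cite{sie2}, which transfers embeddedness from the approximating sequence to the limit building. The first task is to identify the root $\util_r$. By Theorem~\ref{thm_general_compactness} it is asymptotic to $P_1^2$ at $\infty$, and Lemma~\ref{lem_uniform_ends_vertices} controls its ends near $\infty$. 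Of the three mutually exclusive alternatives of Lemma~\ref{lem_aux_compactness}, alternative~III) is excluded for the root by that lemma; alternative~II) is excluded as well, because a $\jtil_E$-holomorphic $\util_r$ with a negative puncture would be asymptotic there to a contractible closed $\lambda_E$-Reeb orbit of period $<\pi r_1^2$, which does not exist (the contractible orbits of the irrational ellipsoid $\lambda_E$ are $P_1^{2k}$ and $P_2^{2k}$, all of period $\ge\pi r_1^2$), while a $\jtil_E$-holomorphic plane $\util_r$ would force $\min\util_n$ to stay bounded, against $\min\util_n\to-\infty$. Hence alternative~I) holds, $\util_r$ is $\jbar$-holomorphic, and $\Gamma_r\ne\emptyset$, since otherwise $\util_n\to\util_r$ in $C^\infty_{\rm loc}(\C)$ and $\min\util_n$ would again remain bounded.

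Next I would pin down the asymptotics and topology of $\util_r$. Since the $\util_n$ lie in $\Theta'$ we have $[\util_n]*[\util_n]=0$, as established in the proof of Proposition~\ref{prop_embed}; Siefring's intersection theory applied to the limit building then forces the cobordism-level curve $\util_r$ to satisfy $[\util_r]*[\util_r]=0$, so $\util_r$ is somewhere injective and, by the analogue of~\eqref{intuu}, an embedding. As $\jbar\in\J_{\rm reg}$, Theorem~\ref{theo_transversal} applies and gives
\[
0 \le \mu_{CZ}(P_1^2)-\sum_{z\in\Gamma_r}\mu_{CZ}(P_z)+\#\Gamma_r-1 = 2-\sum_{z\in\Gamma_r}\bigl(\mu_{CZ}(P_z)-1\bigr),
\]
with Conley--Zehnder indices computed in the global trivialization of $\xi$. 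Every negative asymptotic $P_z$ is a contractible closed $\lambda$-Reeb orbit of period $\le\pi r_1^2$, so hypothesis~b) of Proposition~\ref{prop1} gives $\mu_{CZ}(P_z)\ge3$, hence $\mu_{CZ}(P_z)-1\ge2$. Combined with $\Gamma_r\ne\emptyset$, the displayed inequality forces $\#\Gamma_r=1$ and $\mu_{CZ}(P)=3$ for the unique negative asymptotic $P\in\P(\lambda)$. Thus $\util_r:\C\setminus\{0\}\to\R\times L(2,1)$ is a $\jbar$-holomorphic cylinder asymptotic to $P_1^2$ at $\infty$ and to $P$ at $0$, with $\mu_{CZ}(P)=3$.

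Since $\#\Gamma_r=1$, the root has a single direct descendant $q$, which by property~(i) of a bubbling-off tree is $\jtil$-holomorphic with positive asymptotic $P$. To finish, I would show $\util_q$ has no negative punctures. Discarding first the case $u_q^*d\lambda\equiv0$ — covers of trivial cylinders, excluded by property~(iv) of the tree and by the fact that a finite-energy $\jtil$-plane with $u^*d\lambda\equiv0$ is constant, which would propagate down the subtree rooted at $q$ — we may use $\wind_\pi(\util_q)\ge0$. Now $\mu_{CZ}(P)=3$ gives $\wind^{<0}(A_P)=1$, while any negative asymptotic $P'$ of $\util_q$, being a contractible closed $\lambda$-orbit of period strictly less than that of $P$, satisfies $\mu_{CZ}(P')\ge3$ and hence $\wind^{\ge0}(A_{P'})\ge2$; with the formula $\wind_\pi=\wind_\infty-\chi(S^2)+\#\Gamma$ this yields
\[
0 \le \wind_\pi(\util_q) = \wind_\infty(\util_q)+\#\Gamma_q-1 \le \bigl(1-2\#\Gamma_q\bigr)+\#\Gamma_q-1 = -\#\Gamma_q,
\]
so $\#\Gamma_q=0$ and $\util_q:\C\to\R\times L(2,1)$ is a plane asymptotic to $P$. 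Then $\wind_\pi(\util_q)=0$, i.e.\ $\util_q$ is a fast plane and hence immersed, and, being an SFT-limit of the embedded planes $\util_n$, it satisfies $[\util_q]*[\util_q]=0$ and is therefore an embedding by~\eqref{intuu}. As $r$ has the single descendant $q$ and $q$ has none, $\T=\{r,q\}$, completing the proof.

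The main obstacle is the reliance on Siefring's intersection theory at two points: showing that the cobordism-level curve $\util_r$ inherits $[\util_r]*[\util_r]=0$ from the embedded approximating planes (a prerequisite for invoking~\eqref{eq_fredholm}, which requires somewhere injectivity and immersedness of $\util_r$), and the analogous conclusion for $\util_q$; one must also carefully rule out the degenerate bubbling configurations with vanishing contact area via properties~(i)--(iv) of the bubbling-off tree. Once these regularity and triviality issues are settled, the index and winding bookkeeping that produces $\#\Gamma_r=1$, $\mu_{CZ}(P)=3$ and $\#\Gamma_q=0$ is elementary.
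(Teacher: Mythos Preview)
Your overall architecture matches the paper's, but the step that pins down $\#\Gamma_r=1$ has a genuine gap. You invoke Siefring's theory to pass from $[\util_n]*[\util_n]=0$ to $[\util_r]*[\util_r]=0$ and then to somewhere injectivity of $\util_r$. Neither passage is justified: under SFT degeneration the self-intersection number of the limit \emph{building} equals $0$, but this decomposes into contributions from all levels and their pairwise interactions, not just the cobordism piece; and the implication \eqref{intuu} presupposes somewhere injectivity, so using it to \emph{deduce} somewhere injectivity is circular. You also do not verify that $\util_r$ is immersed, which Theorem~\ref{theo_transversal} requires. The paper does not attempt this route. Instead it splits into cases: if $\util_r$ is somewhere injective, your Fredholm computation goes through verbatim; if not, one factors $\util_r=\vtil_r\circ Q$ with $\vtil_r$ somewhere injective and $\deg Q\ge 2$, and since the positive asymptotic $P_1^2$ has prime $P_1$, necessarily $\deg Q=2$ and $\vtil_r$ is asymptotic to $P_1$ at $\infty$ with $\mu_{CZ}(P_1)=1$. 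Applying \eqref{eq_fredholm} to $\vtil_r$ together with Lemmas~\ref{lem_fundamental_CZ} and~\ref{lem_CZ_iteration} forces each negative asymptotic of $\vtil_r$ to have $\mu_{CZ}=1$, hence to be prime and \emph{non-contractible}; since the negative asymptotics of $\util_r$ itself are contractible, every point of $\Gamma_r=Q^{-1}(\Gamma')$ must be a critical point of $Q$, and a degree-$2$ polynomial has exactly one, giving $\#\Gamma_r=\#\Gamma'=1$ and $\mu_{CZ}(P)=\mu_{CZ}(P_{z_0}^2)=3$.

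Two smaller points. Your dismissal of the case $\int u_q^*d\lambda=0$ is too quick: property~(iv) gives $\#\Gamma_q\ge2$, but one then needs the classification of zero-area spheres from \cite[Theorem~6.11]{props2} to see that the negative asymptotics are covers of the prime orbit underlying $P$; if $P$ is prime this contradicts $\#\Gamma_q\ge2$, while if $P=(P')^2$ the two negative asymptotics equal the non-contractible $P'$, again a contradiction. Finally, for embeddedness of $\util_q$ the paper avoids Siefring entirely: $\wind_\pi(\util_q)=0$ makes $\util_q$ an immersion, hence somewhere injective (a factorization through a polynomial of degree $\ge2$ would create critical points), so self-intersections are isolated; positivity and stability of intersections then transfer any self-intersection to the embedded $\util_n$, a contradiction.
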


\begin{proof}
Let $\tilde u_r=(a_r,u_r):\C \setminus \Gamma_r \to \R \times L(2,1)$ be the finite energy sphere associated to the root $r\in \T$. Then $P_1^2$ is the asymptotic limit of $\tilde u_r$ at its positive puncture. We claim that $\tilde u_r$ is $\bar J$-holomorphic, i.e., it is a generalized finite-energy punctured sphere. The argument to prove this is very similar to the one given in the proof of Proposition~\ref{lem_compact}. We include it here for completeness.

There are sequences $z_n^r,\delta_n^r,c_n^r$ such that~\eqref{renormaliza} holds for the root $r$. We can apply Lemma~\ref{lem_aux_compactness} to conclude that I) or II) of Lemma~\ref{lem_aux_compactness} holds for the root $r$.

Suppose that II) holds for the root $r$. Then $\util_r$ is $\jtil_E$-holomorphic and there are two cases: either its contact area vanishes or not. If the contact area of $\util_r$ vanishes then it has negative punctures and it is not a trivial cylinder over $P_1^2$; consequently the asymptotic limits of $\util_r$ at negative punctures are contractible closed $\lambda_E$-Reeb orbits with action strictly less than $\pi r_1^2$, absurd. If the contact area of $\util_r$ does not vanish and there are negative punctures then again we find contractible closed $\lambda_E$-Reeb orbits with action strictly less than $\pi r_1^2$, absurd. It remains to handle the case where $\util_r$ is a $\jtil_E$-holomorphic plane, i.e. no negative punctures. In this case using Lemma~\ref{lem_uniform_ends_vertices} we find $R_0>0$ such that $a_n(z_n^r+\delta_n^rz) > -c_n^r$ for all $z$ satisfying $|z| > R_0$ and $a_n(z_n^r+\delta_n^rz) > 0$ for all $z$ satisfying $|z|\leq R_0$, when $n$ is large enough. Since $c_n^r \to -\infty$ we find $\inf a_n(\C)\geq 0$ when $n$ is large enough, contradicting the fact that $\inf a_n(\C) \to -\infty$ as $n\to\infty$. Thus I) holds for the root.

Now we show that $\tilde u_r$ has negative punctures. Arguing indirectly assume that $\Gamma_r=\emptyset$. Then we know $\tilde u_r$ is a $\jbar$-holomorphic finite-energy plane. Since $c_n^r$ is bounded, using Lemma~\ref{lem_uniform_ends_vertices} we find $R_0>0$ such that $a_n(z_n^r+\delta_n^r\cdot)$ maps $\{|z| > R_0\}$ into $[0,+\infty)$ for all $n$ large enough. But $a_n(z_n^r+\delta_n^r\cdot)$ is uniformly bounded on compact subsets of $\C$, in particular on $\{|z|\leq R_0\}$. This is in contradiction to the fact that $\inf a_n(\C) \to -\infty$ as $n\to\infty$.

Next we show that $\#\Gamma_r=1$. We know that all asymptotic limits $P_z$, $z\in \Gamma_r$, of $\util_r$ are contractible and have period $\leq \pi r_1^2$. Hence each $P_z$ is nondegenerate and $\mu_{CZ}(P_z)\geq 3$, $\forall z\in \Gamma_r$.

Assume first that $\tilde u_r$ is not somewhere injective. Then $\tilde u_r$ factors as $\tilde u_r=\tilde v_r \circ Q$, where $\tilde v_r:\C \setminus \Gamma' \to \R\ \times L(2,1)$ is a somewhere injective generalized finite energy sphere with $1\leq \# \Gamma'$ and $Q$ is a polynomial with degree ${\rm deg} (Q)\geq 2$ satisfying $Q^{-1}(\Gamma')=\Gamma_r$. The set $\Gamma'$ consists of negative punctures of $\tilde v_r$, all of its asymptotic limits are periodic $\lambda$-Reeb orbits. The asymptotic limit $P_\infty$ of $\tilde v_r$ at $\infty$ must satisfy $P_\infty^{{\rm deg}(Q)} = P_1^2$. Since ${\rm deg}(Q)\geq 2$ and $P_1$ is simply covered it follows that
\begin{equation}\label{eq_gener0}
{\rm deg}(Q) =2 \mbox{ and } P_\infty = P_1.
\end{equation}
Recall that our choice of almost complex structure $\bar J\subset \J_{\rm reg}$ is generic as stated in Theorem \ref{theo_transversal}. Using a global $d\lambda_E$-symplectic (or $d\lambda$-symplectic) trivialization of the contact structure on $L(2,1)$ in order to compute the Conley-Zehnder indices, we know from \eqref{eq_fredholm} that \begin{equation}\label{eq_gener1}\mu_{CZ}(P_\infty) - \sum_{z'\in \Gamma'} \mu_{CZ}(P_{z'})+\#\Gamma' -1 =- \sum_{z'\in \Gamma'} \mu_{CZ}(P_{z'}) + \# \Gamma'  \geq 0,  \end{equation} where $P_{z'}$ is the asymptotic limit of $\tilde v_r$ at $z'\in \Gamma'$. Here we used that $\mu_{CZ}(P_\infty) = \mu_{CZ}(P_1) = 1$. The periods of all $P_{z'}$, $z'\in\Gamma'$, are less than the period of $P_\infty=P_1$, that is, less than $\pi r_1^2/2$; this follows from Stokes theorem. All contractible periodic $\lambda$-Reeb orbits with action $\leq \pi r_1^2$ are non-degenerate and have Conley-Zehnder index $\geq 3$. From Lemma~\ref{lem_CZ_iteration} i) we get $\mu_{CZ}(P_{z'})\geq 1$ for all $z'\in\Gamma'$.
It follows from \eqref{eq_gener1} that $\mu_{CZ}(P_{z'})=1$ for all $z'\in\Gamma'$, in particular, from Lemma~\ref{lem_fundamental_CZ} we conclude that all $P_{z'}$, $z'\in\Gamma'$, are prime, non-contractible, elliptic and $\rho(P_{z'})\in(1/2,1)$. Hence if $w\in \Gamma_r = Q^{-1}(\Gamma')$ then $Q'(w)=0$ because otherwise $P_{z'}$ is also an asymptotic limit of $\util_r$ and, consequently contractible, contradiction. Since $\deg(Q)=2$ there is only one point $w$ in $\C$ with $Q'(w)=0$, and we conclude that $\#\Gamma = \#\Gamma' = 1$ and we can assume that $\Gamma = \Gamma'=\{z_0=0\}$. Since $\mu_{CZ}(P_{z_0})=1$,  the asymptotic limit at the unique negative puncture of $\tilde u_r$ is the contractible periodic orbit $P:=P_{z_0}^2$ satisfying $\mu_{CZ}(P) = 3$.

Now assume that $\tilde u_r$ is somewhere injective. The asymptotic limit $P_z$ of $\tilde u$ at any negative puncture $z\in \Gamma_r$ is contractible and hence $\mu_{CZ}(P_z) \geq 3$. From \eqref{eq_fredholm} we have
\begin{equation}\label{eq_gener2}
\mu_{CZ}(P_1^2) - \sum_{z\in \Gamma_r} \mu_{CZ}(P_z)+\#\Gamma_r -1 =2- \sum_{z\in \Gamma_r} \mu_{CZ}(P_z) + \# \Gamma_r  \geq 0,
\end{equation}
where we have used that $\mu_{CZ}(P_1^2) =3$. Since $\Gamma_r \neq \emptyset$, we conclude from~\eqref{eq_gener2} that $\#\Gamma = 1$ and we can assume that $\Gamma_r = \{0\}$. Hence $\Gamma_r$ consists of a single negative puncture of $\tilde u_r$ whose asymptotic limit $P$ is contractible and has Conley-Zehnder index $3$.

Since $\tilde u_r$ is $\bar J$-holomorphic, we conclude that the finite energy sphere $\tilde u_q=(a_q,u_q):\C\setminus \Gamma_q\to \R \times L(2,1)$, corresponding to the second vertex $q\in \T$, the unique direct descendant of the root $r$, is a finite energy $\tilde J$-holomorphic sphere. Moreover, $\tilde u_q$ is asymptotic to $P=(x,T)$  at its positive puncture and since $\mu_{CZ}(P)=3$, either $P$ is prime or it is the double cover of the prime non-contractible periodic orbit $P'=(x,T/2)$; we used Lemma~\ref{lem_CZ_iteration}. All asymptotic limits at its negative punctures are contractible and have period $\leq \pi r_1^2$, hence are nondegenerate and have Conley-Zehnder index $\geq 3$. Next we show that $\Gamma_q = \emptyset$ and that $\tilde u_q$ is an embedding.

We argue indirectly to see that $\Gamma_q = \emptyset$. If not, denote by $P_{z}$ the asymptotic limit of $\tilde u_q$ at $z \in \Gamma_q$. If $\int_{\C \setminus \Gamma_q} u_q^*d\lambda =0$, then $\#\Gamma_q \geq 2$. In this case, from the classification of finite energy curves in symplectizations with vanishing $d\lambda$-area given in \cite[Theorem 6.11]{props2}, $\tilde u_q$ maps $\C \setminus \Gamma_q$ onto $\R \times P$ and we find  integers $k_{z}\geq 1$ so that $\tilde u_q$ is asymptotic to $(P')^{k_z}$ at each $z \in \Gamma_q$, where $P'$ is a periodic orbit of $\lambda$. We have two possibilities: either $P'=P$ is simple or $P'$ is simple, non-contractible and $(P')^2=P$.  If $P'=P$, then $\sum_{z\in \Gamma_q} k_z=1,$ contradicting $\#\Gamma_q \geq 2$. If $(P')^2=P$ then $\sum_{z \in \Gamma_q} k_{z} =2$. Using that $\#\Gamma_q\geq 2$, we obtain $\#\Gamma_q=2$, $k_z=1 \ \forall z\in \Gamma_q$, and $\tilde u_q$ is asymptotic to the non-contractible periodic orbit $P'$ at its two negative punctures, a contradiction.

We have concluded that $\int_{\C \setminus \Gamma_q} u_q^*d\lambda >0$.  In this case, the integers $\wind_\pi(\tilde u_q)$, $\wind_\infty(\tilde u_q,\infty)$ and $\wind_\infty(\tilde u_q,z),z\in \Gamma_q,$ are well defined and
\begin{equation}\label{eq_wind1}
0\leq \wind_\pi(\tilde u_q) = \wind_\infty(\tilde u_q,\infty) - \sum_{z \in \Gamma_q} \wind_\infty(\tilde u_q,z) - 1 + \# \Gamma_q.
\end{equation}
Using that $\mu_{CZ}(P_{z}) \geq 3$, we obtain
\begin{equation}\label{eq_windinf}
\wind_\infty(\tilde u_q,z) \geq 2.
\end{equation}
Now since $\mu_{CZ}(P) =3$, we have $\wind_\infty(\tilde u_q,\infty) \leq 1$. Using \eqref{eq_wind1} and \eqref{eq_windinf}, we obtain
$$
0 \leq 1 - 2\# \Gamma_q -1 + \#\Gamma_q = - \#\Gamma_q,
$$
and, therefore, $\Gamma_q = \emptyset$. We conclude that $\tilde u_q$ is a finite energy $\tilde J$-holomorphic plane and that $\wind_\pi(\tilde u_q) = \wind_\infty(\tilde u_q, \infty) -1 =0$. In particular, $u_q$ is an immersion transverse to the Reeb vector field $X_\lambda$. This also implies that $\tilde u_q$ is somewhere injective, otherwise it would factor as $\tilde u_q=\tilde v_q \circ Q$ for a finite energy $\tilde J$-holomorphic plane and a polynomial $Q:\C\to \C$ with degree $\geq 2$, forcing $\tilde u_q$ to have critical points, a contradiction.

At this point we know that the second vertex $q\in \T$ corresponds to a finite energy $\tilde J$-holomorphic plane asymptotic to $P$ and $\mu_{CZ}(P)=3$. In particular, $\T$ has only two vertices $\{r,q\}$.

We still need to show that $\tilde u_q:\C\to\R\times L(2,1)$ is an embedding. We know $\util_q$ is an immersion since $\wind_\pi(\util_q)=0$. If $\util_q$ is not an embedding we find that the set
\begin{equation*}
D = \{(z_1,z_2) \in \C\times\C \setminus\Delta \mid \util_q(z_1)=\util_q(z_2)\}
\end{equation*}
is non-empty, where $\Delta$ is the diagonal in $\C\times \C$. $D$ must be discrete, since a limit point of $D$ in $\C\times\C\setminus\Delta$ would force $\util_q$ to be multiply covered; this follows from the similarity principle. Thus, self-intersections of $\util_q$ are isolated. In view of~\eqref{renormaliza}, and of positivity and stability of intersections of pseudo-holomorphic immersions, we find self-intersections of the maps $\util_n$ for $n$ large enough, but this is impossible since the $\util_n$ are embeddings.
\end{proof}

\subsubsection{Proof of Proposition~\ref{prop1}}

By Theorem~\ref{theo_tree} there exists an embedded finite-energy $\jtil$-holomorphic plane $\util =(a,u)$ asymptotic to $P=(x,T)\in\P(\lambda)$ with $\mu_{CZ}(P)=3$. In particular, $\wind_\pi(\util)=0$ and
\begin{equation}\label{eq_tr}
\text{$u$ is an immersion transverse to the Reeb vector field $X_\lambda$.}
\end{equation}
We claim that
\begin{equation}\label{eq_uembd}
\text{$u|_{\C \setminus B_R(0)}$ is an embedding if $R>0$ is sufficiently large.}
\end{equation}
To prove this, we again use the identity $\mu_{CZ}(P)=3$. In fact, in this case, since $P$ is contractible, Lemma~\ref{lem_CZ_iteration} implies that either $P$ is simply covered and~\eqref{eq_uembd} is trivial, or $P=P'^2$ for some non-contractible simply covered periodic $\lambda$-orbit $P'=(x,T/2)$. In the latter case, since $\wind_\infty(\tilde u,\infty)=1$ is odd and the covering multiplicity of $P$ is $2$ we can apply Lemma~\ref{lem_crucial_behavior_at_punct} to obtain~\eqref{eq_uembd}. Thus~\eqref{eq_uembd} is proved in all cases.

Before proceeding we pause to draw some consequences of the Fredholm theory developed in~\cite{props3}. Since $\tilde u$ is an embedded fast plane, by~\cite[Theorem~A.1]{HLS} there is a smooth two-dimensional foliation $F$ of a neighborhood of $\util(\C)$ in $\R\times L(2,1)$ by embedded finite-energy $\tilde J$-holomorphic planes asymptotic to $P$, such that $\util(\C)$ is a leaf of $F$. Moreover, by the completeness statement contained in~\cite[Theorem~A.1]{HLS}, this family contains the $\R$-translations $\tilde u_\epsilon=(a+\epsilon,u)$ of $\tilde u$ for all $\epsilon>0$ small enough. This proves that
\begin{equation}\label{conseq_fred_theory_HLS}
\text{$\util_\epsilon(\C) \cap \util(\C) = \emptyset$ for all $\epsilon>0$ small enough.}
\end{equation}

Now we use~\eqref{conseq_fred_theory_HLS} in order to show that
\begin{equation}\label{eq_u_not_int_P}
u(\C) \cap x(\R) = \emptyset
\end{equation}
We argue indirectly, assuming that $u(\C) \cap x(\R) \neq \emptyset$. From \eqref{eq_tr} and from the fact that $\tilde u$ is asymptotic to $P$, we find $z,z'\in \C$ and $c>0$ such that $\tilde u(z') = \tilde u_c(z)$, where $\tilde u_c=(a+c,u)$. Fixing such $c>0$, we claim that for every $0<\epsilon<c$ there exists $R>0$ so that if $b\in [\epsilon,c]$ and $\tilde u(z) =\tilde u_b(w)$ then $|z|,|w|<R$. To see this, we argue indirectly and assume the existence of sequences $b_n\in [\epsilon,c]$, $z_n,w_n\in \C$ satisfying $\tilde u_{b_n}(z_n) = \tilde u(w_n)$ and $\max \{|z_n|,|w_n|\} \to +\infty$ as $n\to \infty$. Then
\begin{equation}\label{eq_azn}
\begin{aligned}
u(z_n)& =u(w_n), \\
a(z_n)+b_n& = a(w_n).
\end{aligned}
\end{equation}
from where we see that $b_n\neq0 \Rightarrow z_n \neq w_n$. Up to selection of a subsequence, we may assume that one of the following alternatives holds: (a) $|z_n|,|w_n| \to +\infty$ as $n\to \infty$; (b) $z_n$ is bounded and $|w_n|\to +\infty$ as $n\to \infty$; (c) $|z_n|\to +\infty$  as $n\to +\infty$ and $w_n$ is bounded. In view of $z_n \neq w_n$, case (a) is excluded by~\eqref{eq_uembd} and the first equation in~\eqref{eq_azn}.  The second equation in~\eqref{eq_azn} and the fact that $b_n$ is bounded implies that either both $z_n$ and $w_n$ are bounded or both $z_n$ and $w_n$ are unbounded. Hence cases (b) and (c) are also excluded and our claim follows. We conclude by stability and positivity of intersections of pseudo-holomorphic curves that $\tilde u(\C)$ intersects $\tilde u_\epsilon(\C)$ for all $\epsilon>0$ small enough, contradicting~\eqref{conseq_fred_theory_HLS}. The proof of~\eqref{eq_u_not_int_P} is complete.

A direct application of~\cite[Theorem~2.3]{props2} proves that $u:\C\to L(2,1)\setminus x(\R)$ is injective. This fact and~\eqref{eq_tr} together show that $u$ determines an oriented $m$-disk for $x(\R)$, where $m$ is the covering multiplicity of $P$. Again using~\eqref{eq_tr} we can compute the self-linking number of $P$ to be $-1/m$ as in~\cite[Lemma~3.10]{HLS}. By purely topological considerations, or by an application of Lemma~\ref{lem_CZ_iteration}, we already know that $m\in\{1,2\}$. To conclude that $m=2$ we need to invoke the following characterization of the tight $3$-sphere:

\begin{theorem}[Hofer, Wysocki and Zehnder~\cite{char1,char2}]\label{thm_char2}
Let $\lambda$ be a contact form on a closed connected $3$-manifold $M$. If the Reeb flow of $\lambda$ admits an unknotted periodic orbit, with prime period $T>0$, Conley-Zehnder index $3$, self-linking number $-1$, and so that any other contractible periodic $\lambda$-Reeb orbit with period $\leq T$ is nondegenerate and has Conley-Zender index $\geq 3$, then $(M,\xi = \ker \lambda)$ is contactomorphic to the sphere $S^3$ equipped with the standard tight contact structure $\xi_0 = \lambda_0$.
\end{theorem}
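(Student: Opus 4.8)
The plan is to follow the strategy of Hofer, Wysocki and Zehnder: produce a disk-like global surface of section bounded by the distinguished orbit $P_0=(x_0,T)$, upgrade it to an open book decomposition with disk-like pages, and then read off both the smooth type of $M$ and its contact structure from this foliation. First I would fix $J\in\J_+(\xi)$, the induced almost complex structure $\jtil$ on $\R\times M$, and an embedded disk $D_0\subset M$ with $\partial D_0=x_0(\R)$, which exists because $P_0$ is unknotted. After a $C^\infty$-small perturbation supported away from the boundary, $D_0$ may be assumed to have singular characteristic foliation with a single positive elliptic singularity $e$, in the normal form of Definition~\ref{def_special_robust} near $e$. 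Starting at $e$ one constructs a Bishop family of small embedded $\jtil$-holomorphic disks in $\R\times M$ with boundary on $\R\times D_0$; this is the initial end of the moduli space $\M$ of such disks. The hypotheses $\mu_{CZ}(P_0)=3$ and $\sl(P_0)=-1$ are exactly what make $\M$ a smooth $1$-parameter family of the correct Fredholm index and ensure that, as a disk in the family approaches $x_0(\R)$, its boundary winds once around $P_0$.

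The hard part will be the compactness analysis. Following $\M$ to its far end, the boundaries of the disks sweep across $D_0$, and a soft/hard rescaling (bubbling-off) analysis of a sequence whose boundaries exit every compact subset of $D_0\setminus\partial D_0$ produces, in the SFT/Gromov limit, a finite-energy $\jtil$-holomorphic plane $\util_0=(a_0,u_0):\C\to\R\times M$ asymptotic to $P_0$ at $\infty$. The dynamical-convexity hypothesis --- every contractible closed $\lambda$-Reeb orbit of period $\leq T$ is nondegenerate with $\mu_{CZ}\geq 3$ --- together with $\mu_{CZ}(P_0)=3$ is used exactly as in \S\ref{sec_compactness}: every piece of the limit satisfies a $\wind_\pi\geq 0$ inequality which, combined with the index lower bounds on the admissible asymptotic limits, forces the limiting bubbling-off tree to reduce to a single plane asymptotic to $P_0$, with no nodes and no extra symplectization levels. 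Carrying out this analysis with care --- ruling out connectors, multiply covered components, and orbit-breaking along short orbits --- is the technical heart of the argument and the main obstacle.

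Next I would establish the qualitative properties of $\util_0$. From $\mu_{CZ}(P_0)=3$ one has $\wind_\infty(\util_0,\infty)=\wind^{<0}(A_{P_0})=1$, hence $\wind_\pi(\util_0)=\wind_\infty(\util_0,\infty)-1=0$, so $\util_0$ is a fast plane, $u_0$ is an immersion transverse to $X_\lambda$, and $\util_0$ is somewhere injective. Using $\sl(P_0)=-1$ and positivity of intersections together with Siefring's intersection theory --- exactly as in Proposition~\ref{prop_embed} --- one shows that $\util_0$ is an embedding, that $u_0:\C\to M\setminus x_0(\R)$ is an embedding, and that $u_0(\C)\cap x_0(\R)=\emptyset$; thus $u_0$ is an oriented disk bounding $P_0$ whose boundary traverses it once. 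Because $\util_0$ is an embedded fast plane asymptotic to an orbit of Conley--Zehnder index $3$, automatic transversality applies: the linearized Cauchy--Riemann operator is surjective, so a neighbourhood of $\util_0$ in the space $\Theta$ of finite-energy $\jtil$-holomorphic planes asymptotic to $P_0$ is a smooth $2$-manifold, and --- as in Theorem~\ref{thm_fred_theory_gen_planes} --- every nearby plane is again embedded, transverse to $X_\lambda$, and disjoint from $x_0(\R)$.

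Finally I would run the usual openness--closedness argument on $\Theta$, modulo $\R$-translation and reparametrization. Openness is the automatic transversality just invoked; closedness is the same SFT-compactness plus dynamical-convexity input as in the second step, which again forbids bubbling and forces any limit to be another embedded fast plane asymptotic to $P_0$. Since $M$ is connected, the images $u_\tau(\C)$ foliate $M\setminus x_0(\R)$ by embedded open disks, and after normalizing their asymptotic behaviour these assemble into an open book decomposition of $M$ with binding $x_0(\R)$ --- honest, not merely rational, since $P_0$ is prime and the boundaries wrap once --- whose pages are disk-like global surfaces of section for $X_\lambda$. A closed $3$-manifold admitting an open book with disk-like pages and connected binding is diffeomorphic to $S^3$. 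The contact structure is then pinned down: $\ker\lambda$ is tight (a disk-like global surface of section obstructs the existence of an overtwisted disk), and a tight contact structure on $S^3$ is contactomorphic to $\xi_0=\ker\lambda_0$; alternatively one identifies the first-return map on a disk page as an area-preserving diffeomorphism of the open disk and reconstructs $(M,\ker\lambda)$ directly as the standard model of $(S^3,\xi_0)$. This completes the proof.
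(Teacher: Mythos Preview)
The paper does not prove Theorem~\ref{thm_char2}: it is quoted from Hofer, Wysocki and Zehnder~\cite{char1,char2} and invoked as a black box at the end of the proof of Proposition~\ref{prop1} (to rule out the possibility $m=1$). There is therefore no proof in this paper to compare your proposal against.

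That said, your outline is broadly faithful to the original HWZ strategy in~\cite{char1,char2}: Bishop disks from an elliptic singularity on a spanning disk, bubbling-off analysis producing a fast plane asymptotic to $P_0$, automatic transversality and an openness--closedness argument to foliate $M\setminus x_0(\R)$ by pages of a disk open book, and finally the identification of $M\simeq S^3$ with the tight contact structure. A few of your internal cross-references are misplaced, however: Proposition~\ref{prop_embed} and \S\ref{sec_compactness} in this paper concern generalized planes in a non-cylindrical cobordism on $L(2,1)$, not the cylindrical symplectization situation relevant to Theorem~\ref{thm_char2}; and invoking Siefring's intersection theory is anachronistic for the original HWZ argument, which used the embedding controls of~\cite{props2} directly. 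These are cosmetic issues rather than mathematical gaps.
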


In fact, if $m=1$ then the above result would imply that $L(2,1)$ is diffeomorphic to $S^3$, absurd. Thus $x(\R)$ is a $2$-unknot with self-linking number $-1/2$. By Lemma~\ref{lem_CZ_iteration} the orbit $P'=(x,T/2)$ is elliptic and $1/2<\rho(P')<1$. The proof of Proposition~\ref{prop1} is now complete.

\subsection{The degenerate case}

Here we give a complete proof of Theorem~\ref{main1}. As observed before, since $\ker\lambda$ is universally tight, there is no loss of generality to assume that $\lambda=f\lambda_0$ for some $f:L(2,1)\to (0,+\infty)$ smooth. Choose $r_1,r_2$ satisfying~\eqref{hip1} and such that the function $f_E$ defined as in~\eqref{lambdaE} satisfies $f<f_E$ pointwise. Choose a $C^\infty$-neighborhood $\V$ of the constant function $1$ with the following property: if $h \in \V$ then $hf<f_E$ pointwise, and if the contractible closed Reeb orbit $P'=(x',T')\in \P(h\lambda)$ satisfies $T'\leq\pi r_1^2$, then $\mu_{CZ}(P')\geq 3$. Since $\lambda$ is dynamically convex, an easy application of the Arzel\`a-Ascoli theorem combined with the lower-semicontinuity of the Conley-Zehnder index will tell us that such neighborhood $\V$ exists.

Let $h\in\V$ and choose a sequence $h_n \to h$ in $C^\infty$ such that $\lambda_n := h_n\lambda = h_nf\lambda_0$ is non-degenerate, for all $n$. The existence of such a sequence is proved in \cite[Proposition 6.1]{convex}. Then $h_nf < f_E$ pointwise, for all large $n$.  We claim that if $n$ is large enough then $\lambda_n$ satisfies the hypotheses of Proposition~\ref{prop1}. Otherwise, there exists a subsequence, also denoted $\lambda_n$, so that each $\lambda_n$ admits a  contractible periodic orbit $Q_n$ with period $0<T_n \leq \pi r_1^2$ and index $\mu_{CZ}(Q_n) \leq 2$. Using Arzel\`a-Ascoli theorem we may assume, up to selection of a subsequence, that $Q_n$ converges in $C^\infty$ as $n \to +\infty$ to  a contractible periodic orbit $Q$ of $\lambda$ with period $\leq \pi r_1^2$. Due to the lower semi-continuity of the generalized Conley-Zehnder index we have $\mu_{CZ}(Q) \leq 2$, contradicting $h\in\V$.  Hence, by Proposition \ref{prop1}, $\lambda_n$ admits a $2$-unknotted simply covered periodic orbit $P_n$ with self-linking number $\frac{-1}{2}$ and $\mu_{CZ}(P_n^2) = 3$, $\forall n$. Moreover, their periods are uniformly bounded by $\frac{\pi r_1^2}{2}$. Therefore, using Arzel\`a-Ascoli theorem, we have $P_n \to P$ as $n \to +\infty$ in $C^\infty$, up to extraction of a subsequence, where $P$ is a periodic orbit of $\lambda$ with period $\leq \frac{\pi r_1^2}{2}$. Since $P_n$ is $2$-unknotted we know that $P_n^2$ is contractible, and hence $P^2$ is also contractible. Again due to the lower semi-continuity of the generalized Conley-Zehnder index, we must have $\mu_{CZ}(P^2)\leq 3$. Since $f\in\V$, $\mu_{CZ}(P^2)=3$ and this implies that $P$ is simply covered. In fact, assume $P$ factors as $P=P_0^k$ for some $P_0\in \P(\lambda)$ and an integer $k\geq2$. Since $\pi_1(L(2,1)) \simeq \Z_2,$  $P_0^2$ is contractible and, therefore, $\mu_{CZ}(P_0^2) \geq 3$. Since $P^2=(P_0^k)^2=(P_0^2)^k$ and $k\geq 2$, we obtain $\mu_{CZ}(P^2) \geq 5$, a contradiction. Since $P$ is simple and $P_n \to P$ as $n \to +\infty$, $P$ is transversally isotopic to each $P_n$ for all large $n$ and, therefore, $P$ is also $2$-unknotted and also has self-linking number $\frac{-1}{2}$.  The proof of Theorem \ref{main1} is finished.

\section{Proof of Theorem~\ref{thm_22}}

Throughout this section we fix a contact form $\lambda$ on $L(p,q)$ defining the standard contact structure $\xi=\ker\lambda$. We assume that $X_\lambda$ is tangent to an order $p$ rational unknot $K$ with self-linking number $-1/p$, which we orient by $X_\lambda$. Let $(x,T_{\rm min})$ be the prime closed $\lambda$-Reeb orbit determined by~$K$, that is, $K=x(\R)$ where $x:\R\to L(p,q)$ is a periodic $\lambda$-Reeb trajectory with minimal positive period $T_{\rm min}$. Set $T=pT_{\rm min}$, denote $P = (x,T)$ and assume that
\begin{equation}
\rho(P) > 1.
\end{equation}
We make no genericity assumptions on $\lambda$.

Consider $\P^*(\lambda) \subset \P(\lambda)$ the set defined in the statement of Theorem~\ref{thm_22}, consisting of closed orbits $P'\subset L(p,q)\setminus K$ which are contractible in $L(p,q)$ and have transverse rotation number (with respect to the Reeb flow of $\lambda$) equal to $1$. From now on we fix an oriented $p$-disk $u_0$ for $K$ obtained from Proposition~\ref{prop_nice_disk}, which is special robust for $(\lambda,K)$.

The remainder of this section is devoted to the proof of Theorem~\ref{thm_22}, so from now on we assume that~\eqref{linked_conv} holds for every orbit $P'\in\P^*(\lambda)$. We may write $\P^*$ instead of $\P^*(\lambda)$ for simplicity. The next statement is~\cite[Proposition 7.1]{HLS}.

\begin{proposition}\label{prop_global_sections}
Let $e$ be the singular point of the characteristic foliation of $u_0$, and fix any open neighborhood $V$ of $e$. Suppose that every orbit $P'\in\P^*$ satisfies~\eqref{linked_conv}. Then for every sequence of smooth functions $f_n:L(p,q)\to(0,+\infty)$ satisfying $f_n|_K\equiv1$, $df_n|_K\equiv0$, $f_n|_V\equiv 1$, $f_n\to 1$ in $C^\infty$ and such that $\lambda_n:=f_n\lambda$ is nondegenerate $\forall n$, one finds $n_0$ with the following property: for all $n\geq n_0$ there exists a rational open book decomposition $(K,\pi_n)$ with disk-like pages of order $p$ adapted to~$\lambda_n$ in the sense that all pages are (rational) disk-like global surfaces of section for the $\lambda_n$-Reeb flow.
\end{proposition}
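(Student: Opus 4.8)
The plan is to construct, for each sufficiently large $n$, a stable finite-energy foliation of $\R\times L(p,q)$ adapted to $\lambda_n$ in the spirit of Hofer, Wysocki and Zehnder: its leaves will be the trivial cylinder $\R\times K$ together with a family of fast finite-energy $\jtil_n$-holomorphic planes asymptotic to $P$ (here $\jtil_n$ is induced by $\lambda_n$ and some $J_n\in\J$), and projecting to $L(p,q)$ will produce the rational open book $(K,\pi_n)$. First I would check that the relevant hypotheses persist for $\lambda_n$ once $n$ is large. Since $f_n\equiv1$ and $df_n\equiv0$ along $K$, the curve $K$ is still a closed $\lambda_n$-Reeb orbit of the same prime period; $\sl(K)$ is unchanged; $\rho(P)$ computed for $\lambda_n$ tends to its value for $\lambda$, which is $>1$; and, by Arzel\`a-Ascoli combined with lower semicontinuity of the Conley-Zehnder index, the linking condition~\eqref{linked_conv} valid for $\lambda$ transfers to every $P'\in\P^*(\lambda_n)$. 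Non-degeneracy of $\lambda_n$ makes $P$ a non-degenerate orbit, and with respect to the trivialization of $\xi$ induced by the special robust disk $u_0$ --- whose normal forms near the singular point $e$ and near $K$ survive verbatim for $\lambda_n$, since $f_n\equiv1$ there --- the conditions $\rho(P)>1$ and $\sl(K)=-1/p$ force the planes we seek to be \emph{fast}, so that $\wind_\pi=0$ and the asymptotic eigensection at $\infty$ has winding $1$; as $\gcd(1,p)=1$, the puncture at $\infty$ is relatively prime in the sense of Lemma~\ref{lem_crucial_behavior_at_punct}.

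Next I would show that the moduli space is non-empty. Using the normal form of Definition~\ref{def_special_robust}(a), $\alpha\simeq dz+x\,dy$ with $u_0(\D)\cap V\simeq\{z=-\frac{1}{2}xy\}$, I would launch a Bishop family of small pseudo-holomorphic disks with boundary on $u_0(\D)$ from the positive singular point $e$, and argue by the standard maximal-disk/compactness dichotomy that this family cannot close up inside a compact set and must instead grow into a finite-energy plane $\tilde u_*$ asymptotic to $P$; condition Definition~\ref{def_special_robust}(b), transferred to $\lambda_n$, is precisely what keeps these disks away from the binding. Let $\M$ be the connected component, containing $\tilde u_*$, of the space of fast $\jtil_n$-holomorphic planes asymptotic to $P$ modulo reparametrization. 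By \cite[Theorem~A.1]{HLS} each leaf sits in a local foliation by such planes, so $\M$ is a smooth two-manifold; moreover each leaf $\tilde u=(a,u)$ is an embedding and $u$ is an embedding transverse to $X_{\lambda_n}$ with $u(\C)\cap K=\emptyset$, which follows from Siefring's identity $[\tilde u]*[\tilde u]=0$ computed exactly as in Proposition~\ref{prop_embed} (using $\wind_\pi=0$ and Lemma~\ref{lem_crucial_behavior_at_punct} near $\infty$). Homotopy invariance of Siefring's intersection number likewise yields $[\tilde u]*[\tilde v]=0$, so distinct leaves of $\M$ have disjoint images in $L(p,q)\setminus K$.

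The heart of the argument is that the images $\{u(\C):[\tilde u]\in\M\}$ foliate all of $L(p,q)\setminus K$. Their union is open by the local foliation property. For closedness I would take a sequence $[\tilde u_k]\in\M$ and apply SFT compactness to obtain a bubbling-off tree, and then rule out every broken configuration except a single translated plane asymptotic to $P$. Any nontrivial breaking would produce an intermediate contractible closed $\lambda_n$-Reeb orbit $P'$ with $\mu_{CZ}(P')=2$, hence $\rho(P')=1$, that is moreover contractible in $L(p,q)\setminus K$; by Stokes $\int_{P'}\lambda_n\leq 1+\int_\D|u_0^*d\lambda_n|$, so that $P'$ violates the second alternative of~\eqref{linked_conv} --- which must hold for $P'\in\P^*(\lambda_n)$ since the first alternative fails. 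Breaking localized near the binding is once more excluded by Definition~\ref{def_special_robust}(b). Hence $\M$ is compact modulo $\R$-translation, the union of leaf images is open and closed in the connected manifold $L(p,q)\setminus K$, and therefore equals it.

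Finally I would assemble the open book. The disks $\{u(\C)\}$ together with $K$ foliate $L(p,q)$, and each closure $\overline{u(\C)}$ is the image of an oriented $p$-disk for $K$ that carries near $K$ the normal form prescribed by Definition~\ref{def_special_robust}(a); thus $(K,\pi_n)$ is a rational open book with disk-like pages in the sense of \cite[Definition 4.4.4]{gei}. Since every $u$ is transverse to $X_{\lambda_n}$ ($\wind_\pi=0$), the Reeb flow crosses the moving page in a single direction away from $K$, and a trajectory that never returned would be trapped between two leaves of a foliation by surfaces of section --- impossible; hence every page is a rational disk-like global surface of section for the $\lambda_n$-Reeb flow, as claimed. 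I expect the compactness step to be the main obstacle: one must use~\eqref{linked_conv} and the special robust structure with precision, and the multiply-covered (rational) nature of the binding makes winding-number-mod-$p$ phenomena and the possible non-regularity of limiting curves genuinely delicate, which is exactly why Siefring's intersection theory, rather than bare Fredholm transversality, is needed throughout.
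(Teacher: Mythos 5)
The paper does not prove this statement itself but cites it directly as \cite[Proposition~7.1]{HLS}, so there is no in-text proof to compare against. Your outline --- a Bishop family launched from the positive singular point of the special robust disk $u_0$, embedding controls via Siefring's intersection numbers, SFT compactness with breaking excluded by the linking hypothesis~\eqref{linked_conv}, the homology of $L(p,q)\setminus K$, and the special robust normal form, followed by assembling the resulting finite-energy foliation into a rational open book of global surfaces of section --- is precisely the strategy of~\cite{HLS} and of the rest of this paper, so the approach matches. One small caution: you invoke Lemma~\ref{lem_crucial_behavior_at_punct} through \textquotedblleft$\gcd(1,p)=1$\textquotedblright, but $\wind_\infty=1$ there is measured against the trivialization coming from a capping ($p$-)disk, which for $p>2$ need not be an iterated trivialization $\beta^p$ of the prime orbit, so relative primeness is not immediate from $\wind_\infty=1$ alone; the paper avoids this by deducing injectivity of $u$ near $\infty$ from somewhere-injectivity and the proof of \cite[Theorem~2.3]{props2}, as in the discussion following Theorem~\ref{thm_two_fast_planes}, and you should do the same.
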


The existence of sequences $f_n$ as above is standard. From now on we fix the choice of such a sequence, and denote
\begin{equation}\label{sequence_lambda_n}
\lambda_n := f_n\lambda.
\end{equation}

\begin{remark}\label{rmk_pages}
The pages of the rational open book decompositions whose existence is guaranteed by Proposition~\ref{prop_global_sections} above are images of embedded fast finite-energy planes under the projection $\R\times L(p,q) \to L(p,q)$. These planes are leaves of a finite-energy foliation, see~\cite[Section 7]{HLS}.
\end{remark}

The first step in the proof of Theorem~\ref{thm_22} is that for every $J\in\J_+(\xi)$ and every $(a,y)\in \R\times (L(p,q)\setminus K)$ there exists an embedded fast finite-energy $\jtil$-holomorphic plane asymptotic to $P$ passing through $(a,y)$, where $\jtil$ is determined by $(\lambda_n,J)$, for all $n\gg1$. This follows immediately from combining the above statement with the proposition below, which is a generalization of~\cite[Lemma~3.13]{openbook}. Its proof is found in the appendix.

\begin{proposition}\label{prop_compactness_1}
Let $\alpha$ be a non-degenerate defining contact form for the closed co-oriented contact $3$-manifold $(M,\xi)$. Assume that $c_1(\xi)|_{\pi_2(M)}=0$, and that there exists a periodic $\alpha$-Reeb trajectory $x:\R\to M$ such that $x(\R)$ is $p$-unknotted and has self-linking number $-1/p$. Fix a point $y\in M\setminus x(\R)$ and consider $P=(x,T=pT_{\rm min}) \in \P(\alpha)$ where $T_{\rm min}>0$ is the minimal positive period of $x$. Let $\J_{\rm fast}(\alpha,y)\subset \J_+(\xi)$ be the set of $d\alpha$-compatible complex structures $J:\xi\to\xi$ for which there exists some embedded fast finite-energy $\jtil$-holomorphic plane $\util=(a,u):\C\to\R\times M$ asymptotic to $P$ and satisfying $y\in u(\C)$; here $\jtil$ is determined by $(\alpha,J)$.

If $\rho(P)>1$ and every contractible orbit $P'=(x',T')\in \P(\alpha)$ in $M\setminus x(\R)$ and satisfying $\rho(P')=1$ also satisfies at least one of the following conditions
\begin{itemize}
\item[a)] $T'>T$,
\item[b)] $P'$ is not contractible on $M\setminus x(\R)$,
\end{itemize}
then either $\J_{\rm fast}(\alpha,y)=\emptyset$ or $\J_{\rm fast}(\alpha,y)=\J_+(\xi)$.
\end{proposition}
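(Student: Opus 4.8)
The plan is to prove that $\J_{\rm fast}(\alpha,y)$ is both open and closed in $\J_+(\xi)$; since the latter is connected (it is contractible in the $C^\infty$-topology), the claimed dichotomy follows at once. We may assume $\J_{\rm fast}(\alpha,y)\neq\emptyset$, so some fast $\jtil$-holomorphic plane asymptotic to $P$ exists; combining $\wind_\pi=0$ with the identity $\wind_\pi=\wind_\infty-\chi(S)+\#\Gamma$ forces $\wind_\infty=1$ for such a plane, hence $\wind^{<0}(A_P)=1$ with respect to the trivialization of $(x(pT_{\rm min}\cdot))^*\xi$ extending over a $p$-disk for $x(\R)$. Together with $\sl(x(\R))=-1/p$ and $\rho(P)>1$ this pins down $\mu_{CZ}(P)=3$, $\wind^{<0}(A_P)=1$, $\wind^{\geq0}(A_P)=2$ (equivalently computed with the capping-disk trivialization, since $\sl=-1/p$ makes the two trivializations agree). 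Finally, $L(p,q)$ is a rational homology sphere, so $c_1(\xi)$ vanishes on every relative class obtained by capping a punctured curve with disks bounding contractible orbits; hence relative Chern numbers never obstruct the comparison of trivializations used below.

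\textbf{Openness.} Suppose $J_0\in\J_{\rm fast}(\alpha,y)$, witnessed by an embedded fast $\jtil_0$-plane $\util_0=(a_0,u_0)$ with $u_0(z_0)=y$. Fast finite-energy planes enjoy automatic transversality (see \cite[Theorem~A.1]{HLS} and the references therein), so $\util_0$ is a regular point of the relevant Fredholm problem, and the implicit function theorem with $J$ as a parameter yields, for $J$ near $J_0$, a fast $\jtil_J$-plane asymptotic to $P$ depending smoothly on $J$ (the asymptotic limit stays $P$ since $\alpha$ is nondegenerate and orbits of bounded action are isolated, while $\wind_\pi=0$ is an open condition). By the foliation result \cite[Theorem~A.1]{HLS} applied in the symplectization, the embedded fast $\jtil_0$-planes asymptotic to $P$ near $\util_0$ sweep out a neighborhood of $\util_0(\C)$ in $\R\times L(p,q)$, so the evaluation map $(\util,z)\mapsto u(z)$ is a submersion onto a neighborhood of $y$; the whole package is stable under $C^\infty$-small perturbations of $J$. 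Hence for $J$ near $J_0$ one still finds an embedded fast $\jtil_J$-plane asymptotic to $P$ whose projection contains $y$, i.e.\ $J\in\J_{\rm fast}(\alpha,y)$.

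\textbf{Closedness.} Let $J_n\to J_\infty$ in $\J_+(\xi)$ with $J_n\in\J_{\rm fast}(\alpha,y)$ and pick embedded fast $\jtil_n$-planes $\util_n=(a_n,u_n)$ asymptotic to $P$ with $u_n(z_n)=y$. All have Hofer energy $\int_P\alpha$, so the SFT compactness theorem \cite{sftcomp} produces, after passing to a subsequence, normalizing the reparametrizations and translating in $\R$, a genus-zero holomorphic building $\B$ for $\jtil_\infty$ (no sphere bubbling in an exact symplectization), with a single top curve $\util_r$ asymptotic to $P$ at $\infty$ and carrying a marked point with image $y$ lying on a component that is not a trivial cylinder (as $y\notin x(\R)$). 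Since $\R\times x(\R)$ is a $\jtil_\infty$-holomorphic trivial cylinder, positivity and stability of intersections rule out interior intersections of the curves of $\B$ with it (they would persist to the embedded $\util_n$), and no asymptotic limit can be an iterate $x^j$ with $0<j<p$ because such an orbit is noncontractible in $L(p,q)$ while every non-top asymptotic limit bounds a singular disk supplied by the subtree below it. Thus all curves of $\B$ have image in $L(p,q)\setminus x(\R)$, every non-top asymptotic limit is contractible in $L(p,q)\setminus x(\R)$, and every orbit below a nontrivial breaking has action $<T$ by Stokes. An induction up the tree from the leaves, using $0\leq\wind_\pi=\wind_\infty-\chi(S)+\#\Gamma$ and the estimates $\wind^+\leq\wind^{<0}$ at positive punctures, $\wind^-\geq\wind^{\geq0}$ at negative punctures (Theorem~\ref{thm_asymptotics}), shows $\wind^{<0}(A_Q)\geq1$ for every non-top asymptotic limit $Q$. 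Feeding this into the corresponding inequality for $\util_r$ and using $\wind^{<0}(A_P)=1$ collapses the chain to equalities: $\util_r$ is fast, and each orbit $P'_i$ at a negative puncture of $\util_r$ satisfies $\wind^{<0}(A_{P'_i})=\wind^{\geq0}(A_{P'_i})=1$, hence $\mu_{CZ}(P'_i)=2$ and $\rho(P'_i)=1$. But $P'_i$ is contractible in $L(p,q)\setminus x(\R)$ with action $<T$, contradicting the standing hypothesis. Therefore $\B$ has no breaking: it is a single embedded fast $\jtil_\infty$-plane $\util_r$ (it is an immersion since $\wind_\pi=0$, and a self-intersection would be isolated by the similarity principle and would persist to the embedded $\util_n$ by positivity of intersections, as in Theorem~\ref{theo_tree}), and $\util_n\to\util_r$ in $C^\infty_{\rm loc}(\C)$ with $z_n\to z_\infty$ gives $u_r(z_\infty)=y$. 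Hence $J_\infty\in\J_{\rm fast}(\alpha,y)$.

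\textbf{Main obstacle.} The substantive part is the closedness step, precisely because $J_\infty$ is \emph{not} assumed generic, so one cannot invoke Fredholm regularity of the limiting building and must instead wring every constraint out of the $\wind_\pi/\wind_\infty$ inequalities; this is exactly what forces the would-be broken-off orbits to have $\rho=1$, at which point the standing hypothesis applies and kills the breaking. Two points need care along the way: that relative first Chern numbers do not spoil the trivialization bookkeeping in the winding induction (harmless since $H_2(L(p,q))=0$), and that the marked point $z_n$ does not escape to the puncture of the building (it does not, using $y\notin x(\R)$ together with the uniform control of $u_n$ near $\infty$, in the spirit of Lemma~\ref{lem_uniform_ends_vertices}).
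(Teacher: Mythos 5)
Your overall strategy (show $\J_{\rm fast}(\alpha,y)$ is open and closed, and use connectedness of $\J_+(\xi)$) is exactly the paper's. The openness sketch — automatic transversality plus the implicit function theorem, with the evaluation through $y$ tracked via a foliation/submersion argument — matches the paper's appendix, modulo the functional-analytic scaffolding that would be needed to make it rigorous. The closedness sketch also follows the paper's skeleton: SFT compactness yields a bubbling-off tree, no curve in it can touch $\R\times x(\R)$, non-top asymptotic limits must be contractible and have action below $T$, and one wants to force them to have $\rho=1$, at which point the linking hypothesis kills the breaking.

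However, there is a genuine gap in the closedness step. You assert that $\wind_\infty=1$ for a fast plane "hence $\wind^{<0}(A_P)=1$", and you use $\wind^{<0}(A_P)=1$ to collapse the winding cascade. The implication is false: a fast plane has asymptotic eigenvalue with winding $1$, which only gives $\wind^{<0}(A_P)\geq 1$. If $\rho(P)\in(2,3)$, for instance, then $\wind^{<0}(A_P)=2$ even though a fast plane still has $\wind_\infty=1$. The proposition only assumes $\rho(P)>1$, so $\wind^{<0}(A_P)>1$ is not excluded, and with it the upper bound that makes your cascade collapse disappears. What you actually need is not a spectral property of $A_P$, but a statement about the limit curve: that the top curve $\util_r$ of the bubbling-off tree satisfies $\wind_\infty(\util_r,\infty,B)=1$ with respect to a capping-disk trivialization $B$. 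This is not automatic from $C^\infty_{\rm loc}$-convergence of the $\util_n$ — asymptotic data need not pass to local limits. The paper derives it by fixing $R_0\gg1$, homotoping the loop $t\mapsto u(R_0 e^{2\pi i t})$ to $t\mapsto u_n(R_0 e^{2\pi it})$ for $n$ large, transporting the nonvanishing section $\pi_\alpha\circ\partial_r u_n$ and a trivialization along the homotopy, and using $\wind_\pi(\util_n)=0$ together with $H_2=0$; only then does $\wind_\infty(\util_r,\infty,B)=1$ follow and the cascade of winding inequalities collapse to equalities, forcing $\mu_{CZ}=2$ (equivalently $\rho=1$) for all non-top asymptotic limits. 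Without this step your argument does not close, so you should replace the spectral claim $\wind^{<0}(A_P)=1$ by the convergence-based computation of $\wind_\infty(\util_r,\infty,B)$.

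A smaller but still substantive point: your winding induction up the tree uses $\wind_\pi\geq 0$ at each level, which presupposes that the contact area of each intermediate curve is positive. The vanishing-area case (trivial-cylinder branches) has to be treated separately — the paper's "fundamental mechanism" handles it via iteration formulas for the Conley–Zehnder index with compatible capping-disk trivializations — and this case is exactly where one must also rule out the top curve degenerating onto $P$ itself.
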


It follows from Proposition~\ref{prop_global_sections} and Remark~\ref{rmk_pages} that for every $y\in L(p,q)\setminus K$ and $n$ large enough the set $\J_{\rm fast}(\lambda_n,y)$ is non-empty. Thus, as a consequence of Proposition~\ref{prop_compactness_1} we know, in fact, that $\J_{\rm fast}(\lambda_n,y) = \J_+(\xi)$.

Fix $J\in\J_+(\xi)$, and for every $n$ denote by $\jtil_n$ the almost complex structure determined by $(\lambda_n,J)$. Our arguments so far show that there is no loss of generality to assume the existence of finite-energy $\jtil_n$-holomorphic embedded fast planes
\begin{equation}
\util_n = (a_n,u_n) : \C \to \R \times L(p,q)
\end{equation}
asymptotic to $P$ such that $y\in u_n(\C)$. Up to reparametrization there is no loss of generality to assume that
\begin{equation}\label{normalization_u_n}
\begin{array}{ccc} a_n(0) = \min_\C a_n & & \int_{\C\setminus\D} u_n^*d\lambda_n = \sigma \end{array}
\end{equation}
where $\sigma>0$ satisfies
\begin{equation*}
2\sigma < T', \ \ \forall \ P'=(x',T') \in \P(\lambda) \cup \bigcup_n\P(\lambda_n).
\end{equation*}
The existence of $\sigma$ follows from standard arguments.

Note that $\jtil_n \to \jtil$ in $C^\infty$ (weak or, equivalently in this case, strong) as $n\to\infty$, where $\jtil$ is determined by $(\lambda,J)$. Up to translating in the $\R$-direction and selecting a subsequence, we may assume in addition that we find a finite set $\Gamma \subset \D$ and a finite-energy smooth $\jtil$-holomorphic map
\begin{equation*}
\util = (a,u) : \C\setminus \Gamma \to \R\times L(p,q)
\end{equation*}
such that $$ \util_n \to \util \ \ \ \text{in} \ \ \ C^\infty_{\rm loc}(\C\setminus\Gamma). $$ Also, up to extraction of a further subsequence, we may assume in addition that all points in $\Gamma$ are negative punctures of $\util$. We claim that $\util$ is non-constant. This is obvious if $\Gamma \neq \emptyset$, and if $\Gamma=\emptyset$ this follows from $0 < T - \sigma = \int_\D u^*d\lambda$. Identifying $\C\setminus\Gamma$ with a punctured Riemann sphere, where $\infty$ is a puncture, it follows that $\infty$ must be a positive puncture.

Our next goal is to show that fast planes do not intersect the trivial cylinder over its asymptotic orbit. The argument for this important step is contained in~\cite{props2}, but the statement we need, which can be found in~\cite{HLS} in the non-degenerate case, is not explicitly published. The case $p=1$, however, can be found in~\cite{hryn}.

\begin{lemma}\label{lemma_intersection}
If $\vtil=(b,v)$ is a fast finite-energy plane asymptotic to $P$ pseudo-holomorphic with respect to $\jtil$, or to $\jtil_n$ for some $n$, then $v(\C) \cap K = \emptyset$.
\end{lemma}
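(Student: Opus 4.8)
The plan is to rule out intersections of a fast plane $\vtil=(b,v)$ with the trivial cylinder $\R\times K$ over $P$ by a combination of positivity of intersections in dimension four and a winding/asymptotic-index count. The key point is that the trivial cylinder $\Z:\R\times(\R/T\Z)\to\R\times L(p,q)$, $(s,t)\mapsto(Ts,x(Tt))$ (more precisely the $p$-fold cover of the trivial cylinder over the prime orbit $(x,T_{\min})$) is itself a $\jtil$-holomorphic (resp.\ $\jtil_n$-holomorphic) curve, and both $\vtil$ and $\Z$ are asymptotic to the same orbit $P$ at their positive punctures. First I would observe that any intersection point $v(z_0)\in K$ produces, via stability and positivity of intersections of pseudoholomorphic curves, a genuine positive contribution to the algebraic intersection number between $\vtil$ and $\Z$, and that this intersection number is finite because the two curves have distinct images (the plane $\vtil$ is somewhere injective, being fast, while $\Z$ is a cylinder; they cannot coincide). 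So it suffices to show that this algebraic intersection number, suitably counted relative to infinity, vanishes.

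The heart of the argument is the relative intersection count. The standard tool from~\cite{props2} is the identity relating the count of intersections of $\vtil$ with $\Z$ to winding numbers at the common asymptotic orbit. Concretely, trivialize $v^*\xi$ over $\C$ by a global frame $\sigma$; since $\vtil$ is fast, $\wind_\pi(\vtil)=0$, and the asymptotic winding of $\vtil$ at $\infty$ with respect to $\sigma$ equals $\wind^{<0}$ of the asymptotic operator at $P$, while the total intersection number of $\vtil$ with the trivial cylinder is bounded above by the difference between the actual asymptotic winding of $v$ around $K$ and the extremal winding $\wind^{\geq 0}$; using $\mu_{CZ}(P)=\wind^{<0}(P)+\wind^{\geq 0}(P)$ and the hypothesis $\rho(P)>1$ (which gives $\wind^{<0}(P)\geq 1$, hence $\mu_{CZ}(P)\geq 3$ and the asymptotic eigenvalue of the fast plane has winding exactly $\wind^{<0}(P)$) one forces the count to be zero. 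Equivalently, I would phrase this through Siefring's intersection number $[\vtil]\ast[\Z]$ and the vanishing criterion: the positive asymptotic contribution at the shared puncture, computed from the relative winding of the two asymptotic eigensections, cancels, because the fast plane winds with the minimal possible rate $\wind^{<0}(P)$ while the trivial cylinder winds at exactly the rate prescribed by $P$ itself; any extra geometric intersection would push $\wind_\infty(\vtil)$ strictly below $\wind^{<0}(P)$, contradicting fastness. Since $[\vtil]\ast[\Z]=0$ and geometric intersections contribute nonnegatively, there are none.

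The main obstacle I anticipate is bookkeeping in the degenerate case: $\lambda$ itself is not assumed non-degenerate, so I cannot directly invoke the clean non-degenerate asymptotics; instead I must use Lemma~\ref{lem_asymp_formula_nondeg_punct} and the notion of non-degenerate puncture together with the asymptotic formula of Theorem~\ref{thm_asymptotics} to make sense of $\wind_\infty$, $\wind_\pi$, and the asymptotic eigenvalue for the limiting curve $\util$ and its pieces. For the plane $\vtil$ which is pseudoholomorphic with respect to $\jtil_n$ with $\lambda_n$ non-degenerate this is not an issue; for $\vtil$ pseudoholomorphic with respect to $\jtil$ (where $\lambda$ is only assumed to have the orbit $K$ and its contractible orbits controlled) fastness already forces $\infty$ to be a non-degenerate puncture by Definition~\ref{defn_fast_planes}, so the asymptotic analysis applies there too. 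A second, more technical point is that I should make sure the intersection-theoretic count compares $\vtil$ with the correct multiple cover of the trivial cylinder over the prime orbit: since $K$ is a $p$-unknot and $P=(x,pT_{\min})$, the relevant object is the $p$-fold cover, and the relative prime puncture discussion (Lemma~\ref{lem_crucial_behavior_at_punct}) guarantees the asymptotic eigensection of $\vtil$ winds in a way relatively prime to $p$, which is exactly what is needed to conclude that the geometric intersection count with the cylinder cannot be a positive multiple of anything — it must be zero. Finally, one translates back: $v(\C)\cap K=\emptyset$ is precisely the statement that the image of $\vtil$ is disjoint from $\R\times K$ at finite points, which we have just shown.
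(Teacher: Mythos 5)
Your general strategy---use positivity of intersections, compute the algebraic intersection number of $\vtil$ with $\R\times K$, and force it to vanish by a winding-number argument exploiting the asymptotic formula and fastness---is the right one and is indeed what the paper does. However, there is a genuine gap: you never use the hypothesis $\sl(K)=-1/p$, and it is essential. The paper's proof proceeds by identifying the homology class $\alpha\in H_1(U\cap V)$, where $U$ is a Martinet tube around $K$ and $V=L(p,q)\setminus K$, of the loop $t\mapsto v(Re^{i2\pi t})$ for $R\gg 1$. With generators $a$ (meridian) and $b$ of $H_1(U\cap V)\cong\Z^2$, the asymptotic formula shows $\alpha$ is the class of the loop $t\mapsto\exp(\epsilon e(t))$ where $e$ is the asymptotic eigensection; computing its winding requires knowing in what class the capping-disk trivialization sits. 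This is exactly what $\sl(K)=-1/p$ supplies: it says that such a trivialization corresponds to the class $(s_0-1)a+pb$, where $s_0a+pb$ generates $\ker\bigl(\iota_*:H_1(U\cap V)\to H_1(V)\bigr)$. Combining with $\wind_\infty(\vtil)=1$ (fastness and $\mu_{CZ}(P)\geq 3$) yields $\alpha=s_0a+pb\in\ker\iota_*$, hence ${\rm int}(\vtil,\R\times K)\cdot a_0=\iota_*\alpha=0$ and the intersection number vanishes. Without the self-linking number you cannot pin down $\alpha$, and the count need not be zero. In addition, your closing step is incorrect: invoking Lemma~\ref{lem_crucial_behavior_at_punct} to say that the asymptotic eigensection winds ``relatively prime to $p$'' and then concluding the intersection count ``cannot be a positive multiple of anything, so it must be zero'' is a non sequitur---relative primality of a winding number with $p$ places no constraint forcing a nonnegative integer to vanish. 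Likewise, ``any extra geometric intersection would push $\wind_\infty(\vtil)$ strictly below $\wind^{<0}(P)$'' is not a valid deduction: $\wind_\infty(\vtil)=\wind^{<0}(P)$ is a fixed equality for a fast plane, independent of how $v(\C)$ meets $K$; what changes with intersections is the homology class $\alpha_0$, not $\wind_\infty$. Your Siefring-based reformulation $[\vtil]\ast[\Z]=0$ is plausible but you do not actually compute it, and any correct computation would again have to feed in the self-linking number to evaluate the asymptotic contribution at the shared puncture.
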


\begin{proof}
Let $U\simeq \R/\Z\times B$ be a Martinet tube around $x(\R)$, set $V=L(p,q)\setminus K$ and denote by $D$ the closure of $\{{\rm pt}\}\times (1/2)B$. We orient $D$ by $d\lambda$, denote by $a$ the homology class of $\partial D$ in $H_1(U\cap V)$ and by $a_0$ the homology class of $\partial D$ in $H_1(V)$. Since $U\cap V$ is homotopy equivalent to a $2$-torus, $a$ is a generator of $H_1(U\cap V)$ and we may choose another generator $b$ in order to identify $H_1(U\cap V) \simeq \Z\times \Z$ via the isomorphism $sa+tb \simeq (s,t)$. Such a choice of $b$ can be identified with a choice of a $d\lambda$-symplectic trivialization of $(x_{T_{\rm min}})^*\xi$, up to homotopy. Hence, the homotopy classes of $d\lambda$-symplectic trivializations of $(x_T)^*\xi$ can be identified with the set of homology classes $\{ka+pb\}_{k\in\Z} \subset H_1(U\cap V)$.

Fix $R\gg1$ and orient $D_R=\{z\in\C: |z|\leq R\}$ by the standard complex orientation of $\C$. Under our assumption that $\vtil$ is fast there is an asymptotic formula for the approach of $\vtil$ to $P$ as described in \S~\ref{par_hol_curves}, even if the contact form is degenerate. This is Lemma~\ref{lem_asymp_formula_nondeg_punct} and has many consequences to be explored below.

Let $\alpha_0$ be the class $v_*[\partial D_R]$ in  $H_1(V)$, and $\alpha$ be the class $v_*[\partial D_R]$ in $H_1(U\cap V)$. By the asymptotic formula, the algebraic intersection number ${\rm int}(\vtil)$ of $\vtil$ with $Z := \R\times K$ is well-defined, and by positivity of intersections it is a non-negative integer. The proof will be finished if we can show that ${\rm int}(\vtil)=0$. A basic computation from~\cite{props2} shows that $\alpha_0 = {\rm int}(\vtil)a_0$. Moreover, we find an eigensection $e(t)$ of the asymptotic operator at $P$ associated to a negative eigenvector, such that $\alpha$ is the class $[t\in\R/\Z\mapsto \exp(\epsilon e(t))] \in H_1(U\cap V)$, where $\epsilon>0$ is small: $e(t)$ is the asymptotic eigenvector of $\vtil$ at the puncture $\infty$.

Recall the $p$-disk $u_0$ and let $s_0\in\Z$ be uniquely determined by $[u_0((1-\epsilon)e^{i2\pi t})] = s_0a+pb \in H_1(U\cap V)$. This class generates $\ker \iota_*$ where $\iota:U\cap V \hookrightarrow V$ is the inclusion. By the definition of self-linking number, a $d\lambda$-symplectic trivialization of $(x_T)^*\xi$ which extends to $(u_0)^*\xi$ is identified with the class $(s_0+p\ \sl(K))a+pb=(s_0-1)a+pb$, according to the convention above. Thus, using the definition of $\wind_\infty$, we compute
\begin{equation*}
\begin{aligned}
{\rm int}(\util)a_0 & =\alpha_0=\iota_*\alpha=\iota_*[\exp(\epsilon e(t))] \\
&= \iota_*[(s_0-1+\wind_\infty(\vtil))a+pb] = \iota_*(s_0a+pb) = 0
\end{aligned}
\end{equation*}
thus proving that ${\rm int}(\vtil)=0$, as desired. Note that $\wind_\infty(\vtil)=1$ since $\vtil$ is fast.
\end{proof}

\begin{lemma}\label{lemma_global_linked}
If every $P'=(x',T') \in \P_*$ satisfies~\eqref{linked_conv} then every closed $\lambda$-Reeb orbit contained in $L(p,q)\setminus K$ is not contractible in $L(p,q)\setminus K$.
\end{lemma}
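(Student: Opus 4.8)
The plan is to argue by contradiction using positivity of intersections. Suppose some closed $\lambda$-Reeb orbit has geometric image $\gamma\subset L(p,q)\setminus K$ and is contractible in $L(p,q)\setminus K$. Since $\pi_2(L(p,q))=0$ this means $[\gamma]=0$ in $H_1(L(p,q)\setminus K)\simeq\Z$; in particular $\gamma$ is disjoint from $K$ (so it is not a cover of the prime orbit underlying $P$) and its rational linking number with $K$ vanishes. I would then transform ``$\link(\gamma,K)=0$'' into a forbidden intersection with a pseudo-holomorphic building obtained as a limit of fast planes passing through a point of $\gamma$.

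Concretely, fix $y_0\in\gamma$. By the discussion preceding this lemma, $\J_{\rm fast}(\lambda_n,y_0)=\J_+(\xi)$ for all large $n$, so for our fixed $J$ there are embedded fast $\jtil_n$-holomorphic planes $\util_n=(a_n,u_n):\C\to\R\times L(p,q)$ asymptotic to $P_n$ with $y_0\in u_n(\C)$; Lemma~\ref{lemma_intersection} gives $u_n(\C)\cap K=\emptyset$, so $\overline{u_n(\C)}$ is an oriented $p$-disk for $K$. Let $Z_\gamma:=\R\times\gamma$, which is $\jtil$-holomorphic because $\gamma$ is a $\lambda$-Reeb orbit. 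Now compute Siefring's generalized intersection number $[\util_n]*[Z_\gamma]$: there is no contribution at infinity, since the only ends involved are asymptotic to $K$ (for $\util_n$) and to $\gamma$ (for $Z_\gamma$), disjoint closed orbits; hence $[\util_n]*[Z_\gamma]$ equals the algebraic intersection number of the $p$-disk $\overline{u_n(\C)}$ with $\gamma$, which depends only on $[\gamma]\in H_1(L(p,q)\setminus K)$ and therefore vanishes. Letting $n\to\infty$, after the normalization~\eqref{normalization_u_n} and passing to a subsequence the $\util_n$ converge in the SFT sense to a $\jtil$-holomorphic building whose only external end is the positive end at $P$ and one of whose components passes through a point lying over $y_0$; by homotopy invariance of the generalized intersection number under this degeneration (\cite[Theorem~2.1]{sie2}), the limit building still has generalized intersection number $0$ with $Z_\gamma$. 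Siefring positivity (\cite[Theorems~2.2--2.3]{sie2}), applied component by component, then forces every component of the limit building to have image disjoint from $Z_\gamma$, directly contradicting the fact that one of them passes over $y_0\in\gamma$.

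The step I expect to be the main obstacle is the very last one: one must ensure that the point $y_0$ is genuinely retained, in the limit, as an interior point of a \emph{non-trivial} component of the building, rather than being ``absorbed'' into a trivial cylinder over $\gamma$ or lost in a neck region where $\util_n$ degenerates onto $\R\times\gamma$ — the scenario that occurs precisely when $\gamma$ itself appears as an asymptotic orbit of the building, where Siefring's ``intersections at infinity'' enter. To rule this out I would combine careful SFT-compactness bookkeeping (controlling where the preimages $u_n^{-1}(y_0)$ accumulate under the rescalings defining the components) with a suitable choice, e.g.\ taking $\gamma$ of least $d\lambda$-action among closed orbits in $L(p,q)\setminus K$ contractible there, so that no component of the building can be a cover of $Z_\gamma$. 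Once this is settled, positivity of intersections against the $\jtil$-holomorphic cylinder $Z_\gamma$ forces the contradiction with $\link(\gamma,K)=0$ in every case, and the lemma follows. (Note that the hypothesis that every $P'\in\P^*$ satisfies~\eqref{linked_conv} enters only through the input $\J_{\rm fast}(\lambda_n,y_0)=\J_+(\xi)$, which relies on Proposition~\ref{prop_global_sections}.)
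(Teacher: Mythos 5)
Your strategy diverges from the paper's, and the step you flag as the likely obstacle is in fact a genuine gap that your proposal does not close. Ruling out the scenario where $y_0$ is absorbed into a neck region or lands on a trivial cylinder over $\gamma$ in the SFT limit is exactly where the work is, and the ``least $d\lambda$-action'' device does not do it: first, trivial cylinders over $\gamma$ have vanishing contact area, so an action-minimality hypothesis on $\gamma$ does not preclude a component of the limit building being a cover of $Z_\gamma$; second, even if the argument worked for the minimal-action $\gamma$, the lemma asserts the conclusion for \emph{every} closed orbit in $L(p,q)\setminus K$, and knocking out one minimizer does not propagate to the rest. There is also a secondary mismatch you gloss over: $\util_n$ is $\jtil_n$-holomorphic while $Z_\gamma$ is only $\jtil$-holomorphic, so Siefring positivity is not directly available before the limit; after the limit you must contend with the degenerate contact form $\lambda$, where the asymptotic analysis underlying the intersection theory requires extra care.

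The paper's proof sidesteps all of this by a much shorter observation. Given $P_0=(x_0,T_0)$ with geometric image $K_0 \subset L(p,q)\setminus K$, choose the perturbations $g_n$ of Proposition~\ref{prop_global_sections} with the additional conditions $g_n|_{K_0}\equiv 1$ and $dg_n|_{K_0}\equiv 0$, so that $K_0$ remains a closed Reeb orbit of the non-degenerate contact form $g_n\lambda$. Proposition~\ref{prop_global_sections} then produces a rational open book with binding $K$ whose pages are global surfaces of section for $g_n\lambda$; since $P_0$ is a closed $g_n\lambda$-orbit and each page is a global surface of section, $P_0$ intersects any page transversally, a positive number of times, and always with the same sign. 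Hence the algebraic intersection number of $P_0$ with a page is strictly positive, which forces $[P_0]\neq 0$ in $H_1(L(p,q)\setminus K)$ and a fortiori $P_0$ is not contractible there. The key move you are missing is to keep $\gamma$ itself alive as a Reeb orbit of the perturbed form, converting the question into the elementary fact that a closed orbit cannot be nullhomologous in the complement of the binding once one has a global surface of section. This is both simpler and uniform in the choice of $\gamma$, avoiding holomorphic degeneration arguments entirely.
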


\begin{proof}
Let $P_0=(x_0,T_0)\in\P(\lambda)$ be contained in $L(p,q)\setminus K$ and denote $K_0=x_0(\R)$. It is a standard procedure to find positive smooth functions $g_n:L(p,q) \to \R$ such that $g_n|_{K\cup K_0} \equiv 1$, $dg_n|_{K\cup K_0} \equiv 0$, $g_n$ is identically equal to $1$ on some small neighborhood of the unique singular point $e$ of the $p$-disk $u_0$, $g_n \to 1$ in $C^\infty$, and each $g_n\lambda$ is a non-degenerate contact form. Thus, in view of Proposition~\ref{prop_global_sections}, for $n$ large enough we find a rational open book decomposition with disk-like pages of order $p$ and binding $K$ such that each page is a global surface of section for the Reeb flow of $g_n\lambda$. In particular, the intersection number of $P_0$ with a page is strictly positive because $P_0$ is a closed orbit of the Reeb flow of $g_n\lambda$. This implies that $P_0$ is not contractible in the complement of $K$, as desired.
\end{proof}

\begin{lemma}\label{lemma_asymp_limit_u}
The sequence $[t\in\R/\Z \mapsto u(Re^{i2\pi t})]$ converges to $[t\mapsto x(Tt)]$ when $R\to+\infty$ in the space $C^\infty(\R/\Z,L(p,q)) \mod \ \R/\Z$.
\end{lemma}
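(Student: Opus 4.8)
The plan is to identify the asymptotic behaviour of $\util$ at its (unique, positive) puncture $\infty$ as the orbit $P=(x,T)$, and only then upgrade this to genuine convergence of the loops $u(Re^{i2\pi t})$. First I would invoke Hofer's asymptotic analysis \cite{93}: since $\infty$ is a positive puncture of $\util$, any sequence $R_k\to+\infty$ has a subsequence along which $t\mapsto u(R_ke^{i2\pi t})$ converges in $C^\infty(\R/\Z,L(p,q))\mod\R/\Z$ to a parametrization $t\mapsto \hat x(\hat Tt)$ of a closed $\lambda$-Reeb orbit $\hat P=(\hat x,\hat T)$, with $\hat T=\lim_{R\to+\infty}\int_{|z|=R}u^*\lambda$ (this limit exists and is independent of the sequence because $\Gamma\subset\D$ and $u^*d\lambda\ge0$, so the integral is monotone in $R$). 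Each such limit orbit $\hat P$ is contractible in $L(p,q)$: for fixed large $R$, $u(Re^{i2\pi t})$ is the $C^\infty$-limit of $u_n(Re^{i2\pi t})=\partial(u_n|_{D_R})$, which bounds the disk $u_n(D_R)$. It therefore suffices to show that \emph{every} such limit orbit $\hat P$ equals $P$ (note that, $\lambda$ being possibly degenerate, the end of $\util$ might a priori accumulate on a whole family of orbits); the asserted convergence as $R\to+\infty$ then follows by the standard subsequence argument, using the a priori $C^\infty$-bounds on the loops $u(Re^{i2\pi t})$ coming from finite energy.

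Next I would record the elementary bound $0<\hat T\le T$. Indeed, for $R>1$ the circle $\{|z|=R\}$ is disjoint from $\Gamma$, so $\int_{|z|=R}u^*\lambda=\lim_n\int_{|z|=R}u_n^*\lambda_n=\lim_n\int_{D_R}u_n^*d\lambda_n\le\lim_n\int_{\C}u_n^*d\lambda_n=T$, where I use $u_n^*d\lambda_n\ge0$, Stokes, and that $\util_n$ is a plane asymptotic to $P=(x,T)$; letting $R\to+\infty$ gives $\hat T\le T$. (The normalization $\int_{\C\setminus\D}u_n^*d\lambda_n=\sigma$ from~\eqref{normalization_u_n} would even give $\hat T\ge T-\sigma>0$, but $\hat T\le T$ is all I will use.) Now $\hat x(\R)$ and $K=x(\R)$, being images of $\lambda$-Reeb orbits, are either equal or disjoint. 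If $\hat x(\R)=K$, then $\hat T$ is a positive integer multiple of $T_{\min}$, and since a loop tracing $K$ once has order $p$ in $\pi_1(L(p,q))$, contractibility of $\hat P$ forces $p\mid(\hat T/T_{\min})$; together with $0<\hat T\le T=pT_{\min}$ this gives $\hat T=T$, i.e. $\hat P=P$, as wanted. So the argument comes down to excluding the alternative $\hat x(\R)\cap K=\emptyset$ — this is the step I expect to be the main obstacle, and it is exactly what the preceding two lemmas were set up to handle.

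To rule out $\hat x(\R)\cap K=\emptyset$, I would first show $u(\C\setminus\Gamma)\cap K=\emptyset$. The trivial cylinder $\R\times K$ over the prime orbit $(x,T_{\min})$ is $\jtil$-holomorphic, and also $\jtil_n$-holomorphic for each $n$ — here one uses that $K$ remains a Reeb orbit of $\lambda_n=f_n\lambda$ because $f_n|_K\equiv1$ and $df_n|_K\equiv0$, so $X_{\lambda_n}|_K=X_\lambda|_K$. If $\util$ met $\R\times K$ at an interior point $z_*$ without being contained in it, positivity of intersections would give a positive local intersection index at $z_*$, and stability of intersections under the convergence $(\util_n,\jtil_n)\to(\util,\jtil)$ would produce intersections of $\util_n$ with $\R\times K$, i.e. points of $u_n(\C)\cap K$, contradicting Lemma~\ref{lemma_intersection}; and if $\util$ were contained in $\R\times K$ it would be a branch of that trivial cylinder, forcing $\hat x(\R)=K$, contrary to assumption. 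Hence $u(\C\setminus\Gamma)\cap K=\emptyset$.

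Finally, for large $R$ the loops $u(\rho e^{i2\pi t})$ with $\rho\in[R,+\infty]$ form a continuous family inside $L(p,q)\setminus K$ connecting $u(Re^{i2\pi t})$ to the reparametrized orbit $\hat x(\hat T\,\cdot)$ (using both $u(\C\setminus\Gamma)\cap K=\emptyset$ and $\hat x(\R)\cap K=\emptyset$), while for $n$ large $u(Re^{i2\pi t})$ is $C^0$-close to, hence homotopic in $L(p,q)\setminus K$ to, $u_n(Re^{i2\pi t})=\partial(u_n|_{D_R})$, which bounds the disk $u_n(D_R)\subset L(p,q)\setminus K$ by Lemma~\ref{lemma_intersection}. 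Therefore $\hat P$ would be a closed $\lambda$-Reeb orbit contained in $L(p,q)\setminus K$ and contractible in $L(p,q)\setminus K$, contradicting Lemma~\ref{lemma_global_linked}. This eliminates the case $\hat x(\R)\cap K=\emptyset$, so every subsequential limit orbit at $\infty$ equals $P$, which completes the plan.
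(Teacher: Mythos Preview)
Your overall strategy matches the paper's: show that every subsequential limit orbit $\hat P=(\hat x,\hat T)$ at $\infty$ equals $P$ by (i) if $\hat x(\R)\cap K=\emptyset$, homotoping $\hat P$ inside $L(p,q)\setminus K$ to some $u_n(Re^{i2\pi t})$, which bounds the disk $u_n(D_R)\subset L(p,q)\setminus K$ by Lemma~\ref{lemma_intersection}, contradicting Lemma~\ref{lemma_global_linked}; and (ii) if $\hat x(\R)=K$, using contractibility of $\hat P$ in $L(p,q)$ together with $\hat T\le T=pT_{\min}$ to force $\hat T=T$.

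There is one slip in your final paragraph. You assert that the family $u(\rho e^{i2\pi t})$, $\rho\in[R,+\infty]$, is continuous at $\rho=+\infty$ with value $\hat x(\hat Tt)$; but that would require $u(\rho e^{i2\pi t})\to\hat x(\hat Tt)$ as $\rho\to+\infty$ in the full sense, which is precisely the conclusion of the lemma and, as you yourself note, need not hold a~priori when $\lambda$ is degenerate. The fix is immediate and is exactly what the paper does: instead of the whole half-line, take $R=R_{k_j}$ along the subsequence so that $u(R_{k_j}e^{i2\pi t})$ already lies in a small neighborhood of $\hat x(\R)\subset L(p,q)\setminus K$, homotope there directly to $\hat x(\hat Tt)$, and then homotope to $u_n(R_{k_j}e^{i2\pi t})$ for large $n$. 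Once you do this, your auxiliary step proving $u(\C\setminus\Gamma)\cap K=\emptyset$ (and the annular homotopy it was meant to support) becomes unnecessary, and your argument coincides with the paper's.
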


\begin{proof}
Consider any sequence $R_k \to +\infty$. By the fundamental results of Hofer~\cite{93} we can find a subsequence $R_{k_j}$, and some $P_0=(x_0,T_0) \in \P(\lambda)$ such that $u(R_{k_j}e^{i2\pi t})$ converges to $x_0(T_0t)$ in $C^\infty(\R/\Z,L(p,q))$ as $j\to\infty$.

If $P_0$ is geometrically distinct from $P$ then choose $j$ and $n$ large enough so that $u(R_{k_j}e^{i2\pi t})$ can be homotoped to $x_0(T_0t)$ in $L(p,q)\setminus K$, and $u_n(R_{k_j}e^{i2\pi t})$ can be homotoped to $u(R_{k_j}e^{i2\pi t})$ in $L(p,q)\setminus K$. Thus there is a homotopy from $u_n(R_{k_j}e^{i2\pi t})$ to $x_0(T_0t)$ in the complement of $K$. By Lemma~\ref{lemma_intersection} we know that $u_n(\C)\cap K=\emptyset$. Hence $x_0(T_0t)$ is contractible in $L(p,q)\setminus K$, contradicting Lemma~\ref{lemma_global_linked}. We have proved that $P_0 = (x,mT_{\rm min})$ for some $m\geq 1$.

We must have $m\leq p$ since $mT_{\rm min} = E(\util) \leq \limsup_n E(\util_n)= T = pT_{\rm min}$. Arguing as above one proves that $t\mapsto x(mT_{\rm min}t)$ is contractible in $L(p,q)$. To exclude the possibility of $m<p$ simply note that the class $\alpha$ of the loop $t\mapsto x(T_{\rm min}t)$ generates $H_1(L(p,q)) = \Z/p\Z$, so that $m\alpha \neq 0$ if $m<p$. Thus $m=p$ and the proof is complete.
\end{proof}

\begin{lemma}\label{lemma_positive_contact_area}
The section $\pi_\lambda \circ du$ does not vanish identically.
\end{lemma}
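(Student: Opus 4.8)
The plan is to argue by contradiction: if $\pi_\lambda\circ du\equiv 0$ then, since $\jtil|_\xi=J$ is compatible with $d\lambda$, the non-negative two-form $u^*d\lambda$ vanishes identically (in conformal coordinates $u^*d\lambda=|\pi_\lambda\circ du|^2\,ds\wedge dt$ on $\C\setminus\Gamma$). Hence it suffices to show that the $d\lambda$-area of $u$ over $\C\setminus\D$ is strictly positive; in fact I would show it equals the constant $\sigma>0$ from the normalization~\eqref{normalization_u_n}.

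The key computation is via Stokes' theorem on the annuli $\D_R\setminus\D$. For each $R>1$ one has $\int_{\D_R\setminus\D}u^*d\lambda=\int_{|z|=R}u^*\lambda-\int_{|z|=1}u^*\lambda$ (all circles oriented counter-clockwise). By Lemma~\ref{lemma_asymp_limit_u} the loops $t\mapsto u(Re^{i2\pi t})$ converge in $C^\infty$, modulo rotation, to $t\mapsto x(Tt)$ as $R\to+\infty$, so $\int_{|z|=R}u^*\lambda\to T$; since the integrand $u^*d\lambda$ is non-negative, letting $R\to+\infty$ gives $\int_{\C\setminus\D}u^*d\lambda=T-\int_{|z|=1}u^*\lambda$. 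The same identity applied to each smooth fast plane $\util_n=(a_n,u_n)$ — whose asymptotic orbit $P=(x,T)$ is also a closed $\lambda_n$-Reeb orbit of period $T$, because $\lambda_n$ and $\lambda$ agree to first order along $K$ — yields $\sigma=\int_{\C\setminus\D}u_n^*d\lambda_n=T-\int_{|z|=1}u_n^*\lambda_n$, using the normalization, so that $\int_{|z|=1}u_n^*\lambda_n=T-\sigma$ for all $n$.

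To conclude I would pass to the limit on the fixed circle $\{|z|=1\}$, which is compact and avoids the punctures $\Gamma$: from $\util_n\to\util$ in $C^\infty$ there together with $\lambda_n=f_n\lambda\to\lambda$ in $C^\infty$ one gets $u_n^*\lambda_n=(f_n\circ u_n)\,u_n^*\lambda\to u^*\lambda$ uniformly on $\{|z|=1\}$, hence $\int_{|z|=1}u^*\lambda=T-\sigma$ and therefore $\int_{\C\setminus\D}u^*d\lambda=T-(T-\sigma)=\sigma>0$, contradicting $u^*d\lambda\equiv 0$.

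The single delicate point — the main obstacle — is the implicit assertion that no $d\lambda$-energy escapes to the puncture at $\infty$ in the limit $\util_n\to\util$; this is exactly what Lemma~\ref{lemma_asymp_limit_u} supplies, by identifying the asymptotic limit of $\util$ at $\infty$ with the full orbit $P=(x,T)$ of period $T$ rather than a lower-period orbit, which is what makes $\int_{|z|=R}u^*\lambda\to T$ and hence the whole Stokes computation legitimate. As an alternative to the area estimate one could instead invoke the classification of finite-energy curves of vanishing $d\lambda$-area from~\cite[Theorem~6.11]{props2}: $\pi_\lambda\circ du\equiv0$ would force $\util$ to be a branched cover of the trivial cylinder $\R\times K$, which is again incompatible with $\int_{\C\setminus\D}u^*d\lambda=\sigma>0$.
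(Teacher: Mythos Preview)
Your proof is correct and takes a somewhat more direct route than the paper's. Both arguments end with the same limiting computation on $\partial\D$, but they arrive there differently. The paper first invokes the classification from~\cite{props2} of zero-$d\lambda$-area finite-energy curves to write $\util=\vtil\circ Q$ for a degree-$p$ polynomial $Q$ with $Q^{-1}(0)=\Gamma$ and $\vtil$ the trivial cylinder over $(x,T_{\min})$; it then uses that the class of $t\mapsto x(T_{\min}t)$ has order exactly $p$ in $\pi_1(L(p,q))$ to force $\#\Gamma=1$ and (via the normalization) $\Gamma=\{0\}$. With the branched-cover structure in hand, one reads off $\int_{\partial\D}u^*\lambda=T$ directly, which contradicts $\lim_n\int_{\partial\D}u_n^*\lambda_n=T-\sigma$.

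You bypass both the classification theorem and the analysis of $\Gamma$: Stokes on $\D_R\setminus\D$ together with the already-established Lemma~\ref{lemma_asymp_limit_u} yields $\int_{\C\setminus\D}u^*d\lambda=T-\int_{\partial\D}u^*\lambda$, and the limit on $\partial\D$ gives $\int_{\partial\D}u^*\lambda=T-\sigma$, hence $\int_{\C\setminus\D}u^*d\lambda=\sigma>0$. This is shorter and uses only what has been proved; the paper's route, in exchange, extracts the structural information $\Gamma=\{0\}$ along the way (though that is superseded by the stronger Lemma~\ref{lemma_no_bubb_points} immediately afterwards). One small technical caveat: you need $\{|z|=1\}\cap\Gamma=\emptyset$ for the $C^\infty_{\rm loc}$-convergence to apply there; since $\Gamma\subset\D$ this is automatic if $\D$ denotes the open disk, and otherwise one simply replaces the radius $1$ by $1+\epsilon$ throughout.
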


\begin{proof}
Consider the trivial cylinder $\vtil : \C\setminus0 \to \R\times L(p,q)$ given by $\vtil(e^{2\pi(s+it)}) = (T_{\rm min}s,x(T_{\rm min}t))$. Our proof is indirect. If $\pi_\lambda \circ du$ vanishes identically then, in view of the above lemma and of results from~\cite{props2}, we find a non-constant complex polynomial $P(z)$ of degree $p$ satisfying $P^{-1}(0)=\Gamma$ and $\util=\vtil \circ P$. Thus $\util$ has definite asymptotic limits at any given negative puncture, which are contractible iterates of $(x,T_{\rm min})$. Thus, if $\#\Gamma\geq 2$ then the asymptotic limit at some puncture in $\Gamma$ is $(x,mT_{\rm min})$ for some $m<p$. This is a contradiction since $H_1(L(p,q)) = \Z/p\Z$ is generated by the class $[t\mapsto x(T_{\rm min}t)]$. Thus $\Gamma=\{z_0\}$ and $P(z)=(z-z_0)^p$ for some $z_0\in\C$. We must have $z_0=0$ in view of the normalization conditions~\eqref{normalization_u_n}. Computing we obtain $$ T = \int_{\partial \D} u^*\lambda = \lim_n \int_{\partial \D} u_n^*\lambda = \lim_n \int_{\D} u_n^*d\lambda_n = T - \sigma $$ which is again contradiction.
\end{proof}

\begin{lemma}\label{lemma_no_bubb_points}
The set $\Gamma$ is empty, i.e., $\util$ is a finite-energy plane.
\end{lemma}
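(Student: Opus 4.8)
The plan is to argue by contradiction: assume $\Gamma\neq\emptyset$ and fix a negative puncture $z_0\in\Gamma$ of $\util=(a,u)$. By the fundamental results of Hofer~\cite{93} applied at $z_0$, I would first extract a sequence $\epsilon_j\to 0^+$ and a closed $\lambda$-Reeb orbit $P_{z_0}=(x_{z_0},T_{z_0})$, with minimal period $T^{\min}_{z_0}$ and covering multiplicity $k_{z_0}=T_{z_0}/T^{\min}_{z_0}\ge 1$, such that the loops $t\mapsto u(z_0+\epsilon_j e^{-2\pi it})$ converge in $C^\infty(\R/\Z)$ to a $k_{z_0}$-fold parametrization of $x_{z_0}$. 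Two observations will drive the rest of the argument. First, since $\util_n\to\util$ in $C^\infty_{\rm loc}(\C\setminus\Gamma)$ and each $\util_n$ is a genuine plane, for every fixed $j$ and all large $n$ the loop $t\mapsto u(z_0+\epsilon_j e^{-2\pi it})$ is $C^0$-close (hence freely homotopic in $L(p,q)$) to $t\mapsto u_n(z_0+\epsilon_j e^{-2\pi it})$, which bounds the disk $u_n(\{|z-z_0|\le\epsilon_j\})$; by Lemma~\ref{lemma_intersection} this disk is contained in $L(p,q)\setminus K$. Second, because $u^*d\lambda\ge 0$, the function $\rho\mapsto\int_{|z-z_0|=\rho}u^*\lambda$ is monotone, so its limit $\tau_{z_0}$ as $\rho\to 0^+$ exists, and evaluating along $\rho=\epsilon_j$ gives $\tau_{z_0}=T_{z_0}>0$. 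Finally, since $x_{z_0}$ and $x$ are integral curves of the nonvanishing Reeb field, either $x_{z_0}(\R)=K$ or $x_{z_0}(\R)\cap K=\emptyset$, and I would treat these two cases separately.

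In the case $x_{z_0}(\R)=K$ we have $P_{z_0}=(x,k_{z_0}T_{\rm min})$. By the first observation the loop $t\mapsto u(z_0+\epsilon_j e^{-2\pi it})$ is freely homotopic in $L(p,q)$ to the $k_{z_0}$-th iterate of $x$ over one minimal period and also bounds a disk in $L(p,q)$, hence $k_{z_0}[x]=0$ in $\pi_1(L(p,q))\cong\Z_p$. Since $K$ is an order $p$ unknot the class $[x]$ has order exactly $p$, so $p\mid k_{z_0}$ and therefore $\tau_{z_0}=k_{z_0}T_{\rm min}\ge pT_{\rm min}=T$. On the other hand, Stokes' theorem on $\C\setminus\Gamma$ together with Lemma~\ref{lemma_asymp_limit_u} gives $0\le\int_{\C\setminus\Gamma}u^*d\lambda=T-\sum_{z\in\Gamma}\tau_z\le T-\tau_{z_0}\le 0$, which forces $\pi_\lambda\circ du\equiv 0$ and contradicts Lemma~\ref{lemma_positive_contact_area}. (Alternatively one can contradict the normalization $\int_{\partial\D}u^*\lambda=T-\sigma$ with $\sigma>0$.)

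In the case $x_{z_0}(\R)\cap K=\emptyset$, the orbit $x_{z_0}$ lies in $L(p,q)\setminus K$; let $Q=(x_{z_0},T^{\min}_{z_0})$ be the underlying prime orbit. For $j$ large the loop $t\mapsto u(z_0+\epsilon_j e^{-2\pi it})$ lies in an arbitrarily small tube around $x_{z_0}(\R)$, hence in $L(p,q)\setminus K$, and inside that tube it is freely homotopic to $Q^{\pm k_{z_0}}$; by the first observation it is also freely homotopic in $L(p,q)\setminus K$ to a loop bounding a disk in $L(p,q)\setminus K$. Thus $k_{z_0}[Q]=0$ in $\pi_1(L(p,q)\setminus K)$. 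Since $K$ is a $p$-unknot, $L(p,q)\setminus K$ fibers over $S^1$ with connected, simply connected fibers — for instance, by applying Proposition~\ref{prop_global_sections} to a suitable nondegenerate perturbation $g_n\lambda$ of $\lambda$ — so $\pi_1(L(p,q)\setminus K)\cong\Z$ is torsion free, whence $[Q]=0$ there. This contradicts Lemma~\ref{lemma_global_linked}, which, under the standing hypothesis~\eqref{linked_conv}, asserts that no closed $\lambda$-Reeb orbit contained in $L(p,q)\setminus K$ is contractible in $L(p,q)\setminus K$.

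Either case produces a contradiction, so $\Gamma=\emptyset$ and $\util$ is a finite-energy plane. The step I expect to require the most care is the case where the asymptotic orbit at $z_0$ wraps $K$, in the possibly degenerate setting: there one must extract a single asymptotic orbit $P_{z_0}$ and a well-defined limiting action $\tau_{z_0}$ at $z_0$ — which is exactly what Hofer's subsequential convergence combined with the monotonicity of $\rho\mapsto\int_{|z-z_0|=\rho}u^*\lambda$ provides — and then check that the resulting action is forced above the total available $d\lambda$-area. The complementary case is comparatively soft, being a homotopy-theoretic consequence of Lemma~\ref{lemma_intersection}, the torsion freeness of $\pi_1(L(p,q)\setminus K)$, and Lemma~\ref{lemma_global_linked}.
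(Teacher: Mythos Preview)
Your proof is correct and follows essentially the same approach as the paper's: split into the two cases $x_{z_0}(\R)=K$ and $x_{z_0}(\R)\subset L(p,q)\setminus K$, using Lemma~\ref{lemma_positive_contact_area} together with the order of $[x]$ in $\pi_1(L(p,q))$ for the first, and Lemmas~\ref{lemma_intersection} and~\ref{lemma_global_linked} for the second. One small simplification: in the second case the detour through torsion-freeness of $\pi_1(L(p,q)\setminus K)$ is unnecessary, since the loop $t\mapsto x_{z_0}(T_{z_0}t)$ is itself a closed $\lambda$-Reeb orbit in $L(p,q)\setminus K$ that you have shown to be contractible there, already contradicting Lemma~\ref{lemma_global_linked}.
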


\begin{proof}
We argue by contradiction. Suppose that $\Gamma \neq \emptyset$ and choose $z_0\in\Gamma$. Then, by the fundamental results of Hofer~\cite{93}, we find $r_k\to0^+$ and a closed $\lambda$-Reeb orbit $P'=(x',T')$ such that the sequence of loops $t\mapsto u(z_0+r_ke^{i2\pi t})$ converges to $t\mapsto x'(T't+d)$ in $C^\infty$ as $k\to\infty$. If $P'$ is contained in $L(p,q)\setminus K$, then choosing $n$ and $k$ large enough we can find a homotopy from $u_n(z_0+r_ke^{i2\pi t})$ to $x'(T't+d)$ inside $L(p,q)\setminus K$. But since $u_n(\C)\cap K=\emptyset$ for every $n$, we conclude that $P'$ is contractible in $L(p,q)\setminus K$, contradicting Lemma~\ref{lemma_global_linked}. Thus $P'=(x,mT_{\rm min})$ for some $m>1$. However, since by Lemma~\ref{lemma_positive_contact_area} we know that $\int_{\C\setminus\Gamma}u^*d\lambda>0$, we must have $m<p$. But this implies that $t\mapsto x(mT_{\rm min}t)$ is contractible in $L(p,q)$ with $1\leq m<p$, which is again a contradiction because $H_1(L(p,q))=\Z/p\Z$ is generated by $t\mapsto x(T_{\rm min}t)$.
\end{proof}

\begin{lemma}\label{lemma_wind_infty_u}
The section $\pi_\lambda \circ du$ has no zeros.
\end{lemma}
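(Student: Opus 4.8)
The plan is to show directly that the section $\pi_\lambda\circ du$ vanishes nowhere on $\C$, by a Hurwitz-type argument: each approximating curve $\util_n$ is a \emph{fast} plane (Definition~\ref{defn_fast_planes}) for the contact form $\lambda_n$, so $\pi_{\lambda_n}\circ du_n$ has no zeros whatsoever, and a putative zero of the limit section $\pi_\lambda\circ du$ would be forced, by $C^0$-continuity and the argument principle, to produce a nearby zero of $\pi_{\lambda_n}\circ du_n$ for all large $n$. First I would assemble what is already known. By Lemma~\ref{lemma_no_bubb_points} we have $\Gamma=\emptyset$, so $\util=(a,u)\colon\C\to\R\times L(p,q)$ is a genuine finite-energy plane and $\util_n\to\util$ in $C^\infty_{\rm loc}(\C)$; moreover $\lambda_n=f_n\lambda\to\lambda$ and $\jtil_n\to\jtil$ in $C^\infty$. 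The decisive structural point is that $\ker\lambda_n=\ker\lambda=\xi$ for every $n$, so the Reeb projections $\pi_{\lambda_n},\pi_\lambda\colon TL(p,q)\to\xi$ all take values in the same bundle $\xi$ and $\pi_{\lambda_n}\to\pi_\lambda$ in $C^\infty$. By Lemma~\ref{lemma_positive_contact_area}, $\pi_\lambda\circ du$ does not vanish identically, so by the similarity principle its zeros are isolated, each with a positive local degree; the same applies to each $\pi_{\lambda_n}\circ du_n$, which, since $\util_n$ is fast with $\wind_\pi(\util_n)=0$, therefore has no zeros at all.

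Now for the core step. Suppose, for contradiction, that $\pi_\lambda\circ du(z_0)=0$ for some $z_0\in\C$, with local degree $k\geq1$. Choose $r>0$ so small that $z_0$ is the only zero of $\pi_\lambda\circ du$ in $\overline{B_r(z_0)}$ and that $u(\overline{B_r(z_0)})$ lies in a ball over which $\xi$ is trivialized. Relative to such a trivialization the loop $\pi_\lambda\circ du\big|_{\partial B_r(z_0)}$ is nowhere zero and winds $k$ times, and $\delta:=\min_{\partial B_r(z_0)}\bigl|\pi_\lambda\circ du\bigr|>0$. Since $u_n\to u$ in $C^1$ near $z_0$ and $\pi_{\lambda_n}\to\pi_\lambda$, for $n$ large $u_n(\overline{B_r(z_0)})$ lies in the same trivializing ball and $\pi_{\lambda_n}\circ du_n\big|_{\partial B_r(z_0)}$ is within $\delta$ of $\pi_\lambda\circ du\big|_{\partial B_r(z_0)}$; hence it is nowhere zero on $\partial B_r(z_0)$ and also winds $k\geq1$ times. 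By the argument principle for sections of a complex line bundle solving a Cauchy--Riemann type equation (whose zeros count positively), $\pi_{\lambda_n}\circ du_n$ must have a zero inside $B_r(z_0)$ for all large $n$, contradicting the fastness recorded above. Hence $\pi_\lambda\circ du$ has no zeros on $\C$.

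I do not foresee a genuine obstacle: the argument principle step is robust under $C^0$-convergence of sections, and no control on $\util$ near the puncture $\infty$ is needed, since the claim concerns only the finite plane $\C$. The one point requiring care is the bookkeeping with the varying contact forms $\lambda_n$: fastness of $\util_n$ is a statement about $\pi_{\lambda_n}\circ du_n$, and it must be compared with $\pi_\lambda\circ du$. This is harmless precisely because all the $\lambda_n$ share the contact structure $\xi$, so every section in sight is a section of a pullback of $\xi$ and the comparison is carried out through one fixed local trivialization. Equivalently, the nonvanishing just proved reads $\wind_\pi(\util)=0$, i.e.\ $\wind_\infty(\util)=1$, which is how it will be used below; in particular $u$ is an immersion transverse to $X_\lambda$.
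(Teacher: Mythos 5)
Your proof is correct, and it takes a genuinely different route from the paper's. The paper argues \emph{globally}: it picks large circles $|z|=R_k\to\infty$ avoiding zeros, transports a reference trivialization of $u_{n_k}^*\xi$ across a homotopy from $u_{n_k}|_{\{|z|=R_k\}}$ to $u|_{\{|z|=R_k\}}$, uses $\wind_\pi(\util_{n_k})=0$ to pin down the boundary winding, and invokes $H_2(L(p,q))=0$ to make the algebraic zero count on $B_{R_k}$ come out as $1-1=0$. You argue \emph{locally}: assume a zero $z_0$ of positive degree $k$, trivialize $\xi$ over a small ball in $L(p,q)$ containing $u(\overline{B_r(z_0)})$, and use the $C^0_{\rm loc}$-convergence $\pi_{\lambda_n}\circ du_n\to\pi_\lambda\circ du$ (which holds because $\ker\lambda_n\equiv\xi$ and $X_{\lambda_n}\to X_\lambda$) together with the argument principle to force a zero of $\pi_{\lambda_n}\circ du_n$ inside $B_r(z_0)$, contradicting fastness. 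Both proofs rest on the same two inputs — the $C^\infty_{\rm loc}$-convergence $\util_n\to\util$ and $\wind_\pi(\util_n)=0$ — but your localization sidesteps the global bookkeeping with trivializations over large disks and, in particular, does not need the $H_2(L(p,q))=0$ input the paper highlights. The trade-off is negligible here since the hypotheses already place us on a lens space; in a setting where the ambient homology were nontrivial, the local argument would still go through unchanged, which makes it arguably the more robust of the two.
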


\begin{proof}
Set $v(s,t)=u(e^{2\pi(s+it)})$, $v_n(s,t)=u_n(e^{2\pi(s+it)})$, where $(s,t)\in\R\times \R/\Z$. By Lemma~\ref{lemma_positive_contact_area} and the similarity principle we know that the set of points where $\pi_\lambda\circ du$ vanishes is discrete. Thus we can find $s_k\to+\infty$ such that $\pi_\lambda\circ du(z)\neq0$ for all $z$ satisfying $|z|=R_k := e^{2\pi s_k}$. Since $\util_n\to\util$ in $C^\infty_{\rm loc}$, we can find $n_k\to\infty$, $C^\infty$-small homotopies $h_k:[0,1]\times \R/\Z \to L(p,q)$ satisfying $h_k(0,t)=v_{n_k}(s_k,t)$, $h_k(1,t)=v(s_k,t)$, and nowhere vanishing sections $\sigma_k$ of $(h_k)^*\xi$ satisfying
\[
\begin{array}{ccc}
\sigma_k(0,t) = \pi_{\lambda_{n_k}} \cdot \partial_sv_{n_k}(s_k,t) & \text{and} & \sigma_k(1,t) = \pi_{\lambda} \cdot \partial_sv(s_k,t).
\end{array}
\]
Let $\zeta_k$ be a nowhere vanishing section of $(h_k)^*\xi$ such that its restriction to $0\times\R/\Z$ can be extended as a nowhere vanishing section of $(u_{n_k}|_{B_{R_k}})^*\xi$. Since $\wind_\pi(\util_n)=0$ for all $n$ we have that $\wind(t\mapsto \sigma_k(0,t),t\mapsto \zeta_k(0,t))=1$. By invariance of winding numbers under homotopies we get $\wind(t\mapsto \sigma_k(1,t),t\mapsto \zeta_k(1,t))=1$. This implies that the algebraic count of zeros of $\pi_\lambda\circ du$ on $B_{R_k}(0)$ is $1-1=0$; here we strongly used that $H_2(L(p,q))=0$. Since each zero of $\pi_\lambda\circ du$ counts positively to this algebraic count, we conclude that $\pi_{\lambda}\circ du$ does not vanish on $B_{R_k}$, for every~$k$. Thus $\pi_{\lambda}\circ du$ is nowhere vanishing.
\end{proof}

\begin{lemma}\label{lemma_uniform_1}
Let $G$ be any $\R/\Z$-invariant neighborhood of $t\in\R/\Z\mapsto x(Tt)$ in $C^\infty(\R/\Z,L(p,q))$. Then there exists $R_0>0$ such that $t\mapsto u_n(Re^{i2\pi t})$ is a loop in $G$ for every $R\geq R_0$ and $n$.
\end{lemma}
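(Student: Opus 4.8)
The plan is to argue by contradiction, upgrading the bubbling-off convergence $\util_n\to\util$ already established to full SFT convergence and exploiting the smallness built into the normalization~\eqref{normalization_u_n}. Suppose the conclusion fails for some $\R/\Z$-invariant neighborhood $G$ of $t\mapsto x(Tt)$. Then, after passing to a subsequence still denoted $\util_n$, I can find $R_n\to+\infty$ with $\gamma_n:=[t\mapsto u_n(R_ne^{i2\pi t})]\notin G$. First I would dispose of the case of bounded indices: then $\util_n$ would be a single fixed embedded fast $\jtil_n$-holomorphic plane asymptotic to $P$ with $\lambda_n$ non-degenerate, so the exponential asymptotic formula of Theorem~\ref{thm_asymptotics} (applicable at $\infty$ by Theorem~\ref{thm_partial_asymp}) would give $u_n(Re^{i2\pi t})\to x(Tt+d)$ in $C^\infty(\R/\Z)$ as $R\to+\infty$, so $\gamma_n\in G$ for $R$ large by the $\R/\Z$-invariance of $G$ --- a contradiction. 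Hence I may assume $n\to+\infty$.

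Next I would invoke the SFT compactness theorem of~\cite{sftcomp} for the sequence $\util_n$ (uniformly bounded energy $E(\util_n)=T$, with $\jtil_n\to\jtil$ and $\lambda_n\to\lambda$ in $C^\infty$), obtaining, along a further subsequence, a limiting holomorphic building $\B$ whose main level is the plane $\util$ --- this is precisely the content of the convergence $\util_n\to\util$ recorded above together with $\Gamma=\emptyset$ (Lemma~\ref{lemma_no_bubb_points}). Since $\util$ is asymptotic to $P$ (Lemma~\ref{lemma_asymp_limit_u}) it carries $d\lambda$-area exactly $T$, so by conservation of Hofer energy under SFT limits every other component of $\B$ has vanishing $d\lambda$-area. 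I would then argue that every periodic orbit appearing as an asymptotic limit in $\B$ equals $P$: any such orbit is a closed $\lambda$-Reeb orbit of period $\leq T$ bounding a disk inside some $u_n(\C)$, hence contractible in $L(p,q)\setminus K$ in view of Lemma~\ref{lemma_intersection}; by Lemma~\ref{lemma_global_linked} it must meet $K$, so it is an iterate $(x,mT_{\rm min})$, and $H_1(L(p,q))=\Z/p\Z$ together with $m\leq p$ forces $m=p$. Combined with the vanishing of the $d\lambda$-areas and the classification of zero-area curves in symplectizations from~\cite[Theorem~6.11]{props2}, this shows that every component of $\B$ other than $\util$ is a trivial cylinder over $P$.

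Thus the SFT limit of the $\util_n$ consists of $\util$ together with trivial cylinders over $P$; no non-trivial curve is bubbled off at the positive puncture. By the definition of SFT convergence along the positive end, this means that for every $\varepsilon>0$ there are $R_0>0$ and $N$ such that for $n\geq N$ and $R\geq R_0$ the loop $t\mapsto u_n(Re^{i2\pi t})$ is $\varepsilon$-$C^\infty$-close to $t\mapsto x(Tt+d)$ for some $d$. Enlarging $R_0$ to also absorb the finitely many fixed planes $\util_1,\dots,\util_{N-1}$ (applying the exponential asymptotic formula to each) and taking $\varepsilon$ small enough that the $\varepsilon$-neighborhood of $\{t\mapsto x(Tt+d):d\in\R\}$ lies in $G$, I would conclude $t\mapsto u_n(Re^{i2\pi t})\in G$ for all $R\geq R_0$ and all $n$, contradicting the choice of $R_n$ and $\gamma_n$. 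This contradiction proves the lemma.

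I expect the main obstacle to be the analysis of the building $\B$ in the second step --- specifically, ruling out any non-trivial curve bubbling off at the positive puncture. The key leverage is the quantitative bound in~\eqref{normalization_u_n}: the $d\lambda_n$-area of $\util_n$ over $\{|z|\geq1\}$ equals $\sigma<T_{\rm min}/2$, which is below the period of every closed Reeb orbit; together with the homological rigidity of $L(p,q)$ and Lemma~\ref{lemma_global_linked}, this leaves no room for an asymptotic orbit other than $P$, hence no room for a non-trivial component beyond $\util$.
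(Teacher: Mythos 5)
Your overall strategy is the right one — show that if the loops $t\mapsto u_n(Re^{i2\pi t})$ fail to converge uniformly to $P$, then some non-trivial piece bubbles off near the positive puncture, and rule this out using Lemma~\ref{lemma_global_linked}, Lemma~\ref{lemma_intersection}, homology of $L(p,q)$, and the classification of zero-area curves — and your final linking-and-homology step matches the paper's closely. However, the route through the full SFT compactness theorem of~\cite{sftcomp} has a genuine gap in this setting: that theorem requires the (limit) contact form to be non-degenerate (or Morse--Bott), and in Theorem~\ref{thm_22} no non-degeneracy is assumed on $\lambda$. Since periodic $\lambda$-orbits in the relevant action range may be degenerate, one does not get a well-defined limiting holomorphic building $\B$ with well-defined asymptotic Reeb orbits from which to run your argument. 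The paper sidesteps this by a direct rescaling argument (following Lemma~8.1 of~\cite{convex}): pull back the $\util_n$ to cylinders $\vtil_n(s,t) = (a_n(R_ne^{2\pi(s+it)})-c_n,\,u_n(R_ne^{2\pi(s+it)}))$ with $c_n=a_n(R_n)$, establish $C^0_{\rm loc}$ gradient bounds via bubbling-off controlled by the smallness $\sigma$ in~\eqref{normalization_u_n}, extract a non-constant $C^\infty_{\rm loc}$-limit $\vtil$, prove $\pi_\lambda\circ dv\equiv 0$ using Fatou together with Lemmas~\ref{lemma_asymp_limit_u} and~\ref{lemma_no_bubb_points}, conclude by the classification from~\cite[Theorem~6.11]{props2} that $\vtil$ is a (possibly covered) trivial cylinder over some $P'$, and then reach a contradiction exactly as you do. So the idea is shared, but you need the hands-on rescaled-cylinder compactness in place of SFT compactness, since the latter is unavailable for possibly degenerate~$\lambda$.
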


\begin{proof}
The argument is the same as that of the proof of Lemma~8.1 from~\cite{convex}. If the lemma is not true, then choose $R_n\to+\infty$ such that $t\mapsto u_n(R_ne^{i2\pi t})$ is not a loop in $G$. Define cylinders $\vtil_n$ by $$ \vtil_n(s,t) = (a_n(R_ne^{2\pi(s+it)})-c_n,u_n(R_ne^{2\pi(s+it)})) $$ where $c_n = a_n(R_n)$. We claim that $d\vtil_n$ is $C^0_{\rm loc}$-bounded. If not, using Hofer's lemma and standard rescaling arguments, we find a non-constant finite-energy $\jtil$-holomorphic plane that bubbles-off from the sequence $\vtil_n$ at some point $(s_*,t_*)$. In view of~\eqref{normalization_u_n} the $d\lambda$-area of this plane is $\leq \sigma$, which is impossible since by results from~\cite{93} the $d\lambda$-area of this plane is the period of some closed $\lambda$-Reeb orbit. This establishes the desired $C^0_{\rm loc}$ bounds for $d\vtil_n$. Hence, since $\vtil_n(0,0) \in 0\times L(p,q)$, we get $C^1_{\rm loc}$-bounds for the sequence $\vtil_n$. Elliptic estimates allow us to choose a subsequence, still denoted $\vtil_n$, that converges in $C^\infty_{\rm loc}(\R\times\R/\Z)$ to some $\jtil$-holomorphic finite-energy map $\vtil=(b,v)$. The map $\vtil$ is non-constant since $$ \int_{\{s=0\}} v^*\lambda = \lim_n \int_{\{s=0\}} v_n^*\lambda_n = \lim_n \int_{\{|z|=R_n\}} u_n^*\lambda_n = \lim_n \int_{\{|z|\leq R_n\}} u_n^*d\lambda_n \geq T-\sigma. $$ We want to check that $\pi_\lambda\circ dv$ vanishes identically. In fact, suppose not. Then we find $\delta>0$ and $\rho_n\to+\infty$ such that $\int_{\C\setminus B_{\rho_n}} u_n^*d\lambda_n \geq \delta$. Thus, $\int_{B_{\rho_n}} u_n^*d\lambda_n \leq T-\delta$ for all $n$ large. Since $\util_n \to \util$ in $C^\infty_{\rm loc}$, we get $\int_\C u^*d\lambda \leq T-\delta$ from Fatou's Lemma, a contradiction to a combination of lemmas~\ref{lemma_asymp_limit_u} and~\ref{lemma_no_bubb_points}. As a consequence, $\vtil$ is a trivial cylinder over some closed $\lambda$-Reeb orbit $P'=(x',T')$.

Up to a subsequence, the loops $t\mapsto u_n(R_ne^{i2\pi t})$ converge to $t\mapsto x'(T't+d)$ for some $d\in\R$, as $n\to\infty$. Note, however, that $P'$ lies in $L(p,q)\setminus K$. In fact, if not then $P'$ is the $m$-cover of $(x,T_{\rm min})$ for some $m\geq1$. Since $E(\util_n)=pT_{\rm min}$ we get $m\leq p$. However, $P'$ is obviously contractible, and consequently we get $m=p$, $P'=P$ contradicting our choice of $R_n$. Thus $P'\subset L(p,q)\setminus K$, and this is in  contradiction to Lemma~\ref{lemma_intersection} because this loop can not be contractible in the complement of $K$ (Lemma~\ref{lemma_global_linked}). The proof is complete.
\end{proof}

From now on we fix a Martinet Tube $(U,\Psi)$ for $P$, and denote by $(\theta,x_1,x_2)$ the coordinates on $\R/\Z\times B = \Psi(U)$ with respect to which $\lambda = g(d\theta+x_1dx_2)$, where $g(\theta,x_1,x_2)$ satisfies a) and b) in Definition~\ref{defn_martinet}. We denote $z=(x_1,x_2)$. Choose an $\R/\Z$-invariant neighborhood $G$ of $t\mapsto x(Tt)$ so small that loops in $G$ are contained in $U$. As a consequence of Lemma~\ref{lemma_uniform_1} we can find a uniform number $s_0>1$ such that
\begin{equation}\label{number_s_0}
s\geq s_0 \Rightarrow [t\mapsto u_n(e^{2\pi(s+it)})] \in G, \ \forall n.
\end{equation}
In particular, the functions $a_n,\theta_n,x_{1,n},x_{2,n}$ given by
\begin{equation*}
(a_n(s,t),\theta_n(s,t),x_{1,n}(s,t),x_{2,n}(s,t)) = (id_\R\times \Psi) \circ \util_n (e^{2\pi(s+it)})
\end{equation*}
are well-defined on $[s_0,+\infty) \times \R/\Z$. We may lift the $\theta$-coordinate to the universal covering $\R$ and, when convenient, think of $\theta_n$ as a real-valued function of $(s,t) \in [s_0,+\infty) \times \R$. There is a unique such lift, again denoted by $\theta_n(s,t)$ with no fear of ambiguity, satisfying $\lim_{s\to+\infty} \theta_n(s,0) \in [0,1)$. It satisfies $\theta_n(s,t+1)=\theta_n(s,t)+p$. The other functions $a_n,x_{1,n},x_{2,n}$ are also to be thought of as functions of $(s,t)\in[s_0,+\infty) \times \R$ which are $1$-periodic in $t$. We denote $z_n(s,t)=(x_{1,n}(s,t),x_{2,n}(s,t))$.

In these coordinates the contact forms $\lambda_n$ are written as $g_n(d\theta+x_1dx_2)$ where $g_n=f_ng$. By the properties of the $f_n$ we know that $g_n\to g$ in $C^\infty_{\rm loc}$, $g_n|_{\R/\Z\times 0}\equiv T_{\rm min}$ and $dg_n|_{\R/\Z\times 0}\equiv 0$ for all $n$. The Reeb vector field $X_{\lambda_n}$ of $\lambda_n$ has coordinates $$ X_{\lambda_n} = (R_n,Y_n) \in \R\times \R^2 $$ and clearly $Y_n(\theta,0,0) = 0$. Let $j:\R/\Z\times B \to \R^{2\times 2}$ be the smooth $2\times 2$ matrix-valued function representing $J:\xi\to\xi$ in the frame $e_1=\partial_{x_1}$, $e_2=-x_1\partial_\theta+\partial_{x_2}$. The Cauchy-Riemann equations can be written as the system
\begin{align}
& \partial_sa_n - g_n(\theta_n,z_n)(\partial_t\theta_n+x_{1,n}\partial_tx_{2,n}) = 0 \label{cr_a_coord_1} \\
& \partial_ta_n + g_n(\theta_n,z_n)(\partial_s\theta_n+x_{1,n}\partial_sx_{2,n}) = 0 \label{cr_a_coord_2} \\
& \partial_sz_n + j_n\partial_tz_n + S_nz_n = 0 \label{cr_z_coordinate}
\end{align}
where $$ j_n(s,t) = j(\theta_n(s,t),z_n(s,t)) $$ and
\begin{equation*}
S_n(s,t) = \left[ \partial_ta_n \ I-\partial_sa_n \ j_n \right] D_n(s,t)
\end{equation*}
with
\begin{equation*}
D_n(s,t) = \int_0^1 Y_n'(\theta_n(s,t),\tau z_n(s,t)) d\tau
\end{equation*}
where the prime denotes a derivative in the $z$-coordinate. These functions are $1$-periodic in $t$ and may be seen as defined in $\R\times\R/\Z$.

\begin{lemma}\label{lemma_unif_bounds_near_end}
The following holds:
\begin{align}
& \lim_{s\to+\infty} \sup_{n,t} \left( |D^\beta x_{1,n}(s,t)| + |D^\beta x_{2,n}(s,t)|\right) = 0 \ \ \forall \beta \label{unif_bound_1} \\
& \lim_{s\to+\infty} \sup_{n,t} \left( |D^\beta [a_n-Ts](s,t)| + |D^\beta [\theta_n-pt](s,t)| \right) = 0 \ \ \text{if} \ |\beta|\geq1. \label{unif_bound_2}
\end{align}
\end{lemma}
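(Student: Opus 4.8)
The plan is to argue by contradiction through a rescaling–and–compactness argument: along any sequence of points escaping to the positive puncture at which one of \eqref{unif_bound_1}, \eqref{unif_bound_2} fails, a sequence of re-centered curves will be forced to converge to the trivial cylinder over $P$, and this is incompatible with the assumed failure.

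Suppose \eqref{unif_bound_1} or \eqref{unif_bound_2} fails for some multi-index $\beta$. Then there are $\epsilon_0>0$, numbers $s_k\to+\infty$, indices $n_k$ and points $t_k\in\R/\Z$, which we may take to converge to some $t_*$, such that the relevant derivative of $x_{1,n_k}$, $x_{2,n_k}$, $a_{n_k}-Ts$ or $\theta_{n_k}-pt$ at $(s_k,t_k)$ has absolute value at least $\epsilon_0$. First I would observe that $n_k\to\infty$: otherwise, after passing to a subsequence, $n_k$ is a constant $n_0$, and since $\lambda_{n_0}$ is non-degenerate and $\util_{n_0}$ is a fast plane, $\infty$ is a non-degenerate puncture of $\util_{n_0}$ by Theorem~\ref{thm_partial_asymp}; the asymptotic description of Theorem~\ref{thm_asymptotics} then shows that all the quantities appearing in \eqref{unif_bound_1} and \eqref{unif_bound_2} decay to $0$ (indeed exponentially, with all derivatives) as $s\to+\infty$, contradicting the lower bound along $s_k$.

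Next I would rescale. Letting $c_k$ be the $\R$-component of $\util_{n_k}$ at $e^{2\pi s_k}$, define the $\jtil_{n_k}$-holomorphic maps
\[
\vtil_k(s,t) = \util_{n_k}\big(e^{2\pi(s+s_k+it)}\big)+(-c_k,0)\ :\ \R\times\R/\Z \to \R\times L(p,q),
\]
so that $\vtil_k(0,0)\in\{0\}\times L(p,q)$ and, in the Martinet coordinates, the $\R$-, $\theta$- and $z$-components of $\vtil_k$ at $(s,t)$ are $a_{n_k}(s+s_k,t)-c_k$, $\theta_{n_k}(s+s_k,t)$ and $z_{n_k}(s+s_k,t)$ whenever $s+s_k\ge s_0$. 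The crucial point is a uniform gradient bound: if $|\nabla\vtil_k|$ were unbounded on some compact set, then Hofer's lemma and the usual rescaling would produce a non-constant finite-energy $\jtil$-holomorphic plane bubbling off (using $\jtil_{n_k}\to\jtil$ in $C^\infty$), whose $d\lambda$-area equals the period of some closed $\lambda$-Reeb orbit and hence exceeds $2\sigma$; but such a bubble picks up its area inside $u_{n_k}(\{|z|\ge e^{2\pi s_0}\})\subseteq u_{n_k}(\{|z|\ge 1\})$, where the total $d\lambda_{n_k}$-area equals $\sigma$ by \eqref{normalization_u_n} --- a contradiction. With the base point pinned down, the $C^0_{\rm loc}$-gradient bound gives $C^1_{\rm loc}$-bounds, and elliptic bootstrapping (the $\jtil_{n_k}$ being $C^\infty$-bounded) yields, along a subsequence, $\vtil_k\to\vtil_\infty=(b_\infty,v_\infty)$ in $C^\infty_{\rm loc}(\R\times\R/\Z)$, with $\vtil_\infty$ a finite-energy $\jtil$-holomorphic cylinder.

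It remains to identify $\vtil_\infty$ and read off the contradiction. For each fixed $s$ the loop $t\mapsto u_{n_k}(e^{2\pi(s+s_k)}e^{2\pi it})$ has image in an arbitrarily small neighborhood of $x(\R)$ once $k$ is large, by Lemma~\ref{lemma_uniform_1} (applied to shrinking $\R/\Z$-invariant neighborhoods of $x(T\cdot)$, since $s+s_k\to+\infty$); hence $v_\infty(\R\times\R/\Z)\subseteq x(\R)$, so in particular $\pi_\lambda\circ dv_\infty\equiv 0$. Moreover $\int_{\{s=0\}}v_\infty^*\lambda=\lim_k\int_{|z|\le e^{2\pi s_k}}u_{n_k}^*d\lambda_{n_k}\ge T-\sigma>0$, so $\vtil_\infty$ is non-constant. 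The classification of finite-energy curves with vanishing $d\lambda$-area from~\cite[Theorem~6.11]{props2} then forces $\vtil_\infty$ to be a trivial cylinder over $x$, and since $u_{n_k}(Re^{2\pi it})$ is $C^\infty$-close, modulo rotation, to $x(T\cdot)$ for $R$ large (again Lemma~\ref{lemma_uniform_1}), the $t$-winding equals $p$; thus $\vtil_\infty(s,t)=(Ts+c,x(Tt+d))$ with $c=b_\infty(0,0)=0$, i.e.\ in the Martinet coordinates the $\R$-, $\theta$- and $z$-components of $\vtil_\infty$ are $Ts$, $pt+d'$ and $0$. Finally, the prescribed derivative of $x_{i,n_k}$ at $(s_k,t_k)$ equals $D^\beta$ of the $i$-th entry of the $z$-component of $\vtil_k$ at $(0,t_k)$, which by $C^\infty_{\rm loc}$-convergence tends to the corresponding derivative of $0$; and the derivative of $a_{n_k}-Ts$, resp.\ $\theta_{n_k}-pt$, at $(s_k,t_k)$ equals a derivative of the $\R$-, resp.\ $\theta$-, component of $\vtil_k$ minus the matching derivative of $Ts$, resp.\ $pt$, hence tends to a derivative of $Ts$ (resp.\ $pt+d'$) minus the matching derivative of $Ts$ (resp.\ $pt$), which vanishes once $|\beta|\ge1$. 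In all cases this contradicts the bound $\epsilon_0$, and the proof is complete.

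I expect the main obstacle to be the bubbling-off step producing the uniform gradient bounds --- in particular, checking that any bubble forming off the sequence $\vtil_k$ necessarily picks up its area inside the region $\{|z|\ge 1\}$ where the small-area normalization \eqref{normalization_u_n} is available; once that is secured, extraction of the limit, the identification of trivial cylinders via~\cite{props2}, and the final derivative comparison are routine.
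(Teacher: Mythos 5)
Your proposal is correct and reaches the stated conclusions, but it is packaged differently from the paper's argument. The initial step is shared: a uniform gradient bound on $\vtil_n(s,t)=\util_n(e^{2\pi(s+it)})$ for $s\geq s_0$, obtained by bubbling off a nonconstant finite-energy $\jtil$-plane whose $d\lambda$-area would be a period $>2\sigma$, contradicting the normalization~\eqref{normalization_u_n}; your observation that the bubble forms at points $(s^*,t^*)$ with $s^*$ bounded and $s_k\to+\infty$, hence eventually inside $\{|z|\geq 1\}$, is exactly what makes this work. Where you diverge is in how the derivative estimates are then extracted. You run a single re-centering argument, pass to a $C^\infty_{\rm loc}$-limit cylinder $\vtil_\infty$, use Lemma~\ref{lemma_uniform_1} to force its image into $\R\times x(\R)$ (hence zero contact area), invoke the classification of vanishing-$d\lambda$-area curves from~\cite[Theorem~6.11]{props2} together with a winding-number count to identify $\vtil_\infty$ with the trivial cylinder $(Ts+c,x(Tt+d))$, and then read off~\eqref{unif_bound_1} and~\eqref{unif_bound_2} for every $\beta$ at once from $C^\infty_{\rm loc}$-convergence. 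The paper instead splits the coordinates: for the $(a_n,\theta_n)$-block it performs essentially your re-centering/Liouville argument, identifying the limit with the linear holomorphic function $Ts+c+i(pt+d)$; for the $z_n$-block it gets $|\beta|\leq 1$ directly from Lemma~\ref{lemma_uniform_1} and the Cauchy--Riemann equation~\eqref{cr_z_coordinate}, then bootstraps to all $\beta$ by an induction using elliptic estimates for $\bar\partial$ together with the already-established~\eqref{unif_bound_2}. Your version is more geometric and unified (one limit object covers everything) at the cost of invoking the classification theorem and a topological winding identification; the paper's version trades that for an inductive elliptic-estimate step. Your preliminary remark that $n_k\to\infty$ is a harmless shortcut: if $n_k$ were bounded the nondegenerate asymptotics of Theorem~\ref{thm_asymptotics} dispose of that case, as you note, but the rescaling argument itself would also go through unchanged with $\jtil$ replaced by $\jtil_{n_0}$.
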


\begin{proof}
This proof is contained in~\cite{convex}. There is no loss of generality to assume $T_{\rm min}=1$, so that $T=p$. Note that Lemma~\ref{lemma_uniform_1} gives~\eqref{unif_bound_1} for every $\beta=(0,\beta_2)$, $\beta_2\geq0$. Set $\vtil_n(s,t) = \util_n(e^{2\pi(s+it)})$.

We claim that
\begin{equation}\label{uniform_gradient_bounds_end}
\sup_{s\geq s_0,n,t} |\nabla\vtil_n| < \infty.
\end{equation}
Suppose, by contradiction, that~\eqref{uniform_gradient_bounds_end} does not hold. Then, since $\{\vtil_n\}$ is $C^\infty_{\rm loc}$-convergent, we find $(s_n,t_n)$ satisfying $s_n\to+\infty$ such that, up to a subsequence, $|\nabla\vtil_n(s_n,t_n)|\to\infty$. Using Hofer's lemma and rescaling, exactly as in Lemma~\ref{lemma_uniform_1}, we find a non-constant finite-energy $\jtil$-holomorphic plane that bubbles-off from this sequence. The $d\lambda$-area of this plane is at least equal to some period and, as such, strictly larger than $\sigma$, contradicting~\eqref{normalization_u_n}. This proves~\eqref{uniform_gradient_bounds_end}. Differentiating the Cauchy-Riemann equations for $\vtil_n$ and using a boot-strapping argument one now concludes that
\begin{equation}\label{uniform_derivatives_bounds_end}
\sup_{s\geq s_0,t,n} |D^\alpha\vtil_n| < \infty \ \ \ \forall \alpha.
\end{equation}
The arguments just explained, which we used to arrive at~\eqref{uniform_derivatives_bounds_end}, can be found in~\cite[Lemma 7.1]{convex}.

As a consequence $S_n(s,t)$ is uniformly bounded in $n$; this fact together with~\eqref{cr_z_coordinate} implies that~\eqref{unif_bound_1} holds for $|\beta|\leq 1$.

Let $\beta$ satisfy $|\beta|\geq1$ and suppose, by contradiction, that
\begin{equation}
\limsup_{s\to\infty} \left[ \sup_{n,t} \left( |D^\beta [a_n-Ts]| + |D^\beta [\theta_n-pt]| \right) \right] > 0.
\end{equation}
Then, up to choice of a subsequence, we may assume that we can find $(s_n,t_n)$, $s_n\to\infty$, such that
\begin{equation}\label{partial_contradiction}
|D^\beta [a_n-Ts](s_n,t_n)| + |D^\beta [\theta_n-pt](s_n,t_n)| \geq \epsilon > 0.
\end{equation}
Consider
\begin{equation*}
h_n(s+it) = a_n(s+s_n,t+t_n)-a_n(s_n,t_n) + i(\theta_n(s+s_n,t+t_n)-\theta_n(s_n,t_n)).
\end{equation*}
By~\eqref{uniform_derivatives_bounds_end}, equations~\eqref{cr_a_coord_1}-\eqref{cr_a_coord_2} and by~\eqref{unif_bound_1} for $|\beta|\leq1$, $h_n$ $C^\infty_{\rm loc}$-converges to a holomorphic function $h(s+it)$ with bounded gradient, up to the choice of a subsequence. The limit $h$ can not be constant, in fact, by Lemma~\ref{lemma_uniform_1} the $t$-derivative of its imaginary part is equal to $p$. Since $h(s+i(t+1))=h(s+it)+ip$, this function must have the form $h(s+it) = Ts+c + i(pt+d)$ (Liouville's theorem was used). This contradicts~\eqref{partial_contradiction} and proves~\eqref{unif_bound_2} for $|\beta|\geq1$.

The proof can now be completed with an induction argument, differentiating equation~\eqref{cr_z_coordinate}, using the elliptic estimates for the standard $\bar\partial$-operator and~\eqref{unif_bound_2}.
\end{proof}

Up to composing each $\util_n$ with a rotation we can assume, without loss of generality, that
\begin{equation}\label{1_limit_theta_n}
\lim_{s\to+\infty} \theta_n(s,0) = 0, \ \ \ \ \text{for each fixed $n$.}
\end{equation}
Here we used the asymptotic behavior described in Theorem~\ref{thm_partial_asymp} and the fact that each $\lambda_n$ is non-degenerate. Moreover, by Lemma~\ref{lemma_unif_bounds_near_end} we know that
\begin{equation}\label{2_limit_theta_n}
\lim_{j\to\infty} \|D^\beta[\theta_{n_j}(s,t)-pt-\theta_{n_j}(s_j,0)](s_j,\cdot)\|_{L^\infty(\R/\Z)} = 0
\end{equation}
holds for every pair of sequences $n_j,s_j\to+\infty$ and for every partial derivative $D^\beta = \partial_s^{\beta_1}\partial_t^{\beta_2}$.
Set
\begin{equation}\label{asymptotic_geometric_data}
\begin{aligned}
& j^\infty(t) = j(pt,0), \\
& D^\infty(t) = Y'(pt,0), \\
& S^\infty(t) = -Tj^\infty(t)D^\infty(t)
\end{aligned}
\end{equation}
and let $M:\R/\Z\times B\to \R^{2\times 2}$ be a smooth function taking values on symplectic matrices and satisfying $Mj=J_0M$, where $J_0$ is the standard complex structure on $\R^2\simeq \C$. Then define
\begin{equation*}
M_n(s,t) = M(\theta_n(s,t),z_n(s,t))
\end{equation*}
and
\begin{equation}\label{asymptotic_geometric_data_M}
M^\infty(t) = M(pt,0).
\end{equation}

\begin{lemma}\label{lemma_geom_data_unif_compactness}
For every pair of sequences $n_l,s_l\to+\infty$ there exists $l_k \to \infty$ and a number $c \in [0,1)$ such that
\begin{equation}
\begin{aligned}
& \lim_{k\to\infty} \|\partial_s^{\beta_1}\partial_t^{\beta_2}[j_{n_{l_k}}(s,t)-j^\infty(t+c)](s_{l_k},\cdot)\|_{L^\infty(\R/\Z)} = 0 \\
& \lim_{k\to\infty} \|\partial_s^{\beta_1}\partial_t^{\beta_2}[D_{n_{l_k}}(s,t)-D^\infty(t+c)](s_{l_k},\cdot)\|_{L^\infty(\R/\Z)} = 0 \\
& \lim_{k\to\infty} \|\partial_s^{\beta_1}\partial_t^{\beta_2}[S_{n_{l_k}}(s,t)-S^\infty(t+c)](s_{l_k},\cdot)\|_{L^\infty(\R/\Z)} = 0 \\
& \lim_{k\to\infty} \|\partial_s^{\beta_1}\partial_t^{\beta_2}[M_{n_{l_k}}(s,t)-M^\infty(t+c)](s_{l_k},\cdot)\|_{L^\infty(\R/\Z)} = 0
\end{aligned}
\end{equation}
for each partial derivative $\partial_s^{\beta_1}\partial_t^{\beta_2}$, $\beta_1,\beta_2\geq0$.
\end{lemma}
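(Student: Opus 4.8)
The plan is to read the four limits directly off the defining formulas for $j_n$, $D_n$, $S_n$, $M_n$ after feeding in the uniform asymptotic control provided by Lemma~\ref{lemma_unif_bounds_near_end} and~\eqref{2_limit_theta_n}. First I would fix the sequences $n_l,s_l\to+\infty$ and extract a single subsequence $l_k$ along which $\theta_{n_{l_k}}(s_{l_k},0)$ converges modulo $\Z$ to some $\beta_0\in[0,1)$; this is possible simply because the circle is compact. Then I set $c=\beta_0/p\in[0,1/p)\subset[0,1)$. No further extraction is needed: all the remaining ingredients converge already along the full sequence, so the same $l_k$ and the same $c$ will work for all four statements.

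Next I would record the basic slice-wise convergences, all immediate from Lemma~\ref{lemma_unif_bounds_near_end} and~\eqref{2_limit_theta_n}: by~\eqref{unif_bound_1} one has $z_{n_l}(s_l,\cdot)\to 0$ in $C^\infty(\R/\Z)$, together with all $s$-derivatives of $z_{n_l}$ at $s=s_l$ tending to $0$; by~\eqref{2_limit_theta_n}, $\theta_{n_{l_k}}(s_{l_k},t)-pt-\theta_{n_{l_k}}(s_{l_k},0)\to 0$ in $C^\infty(\R/\Z)$, while by~\eqref{unif_bound_2} all $s$-derivatives of $\theta_{n_l}-pt$ at $s=s_l$ tend to $0$; and, again by~\eqref{unif_bound_2}, $\partial_s a_{n_l}(s_l,\cdot)\to T$ and $\partial_t a_{n_l}(s_l,\cdot)\to 0$ in $C^\infty(\R/\Z)$ (with all higher $s$-derivatives of $a_{n_l}-Ts$ tending to $0$). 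Finally, since $\lambda_n=f_n\lambda\to\lambda$ in $C^\infty$, the conformal factors satisfy $g_n\to g$ in $C^\infty_{\rm loc}$ on the Martinet tube, so the Reeb fields converge, $X_{\lambda_n}\to X_\lambda$ in $C^\infty_{\rm loc}$, and hence $Y_n\to Y$ and $Y_n'\to Y'$ in $C^\infty_{\rm loc}$.

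With this in hand the four statements follow by substitution and the chain and product rules (which preserve $C^\infty(\R/\Z)$-convergence of compositions of smooth maps). Viewing $j$ and $M$ as smooth functions on $\R\times B$ that are $1$-periodic in the first slot, we have $j_{n_{l_k}}(s_{l_k},t)=j(\theta_{n_{l_k}}(s_{l_k},t),z_{n_{l_k}}(s_{l_k},t))$, whose argument converges — modulo $\Z$ in the first coordinate, where periodicity of $j$ makes the unbounded integer part of the lift irrelevant — to $(pt+\beta_0,0)$ in $C^\infty(\R/\Z)$, while its $s$-derivatives tend to $0$; composing gives $j_{n_{l_k}}(s_{l_k},\cdot)\to j(pt+\beta_0,0)=j^\infty(\cdot+c)$ in every $\partial_s^{\beta_1}\partial_t^{\beta_2}$-norm, and identically for $M$. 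For $D_n$, the integrand $Y_{n_{l_k}}'(\theta_{n_{l_k}}(s_{l_k},t),\tau z_{n_{l_k}}(s_{l_k},t))$ converges in $C^\infty(\R/\Z)$ to the $\tau$-independent limit $Y'(pt+\beta_0,0)$ (using $Y_n'\to Y'$ in $C^\infty_{\rm loc}$ and $z_{n_{l_k}}(s_{l_k},\cdot)\to 0$), so $D_{n_{l_k}}(s_{l_k},\cdot)\to Y'(pt+\beta_0,0)=D^\infty(\cdot+c)$. For $S_n$, from $S_n=[\partial_t a_n\,I-\partial_s a_n\,j_n]D_n$ and the already-established limits we get $S_{n_{l_k}}(s_{l_k},\cdot)\to[0\cdot I-Tj^\infty(\cdot+c)]D^\infty(\cdot+c)=-Tj^\infty(\cdot+c)D^\infty(\cdot+c)=S^\infty(\cdot+c)$ in $C^\infty(\R/\Z)$.

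The one point requiring care is the treatment of the angular coordinate: the lift $\theta_{n_l}(s_l,0)$ need not be bounded in $l$, so one cannot pass to a limit of the lifts themselves. This is circumvented by the observation that $j$, $M$ and $Y'$ depend only on $\theta\bmod\Z$ — they are functions on $\R/\Z\times B$ — so it suffices that $\theta_{n_l}(s_l,0)\bmod\Z$ converge along a subsequence, which is automatic by compactness of the circle, and the resulting $\beta_0\in[0,1)$ produces the shift $c=\beta_0/p$ appearing in the statement. Beyond this, everything is a routine bookkeeping exercise with the uniform estimates of Lemma~\ref{lemma_unif_bounds_near_end} and the elementary continuity of composition in the $C^\infty$ topology.
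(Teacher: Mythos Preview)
Your argument is correct and follows essentially the same route as the paper's proof: extract a subsequence via compactness of $\R/\Z$ so that $\theta_{n_{l_k}}(s_{l_k},0)$ converges modulo $\Z$, and then feed the uniform slice-wise convergences from Lemma~\ref{lemma_unif_bounds_near_end} and~\eqref{2_limit_theta_n} into the defining compositions for $j_n,D_n,S_n,M_n$. The paper makes the same moves, writing the Taylor/fundamental-theorem-of-calculus expansion of $j_{n_{l_k}}(s,t)-j(pt+c_k,0)$ explicitly where you invoke the chain rule and continuity of composition; both are fine. In fact your identification $c=\beta_0/p$ is the more precise one: since $j^\infty(t+c)=j(pt+pc,0)$ while the limit coming from the composition is $j(pt+\beta_0,0)$, one needs $pc\equiv\beta_0\bmod 1$, which your choice delivers (the paper's proof uses the same letter $c$ for the limit of $\theta_{n_{l_k}}(s_{l_k},0)\bmod 1$, a harmless notational slip since the precise value of the shift never matters downstream).
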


\begin{proof}
We only discuss the first limit, the second and fourth limits are handled analogously, and the third follows from the first two, Lemma~\ref{lemma_unif_bounds_near_end} and the formula $S_n(s,t) = \left[ \partial_ta_n \ I-\partial_sa_n \ j_n \right] D_n(s,t)$.

We find $c\in[0,1)$, $l_k \to \infty$ such that $\theta_{n_{l_k}}(s_{l_k},0) \to c \mod 1$ since $\R/\Z$ is compact. Denote $c_k := \theta_{n_{l_k}}(s_{l_k},0)$ and consider $$ \epsilon_k(s,t) = \theta_{n_{l_k}}(s,t)-pt-c_k. $$ Since $j_n=j\circ(\theta_n,z_n)$ and $j$ is $1$-periodic in the first coordinate, we get
\begin{equation*}
\begin{aligned}
j_{n_{l_k}}(s,t) &= j(pt+c_k,0) \\
&+ \left[ \int_0^1Dj(pt+c_k+\tau\epsilon_k(s,t),\tau z_{n_{l_k}}(s,t)) \ d\tau \right] \cdot (\epsilon_k(s,t),z_{n_{l_k}}(s,t)).
\end{aligned}
\end{equation*}
The desired result follows from the above representation and~\eqref{2_limit_theta_n} since each derivative of $j$ is bounded on a neighborhood of the orbit.
\end{proof}

Defining
\begin{equation*}
\zeta_n(s,t) = M_n(s,t)z_n(s,t)
\end{equation*}
and
\begin{align*}
\Lambda_n = (M_nS_n - \partial_sM_n-J_0\partial_tM_n) M_n^{-1}
\end{align*}
we get
\begin{equation}
\partial_s\zeta_n+J_0\partial_t\zeta_n+\Lambda_n\zeta_n=0.
\end{equation}
Setting
\begin{align*}
& \Lambda^\infty(t) = \left[ M^\infty(t)S^\infty(t)-J_0\partial_tM^\infty(t) \right] M^\infty(t)^{-1}
\end{align*}
we obtain the following corollary of Lemma~\ref{lemma_geom_data_unif_compactness}.

\begin{corollary}
For every pair of sequences $n_l,s_l\to+\infty$ there exists $l_k\to\infty$ and a number $c\in[0,1)$ such that
\begin{equation*}
\lim_{k\to\infty} \|\partial^{\beta_1}_s\partial^{\beta_2}_t[\Lambda_{n_{l_k}}(s,t)-\Lambda^\infty(t+c)](s_{l_k},\cdot)\|_{L^\infty(\R/\Z)} = 0 \ \ \ \forall \ \beta_1,\beta_2\geq0.
\end{equation*}
\end{corollary}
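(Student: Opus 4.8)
The plan is to deduce the corollary entirely from Lemma~\ref{lemma_geom_data_unif_compactness} together with the uniform decay estimates of Lemma~\ref{lemma_unif_bounds_near_end}, using only the formal calculus of $C^\infty$-convergent families of loops. First, given sequences $n_l,s_l\to+\infty$, I would invoke Lemma~\ref{lemma_geom_data_unif_compactness} to pass to a subsequence (still indexed by $l$) and produce $c\in[0,1)$ such that $M_{n_l}(s_l,\cdot)\to M^\infty(\cdot+c)$ and $S_{n_l}(s_l,\cdot)\to S^\infty(\cdot+c)$, where convergence means that every derivative $\partial_s^{\beta_1}\partial_t^{\beta_2}$ of the left side, restricted to $s=s_l$, tends to the corresponding derivative of the right side in $L^\infty(\R/\Z)$. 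Since $M$ takes values in symplectic $2\times2$ matrices, each $M_n$ is pointwise invertible with determinant bounded away from $0$, and $M^\infty(\cdot+c)$ is invertible; because matrix inversion is smooth on the invertible matrices, it follows that $M_{n_l}^{-1}(s_l,\cdot)\to M^\infty(\cdot+c)^{-1}$ in the same $C^\infty$-sense.

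The only genuinely new point is to control the two terms $\partial_sM_n$ and $\partial_tM_n$ occurring in $\Lambda_n=(M_nS_n-\partial_sM_n-J_0\partial_tM_n)M_n^{-1}$, and here I would differentiate the identity $M_n(s,t)=M(\theta_n(s,t),z_n(s,t))$ by the chain rule. Any derivative $\partial_s^{\beta_1}\partial_t^{\beta_2}\partial_sM_n$ is a finite sum of terms of the form $(D^mM)(\theta_n,z_n)$ applied to products of partial derivatives of $\theta_n$ and $z_n$, in which the total number of $s$-derivatives distributed is at least $\beta_1+1\geq1$. On the region $s\geq s_0$ the curves stay in a fixed neighborhood of the orbit by~\eqref{number_s_0}, so every factor $(D^mM)(\theta_n,z_n)$ is uniformly bounded; in each term at least one remaining factor carries an $s$-derivative, hence is either a derivative of $z_n$ (which tends to $0$ uniformly in $n,t$ by~\eqref{unif_bound_1}) or a derivative $\partial_s^a\partial_t^b\theta_n=\partial_s^a\partial_t^b[\theta_n-pt]$ with $a\geq1$ (which tends to $0$ uniformly in $n,t$ by~\eqref{unif_bound_2}), while all other factors ($z_n$ and its derivatives, $\partial_t\theta_n\to p$, and higher $t$-derivatives of $\theta_n-pt$) are uniformly bounded. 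Thus $\partial_sM_n\to0$ in $C^\infty(\R/\Z)$ as $s\to+\infty$, uniformly in $n$. For $\partial_tM_n=(\partial_\theta M)(\theta_n,z_n)\,\partial_t\theta_n+(\partial_zM)(\theta_n,z_n)\,\partial_tz_n$, the second summand and all its derivatives tend to $0$ by the same estimates, and in the first summand I would differentiate the already established relation $M(\theta_{n_l}(s_l,\cdot),z_{n_l}(s_l,\cdot))\to M^\infty(\cdot+c)$ in $t$, using $\partial_t\theta_{n_l}(s_l,\cdot)\to p$ and $z_{n_l}(s_l,\cdot)\to0$, to obtain $p\,(\partial_\theta M)(\theta_{n_l}(s_l,\cdot),z_{n_l}(s_l,\cdot))\to\partial_tM^\infty(\cdot+c)$; iterating the chain rule gives the same with all derivatives, so $\partial_tM_{n_l}(s_l,\cdot)\to\partial_tM^\infty(\cdot+c)$ in $C^\infty(\R/\Z)$.

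It then remains only to assemble: since $C^\infty(\R/\Z)$-convergence along $s=s_l$ is preserved under sums and products by the Leibniz rule, from the four limits
$$
M_{n_l}(s_l,\cdot)\to M^\infty(\cdot+c),\quad S_{n_l}(s_l,\cdot)\to S^\infty(\cdot+c),\quad \partial_sM_{n_l}(s_l,\cdot)\to0,\quad \partial_tM_{n_l}(s_l,\cdot)\to\partial_tM^\infty(\cdot+c),
$$
together with $M_{n_l}^{-1}(s_l,\cdot)\to M^\infty(\cdot+c)^{-1}$, one concludes that $\Lambda_{n_l}(s_l,\cdot)$ converges in $C^\infty(\R/\Z)$ to $\bigl(M^\infty(\cdot+c)S^\infty(\cdot+c)-J_0\partial_tM^\infty(\cdot+c)\bigr)M^\infty(\cdot+c)^{-1}=\Lambda^\infty(\cdot+c)$, which is exactly the claim. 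The only mildly delicate step is the one in the second paragraph, namely the uniform-in-$n$ decay of the $s$-derivatives of $M_n$ and the convergence of the $t$-derivatives of $M_n$; this requires keeping careful track, through the chain rule, of which factors are supplied by Lemma~\ref{lemma_unif_bounds_near_end}, but it is otherwise routine, and everything else is the formal algebra of $C^\infty$-convergence.
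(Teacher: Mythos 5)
Your proof is correct, and the essential ingredients are the right ones: Lemma~\ref{lemma_geom_data_unif_compactness}, invertibility of $M_n$ (from $M$ taking values in symplectic $2\times 2$ matrices), and the Leibniz rule. However, the second paragraph -- which you flag as the ``only genuinely new point'' and the ``only mildly delicate step'' -- is entirely redundant. The fourth limit in Lemma~\ref{lemma_geom_data_unif_compactness} already asserts, for all $\beta_1,\beta_2\geq0$, that $\|\partial_s^{\beta_1}\partial_t^{\beta_2}[M_{n_{l_k}}(s,t)-M^\infty(t+c)](s_{l_k},\cdot)\|_{L^\infty}\to0$; taking $\beta_1=1,\beta_2=0$ gives $\partial_sM_{n_{l_k}}(s_{l_k},\cdot)\to0$ (as $M^\infty$ is $s$-independent) and $\beta_1=0,\beta_2=1$ gives $\partial_tM_{n_{l_k}}(s_{l_k},\cdot)\to(\partial_tM^\infty)(\cdot+c)$, together with all higher derivatives. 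So there is nothing new to control there: you re-derived, via the chain rule for $M$ and the estimates of Lemma~\ref{lemma_unif_bounds_near_end}, a (slightly stronger, uniform-in-$n$) version of something the lemma already hands you along the given subsequence. The genuinely new content beyond the lemma is only (a) that $M_{n}^{-1}$ inherits the same convergence (smoothness of matrix inversion on $\mathrm{Sp}(2)$, or equivalently Cramer's rule together with $\det M_n\equiv1$, plus Leibniz for its partials), and (b) the routine Leibniz assembly you carry out at the end. The paper treats the corollary as an immediate consequence of Lemma~\ref{lemma_geom_data_unif_compactness} for precisely this reason, so your argument reaches the same conclusion by a longer path; I would simply delete the second paragraph and replace it with a one-line appeal to the $\beta_1\geq1$ and $(\beta_1,\beta_2)=(0,1)$ instances of the lemma's fourth limit.
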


\begin{remark}[$C^{l,\alpha,\delta}_0$-topology]\label{rmk_Holder_with_decay}
Given numbers $l\geq1$, $\alpha\in(0,1)$ and $\delta<0$, we recall here the space $C^{l,\alpha,\delta}_0([s_0,+\infty)\times\R/\Z,\R^m)$ defined in~\cite{props3}. It is defined as the set of functions $h:[s_0,+\infty)\times\R/\Z\to \R^m$ which are locally of class $C^{l,\alpha}$ and satisfy $$ \lim_{R\to+\infty} \| e^{-\delta s}D^\beta h \|_{C^{0,\alpha}([R,+\infty)\times\R/\Z)} = 0 \ \ \ \forall |\beta|\leq l. $$ This space becomes a Banach space with the norm
\begin{equation}\label{norm_C_l_alpha_delta}
\|h\|_{l,\alpha,\delta} = \|e^{-\delta s}h(s,t)\|_{C^{l,\alpha}([s_0,+\infty)\times\R/\Z)}.
\end{equation}
See~\cite{props3} for more details.
\end{remark}

The asymptotic behavior of the plane $\util$ can only be studied in view of the following proposition which is proved in Appendix~\ref{app_unif_asymp_analysis} by a small modification of the arguments from~\cite[appendix B]{openbook}. The proof is also essentially contained in~\cite{props1}.

\begin{proposition}\label{prop_unif_asymptotic_analysis}
Suppose that $K_n : [0,+\infty)\times \R/\Z \rightarrow \R^{2m\times 2m}$, $n\geq 1$, and $K^\infty : \R/\Z \rightarrow \R^{2m\times 2m}$ are given smooth maps satisfying:
\begin{enumerate}
\item[i)] $K^\infty(t)$ is symmetric $\forall t$.
\item[ii)] For every pair of sequences $n_l,s_l\to+\infty$ there exists $l_k\to+\infty$ and $c\in[0,1)$ such that
\[
\lim_{k\to\infty} \sup_{\tau\in\R/\Z} |D^\beta[K_{n_{l_k}}(s,t)-K^\infty(t+c)](s_{l_k},\tau)| = 0 \ \ \ \forall \beta.
\]
\end{enumerate}
Consider the unbounded self-adjoint operator $L$ on $L^2(\R/\Z,\R^{2m})$ defined by
\[
  Le = -J_0\dot{e} - K^\infty e.
\]
Denote $E = C^{l,\alpha,\delta}_0 \left( \left[ 0,+\infty \right) \times \R/\Z,\R^{2m} \right)$ for some $\delta < 0$ and $l\geq1$. Suppose that $\{X_n\} \subset E$ are smooth maps satisfying
\begin{equation}\label{eqn_X_n_prop}
\partial_s X_n + J_0\partial_t X_n + K_n X_n = 0  \ \forall n.
\end{equation}
If $\delta\not\in\sigma(L)$ and $X_n$ is $C^\infty_{loc}$-bounded then $\{X_n\}$ has a convergent subsequence in $E$.
\end{proposition}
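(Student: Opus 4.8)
The plan is to reduce the claim to a single \emph{uniform} exponential decay estimate and then run a routine $\epsilon/3$-argument. Since $\delta\notin\sigma(L)$ and $\sigma(L)$ is discrete with no finite accumulation point, fix $\delta'<\delta$ with $[\delta',\delta]\cap\sigma(L)=\emptyset$; when $\sigma(L)\cap(-\infty,\delta)\neq\emptyset$ we take $\delta'$ strictly above $\max\big(\sigma(L)\cap(-\infty,\delta)\big)$ (the complementary case turns out to force $X_n\equiv0$ for all $n$, which makes the statement trivial). The key claim is:
\begin{equation}\label{eqn_unif_decay_plan}
\exists\, s_1\geq0,\ C>0 \ \text{ with } \ \|X_n\|_{C^{l,\alpha}([s,s+1]\times\R/\Z)}\leq C\,e^{\delta' s} \quad\text{for all } n,\ s\geq s_1.
\end{equation}
Granting~\eqref{eqn_unif_decay_plan}, the maps $e^{-\delta s}X_n$ are bounded in $C^{l,\alpha}([s_1,+\infty)\times\R/\Z)$ uniformly in $n$ and satisfy $\sup_n\|e^{-\delta s}X_n\|_{C^{l,\alpha}([R,+\infty)\times\R/\Z)}\leq C\,e^{(\delta'-\delta)R}\to0$ as $R\to+\infty$. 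Together with the assumed $C^\infty_{\rm loc}$-bounds on the compact piece $[0,s_1]\times\R/\Z$, Arzel\`a--Ascoli yields a subsequence $X_{n_j}$ converging in $C^{l,\alpha}_{\rm loc}$ to some $X_\infty$; passing to the limit in~\eqref{eqn_unif_decay_plan} shows $X_\infty$ obeys the same bound, hence $X_\infty\in E$. Splitting $[0,+\infty)\times\R/\Z$ into $[0,R]\times\R/\Z$, on which the weight $e^{-\delta s}$ is bounded and $C^{l,\alpha}$-convergence comes from $C^{l,\alpha}_{\rm loc}$-convergence, and $[R,+\infty)\times\R/\Z$, on which the uniform tail estimate applies to both $X_{n_j}$ and $X_\infty$, and choosing first $R$ then $j$ large, gives $\|X_{n_j}-X_\infty\|_{l,\alpha,\delta}\to0$; cf.\ Remark~\ref{rmk_Holder_with_decay}.

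The heart of the matter is~\eqref{eqn_unif_decay_plan}, a parametrized version of the asymptotic analysis of Hofer, Wysocki and Zehnder~\cite{props1,props3}, carried out as in \cite[Appendix B]{openbook}. Conjugating by the weight, set $Y_n:=e^{-\delta s}X_n$; then $Y_n$ solves $\partial_sY_n+J_0\partial_tY_n+(K_n+\delta)Y_n=0$, its limiting asymptotic operator is $L-\delta$, and $0\notin\sigma(L-\delta)$ by hypothesis; moreover $Y_n\to0$ with all derivatives as $s\to+\infty$, for each fixed $n$, since $X_n\in E$. Hypothesis~(ii), through a subsequence argument, upgrades to the uniform statement that for every $k$ there is $S_k$ with $\sup\{\|K_n(s,\cdot)\|_{C^k}:n\geq1,\,s\geq S_k\}<\infty$ and such that for $s\geq S_k$ the matrix $K_n(s,\cdot)$ is $C^k$-close (uniformly in $n$, with error tending to $0$ as $S_k\to\infty$) to the compact family of symmetric matrices $\{K^\infty(\cdot+c)\}_{c\in[0,1)}$, whose associated asymptotic operators $-J_0\partial_t-(K^\infty(\cdot+c)-\delta)$ are self-adjoint with spectrum $\sigma(L)-\delta$ (being unitarily conjugate to $L-\delta$ via the $t$-shift). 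With this in hand the monotonicity/convexity argument for the $L^2$-norms runs with \emph{uniform} constants: writing $g_n(s)=\tfrac12\|Y_n(s,\cdot)\|_{L^2(\R/\Z)}^2$ (positive unless $X_n\equiv0$, by unique continuation), the equation forces the logarithmic derivative $g_n'/g_n$ to satisfy, for $s$ past a threshold $s_1$ depending only on the error bounds just described, a differential inequality pushing it within a uniformly bounded $s$-interval out of a fixed neighborhood of $0$; since $g_n(s)\to0$ it must end up bounded above by some $-2d<0$, and integrating gives $\|Y_n(s,\cdot)\|_{L^2(\R/\Z)}\leq C\,e^{-ds}$ for $s\geq s_1$, with $d>0$ chosen so that $\delta-d=\delta'\notin\sigma(L)$ and with $C$ uniform in $n$ (here $\sup_n g_n(s_1)<\infty$ comes from the $C^\infty_{\rm loc}$-bounds, as $\{s_1\}\times\R/\Z$ is compact). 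An interior elliptic bootstrap for $\partial_s+J_0\partial_t$ on the unit cylinders $[s-1,s+1]\times\R/\Z$, using the uniform $C^k$-bounds on $K_n$ for large $s$, upgrades this $L^2$-decay of $Y_n$ to $C^{l,\alpha}$-decay, which upon undoing the conjugation is precisely~\eqref{eqn_unif_decay_plan}.

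The principal obstacle is exactly the uniformity in $n$ of~\eqref{eqn_unif_decay_plan}. The exponential decay of each individual $X_n$ at rate $e^{\delta' s}$ is classical, but the fact that the onset $s_1$ of the decay and the constant $C$ can be chosen independently of $n$ rests entirely on hypothesis~(ii) in its full strength: it is what makes the error terms in the monotonicity inequality uniformly small across the whole sequence once $s$ is large. We stress that no a priori bound on $\|X_n\|_E$ is assumed; the $E$-bounds along the extracted subsequence are an output, not an input, of the monotonicity argument, the $C^\infty_{\rm loc}$-boundedness supplying only the control at the fixed finite scale $s_1$.
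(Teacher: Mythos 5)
Your proposal is correct and follows essentially the same route as the paper's Appendix~B: conjugation by the exponential weight, a uniform convexity/monotonicity argument for the $L^2$-norms $g_n(s)=\tfrac12\|e^{-\delta s}X_n(s,\cdot)\|_{L^2}^2$ whose uniformity in $n$ derives exactly from hypothesis~(ii), an elliptic bootstrap to $C^{l,\alpha}$-decay, and a tail-plus-compact-piece argument for convergence in $E$. The only deviations are cosmetic (interior elliptic estimates on unit cylinders in place of the paper's explicit differentiation of the equation in Steps~1--2, and a logarithmic-derivative phrasing of the convexity step), while your parenthetical claim that $\sigma(L)\cap(-\infty,\delta)=\emptyset$ would force $X_n\equiv 0$ is unjustified as stated but moot, since $\sigma(L)$ always accumulates at $-\infty$ for such self-adjoint first-order operators on the circle.
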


Let $\kappa$ be a homotopy class of $d\lambda$-symplectic, or equivalently $d\lambda_n$-symplectic for any $n$, trivializations of $(x_T)^*\xi$ coming from a capping disk for $P$. Let $A_n$ be the asymptotic operator at the orbit $P$ associated to $(\lambda_n,J)$, and let $A$ be the corresponding operator associated to $(\lambda,J)$. By the fact that the Conley-Zehnder index of $P$ with respect to $\lambda$ is larger than or equal to $3$, and by the continuity of the spectrum, we find $\delta<0$ such that for $n$ large enough the eigenvalues of $A_n$ and $A$ with winding number $1$ with respect to $\kappa$ are smaller than $2\delta$. Moreover, we can assume that $\delta$ is not an eigenvalue of $A$ or of $A_n$ with $n$ large enough.

Choose $l\geq 1$, $\alpha\in(0,1)$ and consider the space $E=C^{l,\alpha,\delta}_0([s_0,+\infty)\times\R/\Z,\R^2)$ where $\delta$ is described as above. Since each plane $\util_n$ is fast, we can use Theorem~\ref{thm_asymptotics} to conclude that each $z_n$ belongs to $E$; this is so since the asymptotic eigenvalue of $\util_n$ is below $\delta$. Thus, also $\zeta_n\in E$ for $n$ large. Applying Proposition~\ref{prop_unif_asymptotic_analysis} to $\Lambda_n,\Lambda^\infty$ we conclude that $\zeta_n$ converges in $E$, up to selection of a subsequence. Thus, $z_n$ converges in $E$ as well. In particular, setting
\begin{equation}
(a(s,t),\theta(s,t),x_1(s,t),x_2(s,t)) = id_\R\times\Psi(\util(e^{2\pi(s+it)}))
\end{equation}
we obtain that $z(s,t)=(x_1(s,t),x_2(s,t)) \in E$ and that $z_n\to z$ in $E$, because $z_n\to z$ in $C^\infty_{\rm loc}$ and $z_n$ is convergent in $E$. Since $l\geq1$ was arbitrary, we conclude $\exists r>0$ such that
\begin{equation}\label{decay_z_n}
\lim_{s\to+\infty} \sup_{t\in\R/\Z} e^{rs} |D^\beta z(s,t)| = 0 \ \ \ \forall \beta.
\end{equation}

We know from Theorem~\ref{thm_asymptotics} that $\forall n$ $\exists c_n$ such that $a_n(s,t)-Ts-c_n \to 0$ as $s\to+\infty$. Consider
\begin{equation}
\Delta_n(s,t) = \begin{pmatrix} a_n(s,t)-Ts-c_n \\ \theta_n(s,t)-pt \end{pmatrix}.
\end{equation}
Equations~\eqref{cr_a_coord_1}-\eqref{cr_a_coord_2} can be rewritten as
\begin{equation}\label{cr_Delta_n}
\partial_s\Delta_n + \begin{pmatrix} 0 & -T_{\rm min} \\ T_{\rm min}^{-1} & 0 \end{pmatrix} \partial_t\Delta_n + B_nz_n = 0
\end{equation}
for suitable matrix-valued functions $B_n(s,t)$ satisfying
\begin{equation*}
\limsup_{s\to+\infty} \sup_{t,n} |D^\beta B_n(s,t)| < \infty \ \ \ \forall \beta.
\end{equation*}
Using this, \eqref{cr_Delta_n},~\eqref{decay_z_n}, the convergence of $z_n$ in $E$ and Lemma~\ref{lemma_unif_bounds_near_end} we see that we can apply a version of~\cite[Lemma 6.3]{hryn} for families to conclude that
\begin{equation}\label{decay_Delta_n}
\lim_{s\to+\infty} \sup_{n,t} e^{rs} |D^\beta \Delta_n(s,t)| = 0 \ \ \ \forall \beta \ \text{such that} \ |\beta|\geq1.
\end{equation}
This uniform exponential decay of the derivatives of $\Delta_n$ of order $\geq 1$ together with the fact that $\Delta_n(s,t) \to 0$ as $s\to+\infty$  will tell us that~\eqref{decay_Delta_n} also holds for $\beta=(0,0)$.
Together \eqref{decay_z_n}~\and~\eqref{decay_Delta_n} guarantee that $\infty$ is a non-degenerate puncture of $\util$.

So far we proved that $\util$ is a finite-energy plane with a non-degenerate puncture, and that $\util$ is asymptotic to $P$. It is an immersion since it satisfies $\wind_\pi(\util)=0$, this is a consequence of Lemma~\ref{lemma_wind_infty_u}. Since $y\not\in K$ and $\util_n\to\util$ in $C^\infty_{\rm loc}$ we can use Lemma~\ref{lemma_uniform_1} to conclude that $y\in u(\C)$.

\begin{remark}\label{rmk_fast_embedded}
The fast plane $\util$ must be an embedding. In fact, even $u$ is an immersion since $\pi_\lambda \circ du$ does not vanish. Consequently it must also be somewhere injective because, otherwise using results from~\cite{props2}, we would find a complex polynomial $p(z)$ of degree at least $2$ and a somewhere injective plane $\vtil$ such that $\util=\vtil\circ p$; this forces $\util$ to have critical points, which is impossible. Now, using the proof of~\cite[Theorem~2.3]{props2} combined with Lemma~\ref{lemma_intersection} above we conclude that $u$ is injective.
\end{remark}

Combining the previous arguments with the above remark, we have proved

\begin{proposition}\label{prop_existence_fast_planes}
For every $y\in L(p,q)\setminus K$, and every $J\in \J_+(\xi)$ there exists an embedded fast finite-energy $\jtil$-holomorphic plane $\util=(a,u)$ asymptotic to $P$ and satisfying $y\in u(\C)$, where $\jtil$ is the $\R$-invariant almost complex structure induced by the data $(\lambda,J)$.
\end{proposition}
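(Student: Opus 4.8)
The plan is to assemble the results established throughout this section. Fix $y\in L(p,q)\setminus K$ and $J\in\J_+(\xi)$. By Proposition~\ref{prop_global_sections} and Remark~\ref{rmk_pages}, for the non-degenerate approximating forms $\lambda_n=f_n\lambda$ and all large $n$ there is an embedded fast finite-energy plane asymptotic to $P$ through any prescribed point of $\R\times(L(p,q)\setminus K)$; in the terminology of Proposition~\ref{prop_compactness_1} this forces $\J_{\rm fast}(\lambda_n,y)\neq\emptyset$, and the dichotomy in that proposition then gives $\J_{\rm fast}(\lambda_n,y)=\J_+(\xi)$. Thus, writing $\jtil_n$ for the almost complex structure induced by $(\lambda_n,J)$, there are embedded fast $\jtil_n$-holomorphic planes $\util_n=(a_n,u_n):\C\to\R\times L(p,q)$ asymptotic to $P$ with $y\in u_n(\C)$, which I normalize as in~\eqref{normalization_u_n}.

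The heart of the proof is a compactness-plus-asymptotic-recovery step. Since $\jtil_n\to\jtil$ in $C^\infty$ and $E(\util_n)\leq T$ uniformly, standard bubbling-off analysis --- translating in $\R$ and passing to a subsequence --- produces a finite set $\Gamma\subset\D$ of negative punctures and a non-constant finite-energy $\jtil$-holomorphic map $\util=(a,u):\C\setminus\Gamma\to\R\times L(p,q)$ with $\util_n\to\util$ in $C^\infty_{\rm loc}(\C\setminus\Gamma)$ and $\infty$ a positive puncture. The string of lemmas above then removes each pathology in turn: Lemma~\ref{lemma_intersection} keeps every $u_n(\C)$ disjoint from $K$; Lemma~\ref{lemma_global_linked} shows all closed $\lambda$-Reeb orbits inside $L(p,q)\setminus K$ are non-contractible there; combining these, Lemma~\ref{lemma_asymp_limit_u} identifies the asymptotic limit of $\util$ at $\infty$ with $P$, using that $H_1(L(p,q))=\Z/p\Z$ is generated by $[t\mapsto x(T_{\rm min}t)]$ to exclude proper sub-iterates; Lemma~\ref{lemma_positive_contact_area} shows $\pi_\lambda\circ du\not\equiv0$; Lemma~\ref{lemma_no_bubb_points} shows $\Gamma=\emptyset$, so $\util$ is a plane; and Lemma~\ref{lemma_wind_infty_u} shows $\pi_\lambda\circ du$ never vanishes, i.e.\ $\wind_\pi(\util)=0$ and $u$ is an immersion transverse to $X_\lambda$.

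It then remains to show that $\infty$ is a non-degenerate puncture of $\util$, which I expect to be the main obstacle, since $\lambda$ itself may be degenerate and one cannot directly invoke~\cite{props1}. Here the plan is to run a \emph{uniform} version of the asymptotic analysis in a Martinet tube for $P$: Lemmas~\ref{lemma_uniform_1} and~\ref{lemma_unif_bounds_near_end} give $n$-uniform $C^\infty$ control of $\util_n$ near $\infty$, Lemma~\ref{lemma_geom_data_unif_compactness} and its corollary give uniform convergence of the geometric data $j_n,D_n,S_n,M_n,\Lambda_n$ to their asymptotic limits, and Proposition~\ref{prop_unif_asymptotic_analysis} --- applied with a negative weight $\delta$ chosen so that the winding-$1$ eigenvalues of the relevant asymptotic operators lie strictly below it --- upgrades $C^\infty_{\rm loc}$-convergence $z_n\to z$ to convergence in $E=C^{l,\alpha,\delta}_0$ for every $l$. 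This yields the exponential decay~\eqref{decay_z_n} of $z=(x_1,x_2)$, and then the Cauchy--Riemann system~\eqref{cr_Delta_n} together with a family version of~\cite[Lemma~6.3]{hryn} yields~\eqref{decay_Delta_n}; together these say exactly that $\infty$ is a non-degenerate puncture. Hence $\util$ is a fast finite-energy plane asymptotic to $P$ with $y\in u(\C)$ (the last point from $\util_n\to\util$ in $C^\infty_{\rm loc}$ and Lemma~\ref{lemma_uniform_1}), and Remark~\ref{rmk_fast_embedded} --- somewhere-injectivity from $\wind_\pi(\util)=0$, plus the proof of~\cite[Theorem~2.3]{props2} and Lemma~\ref{lemma_intersection} --- shows $u$ is injective, so $\util$ is an embedding, which completes the proof.
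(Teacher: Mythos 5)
Your proposal is correct and follows essentially the same route as the paper: it invokes Proposition~\ref{prop_global_sections}, Remark~\ref{rmk_pages} and the dichotomy of Proposition~\ref{prop_compactness_1} to produce fast planes for the approximating forms $\lambda_n$, passes to a $C^\infty_{\rm loc}$ limit, and then runs through Lemmas~\ref{lemma_intersection}--\ref{lemma_wind_infty_u} in the same order, the uniform asymptotic analysis (Lemmas~\ref{lemma_uniform_1}, \ref{lemma_unif_bounds_near_end}, \ref{lemma_geom_data_unif_compactness} and Proposition~\ref{prop_unif_asymptotic_analysis}) to show non-degeneracy of the puncture at $\infty$, and Remark~\ref{rmk_fast_embedded} for the embedding property. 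This is exactly the structure of the paper's argument.
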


With the above existence result proved, we need two additional ingredients in order to foliate $L(p,q)\setminus K$ by projections of fast finite-energy planes asymptotic to~$P$: Fredholm theory and a piece of intersection theory from~\cite{props2}.

\begin{theorem}\label{thm_two_fast_planes}
If $\util=(a,u)$ is a fast finite-energy plane asymptotic to $P$, then $u:\C\to L(p,q)\setminus K$ is an embedding. If $\vtil=(b,v)$ is another such plane then either $u(\C)=v(\C)$ or $u(\C)\cap v(\C) = \emptyset$. In the former case, there exist $A,B\in\C$, $A\neq0$, and $c\in\R$ such that $a(z)=b(Az+B)+c$ and $u(z)=v(Az+B)$ for all $z\in\C$.
\end{theorem}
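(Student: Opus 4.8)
The plan is to exploit Siefring's intersection theory in symplectizations, exactly as in the proof of Proposition~\ref{prop_embed}, but now for fast planes in the cylindrical cobordism $(\R\times L(p,q),\jtil)$ rather than in the interpolating cobordism. The key points are: (1) every fast finite-energy plane $\util$ asymptotic to $P$ has self-intersection number $[\util]*[\util]=0$ in the sense of Siefring, and (2) any two such planes $\util,\vtil$ have intersection number $[\util]*[\vtil]=0$. Once both identities are established, Theorems~2.2 and~2.3 of~\cite{sie2} (the implications~\eqref{intuu} and~\eqref{intuv} used earlier) immediately give that $u$ is an embedding and that the images $u(\C),v(\C)$ are either disjoint or equal. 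The last sentence of the theorem is then pure complex analysis: if $u(\C)=v(\C)$ then $\util$ and $\vtil$ parametrize the same embedded $\jtil$-holomorphic sphere minus one point, so $z\mapsto (z,w)$ with $v(w)=u(z)$ defines a biholomorphism $\C\to\C$, hence an affine map $z\mapsto Az+B$; matching the $\R$-components of $\util$ and $\vtil\circ(Az+B)$ along the end (both differ from $Ts$ by a constant by the non-degenerate asymptotics, Theorem~\ref{thm_asymptotics}/Lemma~\ref{lem_asymp_formula_nondeg_punct}) produces the constant $c$, and then $a(z)=b(Az+B)+c$ follows because both sides are $\jtil$-holomorphic lifts of the same curve agreeing at infinity, and an $\R$-translation is the only ambiguity.

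To compute the intersection numbers I would first recall that, since $\util$ is fast, $\infty$ is a non-degenerate puncture, $\wind_\pi(\util)=0$, and by the winding formula $\wind_\infty(\util)=\wind_\pi(\util)-\chi(\C)+\#\Gamma+\ldots = 1$; equivalently, the asymptotic eigenvalue of $\util$ at $\infty$ has winding $1$ relative to a trivialization extending the capping disk, and (since $\mu_{CZ}(P)\geq 3$) coincides with $\wind^{<0}(A_P)$ relative to that trivialization, so $\wind^{<0}(A_P)=1$ and the ``defect at infinity'' $d_0(\util)=\wind^{<0}(A_P)-\wind_\infty(\util)=0$. I would then apply the relevant corollaries of~\cite{sie2} specialized to planes asymptotic to a fixed non-degenerate orbit: Corollary~5.17 gives $[\util]*[\util]=0$ iff $u(\C)\cap K=\emptyset$ and $d_0(\util)=0$, and Corollary~5.9 gives $[\util]*[\vtil]=0$ iff $u(\C)\cap K=v(\C)\cap K=\emptyset$ and $d_0(\util)=d_0(\vtil)=0$ (together with the asymptotic relative winding data being of the expected extremal type, which holds because both planes approach $P$ through the same top eigenspace $\wind^{<0}(A_P)=1$). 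The condition $d_0=0$ was just verified; the condition $u(\C)\cap K=\emptyset$ is precisely Lemma~\ref{lemma_intersection}. Hence both intersection numbers vanish.

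The main obstacle, and the step deserving the most care, is that the cited corollaries of~\cite{sie2} are stated there for curves in a general symplectization under non-degeneracy of the contact form, whereas here $\lambda$ may be degenerate — only the \emph{puncture} of each fast plane is non-degenerate (Definition~\ref{defn_non_deg_punct}). I would handle this exactly as in Lemma~\ref{lemma_intersection} and Remark~\ref{rmk_fast_embedded}: all of Siefring's algebra depends only on the asymptotic formula of Theorem~\ref{thm_asymptotics}, which by Lemma~\ref{lem_asymp_formula_nondeg_punct} holds verbatim at a non-degenerate puncture regardless of degeneracy of $\alpha$; the relevant relative asymptotic operators, winding numbers $\wind^{<0}(A_P)$, $\wind_\infty$, and the nonnegativity and integrality of $d_0$ and of the intersection/self-intersection counts all go through unchanged. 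A second, minor technical point is somewhere-injectivity: a fast plane has $\wind_\pi=0$ so it is an immersion, and if it factored through a polynomial of degree $\geq 2$ it would acquire critical points, a contradiction — so it is somewhere injective, which is needed to invoke~\cite[Theorem~2.3]{props2} for the final injectivity of $u$ (this is the content of Remark~\ref{rmk_fast_embedded}). Assembling these pieces yields the theorem.
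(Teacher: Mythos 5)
You take a genuinely different route from the paper: the paper's argument is a direct citation of Hofer--Wysocki--Zehnder's Theorems~2.3 and~4.11 from \cite{props2} (after Lemma~\ref{lemma_intersection} and $\wind_\pi(\util)=0$ give that $u$ is an immersion into $L(p,q)\setminus K$, hence somewhere injective, and then the asymptotic formula at the non-degenerate puncture makes the proofs of those theorems go through), whereas you route through Siefring's generalized intersection numbers as in Proposition~\ref{prop_embed}. The two are close in spirit, but your write-up contains a gap.

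The problem is the sentence claiming that once $[\util]*[\util]=0$ and $[\util]*[\vtil]=0$ are established, the implications~\eqref{intuu} and~\eqref{intuv} ``immediately give that $u$ is an embedding and that the images $u(\C),v(\C)$ are either disjoint or equal.'' Those implications give that $\util$ is an embedding and that $\util(\C)\cap\vtil(\C)=\emptyset$ when the images differ, which are statements in $\R\times L(p,q)$, not about the projections $u,v$ into $L(p,q)$: two disjoint embedded $\jtil$-curves in $\R\times M$ can perfectly well have overlapping projections to $M$. You quietly patch the first assertion at the very end by falling back on \cite[Theorem~2.3]{props2}, which is exactly what the paper does (making your Siefring computation of $[\util]*[\util]$ redundant for that part), but you supply no analogous fix for the projected-image dichotomy. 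To close this gap via Siefring one would have to argue $[\util]*[\vtil_c]=0$ for every $\R$-translate $\vtil_c$ of $\vtil$ (e.g.\ by translation-invariance of the generalized intersection number, which also deserves a word of justification in the present degenerate-except-at-the-puncture setting), and then observe that any point in $u(\C)\cap v(\C)$ lifts to an intersection of $\util$ with some $\vtil_c$, forcing $\util(\C)=\vtil_c(\C)$; the paper avoids all of this by citing the proof of \cite[Theorem~4.11]{props2}, which handles the projected images directly. Your final paragraph on the affine reparametrization in the coincident case is fine.
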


The proof of the above theorem is due to Hofer, Wysocki and Zehnder, but its statement is not found in the literature. We briefly explain why is this theorem true. The first statement was already explained in Remark~\ref{rmk_fast_embedded}. In fact, the map $u$ is an immersion into $L(p,q)\setminus K$ in view of Lemma~\ref{lemma_intersection} and of the identity $\wind_\pi(\util)=0$, in particular, $\util$ is somewhere injective, see Remark~\ref{rmk_fast_embedded}. The claim that $u$ is injective follows from the proof of~\cite[Theorem~2.3]{props2} since fast planes have an asymptotic formula and are somewhere injective\footnote{To see this, note that if a fast plane is not somewhere injective then it can be factored through a somewhere injective plane and, consequently, must have a critical point. This, however, is impossible since the identity $\wind_\pi=0$ implies that a fast plane is an immersion.}. The dichotomy $u(\C)=v(\C)$ or $u(\C)\cap v(\C)=\emptyset$ follows from Lemma~\ref{lemma_intersection} and the proof of \cite[Theorem~4.11]{props2}. The constants $A,B,c$ can be found using the similarity principle.

\begin{theorem}\label{thm_fredholm_theory_fast_planes}
Let $\alpha_0$ be any contact form defining $(L(p,q),\xi_{\rm std})$, $\util_0$ be an embedded fast $\jtil$-holomorphic plane in $\R\times L(p,q)$ asymptotic to some closed $\alpha_0$-Reeb orbit~$P_0$, not necessarily prime. If $\mu_{CZ}(\util_0)\geq 3$ then there exists an embedding $f:B\times \C\to \R\times L(p,q)$, where $B\subset \R^2$ is an open ball centered at the origin, with the following properties:
\begin{itemize}
\item[i)] $\util_0(\cdot)=f(0,\cdot)$.
\item[ii)] For every $\tau\in B$ the map $z\mapsto f(\tau,z)$ is a fast finite-energy $\jtil$-holomorphic plane asymptotic to $P$.
\item[iii)] If $\{\util_k\}$ is a sequence fast finite-energy $\jtil$-holomorphic planes asymptotic to $P$ satisfying $\util_k\to\util$ in $C^\infty_{\rm loc}$, then $\exists A_k,B_k \in \C$, $\tau_k\in B$ such that $A_k\to1$, $B_k\to0$, $\tau_k\to0$ and $f(\tau_k,A_kz+B_k) = \util_k(z) \ \forall z\in\C$.
\end{itemize}
\end{theorem}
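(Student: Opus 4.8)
The plan is to derive the statement from the Fredholm theory and automatic-transversality mechanism of Hofer--Wysocki--Zehnder~\cite{props3}, combined with the slicing argument of~\cite[Appendix~A]{HLS}, but carried out in the exponentially weighted category of Remark~\ref{rmk_Holder_with_decay}; the weights are unavoidable because $\alpha_0$ and $P_0$ are allowed to be degenerate. First I would set up the Fredholm problem. Let $A_0$ be the asymptotic operator at $P_0$ associated with $(\alpha_0,J)$, and, exactly as in the discussion preceding Theorem~\ref{thm_fredholm_theory_fast_planes}, fix $\delta<0$ with $\delta\notin\sigma(A_0)$ such that every eigenvalue of $A_0$ carrying winding number $\geq1$, relative to the trivialization of $(x_0)^{*}\xi_{\rm std}$ obtained by capping with $\util_0$, is $<2\delta$. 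Since $\util_0$ is fast, $\infty$ is a non-degenerate puncture (Definition~\ref{defn_fast_planes}), so Theorem~\ref{thm_asymptotics} applies via Lemma~\ref{lem_asymp_formula_nondeg_punct} and the asymptotic eigenvalue of $\util_0$ lies below $\delta$; hence, in a Martinet tube around $P_0$ and cylindrical coordinates near $\infty$, $\util_0$ is represented by functions in $C^{l,\alpha,\delta}_0$ for every $l$. I would then take as configuration space the Banach manifold of maps $\C\to\R\times L(p,q)$ which differ from $\util_0$ near $\infty$ by an element of $C^{l,\alpha,\delta}_0$ (with $l$ fixed and large), regard $\bar\partial_{\jtil}$ as a smooth section of the associated Banach bundle, and write $D_{\util_0}$ for its linearization at $\util_0$, which is Fredholm.

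The second step is automatic transversality. Since $\wind_\pi(\util_0)=0$, the map $\util_0$ is an immersion, hence somewhere injective (Remark~\ref{rmk_fast_embedded}), and the hypothesis $\mu_{CZ}(\util_0)\geq3$ triggers the Hofer--Wysocki--Zehnder criterion: the cokernel of $D_{\util_0}$ is identified with a space of sections of the normal bundle solving a Cauchy--Riemann-type equation whose winding at $\infty$ is too large, against its nonnegative count of zeros, to contain a nonzero element, so $D_{\util_0}$ is surjective --- the exponential weight merely shifts the relevant spectral interval by a definite amount (see~\cite{props3} and~\cite[Appendix~A]{HLS}). The implicit function theorem then exhibits the zero set of $\bar\partial_{\jtil}$ near $\util_0$ as a smooth manifold $\M$ of fast finite-energy $\jtil$-holomorphic planes asymptotic to $P_0$ --- fastness is an open condition, since $\wind_\pi$ is locally constant and the puncture stays non-degenerate by the choice of $\delta$. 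The dimension of $\M$ equals the Fredholm index of $\util_0$, which here is $6$: four dimensions of reparametrization $z\mapsto Az+B$, one of $\R$-translation in the target, and one genuine normal deformation. Slicing $\M$ by a two-dimensional submanifold $S\ni\util_0$ transverse to the four-dimensional reparametrization orbit and parametrizing $S$ over a small ball $B\subset\R^2$ yields $f=f(\tau,z)$ with $f(0,\cdot)=\util_0$, each $f(\tau,\cdot)$ a fast plane asymptotic to $P_0$. That $f$ is an embedding of $B\times\C$ I would deduce from the intersection theory of~\cite{props2}: each leaf is an embedding disjoint from $\R\times x_0(\R)$ (by the winding estimates of~\cite{props2}, cf.\ Lemma~\ref{lemma_intersection}), distinct leaves are identical or disjoint (Theorem~\ref{thm_two_fast_planes}), and, because $\wind_\pi(f(\tau,\cdot))=0$, the planes through a point of $\util_0(\C)$ span, together with $\util_0(\C)$, the whole tangent space there, so $\partial_\tau f$ is transverse to the leaves; shrinking $B$ gives (i) and (ii).

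For (iii), suppose $\util_k$ are fast planes asymptotic to $P_0$ with $\util_k\to\util_0$ in $C^\infty_{\rm loc}$. I would first reparametrize by $z\mapsto A_kz+B_k$, $A_k\to1$, $B_k\to0$, so that the reparametrized curves satisfy the normalization defining $S$. A priori gradient bounds near the puncture --- obtained by the Hofer-lemma bubbling-off argument of the proof of Lemma~\ref{lemma_uniform_1} --- together with Proposition~\ref{prop_unif_asymptotic_analysis}, applied to the linearized operators along the $\util_k$ (which converge uniformly to the limiting operator at $P_0$), upgrade the $C^\infty_{\rm loc}$-convergence near $\infty$ to convergence in $C^{l,\alpha,\delta}_0$; with interior elliptic estimates this gives convergence of the reparametrized curves to $\util_0$ in the Banach manifold. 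For $k$ large they therefore lie in $S$, hence equal $f(\tau_k,\cdot)$ with $\tau_k\to0$, which is (iii) after undoing the reparametrization.

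The step I expect to be the main obstacle is precisely the degeneracy of $P_0$: since $P_0$ need not be non-degenerate, the standard Fredholm and compactness theory is unavailable and the whole argument must be run in the weighted spaces $C^{l,\alpha,\delta}_0$. The fast hypothesis is what makes this feasible, placing $\util_0$ together with all nearby fast planes inside a single weighted Banach manifold, and the delicate technical input is the uniform asymptotic control of a $C^\infty_{\rm loc}$-convergent sequence of fast planes --- exactly what Proposition~\ref{prop_unif_asymptotic_analysis} (proved in Appendix~\ref{app_unif_asymp_analysis}) is designed to provide.
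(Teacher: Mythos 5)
Your proposal follows the route the paper itself indicates: the paper does not spell out a proof of Theorem~\ref{thm_fredholm_theory_fast_planes}, instead citing \cite[Theorem~2.3]{hryn} for $p=1$ and the appendix of \cite{HLS} for the non-degenerate case, and asserting that the same weighted Fredholm-theoretic arguments carry over to the degenerate setting because fast planes enjoy the asymptotic formula via Lemma~\ref{lem_asymp_formula_nondeg_punct}. You have filled in the details of that argument --- the exponentially weighted configuration spaces $C^{l,\alpha,\delta}_0$, automatic transversality driven by $\mu_{CZ}\geq 3$ and winding estimates, the intersection theory of \cite{props2} for the embedding claim, and Proposition~\ref{prop_unif_asymptotic_analysis} to upgrade $C^\infty_{\rm loc}$-convergence to convergence in the weighted Banach topology for part (iii) --- and these details match what the paper actually carries out in the closely parallel Fredholm argument in Appendix~A (proof of Proposition~\ref{prop_compactness_1}; see in particular Lemma~\ref{lemma_automatic_transv} and the surrounding index and slicing discussion). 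So this is essentially the same approach, spelled out.

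One slip to correct: your spectral condition on $\delta$, ``every eigenvalue of $A_0$ carrying winding number $\geq 1$ \dots\ is $<2\delta$,'' cannot be met, since the eigenvalues of winding $\geq 1$ form an upper tail of the spectrum that is unbounded above. What is needed, and what the rest of your argument (the Fredholm index computation yielding $6$, equivalently $\mu^\delta_{CZ}(P_0)-1=2$ for normal sections) and the paper both use, is that $\delta$ sits strictly between the eigenvalues of winding $1$ and those of winding $2$: all eigenvalues of winding $\leq 1$ lie below $2\delta$, while $\delta$ lies below the first eigenvalue of winding $\geq 2$. Then $\mu^\delta_{CZ}(P_0)=3$, the weighted index is $2$, and the asymptotic eigenvalue of $\util_0$ --- which has winding $\wind_\infty(\util_0)=1$ because $\util_0$ is fast --- lies below $\delta$, so $\util_0$ belongs to the weighted Banach manifold as required.
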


In the case $p=1$ this follows from~\cite[Theorem~2.3]{hryn}. In the non-degenerate case its proof is sketched in the appendix of~\cite{HLS}; we only note here that the same arguments can be used to prove Theorem~\ref{thm_fredholm_theory_fast_planes} since fast finite-energy planes have a nice asymptotic formula which allows for a Fredholm theory with a suitably placed exponential weight.
%The assumption that $\util_0$ is embedded is unnecessary in the context of this paper as explained in Remark~\ref{rmk_fast_embedded}, but is essential in more general situations.

By Proposition~\ref{prop_existence_fast_planes} we find for every $y\not\in K$ an embedded fast finite-energy plane $\util=(a,u)$ asymptotic to $P$ such that $y\in u(\C)$. Using theorems~\ref{thm_two_fast_planes} and~\ref{thm_fredholm_theory_fast_planes} one can show that the various projections $u(\C)$ of such planes are the leafs of a smooth foliation of $L(p,q)\setminus K$, each leaf being transverse to the Reeb vector field since these planes satisfy $\wind_\pi=0$. Using the uniform asymptotic approach of these planes to its asymptotic limit $P$, one concludes that this foliation is a rational open book decomposition with binding $K$. See~\cite{HLS} for details.

The proof of Theorem~\ref{thm_22} will be completed if we can show that each page is a disk-like global surface of section for the $\lambda$-Reeb flow. Let $y\not\in K$ be fixed arbitrarily and assume that its omega-limit set $\omega(y)$ with respect to the $\lambda$-Reeb flow does not intersect $K$. Then, by compactness of the leaf-space and transversality of the Reeb vector field to the leafs, we conclude that the trajectory through $y$ will hit every page in the future infinitely many times. If $\omega(y) \cap K \neq\emptyset$ then the trajectory through $y$ will spend arbitrarily large amounts of time in the far future very close to $K$ and, consequently, can be well-approximated by the linearized $\lambda$-Reeb flow along $K$; this will force it to hit every leaf infinitely often in the future because the inequality $\mu_{CZ}(P)\geq 3$ implies that the linearized flow along $p$-iterates of $K$ will wind more than $2\pi$ (one full turn) with respect to a page. The reasoning for past hitting times is similar, replacing the omega-limit set by the alpha-limit set of $y$. Theorem~\ref{thm_22} is proved.

\appendix

\section{Proof of Proposition~\ref{prop_compactness_1}}

The proof, which consists of showing that $\J_{\rm fast}(\alpha,y)$ is open and closed in $\J$, is a generalization of the arguments contained in~\cite[appendix]{openbook}. We only include this proof here since we now make more general assumptions than in~\cite{openbook} and need to keep track of the point $y \in M\setminus x(\R)$.

\subsection{$\J_{\rm fast}(\alpha,y)$ is closed.}

Let $J\in\J$ and $J_n\in\J_{\rm fast}(\alpha,y)$ satisfy $J_n\to J$ in $C^\infty$. The data $(\alpha,J)$ and $(\alpha,J_n)$ induce almost complex structures $\jtil$ and $\jtil_n$ on $\R\times M$, respectively, as explained in \S~\ref{par_hol_curves}. Consider $\util_n=(a_n,u_n)$ embedded fast finite-energy $\jtil_n$-holomorphic planes asymptotic to $P$ satisfying $y\in u_n(\C)$. Arguing as in Lemma~\ref{lemma_intersection} we obtain
\begin{equation}\label{intersection_asymptotic_fast}
u_n(\C) \cap x(\R) = \emptyset \ \ \ \forall n.
\end{equation}

Let $\sigma>0$ be a number smaller than all positive periods of closed $\alpha$-Reeb orbits. After reparametrization and translation in the $\R$-direction, we may assume that
\begin{equation}
\begin{array}{cccc} \int_\D u_n^*d\alpha = T-\sigma, & u_n(0)=y & \text{and} & a_n(2)=0. \end{array}
\end{equation}
Up to a subsequence, we find a finite set $\Gamma\subset \D$ and a finite-energy $\jtil$-holomorphic map $\util=(a,u) : \C\setminus \Gamma \to \R\times M$ such that $\util_n \to \util$ in $C^\infty_{\rm loc}(\C\setminus \Gamma)$. There is no loss of generality to assume that $\Gamma$ consists of negative punctures of $\util$. Thus $\infty$ is a positive puncture. Using results on cylinder with small contact area it can be proved that $\util$ is non-constant and its asymptotic limit at $\infty$ is $P$. Using ``soft-rescalling'' at the negative punctures, it can be shown that the asymptotic limits of $\util$ at the punctures in $\Gamma$ are contractible closed $\alpha$-Reeb orbits.

We claim that
\begin{equation}\label{positive_contact_area_1}
\int_{\C\setminus\Gamma} u^*d\alpha > 0.
\end{equation}
We prove this indirectly. If~\eqref{positive_contact_area_1} is not true then we find a non-constant complex polynomial $Q$ such that $Q^{-1}(0)=\Gamma$ and $\util=F\circ Q$, where $F:\C\setminus\{0\}\to \R\times M$ is the map $F(e^{2\pi(s+it)})=(T_{\rm min}s,x(T_{\rm min}t))$. This implies, in particular, that $\Gamma\neq\emptyset$ and that the degree of $Q$ is precisely $p$. If $\#\Gamma>1$ then the asymptotic limit at some point of $\Gamma$ is $(x,mT_{\rm min})$ for some $1\leq m<p$ and, as observed above, this orbit must be contractible. But this is impossible: since $x(\R)$ is $p$-unknotted the loop $t\in\R/\Z\mapsto x(T_{\rm min}t)$ induces an element of $\pi_1(M)$ of order $p$ and, consequently, the loop $t\in\R/\Z\mapsto x(mT_{\rm min})$ can not be contractible because $m<p$. This proves that $\#\Gamma=1$. If $0\not\in\Gamma$ then $y=u(0)\in x(\R)$, absurd. Thus $\Gamma=\{0\}$ and we can estimate $T = \int_{\partial\D} u^*\lambda = \lim_n \int_{\D} u_n^*d\lambda = T-\sigma$, a contradiction.

Let us enumerate the negative punctures in $\Gamma$ as $z_1,\dots,z_N$, and let $P_i=(x_i,T_i)$ be the asymptotic limit of $\util$ at $z_i$. Choose a $d\alpha$-symplectic trivialization $B$ of $u^*\xi$ with the following property: it extends at every puncture $z_i$ (or $\infty$) to a $d\alpha$-symplectic trivialization of $({x_i}_{T_i})^*\xi$ (or of $(x_T)^*\xi$) coming from a capping disk. Here we used the assumption that $c_1(\xi)$ vanishes on $\pi_2(M)$.

It follows from~\cite[Lemma~4.9]{HLS} that $\mu_{CZ}(P_i)\geq 2$ for every $i$. We sketch the argument below. The reason for this is that there is a holomorphic building that arises as the limit of the planes $\util_n$, up to choice of a further subsequence. In our particular situation, the building has the simpler structure of a bubbling-off tree of finite-energy punctured spheres, see~\cite[Section~4]{HLS} for instance.

The {\it fundamental mechanism} is the following: if $\vtil$ is a punctured finite-energy sphere in the tree with precisely one positive puncture and if it is asymptotic to a closed Reeb orbit $P_+$ at its positive puncture satisfying $\mu_{CZ}(P_+)\leq 1$, then there exists at least one negative puncture where the corresponding asymptotic limit satisfies $\mu_{CZ}\leq 1$. The proof of this last claim is as follows. If, by contradiction, $\mu_{CZ}\geq 2$ for every asymptotic limit at a negative puncture of $\vtil$ then we analyze two cases. In case the contact area of $\vtil$ does not vanish we get $\wind_\pi(\vtil)<0$, absurd. In case the contact area of $\vtil$ vanishes, there are negative punctures and all asymptotic limits (including $P_+$) are iterates of a common prime orbit $P_0$. Let $m\geq 1$ be such that $P_+=P_0^m$, fix a negative puncture $z_i$ and let $j$ be such that the asymptotic limit at $z_i$ is $P_i=P_0^j$. Note that $m\geq j$, $P_0^m$ and $P_0^j$ are contractible, and if $N$ is the least common multiple of $m$ and $j$ then trivializations of $\xi$ along $P_0^m,P_0^j$ coming from capping disks iterate (by $N/m,N/j$ respectively) to trivializations of $\xi$ along $P_0^N$ which are homotopic since $c_1(\xi)$ vanishes on $\pi_2(M)$. Hence
\begin{align*}
& 2\frac{N}{j} \leq \mu_{CZ}((P_0^j)^{N/j}) = \mu_{CZ}(P_0^N) = \mu_{CZ}((P_0^m)^{N/m}) \leq 2\frac{N}{m}-1 \leq 2\frac{N}{j} - 1
\end{align*}
This contradiction concludes the proof of the {\it fundamental mechanism}.

Now, using this {\it fundamental mechanism} explained above, we will argue to conclude that $\mu_{CZ}(P_i)\geq2$ for every $i$. If not, there is $i_0$ such that $\mu_{CZ}(P_{i_0})\leq 1$. The orbit $P_{i_0}$ is the asymptotic limit at the unique positive puncture of a finite-energy punctured sphere in the building. By the fundamental mechanism, this punctured sphere must have a negative puncture where it is asymptotic to an orbit satisfying $\mu_{CZ}\leq 1$. Repeating this procedure, we go down the tree one level at a time until we reach a leaf, that is, a finte-energy plane asymptotic to an orbit satisfying $\mu_{CZ}\leq1$; this absurd concludes the argument.

Fix $R_0>1$ such that $\pi_\alpha\circ du$ does not vanish on $\C\setminus B_{R_0}$. Since $\util_n \to \util$ in $C^\infty_{\rm loc}(\C\setminus \Gamma)$, if $n$ is large enough we can find
\begin{itemize}
\item[i)] a smooth homotopy $h_n:[0,1]\times \R/\Z\to M$ satisfying $h_n(0,t)=u_n(R_0e^{i2\pi t})$ and $h_n(1,t)=u(R_0e^{i2\pi t})$,
\item[ii)] a non-vanishing section $\kappa_n$ of $(h_n)^*\xi$ satisfying $\kappa_n(0,t)=\pi_\alpha \cdot \partial_ru_n(R_0e^{i2\pi t})$ and $\kappa(1,t)=\pi_\alpha \cdot \partial_ru(R_0e^{i2\pi t})$, and
\item[iii)] a non-vanishing section $Z_n$ of $(h_n)^*\xi$ satisfying $Z_n(1,t)=B(R_0e^{i2\pi t})$.
\end{itemize}
Above $\partial_r$ denotes radial partial derivative with respect to usual polar coordinates on $\C\setminus0$. The vector field $Z_n(0,t)$ along $u_n(R_0e^{i2\pi t})$ lies on $\xi$ and extends as a non-vanishing section of $(u_n|_{B_{R_0}})^*\xi$. This is so by iii) and by the fact that $c_1(\xi)$ vanishes on $\pi_2(M)$. Hence, since $\wind_\pi(\util_n)=0$ for all $n$ we get $\wind(\kappa_n(0,t),Z_n(0,t))=1$. By the homotopy invariance of winding numbers we get $$ 1=\wind(\kappa_n(1,t),Z_n(1,t))=\wind(\pi_\alpha \cdot \partial_ru(R_0e^{i2\pi t}),B(R_0e^{i2\pi t})). $$ Since $R_0$ can be chosen arbitrarily large we conclude that
\begin{equation}\label{wind_infty_util_infty}
\wind_\infty(\util,\infty,B)=1.
\end{equation}
This follows from the asymptotic formula from Theorem~\ref{thm_asymptotics}.

The next step is to use~\eqref{wind_infty_util_infty} to conclude that $\mu_{CZ}(P_i)=2 \ \forall i$. Suppose by contradiction that $\mu_{CZ}(P_{i_0})\geq 3$ for some $i_0$. Then $\wind_\infty(\util,z_{i_0},B) \geq 2$. Since $\mu_{CZ}(P_i)\geq2 \ \forall i$ we have $\wind_\infty(\util,z_i,B)\geq 1 \ \forall i$. This leads to the following absurdity:
\begin{equation*}
\begin{aligned}
& \wind_\pi(\util) \\
& = \wind_\infty(\util) + N - 1 \\
& = \wind_\infty(\util,\infty,B) - \wind_\infty(\util,z_{i_0},B) - \sum_{i\neq i_0} \wind_\infty(\util,z_i,B) + N - 1 \\
& \leq 1 - 2 - (N-1) + N - 1 = -1.
\end{aligned}
\end{equation*}

Now we can propagate the above arguments down the bubbling-off tree in order to prove that every leaf is a finite-energy plane with asymptotic limit satisfying $\mu_{CZ}=2$. In fact, let $\vtil=(b,v)$ be a finite-energy sphere in the second level of the tree. Then $\vtil$ has exactly one positive puncture $\infty$ where it is asymptotic to the same asymptotic limit $\bar P$ as $\util$ is at one of its negative punctures. We proved above that $\mu_{CZ}(\bar P)=2$. Every asymptotic limit of $\vtil$ at a negative puncture must satisfy $\mu_{CZ}\geq 2$ since, otherwise, we can argue as before to find a leaf of the tree that is a finite-energy plane asymptotic to an orbit with $\mu_{CZ}\leq 1$, absurd. If the contact area of $\vtil$ does not vanish then $\wind_\infty(\vtil,\infty,B') = 1$ where $B'$ is a symplectic trivialization\footnote{The existence of $B'$ follows from the fact that $c_1(\xi)$ vanishes on $\pi_2(M)$.} of $v^*\xi$ that extends at the punctures to trivializations coming from capping disks for its asymptotic limits. This is so because $\mu_{CZ}(\bar P)=2$. A calculation as above, using $\wind_\infty(\vtil,\infty,B')=1$, will provide a contradiction if one the asymptotic limits of $\vtil$ at a negative puncture satisfies $\mu_{CZ}>2$. If the contact area of $\vtil$ vanishes then there is a prime closed Reeb orbit $P_0$ such that $\bar P = P_0^m$ and every asymptotic limit of $\vtil$ at a negative puncture is of the form $P_0^j$ with $j\leq m$. Suppose, again by contradiction, that an asymptotic limit $P'$ of $\vtil$ at one of its negative punctures satisfies $\mu_{CZ}(P')>2$. Define $j$ by $P' = P_0^j$. We know that $j\leq m$. Denote by $N$ the least common multiple of $j$ and $m$. Of course, every asymptotic limit of $\vtil$ is contractible and, consequently, so is $P_0^N$. We get a contradiction as follows
\begin{align*}
& 2\frac{N}{m} = \mu_{CZ}((P_0^m)^{N/m}) =\mu_{CZ}(P_0^N) = \mu_{CZ}((P_0^j)^{N/j}) \geq 2\frac{N}{j} + 1 \geq 2\frac{N}{m} + 1.
\end{align*}
We concluded that, in all cases, every asymptotic limit of $\vtil$ at a negative puncture satisfies $\mu_{CZ}=2$. This must be so for all curves in the second level of the tree. The same arguments apply to curves in the lower levels when we go down one level at a time. Thus, all leafs are planes asymptotic to orbits with $\mu_{CZ}=2$, as desired.

We are finally ready to conclude our compactness argument, this last step will make use of the linking hypotheses made in Proposition~\ref{prop_compactness_1}. The important remark is that for every plane $\vtil=(b,v):\C\to \R\times M$ associated to a leaf of the tree there are sequences $A_n,B_n \in \C$, $c_n\in\R$, such that $\util_n(A_nz+B_n) +c_n \to \vtil$ in $C^\infty_{\rm loc}$. Let us assume by contradiction that the tree has at least two levels. Then the plane $\vtil$ satisfies $\wind_\infty(\vtil)=1$ because its asymptotic limit $P_*$ satisfies $\mu_{CZ}(P_*)=2$. Consequently $\vtil$ is an immersion transverse to the Reeb flow. Suppose that $P_*$ is not geometrically distinct from $P$. Setting $P_{\rm min}=(x,T_{\rm min})$ we have $P=P_{\rm min}^p$ and find $j$ such that $P_* = P_{\rm min}^j$. If $N$ is the least common multiple of $j$ and $p$, then using $\mu_{CZ}(P)\geq 3$ and $\mu_{CZ}(P_*)=2$ we compute as above
\begin{align*}
& 2\frac{N}{j} = \mu_{CZ}(P_*^{N/j}) = \mu_{CZ}(P_{\rm min}^N) = \mu_{CZ}(P^{N/p}) \geq 2\frac{N}{p} + 1 \ \Rightarrow  \ j<p.
\end{align*}
But $P_{\rm min}$ induces an element of $\pi_1$ with order precisely $p$, contradicting contractibility of $P_*$. Hence $P_*$ is geometrically distinct from $P$. The period of $P_*$ is the Hofer energy of $\vtil$ and, consequently, is not larger than $T$. By hypothesis $P_*$ is not contractible in $M\setminus x(\R)$. This forces intersections of $\vtil$ with the $\jtil$-complex surface $\R\times x(\R)$. By positivity and stability of intersections we find intersections of the maps $\util_n$ with $\R\times x(\R)$ for $n\gg1$, contradicting~\eqref{intersection_asymptotic_fast}. We have proved that the tree has exactly one level, consisting of its root. In other words, $\Gamma=\emptyset$ and $\util$ is a plane. It is asymptotic to $P$ and~\eqref{wind_infty_util_infty} tells us that it is fast. Clearly $u(0)=y$.

It only remains to be checked that $\util$ is an embedding. It must be somewhere injective since, otherwise, we would factor it through a somewhere injective plane via a complex polynomial of degree at least $2$, forcing critical points of $\util$, but this is impossible since $\util$ is an immersion. Hence the set $A$ of self-intersection points of $\util$ is discrete. If $A\neq \emptyset$ then again by positivity and stability of intersections we would find self-intersections of $\util_n$ for $n$ large, but this is impossible because the $\util_n$ are embeddings. We have finally proved that $\J_{\rm fast}(\alpha,y)$ is $C^\infty_{\rm loc}$-closed.

\subsection{$\J_{\rm fast}(\alpha,y)$ is open.}

Most of the argument can be found in~\cite[appendix~A]{openbook}. Let $J_0\in\J_{\rm fast}(\alpha,y)$ and $\util_0=(a_0,u_0):\C\to \R\times M$ be an embedded fast finite-energy $\jtil_0$-holomorphic plane asymptotic to $P$ and satisfying $y\in u_0(\C)$. After rotating the domain we can assume that $u(Re^{i2\pi t}) \to x(Tt)$ as $R\to+\infty$.

Fix $l\geq 1$ and let $\mathcal{K}^l$ be the space of $C^l$-sections $K$ of the vector bundle $\mathcal{L}(\xi)$ satisfying $J_0K+KJ_0=0$ and $d\alpha(\cdot,K\cdot)+d\alpha(K\cdot,\cdot)=0$ on $\xi|_p$, for every $p\in M$. With the $C^l$-norm this space becomes a Banach space. If $r>0$ is small enough then every $K$ belonging to the ball of radius $r$ centered at the origin in $\mathcal{K}^l$ induces a complex structure $J=J_0\exp(-J_0K)$ on $\xi$ of class $C^l$ which is $d\alpha$-compatible. The set $\mathcal{U}_r$ of $J$ that arise in this way contains a neighborhood of $J_0$ in $\J$, and admits the structure of a (trivial) Banach manifold via the above explained identification with a ball in $\mathcal{K}^l$.

Fix any $\R$-invariant metric $g$ on $\R\times M$ for which $\jtil_0$ is a pointwise isometry, like for instance $g = da \otimes da + \lambda \otimes \lambda + d\lambda(\cdot,J_0\cdot)$. Then the normal bundle $N$ of $\util_0(\C)$ is a $\jtil_0$-invariant vector bundle over $\util_0(\C)$. Denoting by $\pi_M:\R\times M \to M$ the projection onto the second factor, clearly the bundle $\tilde\xi := \pi_M^*\xi$ is also $\jtil_0$-invariant. In view of the asymptotic formula we can find $R_0$ large enough and a $\jtil_0$-invariant subbundle $L\subset \util_0^*T(\R\times M)$ which coincides with $\util_0^*\tilde\xi = u_0^*\xi$ on $\C\setminus B_{R_0}$, and coincides with $\util_0^*N$ on $B_{R_0-1}$. Here we denoted by $B_\rho$ the ball of radius $\rho$ in $\C$ centered at the origin.

Now let $(U,\Psi)$ be a Martinet tube at $P$. As explained in Definition~\ref{defn_martinet} there are associated coordinates $(\theta,x_1,x_2)\in \R/\Z\times B$ where $\alpha \simeq g(d\theta+x_1dx_2)$ for some smooth function $g$ satisfying $g(\theta,0,0)\equiv T_{\rm min}$, $dg(\theta,0,0)=0$ for all $\theta$.

Consider the $p$-covering space $\R/p\Z\times B \to \R/\Z\times B$, and denote the $\R/p\Z$-coordinate by~$\theta'$. The projection is $(\theta',x_1,x_2) \mapsto (\theta,x_1,x_2)$ and we denote by $\alpha'$ the lift of $\alpha$ to $\R/p\Z\times B$, namely $\alpha' = g(\theta,x_1,x_2)(d\theta'+x_1dx_2)$. The lift of $J_0|_U$ to a $d\alpha'$-compatible complex structure on $\xi' := \ker \alpha'$ will be denoted by $J_0'$. Perhaps after making $R_0$ larger we can assume that $\util_0(\C\setminus B_{R_0}) \subset \R\times U$ and, consequently, we can consider a lift $(\Psi\circ u_0)' : \C\setminus B_{R_0} \to \R/p\Z\times B$ of $\Psi\circ u_0$. There exists a $(d\alpha',J_0')$-unitary frame $\{n_1',n_2'\}$ of $\xi'$ on $\R/p\Z\times B$ such that the $n_i' \circ (\Psi\circ u_0)'$ form a frame with the following important property: after identifying $(\Psi\circ u_0)'^*(\xi') \simeq u_0^*\xi|_{\C\setminus B_{R_0}} = L|_{\C\setminus B_{R_0}}$ in the obvious manner, $\{n_1'\circ (\Psi\circ u_0)',n_2'\circ (\Psi\circ u_0)'\}$ extends to a complex frame of $L$. This extended frame of $L$ will be denoted by $\{n_1,n_2\}$.

Note that for $|z|$ large enough $L_z = \xi|_{u_0(z)}$, and taking the limit as $|z|\to\infty$ the frame $\{n_1,n_2\}$ induces a $(d\alpha,J_0)$-unitary frame $\{\bar n_1,\bar n_2\}$ of $(x_T)^*\xi$. This follows from the construction of $\{n_1,n_2\}$ explained above. Let $\{e_1,e_2\}$ be a $d\alpha$-symplectic frame of $(x_T)^*\xi$ induced by some capping disk for $P$. It follows from~\cite[Theorem~1.8]{props3} that
\begin{equation}\label{winding_disk_to_normal}
\wind(t\mapsto e_1(t),t\mapsto n_1(t)) = -1.
\end{equation}

Let $\exp$ be the exponential map associated to the metric $g$. In view of the asymptotic formula and of the $\R$-invariance of $g$ we can find a small open ball $B'\subset \C$ centered at the origin such that the map
\begin{equation}\label{map_Phi_fred_theory}
\begin{array}{ccc} \Phi : \C\times B' \to \R\times M & & \Phi(z,w) = \exp_{\util_0(z)}(\Re[w]n_1(z) + \Im[w]n_2(z)) \end{array}
\end{equation}
is an immersion onto a neighborhood of $\util_0(\C)$. For every $J \in \U_r$ we can consider the $\R$-invariant almost complex structure $\jtil$ on $\R\times M$ determined by $(\alpha,J)$. Denoting $\bar J = \Phi^*\jtil$, then $\bar J$ can be seen as a smooth $\R^{4\times 4}$-valued function on $\C\times B'$ which can be written in $2\times 2$ blocks as
\begin{equation*}
\bar J(z,w) = \begin{pmatrix} j_1(z,w) & \Delta_1(z,w) \\ \Delta_2(z,w) & j_2(z,w)\end{pmatrix}.
\end{equation*}
The graph of a differentiable function $z\mapsto v(z)$ has $\bar J$-invariant tangent space if, and only if, $H(v,J)=0$ where
\begin{equation}
H(v,J) = \Delta_2(z,v) + j_2(z,v) \ dv - dv \ j_1(z,v) - dv \ \Delta_1(z,v) \ dv.
\end{equation}

Given $\gamma\in(0,1)$ and $\delta<0$ we can consider the space $C^{l,\gamma,\delta}_0(\C,\C)$ defined as the set of maps $v:\C\to\C$ of class $C^{l,\gamma}$ such that $(s,t) \mapsto v(e^{2\pi(s+it)})$ is of class $C^{l,\gamma,\delta}_0$ on $[0,+\infty)\times\R/\Z$; see Remark~\ref{rmk_Holder_with_decay}. Consider $Y = \C\times \mathcal{L}_\R(\C)$ as a trivial vector bundle over $\C$. The space $C^{l,\gamma,\delta}_0(Y)$ is defined as the space of $C^{l,\gamma}$-sections $z\mapsto A(z)$ such that $(s,t) \mapsto A(e^{2\pi(s+it)})e^{2\pi(s+it)}$ is of class $C^{l,\gamma,\delta}_0$ on $[0,+\infty)\times\R/\Z$. The splitting $\mathcal{L}_\R(\C) = \mathcal{L}^{1,0}(\C) \oplus \mathcal{L}^{0,1}(\C)$ as $\C$-linear and $\C$-antilinear maps induces splittings $Y = Y^{1,0} \oplus Y^{0,1}$ and $C^{l,\gamma,\delta}_0(Y) = C^{l,\gamma,\delta}_0(Y^{1,0}) \oplus C^{l,\gamma,\delta}_0(Y^{0,1})$. The set $\mathcal{V}\subset C^{l,\gamma,\delta}_0(\C,\C)$ of maps with image in $B'$ is clearly open. It is a standard procedure to check that
\begin{equation*}
H:(v,J) \in \mathcal{V} \times \U_r \mapsto H(v,J) \in C^{l-1,\gamma,\delta}(Y)
\end{equation*}
defines a smooth map.

Let us write
\begin{equation*}
\bar J_0 = \begin{pmatrix} j^0_1 & \Delta_1^0 \\ \Delta_2^0 & j_2^0 \end{pmatrix}
\end{equation*}
so that $j^0_1(z,0)=j^0_2(z,0)=i$ and $\Delta^0_1(z,0)=\Delta^0_2(z,0)=0$. It follows that
\begin{equation*}
D_1H(0,J_0)\zeta = i \ d\zeta - d\zeta \ i + D_2\Delta_2^0(z,0)\zeta.
\end{equation*}
Differentiating the identity $\bar J_0^2=-I$ we get that $D := D_1H(0,J_0)$ takes values on $C^{l-1,\gamma,\delta}_0(Y^{0,1})$.

\begin{theorem}[Hofer, Wysocki and Zehnder~\cite{props3}]\label{thm_index_HWZ}
If $\delta<0$ is not in the spectrum of the asymptotic operator at $P$ associated to $(\alpha,J_0)$ then $$ D : C^{l,\gamma,\delta}_0(\C,\C) \to C^{l-1,\gamma,\delta}_0(Y^{0,1}) $$ is a Fredholm operator with index $\mu_{CZ}^\delta(P)-1$.
\end{theorem}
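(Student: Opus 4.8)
The plan is to recognize this statement as a special case of the Fredholm theory for real-linear Cauchy--Riemann operators on punctured Riemann surfaces with cylindrical ends developed by Hofer, Wysocki and Zehnder in~\cite{props3}; the same scheme is used in~\cite{hryn} (for $p=1$) and in~\cite[appendix]{openbook}, so we only indicate the main steps. First I would pass to cylindrical coordinates near the puncture at infinity, writing $z\simeq e^{2\pi(s+it)}$ with $(s,t)\in[s_0,+\infty)\times\R/\Z$, and trivialize $L$ there by the frame $\{n_1,n_2\}$ obtained from the $(d\alpha',J_0')$-unitary frame $\{n_1',n_2'\}$ on the $p$-fold cover $\R/p\Z\times B$ of the Martinet tube. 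In these coordinates $D=D_1H(0,J_0)$ becomes an operator of the form $\zeta\mapsto\partial_s\zeta+J_0\partial_t\zeta+S(s,t)\zeta$ acting on $\C$-valued maps, and by the construction of $\{n_1,n_2\}$ together with the asymptotic formula of Theorem~\ref{thm_asymptotics} for $\util_0$, the coefficient $S(s,t)$ converges in $C^\infty$ as $s\to+\infty$ to a loop of symmetric matrices $S^\infty(t)$ such that $e\mapsto-J_0\dot e-S^\infty(t)e$ represents, in the limit frame $\{\bar n_1,\bar n_2\}$, the asymptotic operator $A$ at $P$ associated to $(\alpha,J_0)$. The relative winding $-1$ recorded in~\eqref{winding_disk_to_normal} is exactly the discrepancy between $\{n_i\}$ and a capping-disk frame $\{e_i\}$, and will have to be carried through the index computation.

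Second, I would show that $D$ is Fredholm. Conjugating by the weight $e^{\delta s}$ turns the asymptotic model of the operator on the half-cylinder into an exponentially small perturbation of $\zeta\mapsto\partial_s\zeta+J_0\partial_t\zeta+(S^\infty(t)-\delta)\zeta$, which after the obvious identifications is $\partial_s-(A-\delta\,\mathrm{Id})$. Since $\delta\notin\sigma(A)$ by hypothesis, $A-\delta\,\mathrm{Id}$ is invertible on $L^2(\R/\Z,\R^2)$, so the corresponding constant-coefficient operator on the cylinder is an isomorphism between the appropriate exponentially weighted spaces, together with the standard decay estimates for its solutions. Gluing a parametrix near the puncture built from this invertibility to an interior parametrix for the elliptic operator $\bar\partial$ over a large disk produces a parametrix for $D$ modulo compact operators between $C^{l,\gamma,\delta}_0(\C,\C)$ and $C^{l-1,\gamma,\delta}_0(Y^{0,1})$; elliptic regularity then gives that kernel and cokernel consist of smooth, exponentially decaying objects, so $D$ is Fredholm. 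All of this is carried out in detail in~\cite{props3}.

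Finally, for the index I would use homotopy invariance: deforming $S(s,t)$ through a path whose asymptotic loops have associated operators all avoiding $\delta$ in their spectrum, together with a compactly supported perturbation, leaves $\mathrm{ind}_\R D$ unchanged and reduces the computation to a model real-linear Cauchy--Riemann operator on $\C$ with a single positive puncture. For such an operator the Riemann--Roch formula gives $\mathrm{ind}_\R D=\chi(\C)+\mu^{n}=1+\mu^{n}$, where $\mu^{n}$ is the Conley--Zehnder index of $A$ computed in the frame $\{n_i\}$ and relative to the weight $\delta$; relating $\mu^{n}$ to the index relative to a capping disk via~\eqref{winding_disk_to_normal} yields $\mu^{n}=\mu_{CZ}^\delta(P)-2$, whence $\mathrm{ind}_\R D=\mu_{CZ}^\delta(P)-1$. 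I expect the only genuinely delicate point to be this last bookkeeping, namely fixing orientation and normalization conventions so that the $-1$ in~\eqref{winding_disk_to_normal}, the sign of $\delta$, and the convention for the perturbed index $\mu_{CZ}^\delta$ combine to give exactly $\mu_{CZ}^\delta(P)-1$; the analytic steps are by now routine and follow~\cite{props3} essentially verbatim.
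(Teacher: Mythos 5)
The paper states this theorem by citing \cite{props3} without giving a proof, so there is no internal argument to compare against; your sketch correctly outlines the standard Hofer--Wysocki--Zehnder Fredholm theory, and your index bookkeeping via~\eqref{winding_disk_to_normal} is consistent with how the paper uses the result later (e.g.\ $\mu_{CZ}^\delta(\varphi)=1=\mu_{CZ}^\delta(P)-2$ in the proof of Lemma~\ref{lemma_automatic_transv}). One small slip: with $A=-J_0\partial_t-S^\infty$, conjugating $D$ by $e^{\delta s}$ produces zeroth-order term $S^\infty(t)+\delta$, not $S^\infty(t)-\delta$, which is what makes the conjugated operator equal to $\partial_s-(A-\delta\,\mathrm{Id})$ as you correctly state in the next clause.
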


Here $\mu_{CZ}^\delta(P)$ denotes the Conley-Zehnder index associated to the linearized $\alpha$-Reeb flow along $P$ represented by a $d\alpha$-symplectic frame induced by a capping disk, which takes the exponential weight $\delta$ into account in the following manner: $\mu_{CZ}^\delta(P) = 2k$ if $\delta$ lies between two eigenvalues of $A$ with winding number $k$ with respect to this frame, or $\mu_{CZ}^\delta(P)=2k+1$ if $\delta$ lies between eigenvalues with winding numbers $k$ and $k+1$ with respect to this frame.

From now on we fix our choice of~$\delta$: since $\rho(P)>1 \Leftrightarrow \mu_{CZ}(P)\geq 3$ we can place $\delta$ between eigenvalues with winding numbers $1$ and $2$, so that $\mu_{CZ}^\delta(P)=3$ and the index of $D$ is $2$.

The analysis of~\cite{props3} shows that there exists a (trivial) smooth Banach bundle $\mathcal{E} \to \mathcal{V} \times \U_r$ with fibers modeled on $C^{l-1,\gamma,\delta}(Y^{0,1})$ and a smooth section $\eta$ of $\mathcal{E}$ such that $\eta(v,J)=0 \Leftrightarrow H(v,J)=0$. Moreover, the partial vertical derivative $D_1\eta(0,J_0)$ at $(0,J_0)$ coincides with $D$.

\begin{lemma}\label{lemma_automatic_transv}
The operator $D$ is surjective.
\end{lemma}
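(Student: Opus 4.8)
The plan is to deduce surjectivity from the index computation of Theorem~\ref{thm_index_HWZ} together with an automatic transversality argument in the spirit of Hofer--Wysocki--Zehnder~\cite{props3}. By that theorem and our choice of $\delta$ we have $\mathrm{ind}(D)=\mu_{CZ}^\delta(P)-1=2$, so that $\dim_\R\ker D-\dim_\R\mathrm{coker}(D)=2$. Hence it is enough to show that $\dim_\R\ker D\le 2$, for then $\dim_\R\mathrm{coker}(D)\le 0$ and $D$ is onto.

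The heart of the matter is the claim that every $\zeta\in\ker D\setminus\{0\}$ is nowhere vanishing on $\C$. The operator $D$ is a first-order elliptic operator of Cauchy--Riemann type: its leading part $\zeta\mapsto i\,d\zeta-d\zeta\,i$ is, up to a complex bundle automorphism, the standard $\bar\partial$-operator, while $D_2\Delta_2^0(z,0)$ is a $C^{l-1}$ bundle map. Thus the similarity principle applies and gives that, for $\zeta\in\ker D$ not identically zero, the zeros of $\zeta$ are isolated and each carries a strictly positive local degree; in particular the algebraic count $Z(\zeta)$ of zeros of $\zeta$ in $\C$ satisfies $Z(\zeta)\ge 1$ as soon as $\zeta$ vanishes somewhere. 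On the other hand, since $\alpha$ is non-degenerate the orbit $P$ is non-degenerate and its asymptotic operator $A$ associated with $(\alpha,J_0)$ has no kernel, so the asymptotic analysis of~\cite{props1} yields $\zeta(s,t)=e^{\nu s}\bigl(e(t)+o(1)\bigr)$ as $s\to+\infty$, expressed in the frame $\{n_1,n_2\}$ used to write normal deformations, for some $\nu\in\sigma(A)$ with $\nu<\delta$ (because $\zeta$ lies in the $\delta$-weighted space and $\delta\notin\sigma(A)$) and some nowhere-vanishing eigensection $e$. Consequently $Z(\zeta)$ equals the asymptotic winding $\wind_\infty(\zeta)$, namely the winding of the asymptotic eigensection $e$ in the frame $\{n_1,n_2\}$. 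By the placement of $\delta$, every eigenvalue of $A$ below $\delta$ has winding at most $1$ with respect to a $d\alpha$-symplectic frame $\{e_1,e_2\}$ of $(x_T)^*\xi$ coming from a capping disk, and by~\eqref{winding_disk_to_normal} the frame $\{n_1,n_2\}$ winds $-1$ relative to $\{e_1,e_2\}$; hence the winding of $e$ in the frame $\{n_1,n_2\}$ is at most $1-1=0$. Therefore $Z(\zeta)\le 0$, which forces $\zeta$ to have no zeros at all, proving the claim.

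Granting the claim, suppose $\dim_\R\ker D\ge 3$. For any fixed $z_0\in\C$ the evaluation $\ker D\to\C\cong\R^2$, $\zeta\mapsto\zeta(z_0)$, is $\R$-linear, so its kernel has real dimension $\ge 1$; this produces a nonzero $\zeta\in\ker D$ with $\zeta(z_0)=0$, contradicting the claim. Hence $\dim_\R\ker D\le 2$, so $\dim_\R\mathrm{coker}(D)\le 0$ and $D$ is surjective. The step I expect to require the most care is the winding bookkeeping of the middle paragraph: one must correctly compare the normal frame $\{n_1,n_2\}$ with the capping-disk frame, and it is precisely the value $-1$ in~\eqref{winding_disk_to_normal} --- which in turn reflects the identity $\mathrm{ind}(D)=2$ --- that makes the estimate $Z(\zeta)\le 0$ hold; with the opposite normalization one would only obtain a weaker bound on $\dim_\R\ker D$ insufficient to conclude. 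The remaining ingredients (the similarity principle, the asymptotic formula, and elementary Fredholm theory) are standard.
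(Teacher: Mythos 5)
Your proof is correct and follows essentially the same route as the paper: compute $\mathrm{ind}(D)=2$, use the similarity principle and the asymptotic formula to show that every nonzero element of $\ker D$ is nowhere vanishing (with the key winding bookkeeping coming from~\eqref{winding_disk_to_normal} and the placement of $\delta$), and then deduce $\dim_\R\ker D\le 2$ from the fact that $\zeta\mapsto\zeta(z_0)$ cannot have nontrivial kernel. The minor phrasing slip — you say $\{n_1,n_2\}$ ``winds $-1$ relative to $\{e_1,e_2\}$'' while~\eqref{winding_disk_to_normal} in fact gives $\wind(e_1,n_1)=-1$ — has no effect, since it leads to the same shift $w_n=w_e-1$ and hence the same bound $\le 0$.
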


\begin{proof}
Consider the map $\sigma(s,t) = e^{2\pi(s+it)}$, fix $\zeta \in C^{l,\gamma,\delta}(\C,\C)$ and denote $a(s,t)=\zeta\circ\sigma(s,t)$. If we evaluate $id\zeta - d\zeta i + D_2\Delta_2^0(z,0)\zeta$ at $z=\sigma(s,t)$ and apply it to $\partial_t\sigma$ we obtain $a_s+ia_t+C(s,t)a$ where $C(s,t)u=[D_2\Delta_2(\sigma(s,t),0)u]\partial_t\sigma(s,t)$. It is shown in~\cite{props3} that $C(s,t) \to S(t)$ as $s\to+\infty$, where $S(t)$ is a smooth $1$-periodic path of symmetric $2\times 2$ matrices such that the linearized Reeb flow along $(x_T)^*\xi$ represented in the frame $\{\bar n_1,\bar n_2\}$ yields a path $\varphi(t) \in Sp(2)$ satisfying $-i\dot\varphi-S\varphi=0$. In view of~\eqref{winding_disk_to_normal} we get $\mu_{CZ}^\delta(\varphi)=1$ and conclude that all eigenvectors of the operator $A \simeq -i\partial_t-S$ associated to eigenvalues smaller than $\delta$ have winding number smaller than or equal to $0$. Here we used the monotonicity of winding numbers along the spectrum of $A$; see~\cite{props2}.

If $\zeta\in\ker D$ is non-zero then it follows from asymptotic analysis as in~\cite{props1,sie1,hryn} that for $s$ large enough $a(s,t)$ does not vanish and and looks like $e^{\lambda s}(u(t)+\epsilon(s,t))$ for some eigenvector $u(t)$ associated to an eigenvalue $\lambda<0$ of $A$. Here $\epsilon(s,t)\to0$ as $s\to+\infty$. In particular, the winding number of $t\mapsto a(s,t)$ is equal to the winding number $m$ of $u$ when $s$ is large enough, and $\lambda<\delta$ since $\zeta \in C^{l-1,\gamma,\delta}$. Thus $m\leq0$ and we can use Carleman's similarity principle together with standard degree theory to conclude that $m=0$ and that any non-trivial $\zeta \in \ker D$ is nowhere vanishing.

We are finally in a position to conclude the argument: if there are three linearly independent vectors in $\ker D$ then a suitable combination of them will vanish at some point since $\dim_\R\C=2$. By the above analysis this linear combination is identically zero, absurd. Hence $\dim\ker D \leq 2$. Since $D$ has index $2$ we conclude that $\dim\ker D=2$ and $\dim{\rm coker}\ D=0$, as desired.
\end{proof}

The above lemma is an automatic transversality statement, it is found in~\cite{hryn}, and also proved in~\cite{wendl}. As a consequence we find an open neighborhood $\mathcal{O}$ of $J_0$ in $\U_r$ such that, perhaps after shrinking $\mathcal{V}$, the universal moduli space $$ \M^{\rm univ} = \{ (v,J) \in \mathcal{V} \times \mathcal{O} \mid \eta(v,J)=0 \} $$ is a smooth submanifold of $\mathcal{V} \times \mathcal{O}$. However, there is an important additional piece of information: perhaps after shrinking $\mathcal{V}$ and $\mathcal{O}$ even more we may assume that the projection $\Pi:(v,J) \in \mathcal{V} \times \mathcal{O} \mapsto J \in \mathcal{O}$ onto the second factor induces a submersion $$ \Pi|_{\M^{\rm univ}} : \M^{\rm univ} \to \mathcal{O} $$ such that each fiber $\M(J) := \Pi^{-1}(J) \cap \M^{\rm univ}$ is non-empty. Then each $\M(J)$ is a smooth (non-empty) manifold, and by~\cite[Lemma A.3.6]{mcdsal} and Theorem~\ref{thm_index_HWZ} we have $\dim \M(J) = \mu_{CZ}^\delta(P)-1 = 2$ for all $J\in\mathcal{O}$.

We need to argue a little more in order to keep track of the point $y$. There is no loss of generality to assume that $\util_0(0)=(0,y)$. We introduce the evaluation map $$ \begin{array}{ccc} {\rm ev} : \C\times \M^{\rm univ} \to \R\times M & & {\rm ev}(z,v,J) = \Phi(z,v(z)) \end{array} $$ where $\Phi$ is the map~\eqref{map_Phi_fred_theory}. This is easily proved to be a smooth map and $(0,0,J_0) \in {\rm ev}^{-1}(0,y)$.

\begin{lemma}\label{lemma_transv_ev_map}
The map ${\rm ev}|_{\C\times \M(J_0)}:\C\times\M(J_0) \to \R\times M$ is non-singular at the point $(0,0,J_0)$.
\end{lemma}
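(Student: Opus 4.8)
The plan is to compute the differential of ${\rm ev}|_{\C\times\M(J_0)}$ at $(0,0,J_0)$ and to show that it is an isomorphism; since the domain $\C\times\M(J_0)$ and the target $\R\times M$ both have real dimension $4$, it is enough to prove surjectivity. Write $q_0=\util_0(0)=(0,y)$. Recall from the construction preceding Theorem~\ref{thm_index_HWZ} that on $B_{R_0-1}$ the subbundle $L$ coincides with $\util_0^*N$, where $N$ is the normal bundle of $\util_0(\C)$, and that there the complex frame $\{n_1,n_2\}$ of $L$ restricts to a frame of $\util_0^*N$. Hence $D_2\Phi(0,0):\C\to T_{q_0}(\R\times M)$, $w\mapsto \Re[w]\,n_1(0)+\Im[w]\,n_2(0)$, is a linear isomorphism onto the normal space $N_{q_0}$, and $\Phi(0,0)=\exp_{\util_0(0)}(0)=q_0$.

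Now decompose the tangent space of the domain at $(0,0,J_0)$ into the $\C$-factor and $T_{(0,J_0)}\M(J_0)$. Since $D_1\eta(0,J_0)=D$ is surjective with two-dimensional kernel, we have $T_{(0,J_0)}\M(J_0)=\ker D$. Along the $\C$-factor, ${\rm ev}$ restricted to $v\equiv0$ is $z\mapsto\Phi(z,0)=\util_0(z)$, so the differential there is $\zeta\mapsto D_1\Phi(0,0)\zeta=d\util_0|_0\,\zeta$, whose image is the tangent plane $T_{q_0}\util_0(\C)$ (recall $\util_0$ is an embedding, in particular an immersion). Along $\ker D$, differentiating a path $v_t$ in $\M(J_0)$ with $v_0=0$, $\dot v_0=\zeta$ gives $\frac{d}{dt}\Phi(0,v_t(0))|_{t=0}=D_2\Phi(0,0)\,\zeta(0)$, where $\zeta\in\ker D$ is viewed as a $\C$-valued map via $\{n_1,n_2\}$; by the previous paragraph, near $z=0$ this is honest evaluation at $z=0$ of a section of $N$.

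It therefore suffices to show that the evaluation map ${\rm ev}_0:\ker D\to\C$, $\zeta\mapsto\zeta(0)$, is an isomorphism. It is injective: if $\zeta\in\ker D$ and $\zeta(0)=0$, then by the argument in the proof of Lemma~\ref{lemma_automatic_transv} — Carleman's similarity principle together with the degree-theoretic argument showing that any non-trivial element of $\ker D$ is nowhere vanishing — we get $\zeta\equiv0$. Since $\dim_\R\ker D=2=\dim_\R\C$, the map ${\rm ev}_0$ is an isomorphism. Combining the two pieces, the image of $d({\rm ev}|_{\C\times\M(J_0)})$ at $(0,0,J_0)$ contains $T_{q_0}\util_0(\C)+N_{q_0}=T_{q_0}(\R\times M)$, so the differential is surjective and hence, by the dimension count, an isomorphism. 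This gives the non-singularity at $(0,0,J_0)$.

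The one point requiring care — and it is only bookkeeping — is to keep straight the identification of $\ker D$ with $\C$-valued maps furnished by $\{n_1,n_2\}$ and to verify that on $B_{R_0-1}$ this identification is the one induced by the normal bundle $N$, which is exactly how $L$ and $\{n_1,n_2\}$ were defined. No analytic input beyond the already-established nowhere-vanishing of elements of $\ker D$ enters the argument, so I do not expect any serious obstacle here.
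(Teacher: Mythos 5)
Your proposal is correct and follows essentially the same approach as the paper: decompose the differential into the $\C$-direction (which, since $\util_0$ is an embedding, covers $T_{q_0}\util_0(\C)$) and the $\M(J_0)$-direction (which lands in the normal bundle via $D_2\Phi(0,0)$ composed with evaluation at $0$), then use the nowhere-vanishing of non-trivial elements of $\ker D$ established in the proof of Lemma~\ref{lemma_automatic_transv} to show the evaluation map $\ker D \to \C$ is injective, hence an isomorphism by dimension count. The paper's proof is just a terse two-sentence version of the same argument; your writeup fills in the tangent-space identifications and the role of the frame $\{n_1,n_2\}$ on $B_{R_0-1}$ explicitly, but no new idea is required.
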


\begin{proof}
Since $\util_0$ is an embedding we only need to show that the partial derivative of ${\rm ev}|_{\C\times\M(J_0)}$ in the $\M(J_0)$-direction is transverse to $\util_0(\C) = {\rm ev}(\C\times\{(0,J_0)\})$. This would not be the case precisely when there is a section in $\ker D$ which vanishes somewhere, but this possibility was ruled out by the argument used to prove Lemma~\ref{lemma_automatic_transv}.
\end{proof}

It is an immediate consequence of Lemma~\ref{lemma_transv_ev_map} that the differential $d({\rm ev})|_{(0,0,J_0)}$ of the map ${\rm ev}$ at the point $(0,0,J_0)$ is onto, so there is no loss of generality to assume that ${\rm ev}^{-1}(0,y)$ is a smooth submanifold. The codimension of ${\rm ev}^{-1}(0,y)$ is $4$, and by Lemma~\ref{lemma_transv_ev_map} ${\rm ev}^{-1}(0,y)$ intersects $\C\times \M(J_0)$ transversally at the point $(0,0,J_0)$. In particular, since $\dim \C\times \M(J_0)=4$, it follows that the restriction to $T_{(0,0,J_0)}{\rm ev}^{-1}(0,y)$ of the linearization of the map $(z,v,J) \mapsto J$ at the point $(0,0,J_0)$ is surjective. This completes the proof that for every $J$ close enough to $J_0$ in $\U_r$ there exists some $(v,J) \in \M(J)$ such that $(0,y)$ belongs to the image of the map $z \mapsto {\rm ev}(z,v,J)$.

In order to finish the proof that $J_0$ is an interior point of $\J_{\rm fast}(\alpha,y)$ it only remains to be shown that the map $z\mapsto {\rm ev}(z,v,J)$ can be reparametrized as an embedded fast finite-energy $\jtil$-holomorphic plane, whenever $J$ is close enough to $J_0$ in $\U_r$ and $(v,J) \in \M^{\rm univ}$. This argument uses the analysis of~\cite[appendix]{props3} and has been spelled out in detail in~\cite[appendix~A]{openbook}. We sketch it here: using the analysis of~\cite[appendix]{props3} such a map $z\mapsto {\rm ev}(z,v,J)$ can be reparametrized as a finite-energy $\jtil$-holomorphic plane $\util$. It is clearly an embedded plane. Since $v\in C^{l,\gamma,\delta}_0$ we can use asymptotic analysis as in~\cite{props1,sie1,hryn} to show that the asymptotic eigenvalue of this plane is $\leq\delta$, and hence $<\delta$ since $\delta$ does not belong to the spectrum of the corresponding asymptotic operator. In particular, $\wind_\infty(\util)\leq 1$ and, by the similarity principle, we have $\wind_\infty(\util)=1 \Rightarrow \wind_\pi(\util)=0$, as desired.

\section{Proof of Proposition~\ref{prop_unif_asymptotic_analysis}}\label{app_unif_asymp_analysis}

The arguments here are almost identical to those of~\cite[appendix~B]{openbook}, we include them for the sake of completeness. We fix a sequence of smooth maps
\begin{equation}
K_n : [0,+\infty) \times \R/\Z \to \R^{2m\times 2m} \ \ \ (n\geq1)
\end{equation}
and a map
\begin{equation}
K^\infty : \R/\Z \to \R^{2m\times 2m}
\end{equation}
such that $K^\infty(t)$ is symmetric for all $t$, and satisfying the following property: for every pair of sequences $n_l,s_l\to+\infty$ there exists $l_k\to+\infty$ and $c\in[0,1)$ such that
\begin{equation}
\lim_{k\to\infty} \| \partial^{\beta_1}_s\partial^{\beta_2}_t[K_{n_{l_k}}(s,t)-K^\infty(t+c)](s_{l_k},\cdot) \|_{L^\infty(\R/\Z)} = 0
\end{equation}
for all $\beta_1,\beta_2\geq0$.

From now on $L$ will denote the unbounded self-adjoint operator
\begin{equation}\label{operator_L}
\begin{array}{ccc} L : W^{1,2} \subset L^2 \to L^2 & & e(t) \mapsto -J_0\dot e(t)-K^\infty(t) e(t) \end{array}
\end{equation}
where $W^{1,2} = W^{1,2}(\R/\Z,\R^{2m})$, $L^2=L^2(\R/\Z,\R^{2m})$ and $J_0$ is the standard complex structure
\begin{equation}
J_0 = \begin{pmatrix} 0 & -I \\ I & 0 \end{pmatrix}
\end{equation}
on $\R^{2m}$ (written in four $m\times m$ blocks). The first step towards the proof of the proposition is the following lemma.

\begin{lemma}\label{lemma_unif_asympt_1}
For each $s\geq0$ and $n\geq1$ consider the unbounded self-adjoint operator $L_n(s):W^{1,2}\subset L^2 \to L^2$ given by $L_n(s) = -J_0 \partial_t - S_n(s,t)$ where $S_n = \frac{1}{2}(K_n+K_n^T)$ is the symmetric part of $K_n$.
%Let $A_n(s,t) = \frac{1}{2}(K_n-K_n^T)$ be the anti-symmetric part of $K_n$.
If $\delta<0$ does not lie on the spectrum of the operator $L$~\eqref{operator_L} then we can find $s_0\geq 0$, $n_0\geq 1$ and $c>0$ such that
\begin{equation*}
\begin{aligned}
& \|[L_n(s)-\delta]e\|_{L^2} \geq c\|e\|_{L^2} \\
%& \left< [L_{n}(s)-\delta]e,[L_{n}(s)-\delta-A_{n}(s,\cdot)]e \right> \geq c^2\|e\|_{L^2}^2
\end{aligned}
\end{equation*}
for all $s\geq s_0$, $n\geq n_0$, $e\in W^{1,2}$.
\end{lemma}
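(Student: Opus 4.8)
The statement is a uniform (in $n$ and $s$) version of the standard fact that a self-adjoint operator $L$ has distance to its spectrum bounded below by $c$ whenever $\delta\notin\sigma(L)$. The plan is to argue by contradiction and exploit the compactness hypothesis on the family $\{K_n\}$, reducing the uniform estimate to the single spectral estimate for $L$. First I would record the elementary fact that since $\delta\notin\sigma(L)$ and $L$ is self-adjoint with $\delta$ real, there is a constant $c_0>0$ with $\|(L-\delta)e\|_{L^2}\geq c_0\|e\|_{L^2}$ for all $e\in W^{1,2}$; this is the resolvent bound $\|(L-\delta)^{-1}\|\leq 1/\mathrm{dist}(\delta,\sigma(L))$. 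The goal is then to show that the perturbed operators $L_n(s)-\delta$, for $s,n$ large, satisfy a comparable bound with $c=c_0/2$.

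Next I would suppose the conclusion fails: then for every $j$ there are $s_j\geq j$, $n_j\geq j$ and $e_j\in W^{1,2}$ with $\|e_j\|_{L^2}=1$ and $\|[L_{n_j}(s_j)-\delta]e_j\|_{L^2}<1/j$. From $L_{n_j}(s_j)e_j = -J_0\dot e_j - S_{n_j}(s_j,\cdot)e_j$ and the uniform boundedness of $S_{n_j}(s_j,\cdot)$ on $L^\infty(\R/\Z)$ (which follows from hypothesis (ii), taking e.g.\ $\beta=0$, since $K^\infty$ is bounded and the differences tend to $0$; along a subsequence one first extracts $l_k$ and $c$), one obtains a uniform $W^{1,2}$-bound on $e_j$: indeed $\|\dot e_j\|_{L^2}\leq \|[L_{n_j}(s_j)-\delta]e_j\|_{L^2}+|\delta|\,\|e_j\|_{L^2}+\|S_{n_j}(s_j,\cdot)\|_{L^\infty}\|e_j\|_{L^2}$ is bounded. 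By Rellich, after passing to a further subsequence $e_j\to e_\infty$ strongly in $L^2$, hence $\|e_\infty\|_{L^2}=1$, and weakly in $W^{1,2}$. Using hypothesis (ii) along the sequences $n_j,s_j$ I extract $j_k\to\infty$ and $c\in[0,1)$ with $S_{n_{j_k}}(s_{j_k},\cdot)\to S^\infty(\cdot+c)$ in $L^\infty(\R/\Z)$, where $S^\infty=\frac12(K^\infty+(K^\infty)^T)=K^\infty$ by symmetry of $K^\infty$. Passing to the limit in the weak formulation $\langle -J_0\dot e_{j_k}-S_{n_{j_k}}(s_{j_k},\cdot)e_{j_k}-\delta e_{j_k},\varphi\rangle\to 0$ for all test $\varphi$, I conclude $-J_0\dot e_\infty - K^\infty(\cdot+c)e_\infty-\delta e_\infty=0$ in the distributional, hence $W^{1,2}$, sense. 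Thus $\delta\in\sigma(L_c)$ where $L_c e = -J_0\dot e - K^\infty(\cdot+c)e$. But $L_c$ is unitarily equivalent to $L$ via the shift $e(\cdot)\mapsto e(\cdot-c)$ on $L^2(\R/\Z)$ (note $J_0$ and $d/dt$ commute with translation), so $\sigma(L_c)=\sigma(L)$, contradicting $\delta\notin\sigma(L)$.

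The main obstacle I anticipate is the bookkeeping with the two-parameter family: hypothesis (ii) only gives convergence of $K_{n_l}(s_l,\cdot)$ to a \emph{shifted} copy $K^\infty(\cdot+c)$ along \emph{some} subsequence, so one must be careful to (a) extract the relevant subsequence at the right moment — after obtaining the $W^{1,2}$-bound and before passing to the limit — and (b) handle the shift $c$, which is disposed of by the translation-invariance of $\sigma(L)$. A secondary point is making sure the passage to the limit in the PDE is justified: weak $W^{1,2}$-convergence of $e_{j_k}$ plus $L^\infty$-convergence of the coefficient suffices to pass to the limit in $\langle S_{n_{j_k}}(s_{j_k},\cdot)e_{j_k},\varphi\rangle$ because $S_{n_{j_k}}(s_{j_k},\cdot)e_{j_k}\to K^\infty(\cdot+c)e_\infty$ strongly in $L^2$ (product of $L^\infty$-convergent and $L^2$-convergent sequences), and in $\langle -J_0\dot e_{j_k},\varphi\rangle = \langle e_{j_k}, J_0\dot\varphi\rangle$ by weak convergence. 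All remaining steps — the resolvent bound for $L$, the Rellich compactness, the elliptic regularity upgrading the weak limit equation — are standard and I would only cite them.
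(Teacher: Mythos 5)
Your proof is correct and follows essentially the same contradiction argument as the paper: extract a normalized sequence $e_j$ with $\|[L_{n_j}(s_j)-\delta]e_j\|_{L^2}\to 0$, use the equation and the uniform bound on $S_n$ to get $W^{1,2}$-bounds, pass to a subsequence via Rellich and the hypothesis to obtain the shift $c$, take the limit to produce an eigenvector of $L_c=-J_0\partial_t-K^\infty(\cdot+c)$ with eigenvalue $\delta$, and conclude by translation-invariance of the spectrum. The only cosmetic difference is the order of operations (the paper extracts the subsequence giving the shift $c$ before establishing the $W^{1,2}$-bound, and upgrades to strong $W^{1,2}$-convergence via a Cauchy argument, whereas you use weak $W^{1,2}$-convergence plus strong $L^2$-convergence of the coefficient products), which does not change the substance.
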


\begin{proof}
Let us argue by contradiction, assuming that there exist $n_l,s_l \to +\infty$ and vectors $e_l \in W^{1,2}$ such that $\|e_l\|_{L^2}=1 \ \forall l$, and
%\begin{equation*}
%\left< [L_{n_l}(s_l)-\delta]e,[L_{n_l}(s_l)-\delta-A_{n_l}(s_l,\cdot)]e \right> \to 0 \ \ \ \text{as} \ \ \ l\to\infty.
%\end{equation*}¥
\begin{equation*}
\|[L_{n_l}(s_l)-\delta]e_l\|_{L^2} \to 0 \ \ \ \text{as} \ \ \ l\to\infty.
\end{equation*}
In view of the assumptions, we find $l_k$ and $c\in[0,1)$ such that $S_{n_{l_k}}(s_{l_k},t) \to K^\infty(t+c)$ uniformly in $t$, as $k\to\infty$. Since
\begin{equation*}
\partial_te_{l_k} = J_0([L_{n_{l_k}}(s_{l_k})-\delta]e_{l_k}) + J_0S_{n_{l_k}}(s_{l_k},\cdot)e_{l_k} + \delta J_0e_{l_k}
\end{equation*}
then $e_{l_k}$ is bounded in $W^{1,2}$. Thus, up to a further subsequence, we may assume that $e_{l_k} \to e$ in $L^2$ as $k\to\infty$, for some $e\in L^2$. Appealing again to the above equation we conclude that $e_{l_k}$ is a Cauchy sequence in $W^{1,2}$, in particular $e\in W^{1,2}$, $\|e\|_{L^2}=1$ and $\|e_{l_k}-e\|_{W^{1,2}} \to 0$. But all this implies that $[L_{n_{l_k}}(s_{l_k})-\delta]e_{l_k} \to -J_0\partial_te-K^\infty(\cdot+c)e - \delta e$ as $k\to\infty$. In other words, $e_c(t)=e(t-c)$ satisfies $Le_c=\delta e_c$ and $\|e_c\|_{L^2}=1$. This is in contradiction with our hypothesis on $\delta$.
\end{proof}

\begin{lemma}\label{lemma_unif_asympt_2}
Suppose that $\delta<0$ does not lie on the spectrum of the operator $L$~\eqref{operator_L}, and let $\mu$ be largest number in $\sigma(L)$ below $\delta$. Then there exists $0<r<\delta-\mu$ and $n_1,s_1>0$ such that
\begin{equation*}
\|X(s,\cdot)\|_{L^2} \leq e^{(\delta-r)(s-s_1)} \| X(s_1,\cdot) \|_{L^2}
\end{equation*}
holds for all $s\geq s_1$ and for all solutions $X$ of
\begin{equation*}
\begin{array}{ccc} \partial_sX+J_0\partial_tX+K_nX=0 & & \lim_{s\to+\infty} e^{-\delta s} \|X(s,\cdot)\|_{L^2} = 0 \end{array}
\end{equation*}
with $n\geq n_1$.
\end{lemma}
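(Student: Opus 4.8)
The plan is to reduce the estimate to a Gronwall-type differential inequality for the $L^2$-energy $g(s):=\|X(s,\cdot)\|_{L^2}^2$, and to extract that inequality from the uniform invertibility bound in Lemma~\ref{lemma_unif_asympt_1}, following the asymptotic analysis of Hofer--Wysocki--Zehnder~\cite{props1} (reproduced in~\cite[appendix~B]{openbook}) but keeping all constants independent of $n$.

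First I would fix $r\in(0,\delta-\mu)$ and put $\delta':=\delta-r$. Since $\mu$ is the largest element of $\sigma(L)$ below $\delta$ and $\delta\notin\sigma(L)$, the interval $(\mu,\delta]$ misses $\sigma(L)$; in particular $\delta'<0$ and $\delta'\notin\sigma(L)$, so Lemma~\ref{lemma_unif_asympt_1} applied with $\delta'$ in place of $\delta$ yields $s_1\ge 0$, $n_1\ge 1$ and $c>0$ with $\|[L_n(s)-\delta']e\|_{L^2}\ge c\|e\|_{L^2}$ for all $s\ge s_1$, $n\ge n_1$, $e\in W^{1,2}$. The target then becomes: after possibly enlarging $s_1,n_1$, show that $g'(s)\le 2\delta' g(s)$ for all $s\ge s_1$ and all solutions $X$ with $n\ge n_1$; integrating this gives $\|X(s,\cdot)\|_{L^2}\le e^{\delta'(s-s_1)}\|X(s_1,\cdot)\|_{L^2}=e^{(\delta-r)(s-s_1)}\|X(s_1,\cdot)\|_{L^2}$, which is the assertion.

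Next I would prepare two ingredients. Writing $S_n$ and $B_n$ for the symmetric and antisymmetric parts of $K_n$, so that $L_n(s)=-J_0\partial_t-S_n(s,\cdot)$, the symmetry of $K^\infty$ together with hypothesis (ii) forces $\sup_n\big(\|B_n(s,\cdot)\|_{L^\infty}+\|\partial_s S_n(s,\cdot)\|_{L^\infty}\big)\to 0$ as $s\to+\infty$: a failure would produce, via (ii), a subsequence converging to $K^\infty(\cdot+c)$ in $C^\infty$ while retaining a nonzero antisymmetric part or $s$-derivative. Also, a standard unique continuation argument for the Cauchy--Riemann-type equation shows that if $g(s_1)=0$ then $X\equiv 0$ and the claim is trivial, so one may assume $g>0$ on $[s_1,\infty)$. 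Using the equation together with $\langle B_nX,X\rangle_{L^2}=0$ one gets $g'(s)=2\langle L_n(s)X,X\rangle_{L^2}$ and $g''(s)=4\|L_n(s)X\|_{L^2}^2+\mathcal{R}_n(s)$, where $|\mathcal{R}_n(s)|\le\rho(s)\big(\|X(s,\cdot)\|_{L^2}^2+\|L_n(s)X\|_{L^2}^2\big)$ and $\rho(s)\to 0$ uniformly in $n\ge n_1$.

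Finally I would run the monotonicity-and-contradiction argument. Set $h(s):=g'(s)-2\delta' g(s)$. Combining the formulas for $g'$ and $g''$ with Cauchy--Schwarz, $\langle L_n(s)X,X\rangle^2\le\|L_n(s)X\|^2\,g$, and the inequality obtained by squaring $\|[L_n(s)-\delta']e\|\ge c\|e\|$ and expanding, one checks (after enlarging $s_1$ so $\rho$ is small enough there) that $h(s)=0$ forces $h'(s)>0$; hence $h$ cannot decrease through $0$, so if $h$ were positive at some $s_*\ge s_1$ it would remain positive on $[s_*,\infty)$. In that case the quotient $\lambda(s):=g'(s)/(2g(s))$ stays $>\delta'$, and the same inequalities show --- this is the core of the Hofer--Wysocki--Zehnder asymptotic estimates, now carried out family-wise with the uniform constants above --- that $\lambda(s)$ converges as $s\to+\infty$ to an eigenvalue of $L$ and $X(s,\cdot)/\|X(s,\cdot)\|_{L^2}$ to an associated eigensection. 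That eigenvalue would be $\ge\delta'>\mu$, hence, by the spectral gap of $L$ at $\delta$, strictly larger than $\delta$; so $\|X(s,\cdot)\|_{L^2}$ would grow faster than $e^{\delta s}$, contradicting $e^{-\delta s}\|X(s,\cdot)\|_{L^2}\to 0$. Thus $h\le 0$ on $[s_1,\infty)$, i.e.\ $g'\le 2\delta' g$, which completes the proof. I expect the main obstacle to be exactly this last step: re-running the asymptotic analysis (convergence of $\lambda(s)$ and of the normalized eigensection) with constants uniform in $n$; Lemma~\ref{lemma_unif_asympt_1} is precisely the tool that makes the required uniformity available.
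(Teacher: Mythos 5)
Your strategy differs from the paper's in a way that introduces a genuine gap at the very end. The paper works with the weighted function $Y=e^{-\delta s}X$ and $g=\tfrac12\|Y\|_{L^2}^2$, so that $g(s)\to 0$ is exactly the decay hypothesis on $X$; it then derives the second-order differential inequality $g''\ge 4(c-\epsilon)^2 g$ directly from the formula for $g''$, the uniform lower bound $\|(L_n(s)-\delta)Y\|\ge c\|Y\|$ of Lemma~\ref{lemma_unif_asympt_1}, and the smallness of $A_n$ and $\partial_sS_n$. A non-negative function with $g''\ge 4a^2g$ and $g\to 0$ automatically satisfies $g'\le -2ag$ (an elementary ODE fact), and this already gives the claimed exponential estimate for $\|X\|$; no asymptotic convergence of any ``rotation number'' $\lambda(s)$ or of a normalized eigensection is needed. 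Note also that this second-order inequality is precisely what your unweighted $g=\|X\|^2$ does \emph{not} satisfy: one has $g''=4\|L_n(s)X\|^2+\mathcal R_n$, and $\|L_n(s)X\|$ has no useful lower bound in terms of $\|X\|$ (the spectrum of $L_n(s)$ may contain points near $0$); the shift by $\delta$, i.e.\ working with $Y$ rather than $X$, is what makes Lemma~\ref{lemma_unif_asympt_1} bite.

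Your first-order ``monotonicity'' step is fine: the computation that $h(s_\ast)=0$ and $g(s_\ast)>0$ force $h'(s_\ast)>0$ (for $s_\ast\ge s_1$, $n\ge n_1$ with $\|B_n\|,\|\partial_sS_n\|$ small) is correct, and it is essentially the same algebra the paper performs with $Y$. The problem is closing the resulting dichotomy. If $h>0$ on $[s_\ast,\infty)$, then $g(s)\ge g(s_\ast)e^{2\delta'(s-s_\ast)}$ with $\delta'<\delta$, which is compatible with $e^{-\delta s}\|X\|\to 0$, so no contradiction yet. To get one you appeal to ``convergence of $\lambda(s)=g'/(2g)$ to an eigenvalue of $L$ and of $X/\|X\|$ to an eigensection,'' but this is the deep asymptotic theorem of Hofer--Wysocki--Zehnder, not a consequence of Lemma~\ref{lemma_unif_asympt_1}. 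Worse, hypothesis~(ii) of Proposition~\ref{prop_unif_asymptotic_analysis} only controls $K_n(s_l,\cdot)$ along joint sequences $n_l,s_l\to\infty$; it does \emph{not} assert that for a fixed $n$, $K_n(s,\cdot)$ converges as $s\to\infty$, so even the single-curve HWZ asymptotics are not available under the stated hypotheses, let alone a version with constants uniform in $n$. The paper's argument is engineered exactly to avoid needing such a statement, and your proposal as written reimports it.
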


\begin{proof}
Let $S_n=\frac{1}{2}(K_n+K_n^T)$ and $A_n=\frac{1}{2}(K_n-K_n^T)$ be the symmetric and anti-symmetric parts of $K_n$. Then we have the following property: for all $\beta_1,\beta_2\geq0$ and for all pairs of sequences $s_l,n_l\to+\infty$ we can find $l_k\to\infty$ and $c\in[0,1)$ such that
\begin{equation}\label{hypotheses_prop}
\begin{aligned}
& \lim_{k\to\infty} \| \partial_s^{\beta_1}\partial_t^{\beta_2}[S_{n_{l_k}}(s,t)-S^\infty(t+c)](s_{l_k},\cdot)\|_{L^\infty(\R/\Z)} = 0 \\
& \lim_{k\to\infty}  \| \partial_s^{\beta_1}\partial_t^{\beta_2}A_{n_{l_k}}(s_{l_k},\cdot) \|_{L^\infty(\R/\Z)} = 0.
\end{aligned}
\end{equation}
In particular, it follows that
\begin{equation}\label{conseq1_hyp_prop}
\lim_{s,n\to+\infty} \| \partial_s^{\beta_1}\partial_t^{\beta_2}S_n(s,\cdot) \|_{L^\infty(\R/\Z)} = 0, \ \ \ \forall \beta_1\geq 1,\beta_2\geq 0,
\end{equation}
in the sense that for all $\beta_1\geq1,\beta_2\geq0$ and $\epsilon>0$ there are numbers $s(\epsilon,\beta_1,\beta_2)$, $n(\epsilon,\beta_1,\beta_2)$ such that $$ s\geq s(\epsilon,\beta_1,\beta_2),n\geq n(\epsilon,\beta_1,\beta_2) \Rightarrow \| \partial_s^{\beta_1}\partial_t^{\beta_2}S_n(s,\cdot) \|_{L^\infty(\R/\Z)}\leq\epsilon. $$ Analogously,
\begin{equation}\label{conseq2_hyp_prop}
\lim_{s,n\to+\infty} \| \partial_s^{\beta_1}\partial_t^{\beta_2}A_n(s,\cdot) \|_{L^\infty(\R/\Z)} = 0, \ \ \ \forall \beta_1\geq0,\beta_2\geq 0.
\end{equation}

The function $X$ solves a partial differential equation which depends on $n$: this is not explicit in the notation $X(s,t)$ but the reader should not forget the vital role played by $n$. Consider $Y(s,t)=e^{-\delta s}X(s,t)$. Then
\begin{equation*}
Y_s - (L_n(s)-\delta)Y + A_nY = 0,
\end{equation*}
where $L_n(s)$ is the self-adjoint operator described in the statement of the previous lemma, and $\|Y(s,\cdot)\|_{L^2(\R/\Z)}^2 \to 0$ as $s\to+\infty$. Setting $g(s)=\frac{1}{2}\|Y(s,\cdot)\|_{L^2(\R/\Z)}^2$ then one quickly computes
\begin{equation}\label{expression_g''}
g''(s) = 2\|(L_n(s)-\delta)Y\|_{L^2}^2 - 2\left< (L_n(s)-\delta)Y,A_nY \right>_{L^2} - \left< (\partial_sS_n)Y,Y \right>_{L^2}.
\end{equation}
For this one uses many times that $L_n(s)$ is self-adjoint.

The following notation will simplify the exposition below: given a function $f$ of $(s,t) \in \R\times \R/\Z$ we write $\|f\|_{L^2,s}$ for the $L^2(\R/\Z)$-norm of $f(s,\cdot)$.

We now follow~\cite[Lemma~B.2]{openbook} closely, giving more details. Let $n_0$, $s_0$ and $c>0$ be given by the previous lemma. In view of~\eqref{conseq2_hyp_prop} we can find $n_1\geq n_0$, $s_1\geq s_0$ such that
\begin{equation}
n\geq n_1,s\geq s_1 \Rightarrow \|A_n(s,\cdot)\|_{L^\infty} < c.
\end{equation}
In particular
\begin{equation}
\begin{aligned}
n\geq n_1,s\geq s_1 \Rightarrow c\|Y\|_{L^2,s} > \|A_n(s,\cdot)\|_{L^\infty}\|Y\|_{L^2,s} \geq \|A_nY\|_{L^2,s}.
\end{aligned}
\end{equation}
Using~\eqref{expression_g''} and the previous lemma we estimate when $s\geq s_1,n\geq n_1$:
\begin{equation}\label{ineq_g''_first}
\begin{aligned}
g''(s) &\geq 2c\|Y\|_{L^2,s}\|(L_n(s)-\delta)Y\|_{L^2,s} - 2\|A_nY\|_{L^2,s}\|(L_n(s)-\delta)Y\|_{L^2,s} \\
&- \|\partial_sS_n(s,\cdot)\|_{L^\infty}\|Y\|_{L^2,s}^2 \\
&= 2\|(L_n(s)-\delta)Y\|_{L^2,s} \left( c\|Y\|_{L^2,s} - \|A_nY\|_{L^2,s} \right) \\
&- \|\partial_sS_n(s,\cdot)\|_{L^\infty}\|Y\|_{L^2,s}^2 \\
&\geq 2c\|Y\|_{L^2,s} \left( c\|Y\|_{L^2,s} - \|A_n(s,\cdot)\|_{L^\infty}\|Y\|_{L^2,s} \right) \\
&- \|\partial_sS_n(s,\cdot)\|_{L^\infty}\|Y\|_{L^2,s}^2 \\
&= 2 \left( c^2 - c\|A_n(s,\cdot)\|_{L^\infty} - \frac{1}{2}\|\partial_sS_n(s,\cdot)\|_{L^\infty} \right) \|Y\|_{L^2,s}^2 \\
&= 4g(s) \left( c^2 - c\|A_n(s,\cdot)\|_{L^\infty} - \frac{1}{2}\|\partial_sS_n(s,\cdot)\|_{L^\infty} \right)
\end{aligned}
\end{equation}
Choose $0<\epsilon\ll 1$. By means of~\eqref{conseq1_hyp_prop} and~\eqref{conseq2_hyp_prop} we find $n_2\geq n_1,s_2\geq s_1$ such that
\begin{equation*}
n\geq n_2,s\geq s_2 \Rightarrow c^2 - c\|A_n(s,\cdot)\|_{L^\infty} - \frac{1}{2}\|\partial_sS_n(s,\cdot)\|_{L^\infty} \geq (c-\epsilon)^2.
\end{equation*}
This and~\eqref{ineq_g''_first} together give
\begin{equation}\label{g''_second_estimate}
n\geq n_2,s\geq s_2 \Rightarrow g''(s) \geq 4(c-\epsilon)^2g(s).
\end{equation}
Now one uses the following fundamental fact about positive functions satisfying a differential inequality of the above type: if the non-negative $C^2$-function $g$ defined on $[s_2,+\infty)$ satisfies~\eqref{g''_second_estimate} and $g(s) \to 0$ as $s\to+\infty$, then
\begin{equation*}
g(s) \leq g(s_2)e^{-2(c-\epsilon)(s-s_2)} \ \ \ \forall s\geq s_2.
\end{equation*}
The conclusion of the lemma follows since $2g(s)=e^{-2\delta s}\|X(s,\cdot)\|_{L^2}^2$.
\end{proof}

%For any given $\eta>0$ we recall the Peter-Paul inequality, where one estimates $ab\leq \eta a^2 + C(\eta) b^2$ for all $a,b\geq 0$ with some constant $C(\eta)$ proportional to $\eta^{-1}$ and independent of $a,b$. Let $n_0$, $s_0$ and $c>0$ be given by the previous lemma, so that if $n\geq n_0$ and $s\geq s_0$ then
%\begin{equation}\label{g''_first_estimate}
%\begin{aligned}
%&g''(s) \\
%&\geq 2\|(L_n(s)-\delta)Y\|_{L^2}^2 - 2\|(L_n(s)-\delta)Y\|_{L^2} \|A_nY\|_{L^2} - \|Y\|_{L^2}\|(\partial_sS_n)Y\|_{L^2} \\
%&\geq  2\|(L_n(s)-\delta)Y\|_{L^2}^2 - 2\eta\|(L_n(s)-\delta)Y\|_{L^2}^2 - 2C(\eta)\|A_nY\|_{L^2}^2 - \|(\partial_sS_n)\|_{L^\infty}\|Y\|_{L^2}^2 \\
%&\geq 2(1-\eta)\|(L_n(s)-\delta)Y\|_{L^2}^2 - 2C(\eta)\|A_n\|_{L^\infty}^2\|Y\|_{L^2}^2 - \|(\partial_sS_n)\|_{L^\infty}\|Y\|_{L^2}^2 \\
%&\geq \|Y\|_{L^2}^2 \left( 2(1-\eta)c^2 - 2C(\eta)\|A_n\|_{L^\infty}^2 - \|(\partial_sS_n)\|_{L^\infty} \right) \\
%\end{aligned}
%\end{equation}
%Let $\epsilon>0$ be arbitrary. By means of~\eqref{conseq1_hyp_prop} and~\eqref{conseq2_hyp_prop} we can first choose $\eta$ small depending on $\epsilon$, and then $s_1\geq s_0$ and $n_1\geq n_0$ large depending on $\eta$, such that $$ 2(1-\eta)c^2 - 2C(\eta)\|A_n\|_{L^\infty}^2 - \|(\partial_sS_n)\|_{L^\infty} \geq 2(c-\epsilon)^2 \ \text{for all $s\geq s_1,n\geq n_1$} $$

\begin{lemma}\label{lemma_unif_bounds_K_n}
Under the hypotheses of Proposition~\ref{prop_unif_asymptotic_analysis}
\begin{equation}\label{bounds_on_K_n}
\limsup_{s,n\to+\infty} \|\partial_s^{\beta_1}\partial_t^{\beta_2}K_n(s,\cdot)\|_{L^\infty(\R/\Z)} < +\infty
\end{equation}
holds for all $\beta_1,\beta_2\geq 0$. In other words, for all $\beta_1,\beta_2\geq 0$ there exist numbers $s(\beta_1,\beta_2)$, $n(\beta_1,\beta_2)$ and $M(\beta_1,\beta_2)$ such that $$ s\geq s(\beta_1,\beta_2), n\geq n(\beta_1,\beta_2), t\in\R/\Z \ \ \Rightarrow  \ \ |\partial_s^{\beta_1}\partial_t^{\beta_2}K_n(s,t)| \leq M(\beta_1,\beta_2). $$
\end{lemma}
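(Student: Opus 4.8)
The plan is to prove~\eqref{bounds_on_K_n} by contradiction, using only hypothesis ii) of Proposition~\ref{prop_unif_asymptotic_analysis} (the symmetry of $K^\infty$ required in i) will not be needed). The underlying soft principle is the following: if a family of functions has the property that along every pair of sequences $s_l,n_l\to+\infty$ some subsequence of $\partial_s^{\beta_1}\partial_t^{\beta_2}K_{n_l}(s_l,\cdot)$ converges uniformly to an element of a fixed bounded set, then the whole family $\{\partial_s^{\beta_1}\partial_t^{\beta_2}K_n(s,\cdot)\}$ is uniformly bounded as $s,n\to+\infty$.

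First I would fix a multi-index $\beta=(\beta_1,\beta_2)$ and assume, contrary to~\eqref{bounds_on_K_n}, that $\limsup_{s,n\to+\infty}\|\partial_s^{\beta_1}\partial_t^{\beta_2}K_n(s,\cdot)\|_{L^\infty(\R/\Z)}=+\infty$. Unwinding this joint $\limsup$ (which is exactly what the ``in other words'' reformulation makes precise: boundedness on a set of the form $\{s\geq s_0,\,n\geq n_0\}$), for every integer $N\geq1$ one can choose $s_N\geq N$ and $n_N\geq N$ with $\|\partial_s^{\beta_1}\partial_t^{\beta_2}K_{n_N}(s_N,\cdot)\|_{L^\infty(\R/\Z)}\geq N$; since $\R/\Z$ is compact the supremum is attained, so one also obtains points $t_N\in\R/\Z$ with $|\partial_s^{\beta_1}\partial_t^{\beta_2}K_{n_N}(s_N,t_N)|\geq N$, and $s_N,n_N\to+\infty$ by construction. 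The next step is to feed the pair of sequences $(n_N),(s_N)$ into hypothesis ii): after passing to a subsequence, which I relabel, there is a constant $c\in[0,1)$ with
\[
\lim_{N\to\infty}\ \sup_{\tau\in\R/\Z}\ \bigl|\partial_s^{\beta_1}\partial_t^{\beta_2}\bigl[K_{n_N}(s,t)-K^\infty(t+c)\bigr](s_N,\tau)\bigr|=0.
\]
Then I would observe that $K^\infty(t+c)$ does not depend on the variable $s$: hence $\partial_s^{\beta_1}\partial_t^{\beta_2}[K^\infty(t+c)]$ vanishes identically when $\beta_1\geq1$, and equals $\partial_t^{\beta_2}K^\infty(t+c)$ when $\beta_1=0$, so in both cases it is bounded on $\R/\Z$ by a constant $C_\beta$ since $K^\infty$ is smooth on the compact circle. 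Consequently
\[
\bigl|\partial_s^{\beta_1}\partial_t^{\beta_2}K_{n_N}(s_N,t_N)\bigr|\ \leq\ \sup_{\tau\in\R/\Z}\bigl|\partial_s^{\beta_1}\partial_t^{\beta_2}[K_{n_N}-K^\infty(\cdot+c)](s_N,\tau)\bigr|+C_\beta,
\]
and the right-hand side stays bounded as $N\to\infty$, contradicting $|\partial_s^{\beta_1}\partial_t^{\beta_2}K_{n_N}(s_N,t_N)|\geq N\to+\infty$. This contradiction establishes~\eqref{bounds_on_K_n} for every $\beta$.

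I do not expect a genuine analytic obstacle in this argument. The only slightly delicate point is bookkeeping: one must read the double limit $\limsup_{s,n\to+\infty}$ correctly and extract from it sequences $(s_N)$ and $(n_N)$ that both tend to $+\infty$ and along which the given derivative of $K_n$ blows up, so that hypothesis ii) becomes applicable. Once that is set up, the rest reduces to the elementary remark that $K^\infty$ is independent of $s$ and smooth on a compact domain.
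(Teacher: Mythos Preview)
Your argument is correct and is essentially the same as the paper's: both proceed by contradiction, extract sequences $n_l,s_l\to+\infty$ along which $|\partial_s^{\beta_1}\partial_t^{\beta_2}K_{n_l}(s_l,t_l)|\to+\infty$, and then invoke hypothesis~ii) to obtain the contradiction. Your write-up simply spells out in more detail what the paper compresses into three lines.
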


\begin{proof}
If this lemma is not true then we find $\beta_1,\beta_2\geq 0$, sequences $n_l,s_l\to+\infty$ and $t_l\in\R/\Z$ such that $|\partial_s^{\beta_1}\partial_t^{\beta_2}K_{n_l}(s_l,t_l)| \to +\infty$. By the hypotheses of Proposition~\ref{prop_unif_asymptotic_analysis}, $\exists$ $l_k$, $c\in[0,1)$ such that $|\partial_s^{\beta_1}\partial_t^{\beta_2}[K_{n_{l_k}}(s,t)-K^\infty(t+c)](s_{l_k},t_{l_k})| \to 0$, absurd.
\end{proof}

\begin{lemma}\label{lemma_unif_asympt_3}
For every $k\geq 1$ there exists $n_k$ such that
\begin{equation}\label{decay_X_n_non_uniform}
n\geq n_k, \ \beta_1+\beta_2\leq k \ \ \Rightarrow \ \ \lim_{s\to+\infty} e^{-\delta s} \|\partial_s^{\beta_1}\partial_t^{\beta_2}X_n(s,\cdot)\|_{L^\infty(\R/\Z)} = 0.
\end{equation}
\end{lemma}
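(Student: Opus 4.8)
The plan is to upgrade the $L^2$-decay of $X_n$ provided by Lemma~\ref{lemma_unif_asympt_2} to decay of all derivatives up to any prescribed order, by inserting that $L^2$-bound into interior elliptic estimates for the operator $\bar\partial := \partial_s + J_0\partial_t$ and controlling the zeroth order term with the uniform coefficient bounds of Lemma~\ref{lemma_unif_bounds_K_n}. First I would renormalize by setting $Y_n(s,t) = e^{-\delta s}X_n(s,t)$, so that
\begin{equation*}
\partial_s Y_n + J_0\partial_t Y_n + (K_n + \delta I)Y_n = 0,
\end{equation*}
and \eqref{decay_X_n_non_uniform} becomes the assertion that for every $k\geq1$ there is $n_k$ with $\lim_{s\to+\infty}\|\partial_s^{\beta_1}\partial_t^{\beta_2}Y_n(s,\cdot)\|_{L^\infty(\R/\Z)} = 0$ whenever $n\geq n_k$ and $\beta_1+\beta_2\leq k$. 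Lemma~\ref{lemma_unif_asympt_2} gives $n_1,s_1>0$ and $r>0$ with $\|Y_n(s,\cdot)\|_{L^2(\R/\Z)}\leq e^{-r(s-s_1)}\|Y_n(s_1,\cdot)\|_{L^2(\R/\Z)}$ for $n\geq n_1$, $s\geq s_1$; integrating in $s$ over a bounded interval shows $\|Y_n\|_{L^2([s-2,s+2]\times\R/\Z)}\to0$ as $s\to+\infty$, uniformly in $n\geq n_1$. Lemma~\ref{lemma_unif_bounds_K_n} gives, for each $m$, numbers $\bar s(m),\bar n(m),M(m)$ with $\|K_n\|_{C^m([\bar s(m),+\infty)\times\R/\Z)}\leq M(m)$ for $n\geq\bar n(m)$.

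Next I would fix $k\geq1$ and set $n_k = \max\{n_1,\bar n(k+1)\}$. For $n\geq n_k$ and $s\geq \bar s(k+1)+2$, introduce the nested cylinders $Q^{(j)} = [\,s-2+\tfrac{j}{k+2},\ s+2-\tfrac{j}{k+2}\,]\times\R/\Z$ for $j=0,\dots,k+2$, so $Q^{(0)}=[s-2,s+2]\times\R/\Z$ and $Q^{(k+2)}=[s-1,s+1]\times\R/\Z$. Since $\bar\partial$ has constant coefficients on the flat cylinder, the usual interior $W^{j+1,2}$-estimate has a constant that is independent of $s$ by translation invariance:
\begin{equation*}
\|Y_n\|_{W^{j+1,2}(Q^{(j+1)})} \leq C_j\Big(\|Y_n\|_{L^2(Q^{(j)})} + \|(K_n+\delta I)Y_n\|_{W^{j,2}(Q^{(j)})}\Big).
\end{equation*}
By the Leibniz rule and $\|K_n\|_{C^{j}(Q^{(j)})}\leq M(k+1)$ (valid since $j\leq k+1$, $n\geq n_k$ and $s\geq\bar s(k+1)$), the last term is $\leq C_j'\|Y_n\|_{W^{j,2}(Q^{(j)})}$. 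Induction on $j$, with base case $W^{0,2}=L^2$, then yields
\begin{equation*}
\|Y_n\|_{W^{k+2,2}([s-1,s+1]\times\R/\Z)} \leq C(k)\,\|Y_n\|_{L^2([s-2,s+2]\times\R/\Z)},
\end{equation*}
with $C(k)$ depending only on $k$ and $M(k+1)$. The Sobolev embedding $W^{k+2,2}\hookrightarrow C^{k}$ in dimension two then gives $\sup_{\beta_1+\beta_2\leq k}\|\partial_s^{\beta_1}\partial_t^{\beta_2}Y_n(s,\cdot)\|_{L^\infty(\R/\Z)}\leq C'(k)\|Y_n\|_{L^2([s-2,s+2]\times\R/\Z)}$, and the right side tends to $0$ as $s\to+\infty$ by the first paragraph. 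Undoing the renormalization proves \eqref{decay_X_n_non_uniform}.

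Nothing in this argument is deep, so the only real work is bookkeeping, and that is also the only place something could go wrong. One must check that $n_k$ depends on $k$ alone — guaranteed because $n_1$ (from Lemma~\ref{lemma_unif_asympt_2}) and $\bar n(k+1)$ (from Lemma~\ref{lemma_unif_bounds_K_n}) do — that every elliptic constant is uniform in $s$, which forces one to work on fixed-size cylinders and use translation invariance of $\partial_s+J_0\partial_t$, and that every constant is uniform in $n\geq n_k$, which is exactly the content of the uniform bound $\|K_n\|_{C^{k+1}}\leq M(k+1)$. One can equally run the estimate with Schauder $C^{k,\gamma}$ norms in place of $L^2$-Sobolev norms; the structure is identical, and this is essentially the route of~\cite[appendix~B]{openbook}, which I would follow.
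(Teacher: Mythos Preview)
Your argument is correct and follows essentially the same route as the paper: interior elliptic bootstrapping for $\bar\partial$ on unit-sized translated cylinders, combined with the uniform bounds on $K_n$ from Lemma~\ref{lemma_unif_bounds_K_n}, to upgrade an $L^p$-type base decay to $C^k$ decay via Sobolev embedding. The only cosmetic difference is that the paper uses the hypothesis $X_n\in E=C^{l,\alpha,\delta}_0$ directly to seed the induction with $W^{l,p}$-decay, whereas you detour through Lemma~\ref{lemma_unif_asympt_2} for the $L^2$ base case; both are fine, and the rest of the bookkeeping (nested domains, $n$- and $s$-independence of constants, undoing the renormalization $Y_n=e^{-\delta s}X_n$ via the triangular Leibniz relation) is handled correctly.
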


\begin{proof}
Fix $p>1$. In this proof we shall need to make use of the standard elliptic estimate which holds for smooth functions $h:\R\times\R/\Z \to \R^{2m}$ with compact support
\begin{equation}\label{cal_zyg_ineq}
\|h\|_{W^{{\ell}+1,p}(\R\times\R/\Z)} \leq C_{\ell} \left( \|\bar\partial h\|_{W^{{\ell},p}(\R\times\R/\Z)} + \|h\|_{W^{{\ell},p}(\R\times\R/\Z)} \right)
\end{equation}
for the Cauchy-Riemann operator $\bar\partial = \partial_s+J_0\partial_t$; here the constant $C_\ell>0$ is independent of $h$.

Fix a smooth function $\phi:\R\to[0,1]$ satisfying $\phi|_{[-1,1]}\equiv1$ and ${\rm supp}(\phi) \subset (-2,2)$. For each $\tau\in\R$ set $\phi_\tau(s)=\phi(s-\tau)$ and $Q_\tau=[\tau-2,\tau+2]\times\R/\Z$. Now we use~\eqref{bounds_on_K_n} to find for every $\ell\geq1$ numbers $s_\ell,n_\ell$ and $M_\ell$ such that
\begin{equation}
s\geq s_\ell,n\geq n_\ell,t\in\R/\Z \ \ \Rightarrow \ \ |\partial_s^{\beta_1}\partial_t^{\beta_2}K_n(s,t)| \leq M_\ell \ \ \forall \beta_1,\beta_2 \ \text{with} \ \beta_1+\beta_2\leq \ell.
\end{equation}
Using this and~\eqref{cal_zyg_ineq} we can estimate for $\tau \geq s_{\ell}+2$ and $n\geq n_{\ell}$:
\begin{eqnarray}
\begin{aligned}
& \|X_n\|_{W^{{\ell}+1,p}([\tau-1,\tau+1]\times\R/\Z)} \leq \|\phi_\tau X_n\|_{W^{{\ell}+1,p}(\R\times\R/\Z)} \\
& \leq C_{\ell}\left( \|\bar\partial(\phi_\tau X_n)\|_{W^{{\ell},p}(\R\times\R/\Z)} + \|\phi_\tau X_n\|_{W^{{\ell},p}(\R\times\R/\Z)} \right) \\
& \leq C'_{\ell}\left( \|\bar\partial X_n\|_{W^{{\ell},p}(Q_\tau)} + \|X_n\|_{W^{{\ell},p}(Q_\tau)} \right) \\
& = C'_{\ell}\left( \|K_n X_n\|_{W^{{\ell},p}(Q_\tau)} + \|X_n\|_{W^{{\ell},p}(Q_\tau)} \right) \\
& \leq C''_{\ell} \|X_n\|_{W^{{\ell},p}(Q_\tau)}
\end{aligned}
\end{eqnarray}
where $C'_{\ell}$ depends on $C_{\ell}$ and the derivatives of $\phi$ up to order $\ell+1$, and $C''_{\ell}$ depends on $C'_{\ell}$ and $M_{\ell}$. Since $X_n\in E$ for all $n$, we find that $\|X_n\|_{W^{l,p}(Q_\tau)}$ decays like $e^{\delta \tau}$ as $\tau\to+\infty$, where $l$ is the number used in the definition of $E$. An induction argument will tell us that for any $k\geq 1$, $\|X_n\|_{W^{k,p}(Q_\tau)}$ decays like $e^{\delta\tau}$ as $\tau\to+\infty$ if $n$ is larger than some $n_k$. Then~\eqref{decay_X_n_non_uniform} follows from the Sobolev embedding theorem.
\end{proof}

With the above lemmata at our disposal, we finally turn to the proof of the proposition. The number $\delta<0$ does not lie in $\sigma(L)$, and we take $\mu\in\sigma(L)$ satisfying $\mu<\delta$ and $(\mu,\delta] \cap \sigma(L) = \emptyset$.  \\

\noindent {\bf Step 1.} For every $m\geq 0$ we can find $s_m,n_m$ and $0<r_m<\delta-\mu$ such that
\begin{equation*}
\left( \sum_{j=0}^m \| (\partial_s)^jX_n(s,\cdot) \|_{L^2(\R/\Z)}^2 \right)^{\frac{1}{2}} \leq e^{(\delta-r_m)(s-s_m)} \left(\sum_{j=0}^m \| (\partial_s)^jX_n(s_m,\cdot) \|_{L^2(\R/\Z)}^2\right)^{\frac{1}{2}}
\end{equation*}
holds if $s\geq s_m$ and $n\geq n_m$. \\

\begin{proof}[Proof of Step 1]
Differentiating~\eqref{eqn_X_n_prop} with respect to $s$ yields
\begin{equation*}
\begin{aligned}
& \partial_sX_n+J_0\partial_tX_n+K_nX_n = 0 \\
& (\partial_s)^2X_n+J_0\partial_t\partial_sX_n+\partial_s(K_nX_n) = 0 \\
& \vdots \\
& (\partial_s)^{m+1}X_n+J_0\partial_t(\partial_s)^mX_n+(\partial_s)^m(K_nX_n) = 0
\end{aligned}
\end{equation*}
This system of equations can be rewritten as a single equation in the form
\begin{equation*}
\partial_sZ_n + \hat J_0\partial_tZ_n + \hat K_nZ_n = 0
\end{equation*}
where $Z_n = \begin{bmatrix} X_n,\partial_sX_n,\dots,(\partial_s)^mX_n \end{bmatrix}^T$,
\begin{equation*}
\hat J_0 = \begin{bmatrix} J_0 & 0 & \dots & 0 \\ 0 & J_0 & \dots & 0 \\ \vdots & \vdots & \ddots & \vdots \\ 0 & 0 & \dots & J_0 \end{bmatrix} \ \ \ \hat K_n = \begin{bmatrix} K_n & 0 & \dots & 0 \\ * & K_n & \dots & 0 \\ \vdots & \vdots & \ddots & \vdots \\ * & * & \dots & K_n \end{bmatrix}
\end{equation*}
where every block-entry of the lower triangular block-matrix $\hat K_n$ below the diagonal is a term $\Delta_n(s,t)$ satisfying
\begin{equation*}
\lim_{s,n\to+\infty} \|\partial_s^{\beta_1}\partial_t^{\beta_2}\Delta_n(s,\cdot)\|_{L^\infty(\R/\Z)} = 0 \ \ \ \forall \beta_1,\beta_2\geq0.
\end{equation*}
We can now apply Lemma~\ref{lemma_unif_asympt_2} to $Z_n$ in order to obtain get $s_m,n_m,r_m$ with the properties we desired.
\end{proof}

\noindent {\bf Step 2.} For every integer $q\geq0$ we can find numbers $s_q$, $n_q$, and $0<r_q<\delta-\mu$, $c_q$ such that
\begin{equation}\label{full_step_2}
\begin{aligned}
\max_{\beta_1+\beta_2\leq q} & \|\partial_s^{\beta_1}\partial_t^{\beta_2}X_n(s,\cdot)\|_{L^\infty(\R/\Z)} \\
& \leq c_q e^{(\delta-r_q)(s-s_q)} \max_{\beta_1+\beta_2\leq q+1} \| \partial_s^{\beta_1}\partial_t^{\beta_2}X_n(s_q,\cdot) \|_{L^\infty(\R/\Z)}
\end{aligned}
\end{equation}
holds for all $s\geq s_q$ and $n\geq n_q$.

\begin{proof}[Proof of Step 2]
We will prove by induction that for every $m\geq0$ there exist $k_m,s_m,n_m$ and $0<\rho_m<\delta-\mu$ such that
\begin{equation}\label{step_2_L2}
\begin{aligned}
& \sqrt{\sum_{\beta_1+\beta_2\leq m} \|\partial_s^{\beta_1}\partial_t^{\beta_2}X_n(s,\cdot)\|_{L^2(\R/\Z)}^2} \\
& \leq k_m e^{(\delta-\rho_m)(s-s_m)} \sqrt{\sum_{\beta_1+\beta_2\leq m} \|\partial_s^{\beta_1}\partial_t^{\beta_2}X_n(s_m,\cdot)\|_{L^2(\R/\Z)}^2}
\end{aligned}
\end{equation}
holds for when $s\geq s_m$ and $n\geq n_m$. If we succeed in proving~\eqref{step_2_L2} then the proof of {\it Step 2} will be complete in view of the Sobolev embedding theorem, which tells us that $W^{1,2}(\R/\Z) \hookrightarrow L^\infty(\R/\Z)$ continuously.

Now we proceed with the proof of~\eqref{step_2_L2}. The case $m=0$ is a special case of {\it Step~1}. Assuming~\eqref{step_2_L2} holds for $m$ we now show that it also holds for $m+1$. However, this induction step will be proved by a separate induction argument: we will show, using induction in $0\leq j\leq m+1$, that for every such $j$ we can find $s',n'$, $c'$ and $0<\rho'<\delta-\mu$ such that
\begin{equation}\label{sub_induction_j}
\|\partial_s^{m+1-j}\partial_t^jX_n(s,\cdot)\|_{L^2(\R/\Z)} \leq c'e^{(\delta-\rho')(s-s')} \sqrt{\Sigma' \| \partial_s^{\beta_1}\partial_t^{\beta_2}X_n(s',\cdot) \|_{L^2(\R/\Z)}^2}
\end{equation}
holds if $s\geq s'$ and $n\geq n'$, where the sum $\Sigma'$ indicates a sum over all numbers $\beta_1,\beta_2\geq 0$ satisfying either $\beta_1+\beta_2\leq m$, or $\beta_1+\beta_2=m+1$ and $\beta_2\leq j$. The case $j=0$ follows from~{\it Step~1} and our previous induction hypothesis. Fix $b\leq m+1$ and assume that~\eqref{sub_induction_j} holds for all $0\leq j\leq b-1$. By~\eqref{eqn_X_n_prop} we get
\begin{equation*}
\begin{aligned}
\partial_s(\partial_s^{m+1-b}\partial_t^{b-1}X_n)&+J_0\partial_s^{m+1-b}\partial_t^bX_n \\
&= \partial_s^{m+1-b}\partial_t^{b-1}(\partial_sX_n+J_0\partial_tX_n) \\
&= \partial_s^{m+1-b}\partial_t^{b-1}(-K_nX_n)
\end{aligned}
\end{equation*}
Hence
\begin{equation*}
\partial_s^{m+1-b}\partial_t^bX_n = J_0(\partial_s^{m+2-b}\partial_t^{b-1}X_n + \partial_s^{m+1-b}\partial_t^{b-1}(K_nX_n)).
\end{equation*}
This equation, the uniform asymptotic bounds~\eqref{bounds_on_K_n} on derivatives of $K_n$ and the induction hypothesis together prove~\eqref{sub_induction_j} for $0\leq j\leq b$. Hence~\eqref{sub_induction_j} holds for all $0\leq j\leq m+1$, which together with~\eqref{step_2_L2} for $m$ gives~\eqref{step_2_L2} for $m+1$. The proof of {\it Step~2} is complete.
\end{proof}

We are ready for the final \\

\noindent {\bf Step 3.} Some subsequence of $X_n$ converges in $C^{l,\alpha,\delta}_0$.

\begin{proof}[Proof of Step 3]
Since $X_n$ is $C^\infty_{\rm loc}$-bounded we can find $X_\infty(s,t)$ smooth and assume, up to selection of a subsequence, that $X_n\to X_\infty$ in $C^\infty_{\rm loc}$. We will show now that
\begin{equation}\label{final_step_3_crucial}
\lim_{n\to\infty} \ \ \left[ \sup_{(s,t)\in[0,+\infty)\times\R/\Z} e^{-\delta s}| \partial_s^{\beta_1}\partial_t^{\beta_2}[X_n-X_\infty]| \right] = 0
\end{equation}
holds for every $\beta_1,\beta_2\geq0$. Let $\beta_1,\beta_2\geq 0$ and $\epsilon>0$ be fixed arbitrarily. By {\it Step~2} we find $s',n'$ such that
\begin{equation}\label{eq_step3_1}
\sup_{s\geq s', n\geq n'} e^{-\delta s}| \partial_s^{\beta_1}\partial_t^{\beta_2}X_n(s,t)| \leq \epsilon/2.
\end{equation}
For this we used formula~\eqref{full_step_2} and the fact that $X_n\to X_\infty$ in $C^\infty_{\rm loc}$. In particular, taking the limit as $n\to\infty$ one gets
\begin{equation}\label{eq_step3_2}
\sup_{s\geq s'} e^{-\delta s}| \partial_s^{\beta_1}\partial_t^{\beta_2}X_\infty(s,t)| \leq \epsilon/2.
\end{equation}
In view of the $C^\infty_{\rm loc}$-convergence we find $n''\geq n'$ such that
\begin{equation}\label{eq_step3_3}
n\geq n'' \Rightarrow \sup_{(s,t) \in [0,s']\times\R/\Z} e^{-\delta s}| \partial_s^{\beta_1}\partial_t^{\beta_2}[X_n-X_\infty]| \leq \epsilon.
\end{equation}
Putting~\eqref{eq_step3_1},~\eqref{eq_step3_2} and~\eqref{eq_step3_3} together we obtain
\begin{equation}\label{eq_step3_4}
n\geq n'' \Rightarrow \sup_{(s,t) \in [0,+\infty)\times\R/\Z} e^{-\delta s}| \partial_s^{\beta_1}\partial_t^{\beta_2}[X_n-X_\infty]| \leq \epsilon.
\end{equation}
Since $\epsilon>0$ was arbitrary, this proves~\eqref{final_step_3_crucial}. Since~\eqref{final_step_3_crucial} holds for any $\beta_1,\beta_2\geq 0$ it follows that $X_\infty \in C^{l,\alpha,\delta}_0$ and $X_n\to X_\infty$ in $C^{l,\alpha,\delta}_0$.
\end{proof}

The proof of Proposition~\ref{prop_unif_asymptotic_analysis} is now complete.

\end{document}